\documentclass[11pt]{amsart}
\usepackage{amsthm}
\newtheorem{theorem}{Theorem}[section]
\newtheorem{lemma}[theorem]{Lemma}
\newtheorem{proposition}[theorem]{Proposition}
\theoremstyle{definition}
\newtheorem{definition}[theorem]{Definition}

\theoremstyle{remark}
\newtheorem{remark}[theorem]{Remark}
\newtheorem{corollary}[theorem]{Corollary}
\newtheorem{example}[theorem]{Example}
\counterwithin*{section}{part}

\usepackage{amssymb,mathrsfs}
\usepackage{empheq}
\usepackage{stmaryrd}
\usepackage{enumerate}
\usepackage[shortlabels]{enumitem}
\usepackage{calc}
\usepackage{url}

\usepackage{mathabx}

\usepackage{xcolor}

\usepackage{mathtools}
\DeclarePairedDelimiter{\ceil}{\lceil}{\rceil}

\newcommand{\seq}[1]{\{{#1}\}}

\def\NN{\mathbb{N}}

\def\QQ{\mathbb{Q}}
\def\RR{\mathbb{R}}

\newcommand{\WEPAomega}{\textsf{WE}\mbox{-}\textsf{PA}^\omega}

\newcommand{\model}[1]{\llbracket {#1} \rrbracket }
\newcommand{\QFAC}{\textsf{QF}\mbox{-}\textsf{AC}}
\newcommand{\QFER}{\textsf{QF}\mbox{-}\textsf{ER}}
\newcommand{\DC}{\textsf{DC}}

\def\sigUB{\Sigma^0_1\mbox{-}\mathsf{UB}^{X}}

\usepackage[left=2cm,%
                right=2cm,%
                top=2.5cm,%
                bottom=3.5cm,%
                headheight=12pt,%
                a4paper]{geometry}%

\begin{document}

\title[Logical Metatheorems for Abstract Spaces in Positive
Bounded Logic II]{Logical Metatheorems for Abstract Spaces axiomatized in Positive
Bounded Logic II: Metric spaces and the model-theoretic uniformity principle}

\author[Ulrich Kohlenbach, Morenikeji Neri and Jin Wei]{Ulrich Kohlenbach${}^{\MakeLowercase a}$, Morenikeji Neri${}^{\MakeLowercase a}$ and Jin Wei${}^{\MakeLowercase a}$}
\date{\today}
\maketitle
\vspace*{-5mm}
\begin{center}
{\scriptsize 
${}^a$ Department of Mathematics, Technische Universit\"at Darmstadt,\\
Schlossgartenstra\ss{}e 7, 64289 Darmstadt, Germany,\\ 
E-mails:  kohlenbach@mathematik.tu-darmstadt.de, neri@mathematik.tu-darmstadt.de, wei@mathematik.tu-darmstadt.de}
\end{center}

\maketitle
\begin{abstract}
We extend the proof-theoretic treatment of uniform bound extraction from normed structures axiomatized in positive bounded logic [\emph{Advances in Mathematics}, 290:503--551, 2016] (as developed for the model theory of Banach spaces) to the more general setting of abstract metric structures, including discrete structures viewed as classical first-order models. In particular, we establish uniform bound extraction theorems for our generalized framework for $\forall\exists$-sentences whose matrix is the negation of (an embedding of) a formula in positive bounded logic, whose proofs use saturation. In this way, we provide a formal explanation for the successes in the extraction of uniform bounds from nonstandard proofs given in [\emph{Advances in Mathematics}, 343:567--623, 2019], which had informally followed the perspective of the monotone functional interpretation. As an application of the formal framework we develop, we provide novel explicit bounds for a structural theorem for stable subsets of groups given in [\emph{Mathematical Proceedings of the Cambridge Philosophical Society}, 168(2):405--413, 2020].
\end{abstract}
\noindent
{\bf Keywords:} proof mining, positive bounded logic, ultraproducts, saturation, stable sets, dominated convergence \\ 
{\bf MSC2020 Classification:} 03F10, 03F35, 03C45, 03C20, 11B30, 20D60, 28A20
\section{Introduction}
During the last decades a novel applied
form of proof theory has emerged which, often under the label of `proof mining',
utilizes proof-theoretic transformations in the spirit of G\"odel's functional
(`Dialectica') interpretation to extract effective uniform bounds from proofs
in many areas of analysis (\cite{Kohlenbach2008,kohlenbach:19:nonlinear:icm}): nonlinear analysis, fixed point theory, ergodic
theory, nonsmooth optimization, PDE theory and, more recently (see e.g. \cite{NeriOlivaPischke2026,neri-pischke:24:formal:pub}), probability
theory. While this program started around 1990 (\cite{Kohlenbach(90)}), a main new development began
with the introduction of abstract metric and normed spaces as a kind of
atoms (`base types') to the formal systems used which allowed one to obtain
bounds which were uniform in a way which in some cases was new even qualitatively (\cite{kohlenbach2005some,GeK2008}). Already in \cite{kohlenbach2005some}, the
first author pointed to some similarities between this proof-theoretic
approach to uniform bounds and the role of `uniform $L$-structures' and their
ultrapowers in the context of the model theory of Banach spaces as developed
in \cite{henson2002ultraproducts}. For instance structures similar to those
featured in the model-theoretic approach are also central in the
proof-theoretic metatheorems from \cite{kohlenbach2005some,GeK2008}
and the assumption of the boundedness (on bounded sets) for all
function symbols with some common modulus
is a particular case of a uniformly majorized class of objects with
majorizability being a key property in the proof-theoretic context.
The assumption of equicontinuity of all the function symbols made in
\cite{henson2002ultraproducts} has the proof-theoretic consequence that
the full extensionality property of these functions becomes derivable.
This is convenient but not mandatory in the proof-theoretic framework, where one frequently deals with situations where the functions in question are
discontinuous (which then allows only for some restricted use of
extensionality which, however, usually is all what is used in given proofs).
A striking
difference between the model-theoretic versus the proof-theoretic approach
to uniform bounds is that in the former framework the uniformity is
established by extending a given structure to a large ultrapower, whereas
in the proof-theoretic framework it is a consequence of the fact that a given
proof can only make uniform queries about the objects from
the metric structure by majorizable functionals (as all terms which occur
in the proof analysis are majorizable), i.e. the reasoning takes place
in some small term model.\\

The theme of elaborating on connections between the two paradigms was taken up more
systematically in \cite{GuK2016}. That paper established that all normed
structures which are axiomatizable in positive bounded logic admit proof-theoretic uniform bound extraction metatheorems (as in \cite{kohlenbach2005some,GeK2008}). E.g. this was used to establish such metatheorems for
abstract $C(K)$- and $L^p$-spaces as well as for $BL^pL^q$-Banach lattices.
Furthermore,
the proof-theoretically tame nonstandard uniform bounded principle introduced
in \cite{Kohlenbach(06)} could be seen to a certain extent as a constructive substitute
for the (noneffective) use of ultramethods made in the model-theoretic
context as well as for the use of ultrapowers of Banach spaces made
in functional analysis.
\\[1mm]

While already \cite{kohlenbach2005some,GeK2008} not only treat normed
structures but also arbitrary metric spaces (as well as several intermediate
classes of geodesic spaces), the specific connection to model theory as
investigated in \cite{GuK2016} only considered normed structures as only
those were covered in \cite{henson2002ultraproducts}. \\[1mm]

On the logical side,
our current paper
\begin{itemize}
\item
extends \cite{GuK2016} to treat the case of abstract metric spaces
axiomatized in positive bounded logic
\cite{avigad2013ultraproducts,DI2017}. In particular, this subsumes
first-order logic by viewing classical structures as discrete metric spaces,
thereby giving access to theories from algebra as well as \emph{hybrid}
structures with both discrete and continuous parts, such as measure
constructions on groups;
\item
generalizes the uniform boundedness principles studied in \cite{GuK2016} to
a proof-theoretic treatment of saturation;
\item
establishes uniform bound extraction theorems for our generalized framework
for $\forall\exists$-sentences whose matrix is the negation of (an embedding
of) a formula in positive bounded logic rather than only a
$\exists n^{\NN}$-formula.
\end{itemize}
While the results from \cite{GuK2016} carry over more or less directly
to metric structures provided that the metric spaces have enough
intermediate points (which e.g. is the case for the hyperbolic spaces
treated already in \cite{kohlenbach2005some,GeK2008}), to include also
discrete metric spaces requires some important changes, e.g., similar
to \cite{DI2017}, one has to replace closed balls by open balls in the
definition of approximate satisfiability for universal quantifiers, thereby
restricting the class of positive bounded formulas. In our framework we
have to replace the extensional retraction function used in \cite{GuK2016}
to restrict quantification to bounded balls by an intensional one (which,
moreover, avoids the need of an extensionality assumption made in
 \cite{GuK2016}). \\[1mm]
 
With respect to applications to mathematics we use our extended framework to

\begin{itemize}
\item
  generalize a structural result for stable subsets of finite groups
  which was obtained using ultra-methods in \cite{conant2020group}
  and extract
  explicit effective bounds for this theorem;
\item
  interpret the proof-theoretic extraction of a bound for the
  Avigad-Dean-Rute-Tao meta-\\ stable dominated convergence theorem
  from \cite{Avigad-Dean-Rute:Dominated:12} as an instance of our
  new logical metatheorem thereby complementing results in
  \cite{neri-pischke:24:formal:pub}.
\end{itemize}

\subsection{Organisation of the paper and summary of main results}
\label{subsec:organisation}

We now provide further details and context for the main contributions of this paper. 
\subsubsection{A proof-theoretic treatment of the model-theoretic approach to uniform bounds}

In \cite{avigad2013ultraproducts}, it is shown that model-theoretic and ultraproduct-based techniques can be used to establish qualitative uniformity results corresponding to the quantitative ones extracted proof-theoretically. At the core of the model-theoretic approach is the following \emph{uniformity principle}.

\begin{theorem}[Uniformity principle]
Let $L$ be a first-order signature, let $\mathscr{M}$ be a family of $L$-structures, for each $n \in \NN$ let $\varphi_{n}$ be an $L$-formula. If, for every $L$-structure $\mathcal{M}$ that is an ultraproduct of structures in $\mathscr{M}$ with respect to a non-principal ultrafilter,
\[
  \exists n \in \NN\; (\mathcal{M} \vDash \varphi_{n}),
\]
then 
\[
  \exists N \in \NN\; \forall \mathcal{M} \in \mathscr{M}\; \exists n \leq N\; (\mathcal{M} \vDash \varphi_{n}).
\]
\end{theorem}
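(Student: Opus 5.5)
The plan is to argue by contradiction, using Łoś's theorem on a countable ultraproduct. I read the $\varphi_n$ as $L$-sentences, since ``$\mathcal{M}\vDash\varphi_n$'' is only meaningful for closed formulas (or formulas with parameters fixed in a way that transfers to ultraproducts). Assume the conclusion fails. Then for every $N\in\NN$ there is a structure $\mathcal{M}_N\in\mathscr{M}$ with $\mathcal{M}_N\nvDash\varphi_n$ for all $n\le N$. Choosing one such witness for each $N$ (countable choice suffices) gives a sequence $(\mathcal{M}_N)_{N\in\NN}$ of members of $\mathscr{M}$. This sequence may contain repetitions, which is harmless: an ultraproduct of structures in $\mathscr{M}$ may use any family of members of $\mathscr{M}$ indexed by any set.

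Next, fix a non-principal ultrafilter $\mathcal{U}$ on $\NN$ and form the ultraproduct $\mathcal{M}:=\prod_{N}\mathcal{M}_N/\mathcal{U}$. By the hypothesis of the theorem, $\mathcal{M}\vDash\varphi_n$ for some $n$. For this $n$, the set
\[
  \{N\in\NN : \mathcal{M}_N\vDash\neg\varphi_n\}
\]
contains every $N\ge n$, so it is cofinite. A non-principal ultrafilter contains all cofinite sets, so this set lies in $\mathcal{U}$. Łoś's theorem then gives $\mathcal{M}\vDash\neg\varphi_n$, which contradicts $\mathcal{M}\vDash\varphi_n$. Hence some $N$ works uniformly for all members of $\mathscr{M}$.

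The only substantive ingredient is Łoś's theorem, which I take as standard for classical first-order logic. The main point needing care is the choice of indexing. The ``bad'' structures must be indexed by $N$ so that failure of each fixed $\varphi_n$ holds on a cofinite, hence $\mathcal{U}$-large, set of indices. This is exactly where non-principality of $\mathcal{U}$ is used. With a principal ultrafilter the ultraproduct would just be a single $\mathcal{M}_{N_0}$, which refutes only $\varphi_0,\dots,\varphi_{N_0}$ and so gives no contradiction.
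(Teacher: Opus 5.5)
Your argument is correct, and it is exactly the ``straightforward application of \L o\'s's theorem'' the paper has in mind: the bad structures $\mathcal{M}_N$ refuting $\varphi_0,\dots,\varphi_N$ are glued along a non-principal ultrafilter, so that each $\neg\varphi_n$ holds on a cofinite set of indices. The paper's written-out proof of the positive bounded analogue (Theorem~\ref{thrm:main:model:unif}) chooses the same $\mathcal{M}_N$, but invokes the compactness theorem instead of forming the ultraproduct explicitly.
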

An immediate consequence of this theorem is one in which the first order formulas you consider take parameters, which are not necessarily from the language $L$, but can be from an arbitrary set.

\begin{theorem}[Uniformity principle with parameters]
Let $L$ be a first-order signature, let $\mathscr{M}$ be a family of $L$-structures, let $X$ be an arbitrary set, and for each $x \in X$ and $n \in \NN$ let $\varphi_{x,n}$ be an $L$-formula. If, for every
$L$-structure $\mathcal{M}$ that is an ultraproduct of structures in $\mathscr{M}$ with respect to a non-principal ultrafilter,
\[
  \forall x \in X\; \exists n \in \NN\; (\mathcal{M} \vDash \varphi_{x,n}),
\]
then
\[
  \forall x \in X\; \exists N \in \NN\; \forall \mathcal{M} \in \mathscr{M}\; \exists n \leq N\; (\mathcal{M} \vDash \varphi_{x,n}).
\]
\end{theorem}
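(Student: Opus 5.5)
The parameterized version follows from the plain Uniformity principle by fixing the parameter first. I would fix an arbitrary $x \in X$ and set $\psi_n := \varphi_{x,n}$ for $n \in \NN$. Each $\psi_n$ is an $L$-formula. The sentence ``$\mathcal{M} \vDash \psi_n$'' refers to $x$ only through the choice of which formula $\psi_n$ is, and never through the structure $\mathcal{M}$. So this is exactly the data required by the previous theorem: a signature $L$, the family $\mathscr{M}$, and a sequence $(\psi_n)_{n\in\NN}$ of $L$-formulas.

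Next I would check the hypothesis of the Uniformity principle for $(\psi_n)$. Let $\mathcal{M}$ be any ultraproduct of structures in $\mathscr{M}$ with respect to a non-principal ultrafilter. By assumption $\forall x' \in X\, \exists n\, (\mathcal{M} \vDash \varphi_{x',n})$ holds. Specializing to $x' = x$ gives $\exists n\,(\mathcal{M} \vDash \psi_n)$, which is the required hypothesis.

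Applying the Uniformity principle then yields an $N \in \NN$ with $\forall \mathcal{M} \in \mathscr{M}\, \exists n \le N\, (\mathcal{M} \vDash \varphi_{x,n})$. Since $x$ was arbitrary, the conclusion holds for all $x \in X$. The order of quantifiers is correct: $N$ is allowed to depend on $x$, and it does so only through the sequence $(\psi_n)$.

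There is no real obstacle here. The one point to state explicitly is the bookkeeping. The hypothesis quantifies over ultraproducts outside the quantifier over $x$, so for each fixed $x$ it still holds for every ultraproduct. No uniformity in $x$ is claimed or needed. In particular, no ultraproduct has to be taken over $X$ itself, and no cardinality or saturation condition on $X$ arises.
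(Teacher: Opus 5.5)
Your proposal is correct and follows the paper's route: the paper presents this theorem as an immediate consequence of the unparameterized uniformity principle, and your argument is exactly that derivation. You fix $x \in X$, specialize the hypothesis to $x$ (valid in every ultraproduct), apply the previous theorem to the sequence $(\varphi_{x,n})_{n\in\NN}$, and let $N$ depend on $x$.
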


In other words, if one can show that every ultraproduct of structures in a given family satisfies a $\forall\exists$ statement, then one obtains a bound on the existential witness that is uniform across the entire family. An analogous result holds in the context of positive bounded logic and metric structures; for simplicity, we have stated only the first-order (discrete) case above.

Although the uniformity principle is strikingly general, its typical application to establish uniform bounds in concrete mathematics has a much more specific character. In practice:
\begin{enumerate}
  \item The parameter set $X$ is concrete, and it is sensible to talk about computable functions taking values in $\NN$ on $X$. Typically $X$ is $\NN$ or $\NN^\NN$.
  \item The assignment $(x,n) \mapsto \varphi_{x,n}$ is not arbitrary and
        can be formalised in some (higher-order) formal system.
  \item The structures in $\mathscr{M}$ are all models of a fixed theory
        $\mathcal{T}$.
  \item The only property of ultraproducts used to establish the premise of
        the uniformity principle, if any, is countable saturation.
\end{enumerate}
Thus, in typical applications, the uniformity principle is invoked after \emph{proving} a $\forall\exists$ theorem from the axioms of some fixed theory together with countable saturation. Given that both $X$ and the assignment $(x,n)\mapsto\varphi_{x,n}$ are concrete, as described in conditions (1)--(4), in typical applications, two natural questions arise:
\begin{enumerate}
  \item[(i)]  Can one extract an explicit function $f\colon X\to\NN$
              realising the bound $N$?
  \item[(ii)] Can one bound the complexity of such a function?
\end{enumerate}
The proof of the uniformity principle is a straightforward application of \L o\'s's theorem, but its non-constructive character makes it unclear how questions (i) and (ii) can be addressed for typical results obtained through it. Simmons and Towsner \cite{simmons2019proof} observed, however, that in many applications in algebra, one could bypass the uniformity principle entirely and instead directly analyse the proofs of the $\forall\exists$ theorems that appeal to it, following the proof mining perspective via the \emph{monotone functional interpretation} developed by the first author \cite{Kohlenbach2008}. This approach enabled the authors of \cite{simmons2019proof} to obtain explicit quantitative results for theorems in polynomial and differential polynomial ring theory (from \cite{van1984bounds} and \cite{harrison2012nonstandard}) that had previously been established through the uniformity principle.

We show that, although the model-theoretic approach is very general, its typical application, as described in conditions (1)--(4) above, can be treated proof-theoretically, via our formal approach of treating abstract metric spaces axiomatised in positive bounded logic and corresponding metatheorem, and in particular allows one to address questions (i) and (ii). Our proof-theoretic treatment of the uniformity principle is set in the context of metric structures axiomatised by positive bounded logic. For simplicity, we describe the first-order (discrete) structures: For a first-order signature $L$ and an $L$-theory $\mathcal{T}$, we construct a formal system $\mathcal{A}^\omega[L,\mathcal{T}]$ (extending a weakly extensional variant of Peano arithmetic in all finite types) and through an appropriate embedding of first-order formulas (and, more generally, positive bounded formulas) into these higher-type system, $\mathcal{A}^\omega[L,\mathcal{T}]$ contains the axioms of $\mathcal{T}$; furthermore, many instances of saturation can be formalised within this system via a schema we call $\mathrm{Sat}$.

These developments culminate in our main result: a proof-theoretic analogue of the uniformity principle. Informally, it asserts the following. Let $\Theta(x,n)$ be a suitable formalisation of a parameterised formula, corresponding to $\varphi_{x,n}$ above; in line with condition~(2), we assume
the assignment $(x,n)\mapsto\varphi_{x,n}$ is concrete enough to be captured within the formal system via $\Theta(x,n)$. In line with condition~(1), we take $X = \NN$ for this exposition (in the full theorem, $x$ ranges over a class of well-behaved higher types). Suppose that
\[
  \mathcal{A}^\omega[L,\mathcal{T}] + \mathrm{Sat}
  \;\vdash\;
  \forall x\;\exists n\;\Theta(x,n).
\]
Then, from the proof, one can extract a total computable function $\Phi\colon\NN\to\NN$ (computable by bar recursion) such that the following holds in every non-trivial model of $\mathcal{T}$: for all $x, x^{*}\in\NN$, if $x^{*} \geq x$, then
\[
  \exists n \leq \Phi(x^{*})\;\Theta(x,n).
\]
Section~\ref{sec:model} introduces the model-theoretic background. We begin in Section~\ref{subsec:metric} with an introduction to positive bounded logic and a proof of the uniformity principle. The remainder of Section~\ref{sec:model} is devoted to two applications of the uniformity
principle that are amenable to our formal perspective and bound extraction theorem, which we present in Section~\ref{sec:pt}.

\subsubsection*{First application: stable subsets of groups}

The first application, developed in Section~\ref{subsec:model:APP1}, comes
from group theory. Let $(G,\cdot,e)$ be a group and $k \ge 1$. A subset
$A \subseteq G$ is said to have the \emph{$k$-order property} if there exist
elements $a_1,\ldots,a_k$ and $b_1,\ldots,b_k$ in $G$ such that
\[
  \forall i,j \le k \,(a_i \cdot b_j \in A \leftrightarrow i \le j).
\]
$A$ is \emph{$k$-stable} if it does not have the $k$-order
property, and \emph{stable} if it is $k$-stable for some $k \ge 1$.

In \cite{conant2020group}, Conant established the following structural result
for stable subsets of finite groups, generalising earlier work of Terry and
Wolf \cite{terry2019stable}. Unlike Terry--Wolf, Conant's proof proceeds via
an ultraproduct argument and yields no effective bounds.%
\footnote{Terry and Wolf were able to give explicit bounds for their version
of Theorem~\ref{thrm:1}.}

\begin{theorem}[cf.\ Theorem~1.3 of \cite{conant2020group}]
\label{thrm:1}
For $k \ge 1$ and $\varepsilon > 0$, there exists $n = n(k,\varepsilon)$
with the following property. If $G$ is a finite group and $A \subseteq G$ is
$k$-stable, then there exist a subgroup $H \le G$ of index at most $n$ and a
subset $Y \subseteq G$ that is a union of cosets of $H$, such that
$|A \mathbin{\triangle} Y| \le \varepsilon|H|$.
\end{theorem}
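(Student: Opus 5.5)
We prove the theorem by contradiction, using the uniformity principle stated above, so that the combinatorial content moves into a single pseudofinite group. The language has the group operations and a unary predicate for $A$. The family $\mathscr{M}$ consists of the pairs $(G,A)$ with $G$ finite and $A$ $k$-stable. We also need normalized counting measure: either we add it as a continuous-logic predicate on definable sets, or we simply carry it along as a Loeb measure on the ultraproduct. The sentence $\varphi_n$ should say: there are a subgroup $H$ of index at most $n$ and a union $Y$ of cosets of $H$ with $|A\triangle Y|\le\varepsilon|H|$.

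This is not first order as written, so we restrict $H$ to a definable family. Concretely, $H$ will be the intersection of boundedly many stabilizer-type sets of the form $\{g : \mu(gA\triangle A)<\delta\}$, or a finite Boolean combination of translates of $A$. Then $n$ ranges over a countable list of such formula templates, and $\varphi_n$ asserts that some instance with parameters works. Suppose the theorem fails for some $k,\varepsilon$. Then for every $n$ there is a counterexample $(G_n,A_n)$. Take a nonprincipal ultraproduct $(G^*,A^*)$ with Loeb measure $\mu$; by Łoś's theorem, $A^*$ is $k$-stable, since the $k$-order property is first order.

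The core step, and the main obstacle, is to show that $G^*$ satisfies some $\varphi_n$, which gives the contradiction. For this we use stable group theory and the Hrushovski–Pillay style decomposition. Since $A^*$ is stable, its left-translates form a stable family. The Boolean algebra they generate therefore has only finitely many generic types up to a suitable finite-index subgroup, and $A^*$ is generically a union of cosets of a type-definable connected component $H^{0}$ of bounded index. Equivalently, stability gives that $\mu$ restricted to the algebra generated by translates of $A^*$ is generically stable and definable. Our concrete plan is to define $\mathrm{Stab}_\delta(A^*)=\{g:\mu(gA^*\triangle A^*)<\delta\}$. Stability combined with definability of the measure shows that $\mathrm{Stab}_\delta(A^*)$ is definable and contains a finite-index definable subgroup $H$; the point is that stable sets have no approximate-subgroup pathology, because the relation $\mu(xA\cap yA)$ is stable and so has finitely many "values up to $\delta$" on cosets. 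Averaging $A^*$ over cosets of $H$ and rounding then gives $Y$, a union of $H$-cosets. Stability also forces each coset to be almost contained in, or almost disjoint from, $A^*$, up to error $\varepsilon\mu(H)$. This uses the fact that a stable set has measure $0$ or $1$ relative to any generic coset, which follows from the local stability version of the Erdős–Hajnal/regularity phenomenon: in stable bipartite graphs, "irregular" pairs do not occur.

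Finally we transfer back. The witnesses $H,Y$ are definable, and the measure inequality is expressed via the continuous predicate, so by Łoś's theorem almost all $(G_n,A_n)$ satisfy $\varphi_N$ for a fixed $N$. This contradicts the choice of counterexamples with $n>N$. The delicate points are making the definable subgroup come out of a definable, rather than type-definable, object, and the $0$–$1$ measure dichotomy on cosets; these are where we would rely on the stable-group results behind Conant's theorem in \cite{conant2020group}.
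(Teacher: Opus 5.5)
\textbf{There is a genuine gap: the core step is not carried out, and the mechanism you propose for it does not work as stated.}

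Your outer frame is fine: argue by contradiction, pass to a nonprincipal ultraproduct with its Loeb measure, get $k$-stability of $A^*$ by \L o\'s, and transfer internal witnesses back. It is essentially the paper's frame too. The paper recovers this statement from Theorem~\ref{thrm:meta:main:strong}: take $f(n,m)\ge\log_2(n/\varepsilon)$, so $|A\triangle Y|\le\varepsilon|G|/n\le\varepsilon|H|$.

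For the transfer you need an \emph{internal} subgroup $H^*$ of finite index $N$ such that $A^*$ is a union of $H^*$-cosets up to a Loeb-\emph{null} set. Only then does \L o\'s give $|A_n\triangle Y_n|\le\varepsilon|G_n|/N\le\varepsilon|H_n|$ for almost all $n$. A per-coset error of $\varepsilon\mu(H)$, as you write, adds up to $\varepsilon|G|$. And any nonzero error makes the index depend on the error, which is circular. Your mechanism does not deliver such an $H^*$:
\begin{itemize}
\item $\mathrm{Stab}_\delta(A^*)$ is in general not a subgroup; one only has $\mathrm{Stab}_\delta\mathrm{Stab}_\delta\subseteq\mathrm{Stab}_{2\delta}$.
\item It is not internal either. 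It is the increasing union of the internal sets $\{g:{}^*\mu(gA^*\triangle A^*)\le\delta-1/j\}$, so by saturation it is internal only if the values of $g\mapsto\mu(gA^*\triangle A^*)$ omit an interval just below $\delta$.
\item A definable finite-index $H$ merely \emph{contained} in $\mathrm{Stab}_\delta(A^*)$ is useless: for $\delta>1$ one may take $H=G^*$.
\item The claimed $0$--$1$ law on generic cosets is false as stated. A subgroup $A$ of index $2$ is $2$-stable, yet it has relative measure $1/2$ on the generic coset $G$.
\end{itemize}
The dichotomy holds only for cosets of a subgroup contained in the null stabilizer $\{g:\mu(Ag\triangle A)=0\}$. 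It then follows from genericity of positive-measure stable sets (Lemma~\ref{lem:pos:gen}), as in the proof of Theorem~\ref{thrm:main:nonquant:strong}.

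What is missing is an argument that this null stabilizer has finite index and is internal. The paper supplies it in three steps:
\begin{itemize}
\item A gap lemma (Lemma~\ref{lem:bound:symm:nonquant}): positive values of $\mu(Ag\triangle Ah)$ are at least $2^{-m}$ for one fixed $m$. This uses stability of $x\in Ay\triangle Az$ together with genericity.
\item Haussler packing with $\mathrm{VC}<k$ (Lemma~\ref{lem:fin:orbit:nonquant}): the orbit of $[A]$ is finite. Hence $\mathrm{Stab}([A])$ has finite index and equals $\mathrm{Stab}_m(A)=\mathrm{Stab}_{m'}(A)$ for $m'>m$.
\item Lemma~\ref{lem:Stab:in:F}: this set therefore lies in $\mathcal F$.
\end{itemize}
This gap is also why the transferred formula is phrased robustly as $\mathrm{Stab}^<_m(A)=H=\mathrm{Stab}_m(A)$.

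If you prefer to stay with your $H^0$-picture, you need an extra step. The set of generic types containing $A^*$ is clopen in the space of generic types, which is homeomorphic to the profinite group $G^*/H^0$. So it is a union of cosets of an open, i.e.\ definable finite-index, subgroup. Then $A^*\triangle Y$ lies in no generic type, hence is non-generic and so null. As written, you defer exactly this point to ``the stable-group results behind'' the theorem itself, so the proposal is an outline whose central step is missing.
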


A similar noneffective argument in \cite{conant2020group} yields the following
strengthening.

\begin{theorem}[cf.\ Remark~3.2 of \cite{conant2020group}]
\label{thrm:meta:main}
For $k \ge 1$ and $f:\NN \to \NN$, there exists $n = n(k,f)$ with the
following property. If $G$ is a finite group and $A \subseteq G$ is $k$-stable,
then there exist a subgroup $H \le G$ of index at most $n$ and a subset
$Y \subseteq G$ that is a union of cosets of $H$, such that
$|A \mathbin{\triangle} Y| \le 2^{-f(n)}|G|$.
\end{theorem}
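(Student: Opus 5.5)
We argue by contradiction, combining an ultraproduct with the uniformity principle above; the substantive content is a pseudofinite version of Conant's structure theorem. Fix $k$ and $f$, and suppose that no $n$ works. Then for every $m$ there is a finite group $G_m$ with a $k$-stable $A_m\subseteq G_m$ such that, for every subgroup $H\le G_m$ of index $\le m$ and every union $Y$ of cosets of $H$, we have $|A_m\triangle Y|>2^{-f([G_m:H])}|G_m|$. Here we use the natural reading of the statement, in which the exponent $f(n)$ refers to the actual index $[G:H]$. The version with $f(N)$ for a fixed bound $N$ follows after replacing $f$ by its monotone hull $\tilde f(n)=\max_{i\le n}f(i)$.

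Let $\mathcal U$ be a nonprincipal ultrafilter on $\NN$ and form $G=\prod_{\mathcal U}G_m$ and $A=\prod_{\mathcal U}A_m$, equipped with the pseudofinite (Loeb) counting measure $\mu$, the standard part of the normalised counting measures. By \L o\'s's theorem, $A$ is $k$-stable in $G$. We then invoke the stable analysis of pseudofinite groups, as in Conant's proof of Theorem~\ref{thrm:1}. Stability of the formula $x\cdot y\in A$ yields a type-definable connected component $G^{00}_A=\bigcap_{i}H_i$, where $H_1\ge H_2\ge\cdots$ is a descending chain of definable (internal) subgroups of finite index. Moreover, $A$ is, up to a $\mu$-null set, a union of cosets of $G^{00}_A$; this uses generic stability and the fact that stable sets are ``almost periodic''. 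Hence for every $\delta>0$ there is an $i$ and a union $Y$ of cosets of $H_i$ such that $\mu(A\triangle Y)<\delta$.

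Given this, choose $i$ such that $\mu(A\triangle Y_i)<2^{-f([G:H_i])}$ for some union $Y_i$ of $H_i$-cosets. Pick $i$ by iterating. Set $n_j=[G:H_j]$, which is finite and standard since $H_j$ is internal of finite index. We need $\mu(A\triangle Y_j)\to0$ faster than $2^{-f(n_j)}$ along some subsequence. This is the main obstacle, because the indices $n_j$ grow as the approximation improves. The resolution is that $A$ is \emph{exactly} a union of $G^{00}_A$-cosets modulo a null set. For a fixed $j$, set $\eta=2^{-f(n_j)-1}$. Since $\mu$ is countably additive and $\bigcap_i H_i=G^{00}_A$, the error relative to the chain tends to $0$. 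It therefore suffices that, once $A$ is approximated by a union of $H_{j'}$-cosets, the relevant measure drops below $\eta$ at a finite stage $j'$. To close the loop, I would argue instead that for stable $A$ the chain stabilises modulo measure: the set of ``generic'' cosets of $H_i$ meeting $A$ in measure strictly between $0$ and $\mu(H_i)$ must be empty for large $i$, by stability (a standard Ramsey/order-property argument). This gives some finite $i$ with $\mu(A\triangle Y_i)=0$. Then $\mu(A\triangle Y_i)=0<2^{-f([G:H_i])}$, and the right-hand side is a positive standard number.

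Finally we transfer back. Write $H_i=\prod_{\mathcal U}H_{i,m}$ and $Y_i=\prod_{\mathcal U}Y_{i,m}$. By \L o\'s's theorem, for $\mathcal U$-many $m$ the set $H_{i,m}$ is a subgroup of $G_m$ of index exactly $n=[G:H_i]$, and $Y_{i,m}$ is a union of its cosets. Moreover $|A_m\triangle Y_{i,m}|/|G_m|<2^{-f(n)}$ for $\mathcal U$-many $m$, since the standard part of this ratio is $0$. Taking such an $m\ge n$ contradicts the choice of $G_m$. The key step needing care is the exact null-symmetric-difference claim. If it fails, I would fall back on a compactness argument applying the uniformity principle to the formulas $\varphi_n$ asserting ``there exist an $H$ of index $n$ and a $Y$ with error $<2^{-f(n)}$''. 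The remaining task would then be to check that every ultraproduct satisfies some $\varphi_n$, via the countable-saturation argument sketched above.
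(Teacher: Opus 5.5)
Your overall architecture matches the paper's. The paper applies the uniformity principle (Corollary~\ref{cor:main:model:unif:sat}) to the formulas $\varphi_{f,n,m}$, which amounts to your ultraproduct-plus-\L o\'s argument, and your final transfer step is fine. The gap is the step everything rests on. You need that in the ultraproduct there is an \emph{internal} subgroup $H$ of standard finite index and a union $Y$ of its cosets with $\mu(A\triangle Y)=0$, or at least $\mu(A\triangle Y)<2^{-f([G:H])}$. You do not prove this.

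The fact you actually invoke, that $A$ is a.e.\ a union of cosets of $G^{00}_A=\bigcap_i H_i$, only says that $A$ corresponds to a measurable subset of the profinite group $G/G^{00}_A$. Such a set need not agree a.e.\ with a union of cosets of any open subgroup; a closed nowhere dense set of positive Haar measure is a counterexample. This is exactly the circularity with the growing indices $n_j$ that you ran into. Your sentence that intermediate-measure cosets of $H_i$ ``must be empty for large $i$, by a standard Ramsey/order-property argument'' is an assertion, not an argument. Your fallback is not an independent route either: applying the uniformity principle requires showing that every saturated model witnesses some $\neg\varphi_n$, which is the same claim.

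What is missing is a reason why the measure stabiliser $\mathrm{Stab}([A])=\{g:\mu(Ag\triangle A)=0\}$ has finite index and is internal, and why $A$ is a.e.\ a union of its cosets. The paper proves exactly this as Theorem~\ref{thrm:main:nonquant:strong}, for any group with a bi-invariant finitely additive probability measure. That covers your ultraproduct, taking $\mathcal F$ to be the internal sets. The proof has three steps:
\begin{enumerate}
\item The gap Lemma~\ref{lem:bound:symm:nonquant}: there is a fixed $m$ such that $\mu(Ag\triangle Ah)>0$ forces $\mu(Ag\triangle Ah)>2^{-m}$. It is proved by building a ladder $(g_i,h_i)$ from the stability of $x\in Ay\triangle Az$ (Lemma~\ref{lem:stable:symm}) and the genericity of positive-measure stable sets (Lemma~\ref{lem:pos:gen}).
\item Finiteness of the orbit of $[A]$, via the VC bound and the packing lemma (Lemma~\ref{lem:fin:orbit:nonquant}). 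Hence $\mathrm{Stab}([A])=\mathrm{Stab}_m(A)$ has finite index, and it lies in $\mathcal F$ by Lemma~\ref{lem:Stab:in:F}.
\item A right-genericity argument showing that each coset $gH$ has $\mu(gH\cap A)=0$ or $\mu(gH\setminus A)=0$.
\end{enumerate}
If you prefer to stay in the local-stability picture, you would need the image of $A$ in $G/G^{00}_A$ to be clopen, not merely measurable; for example, this could come from the correspondence between generic types and cosets.

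Separately, your remark that the ``fixed bound'' reading follows from the index reading via $\tilde f$ is wrong, and that reading is in fact false. Take $k\ge 2$, $f(n)=2n$, any $N$, a prime $p\in(N,2N]$, $G=\mathbb{Z}/p$ and $A=\{0\}$, which is $2$-stable. The only subgroup of index $\le N$ is $G$, so every admissible $Y$ gives error at least $1\ge |G|/(2N)>2^{-2N}|G|$. The meaningful statement, and what the paper's argument yields, is: there is some $n\le N$ and $H$ with $[G:H]\le n$ and $|A\triangle Y|\le 2^{-f(n)}|G|$. This is equivalent to your index reading.
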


Applying the uniformity principle for positive bounded logic, we obtain the
following strengthening of Theorem~\ref{thrm:meta:main}, which passes from
finite groups to groups equipped with a bi-invariant finitely additive
probability measure.

\begin{theorem}
\label{thrm:meta:main:strong}
For $k \ge 1$ and $f:\NN \times \NN \to \NN$, there exist $n = n(k,f)$ and
$M = M(k,f)$ with the following property. Let $G$ be a group, $A \subseteq G$
a $k$-stable subset, $\mathcal{F}$ a bi-invariant Boolean algebra on $G$
containing $A$, and $\mu$ a bi-invariant finitely additive probability measure
on $\mathcal{F}$. Then there exist $m \le M$ and $H \in \mathcal{F}$ such that
\[
  \mathrm{Stab}_m^{<}(A) = H = \mathrm{Stab}_m(A),\mbox{ } H \le G,\mbox{ } [G:H] \le n,
\]
and there exists a subset $Y \subseteq G$ that is a union of cosets of $H$,
such that $\mu(A \mathbin{\triangle} Y) \le 2^{-f(n,m)}$, where
\[
  \mathrm{Stab}_m^{<}(A) := \{ g \in G \mid \mu(Ag \mathbin{\triangle} A) < 2^{-m} \} \mbox{ and }
  \mathrm{Stab}_m(A) := \{ g \in G \mid \mu(Ag \mathbin{\triangle} A) \le 2^{-m} \}.
\]
\end{theorem}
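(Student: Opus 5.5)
We argue by contradiction, combining the uniformity principle (in its positive bounded / continuous-logic form) with a structural analysis of a single "nonstandard" model. Consider the class $\mathscr{M}$ of metric structures $(G,\mathcal{F},\mu,A)$. Each is viewed as a two-sorted structure: a discrete sort for $G$ with multiplication and inverse, a measure-algebra sort for $\mathcal{F}$ with metric $d(X,Y)=\mu(X\triangle Y)$, the left and right translation actions of $G$ on $\mathcal{F}$, the constant $A$, and predicates recording membership $g\in X$. The hypotheses are $k$-stability of $A$, bi-invariance of $\mathcal{F}$ and $\mu$, and finite additivity. All of these are expressible by universal positive bounded axioms, so they are preserved under ultraproducts. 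For each pair $(n,m)$, the conclusion "$\mathrm{Stab}^<_m(A)=\mathrm{Stab}_m(A)=H\in\mathcal{F}$, $H\le G$, $[G:H]\le n$, and some union $Y$ of cosets of $H$ satisfies $\mu(A\triangle Y)\le 2^{-f(n,m)}$" is a formula $\varphi_{n,m}$ whose quantifiers range over finitely many group elements (the coset representatives) and one measurable set. It therefore fits the format of the uniformity principle with $\exists (n,m)$ as the unbounded witness. Assuming the theorem fails for $(k,f)$, we obtain counterexamples $\mathcal{M}_N$ refuting every $\varphi_{n,m}$ with $n,m\le N$. By Theorem (uniformity principle) it then suffices to show that every ultraproduct $\mathcal{M}$ satisfies some $\varphi_{n,m}$.

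In the ultraproduct, or more generally in any $\aleph_1$-saturated model, we run Conant's argument. First, stability together with finite additivity gives the standard fact that $\mathrm{Stab}_m(A)$ has finite index for every $m$. Concretely, if $g_1,\dots,g_r$ are pairwise far apart in the sense $\mu(Ag_i\triangle Ag_j)>2^{-m}$, then a Ramsey-plus-$k$-stability argument bounds $r$ by some $r(k,m)$. This is the quantitative heart of the proof, and we would take it from Hrushovski--Pillay-style stable group arguments. Second, $\mathrm{Stab}_m(A)\subseteq \mathrm{Stab}^<_{m-1}(A)$, and the sets $\mathrm{Stab}_{m}(A)$ are symmetric and approximately closed under products: $\mathrm{Stab}_m\cdot\mathrm{Stab}_m\subseteq \mathrm{Stab}_{m-1}$ by bi-invariance and the triangle inequality. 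Since the indices are bounded by $r(k,m)$, stability of the corresponding sequence of indices would force $\mathrm{Stab}^<_m=\mathrm{Stab}_m$ to be a genuine subgroup for some $m$. Here saturation is used to replace "approximately" by exact statements. Third, Conant's stable regularity argument shows that $A$ is $\mu$-approximated by a union of cosets of this subgroup. In the saturated model the error can be made arbitrarily small, in particular $\le 2^{-f(n,m)}$. Moreover $H$ is definable from $A$, which is why $H\in\mathcal{F}$.

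The main obstacle is the second step: producing a single $m$ at which $\mathrm{Stab}^<_m(A)$ and $\mathrm{Stab}_m(A)$ coincide and form a subgroup, while keeping the index $n$ and the approximation error coherent with $f(n,m)$. My first attempt would be a pigeonhole or "metastability" iteration. Since the indices $[G:\mathrm{Stab}_m]$ are bounded by $r(k,m)$, countable saturation in $\mathcal{M}$ yields a limiting type-definable connected component $G^{00}_A=\bigcap_m \mathrm{Stab}_m(A)$ of bounded index. Stability implies this intersection is in fact an intersection of a finite chain, i.e.\ it stabilises. Hence for large $m$ the sets $\mathrm{Stab}_m(A)$ are equal to a fixed finite-index subgroup $H$. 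The equality $\mathrm{Stab}^<_m=\mathrm{Stab}_m$ then follows because the value set $\{\mu(Ag\triangle A)\}$ takes only finitely many values on the finitely many cosets of $H$, so we can choose $m$ avoiding them. Finally we choose $Y$ as the union of the cosets $gH$ on which $A$ has density above $1/2$. Stability then shows the densities lie in $\{0,1\}$ up to infinitesimal error, which gives $\mu(A\triangle Y)$ infinitesimal and hence $\le 2^{-f(n,m)}$. This contradicts the choice of the $\mathcal{M}_N$.
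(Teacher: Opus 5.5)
Your outer architecture (the uniformity principle, then a structural analysis of one saturated model) is the same as the paper's. The gap is at the step you yourself flag as the main obstacle. You assert that $\bigcap_m \mathrm{Stab}_m(A)$ ``stabilises by stability'', but none of your ingredients gives this:
\begin{itemize}
\item your covering bounds $r(k,m)$ grow with $m$, so there is no bounded sequence of indices to stabilise;
\item a bounded-index intersection of countably many generic sets need not have finite index (the infinitesimals in a saturated extension of the circle group are an example);
\item Baldwin--Saxl-type chain conditions do not apply, because the $\mathrm{Stab}_m(A)$ are not subgroups, they are defined through $\mu$, and only the relation $xy\in A$ is stable, not the ambient structure.
\end{itemize}

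The missing idea is the paper's gap lemma (Lemma~\ref{lem:bound:symm:nonquant}): there is $m$ such that $\mu(Ag\triangle Ah)>0\Rightarrow\mu(Ag\triangle Ah)>2^{-m}$. Its proof uses stability a second time, for the relation $x\in Ay\triangle Az$, which is $s$-stable by a Ramsey argument (Lemma~\ref{lem:stable:symm}). It also uses the fact that stable sets of positive measure are generic (Lemma~\ref{lem:pos:gen}). Suppose arbitrarily small positive values $\mu(Ag\triangle Ah)$ occur. Then one right-translates such a small set so that it meets the cell $\bigcap_{i\le n}(Ag_i\triangle Ah_i)$ in positive measure, while the other cells lose at most its small measure. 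Iterating produces a ladder of length $s$, contradicting $s$-stability. Only with this uniform separation does packing (VC-dimension $<k$ plus Theorem~\ref{thrm:packing}) show that the orbit of $[A]$ is finite (Lemma~\ref{lem:fin:orbit:nonquant}). That in turn gives that $\mathrm{Stab}([A])=\{g:\mu(Ag\triangle A)=0\}$ has finite index and equals $\mathrm{Stab}_m(A)=\mathrm{Stab}^<_m(A)$.

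Several further steps are also unsupported.
\begin{itemize}
\item The inference ``$H$ is definable from $A$, hence $H\in\mathcal{F}$'' fails in this generality. Here $\mathcal{F}$ is an arbitrary bi-invariant Boolean algebra and $\mathrm{Stab}_m(A)$ is cut out by $\mu$. The paper needs Lemma~\ref{lem:Stab:in:F}, a VC-theoretic argument whose hypothesis, $\mathrm{Stab}_n(A)=\mathrm{Stab}_m(A)$ for some $n>m$, is again the gap.
\item The density dichotomy on cosets is exactly where genericity is used, and ``stability then shows'' does not supply it. If $\mu(gH\cap A)$ and $\mu(gH\cap A^c)$ were both positive, right genericity of $H\cap g^{-1}A$ gives $h$ with $\mu\bigl((H\cap g^{-1}A)h\cap H\cap g^{-1}A^c\bigr)>0$. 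This forces $h\in H$ and $\mu(Ah\cap A^c)>0$, contradicting $h\in\mathrm{Stab}([A])$.
\item Your metric $\mu(X\triangle Y)$ on the $\mathcal{F}$-sort is not admissible. Membership is not uniformly continuous for it, so you do not obtain a uniform class of Henson structures. Passing to null-set classes also destroys the pointwise assertions $H\le G$ and ``$Y$ is a union of cosets of $H$''.
\end{itemize}
The paper instead uses discrete metrics on both sorts (pre-Loeb structures). It encodes the failure of the conclusion as a positive bounded sentence $\varphi_{f,n,m}$ and applies Corollary~\ref{cor:main:model:unif:sat}.
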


Our proof of Theorem~\ref{thrm:meta:main:strong} is inspired by the
quantitative finitary arguments of Conant~\cite{conant2021quantitative}, who
obtained explicit bounds for Theorem~\ref{thrm:1}. Following the formal
perspective of Section~\ref{sec:pt}, we extract explicit bounds for
Theorem~\ref{thrm:meta:main:strong} in Section~\ref{sec:quant:main}. We
stress that, although the formal perspective of Section~\ref{sec:pt} was
instrumental in guiding the quantitative arguments, the proofs and lemmas in
Section~\ref{sec:quant:main} make no reference to proof theory or model
theory, and that section can be read as a purely mathematical result
independently of Section~\ref{sec:pt}.

\subsubsection*{Second application: the metastable dominated convergence theorem}

The second application, developed in Section~\ref{subsec:model:APP2}, comes
from probability theory. Recall the Lebesgue dominated convergence theorem
for doubly indexed sequences.

\begin{theorem}
\label{thrm:DCT}
Let $(\Omega,\mathcal{F},\mu)$ be a probability space and let
$f_{n,m}\colon\Omega\to[0,1]$ be measurable for each $n,m \in \NN$. If
$\lim_{n,m\to\infty}f_{n,m}(\omega) = 0$ for almost every $\omega\in\Omega$,
then $\lim_{n,m\to\infty}\int_\Omega f_{n,m}\,d\mu = 0$.
\end{theorem}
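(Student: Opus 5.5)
We prove Theorem~\ref{thrm:DCT} by contradiction, using Fatou's lemma applied to the pointwise tail suprema. Since $\mu$ is a probability measure and the $f_{n,m}$ take values in $[0,1]$, all functions are dominated by the integrable constant $1$.

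For $N\in\NN$ set
\[
  g_N(\omega):=\sup_{n,m\ge N} f_{n,m}(\omega).
\]
Each $g_N$ is a countable supremum of measurable functions, so it is measurable with values in $[0,1]$. The sequence $(g_N)$ is nonincreasing in $N$. The hypothesis $\lim_{n,m\to\infty}f_{n,m}(\omega)=0$ says exactly that $g_N(\omega)\to 0$ as $N\to\infty$. Hence $g_N\to 0$ pointwise outside a null set.

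Apply Fatou's lemma to the nonnegative functions $1-g_N$. This gives
\[
  \int 1\,d\mu=\int\liminf_N(1-g_N)\,d\mu\le\liminf_N\int(1-g_N)\,d\mu .
\]
It follows that $\limsup_N\int g_N\,d\mu\le 0$, so $\int g_N\,d\mu\to 0$. Alternatively, since the $g_N$ are monotone and bounded, the monotone convergence theorem applied to $1-g_N$ gives the same conclusion.

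Finally, for all $n,m\ge N$ we have $0\le f_{n,m}\le g_N$ pointwise. Therefore
\[
  0\le\int f_{n,m}\,d\mu\le\int g_N\,d\mu .
\]
Given $\varepsilon>0$, choose $N$ with $\int g_N\,d\mu<\varepsilon$. Then $\int f_{n,m}\,d\mu<\varepsilon$ for all $n,m\ge N$, which is the claimed double limit.

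The only point needing care is that $g_N$ is measurable even though it is a supremum over a doubly indexed family. This holds because the index set $\{(n,m):n,m\ge N\}$ is countable. No uniformity in the rate of convergence is claimed or needed here. The metastable, quantitative version that is of interest later requires the more delicate proof-theoretic analysis; the qualitative statement itself is routine.
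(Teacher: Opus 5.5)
Your proof is correct. For nonnegative $f_{n,m}$, the joint limit $\lim_{n,m\to\infty}f_{n,m}(\omega)=0$ is exactly the statement $g_N(\omega)\downarrow 0$. Fatou's lemma (or monotone convergence) applied to $1-g_N$ then gives $\int g_N\,d\mu\to 0$. Finally, the pointwise bound $f_{n,m}\le g_N$ for $n,m\ge N$ transfers this to the integrals.

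The paper gives no proof of this statement. It recalls it as the classical Lebesgue dominated convergence theorem and uses it as a black box in Section~\ref{subsec:model:APP2}. Your reduction of the doubly indexed case to the single monotone sequence $(g_N)$ is a standard way to supply the omitted argument.

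One point worth making explicit is that your argument genuinely needs countable additivity of $\mu$, through Fatou or monotone convergence. The statement fails for merely finitely additive probability measures. For example, take the $\{0,1\}$-valued measure on $\mathcal{P}(\NN)$ given by a nonprincipal ultrafilter, and let $f_{n,m}$ be the indicator of $\{\omega : \omega\ge\max(n,m)\}$. Then $f_{n,m}\to 0$ everywhere, yet every integral equals $1$. This is why the paper first passes from the finitely additive external measure $[\mu]$ to its $\sigma$-additive Loeb extension $[\mu]_L$ (Theorem~\ref{thrm:extensionLoeb}) before invoking the theorem.
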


In \cite{tao:08:ergodic}, Tao studied quantitative forms of this result. The
most natural quantitative interpretation of convergence for a doubly indexed
sequence $\{x_{n,m}\}$ is a \emph{rate of convergence}, namely a function
$\phi:\QQ^+\to\NN$ satisfying
\[
  \forall \varepsilon \in \QQ^+\,\forall i,j \ge \phi(\varepsilon)\,(|x_{i,j}| \le \varepsilon).
\]
For many convergence theorems in analysis, however, no uniform rate of
convergence exists. A well-known example is the monotone convergence theorem:
monotone sequences in $[0,1]$ can converge arbitrarily slowly, and this
obstruction persists even for computable sequences of rationals
\cite{specker:49:sequence}. In such situations, one can often obtain uniform bounds on the equivalent
\emph{metastable} reformulation:
\begin{equation}
\label{eqn:metastable}
  \forall \varepsilon \in \QQ^+\,\forall g:\NN\to\NN\,\exists n\,
  \forall i,j\in[n;\,n+g(n)]\,(|x_{i,j}|\le\varepsilon).
\end{equation}
A \emph{rate of metastability} is a functional $\Phi$ bounding $n$
in~\eqref{eqn:metastable}:
\[
  \forall \varepsilon \in \QQ^+\,\forall g:\NN\to\NN\,\exists n \le \Phi(\varepsilon,g)\,
  \forall i,j\in[n;\,n+g(n)]\,(|x_{i,j}|\le\varepsilon).
\]
The term \emph{metastability} originates with Tao~\cite{tao:07:softanalysis}.
Rates of metastability were computed earlier by the first
author~\cite{kohlenbach:meta:first,kohlenbach2004bounds} using tools from
proof theory, prior to Tao coining the term (see the survey
\cite{kohlenbach:19:nonlinear:icm} for a historical account of metastability
and the proof-mining programme as of 2018). From a logical standpoint,
metastability~\eqref{eqn:metastable} is the Herbrand normal form of
convergence, and a rate of metastability provides a solution to
Kreisel's no-counterexample interpretation of the convergence
statement~\cite{kreisel:51:proofinterpretation:part1,
kreisel:52:proofinterpretation:part2}. Logical metatheorems of the kind
presented in this paper guarantee the extractibility of uniform rates of
metastability for large classes of proofs.

Tao showed that rates of metastability for the convergence of the integrals
in the dominated convergence theorem exist and depend only on a suitable
quantitative reading of the almost sure convergence of the integrands, namely
a functional $\Psi$ satisfying
\[
  \forall \varepsilon \in \QQ^+\,\forall g:\NN\to\NN\,
  \Bigl(\mu\bigl(\{\omega\in\Omega\mid \exists n \le \Psi(\varepsilon,g)\,
  \forall i,j\in[n;\,n+g(n)]\,(f_{i,j}(\omega)\le\varepsilon)\}\bigr)=1\Bigr).
\]
Avigad, Dean, and Rute~\cite{Avigad-Dean-Rute:Dominated:12} subsequently
extended Tao's result in two directions: they weakened the quantitative
assumption on almost sure convergence, and they gave an explicit construction
of the functional transforming the quantitative premise into a rate of
metastability for the integrals.

\begin{theorem}[Avigad--Dean--Rute--Tao metastable dominated convergence theorem]
\label{thrm:avigad:dominated}
For every functional $\Psi\colon\QQ^+\times\QQ^+\times\NN^\NN\to\NN$ there
exists a functional $\Phi\colon\QQ^+\times\NN^\NN\to\NN$ such that the
following holds. Let $(\Omega,\mathcal{F},\mu)$ be a probability space and
$f_{n,m}\colon\Omega\to[0,1]$ measurable for each $n,m\in\NN$. If
\[
  \forall \varepsilon,\lambda \in \QQ^+\,\forall g:\NN\to\NN\,
  \Bigl(\mu\bigl(\{\omega\in\Omega\mid \exists n \le \Psi(\varepsilon,\lambda,g)\,
  \forall i,j\in[n;\,n+g(n)]\,(f_{i,j}(\omega)\le\varepsilon)\}\bigr)
  > 1-\lambda\Bigr),
\]
then $\Phi$ is a rate of metastability for $\bigl\{\int_\Omega
f_{n,m}\,d\mu\bigr\}$, i.e.\
\[
  \forall \varepsilon \in \QQ^+\,\forall g:\NN\to\NN\,
  \exists n \le \Phi(\varepsilon,g)\,\forall i,j\in[n;\,n+g(n)]\,
  \Bigl(\Bigl|\int_\Omega f_{i,j}\,d\mu\Bigr|\le\varepsilon\Bigr).
\]
\end{theorem}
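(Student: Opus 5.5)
The plan is to prove the theorem directly by a combinatorial argument, following Tao and Avigad--Dean--Rute, rather than through the uniformity principle. Fix $\varepsilon$ and $g$, and put $\varepsilon' := \varepsilon/2$ and $\lambda := \varepsilon/2$. Since $0 \le f_{i,j} \le 1$, it suffices to find $n$ together with a set $E$ of measure $> 1-\lambda$ on which $f_{i,j}\le\varepsilon'$ for all $i,j\in[n;n+g(n)]$. Then for those $i,j$,
\[
\int_\Omega f_{i,j}\,d\mu \le \varepsilon'\mu(E) + \mu(\Omega\setminus E) \le \varepsilon'+\lambda=\varepsilon .
\]

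The difficulty is that the hypothesis gives, for each $\omega$ in a large set, an $n_\omega \le \Psi(\varepsilon',\lambda',g')$ that depends on $\omega$. What we need is one $n$ that works on a set of large measure. Two ideas handle this: an iterated ``counterfunction'' $g'$, and pigeonhole on the finitely many candidate values of $n_\omega$.

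First I build a finite sequence $m_0 < m_1 < \dots < m_K$. Set $m_0 := 0$ and $m_{l+1} := m_l + \tilde g(m_l) + 1$, where $\tilde g(m) := \max_{k\le m}(k+g(k))$. I then choose $g'$ so that any window $[n;n+g'(n)]$ swallows an entire block of this form. Concretely, I take $g'(n)$ so large that $n+g'(n)$ exceeds $m_{l+1}+\tilde g(m_{l+1})$ for the first $l$ with $m_l\ge n$.

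Next I apply the hypothesis with $g'$ and some $\lambda'$ to be fixed later. This gives a set $E'$ of measure $>1-\lambda'$. For each $\omega\in E'$ there is $n_\omega\le N:=\Psi(\varepsilon',\lambda',g')$ such that $f_{i,j}(\omega)\le\varepsilon'$ on $[n_\omega;n_\omega+g'(n_\omega)]$. By the choice of $g'$, each such $\omega$ is ``good'' on a whole window $[m_l;m_l+g(m_l)]$ for some $l$ ranging over the finitely many $l$ with $m_l$ near a value $\le N$. There are at most $N+2$ such $l$, so pigeonhole gives a single $l$ for which the set of $\omega$ good on $[m_l;m_l+g(m_l)]$ has measure at least $(1-\lambda')/(N+2)$.

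This pigeonhole bound is too weak, because we need measure $>1-\lambda$. That is the main obstacle. The standard fix, due to Avigad--Dean--Rute, is to iterate: apply the hypothesis repeatedly along the sequence $m_l$, each time with a tighter $\lambda'$. At each stage the sets $B_l$ of points that are not good on block $l$ either all have measure $<\lambda$ for some $l$, or they have measure $\ge\lambda$ for many consecutive $l$. In the second case, a point of $E'$ would have to be good on some block and hence avoid some $B_l$, which bounds the number of bad blocks by roughly $1/\lambda$ times an overlap argument.

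So I expect the right move is to take $g'$ making the hypothesis window cover $\lceil 1/\lambda\rceil+1$ consecutive blocks, and to choose $\lambda'$ small. Fubini/linearity then gives
\[
\sum_l \mu(E'\cap B_l^c) \ge \mu(E'),
\]
and it forces some block among the first $\approx N$ to satisfy $\mu(B_l)<\lambda$. Then $\Phi(\varepsilon,g)$ is $m_L$ for $L$ determined by $N$ and $1/\lambda$. The only nonconstructive content is the choice of $l$, which the bound $\le m_L$ absorbs. Getting the counting in this last step exactly right, so that $\mu(B_l)<\lambda$ is forced for some $l$, is where I expect the real work to lie. If the direct counting fails, I would iterate the hypothesis $\lceil 1/\lambda\rceil$ times, nesting the functionals.
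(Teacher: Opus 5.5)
Your reduction is fine: it suffices to find one $n$ and a set of measure $>1-\lambda$ on which $f_{i,j}\le\varepsilon/2$ for all $i,j\in[n;n+g(n)]$. The gap is that the step producing such an $n$ is never carried out, and the mechanism you propose cannot carry it out.

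\textbf{Why the averaging step fails.} Suppose each good point's $g'$-window covers $K+1$ consecutive blocks, starting at one of the first $L_N+1$ blocks (those with $m_l$ up to about $N$). Then linearity only gives $\max_l\mu(B_l^c)\ge (K+1)\mu(E')/(L_N+K+1)$. This is small as soon as $N=\Psi(\varepsilon',\lambda',g')$ is large compared with $K\approx 1/\lambda$. You cannot arrange $K\gtrsim L_N$, because $g'$ must be fixed before $N$ is known.

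\textbf{Why finite nesting cannot rescue it.} No bound determined by finitely many (possibly adaptively chosen) values of $\Psi$ can be correct, and this includes your $\lceil 1/\lambda\rceil$-fold nesting. Here is an adversary.
\begin{itemize}
\item Let $\Omega=[0,1)$ with Lebesgue measure, and take $\varepsilon<1$.
\item For the $t$-th functional $g'_t$ you query, pick $p^t_1<\dots<p^t_{M_t}$ with $p^t_{i+1}>p^t_i+g'_t(p^t_i)$ and $M_t\ge 2^{t+1}/(1-\varepsilon)$. The windows $W^t_i:=[p^t_i;p^t_i+g'_t(p^t_i)]$ are then pairwise disjoint.
\item Answer the query with $N_t:=p^t_{M_t}$, and assign $W^t_i$ to the points of $[\frac{i-1}{M_t},\frac{i}{M_t})$.
\item Set $f_{n,n}(\omega):=1$ if $n\le H_0$ and $n$ lies in none of the windows assigned to $\omega$. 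Let all other $f_{i,j}$ vanish.
\item Let $\Psi:=H_0+1$ at every argument you did not query.
\end{itemize}
Then the hypothesis holds with measure $1$ at every $(\varepsilon,\lambda,g)$. On the other hand, $\int f_{n,n}\,d\mu\ge 1-\sum_t 1/M_t>\varepsilon$ for every $n\le H_0$. Since your bound depends only on $N_1,\dots,N_r$, you can choose $H_0$ larger than it, so the bound fails.

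\textbf{What is missing, and how the paper proceeds.} The number of applications of the hypothesis must depend on the answers it returns, so a fixed finite iteration is the wrong shape of argument. The paper avoids explicit combinatorics altogether, as follows.
\begin{enumerate}
\item It axiomatises the setting in positive bounded logic, encoding the hypothesis via the outer measure: some $A\in\mathcal{F}$ with $\mu(A)\le\lambda$ covers the points that are bad for every $n\le\Psi(\dots)$.
\item It passes to an $\aleph_1$-saturated model. Its Loeb extension is a $\sigma$-additive probability space, and there $\Psi$ becomes a rate of pointwise metastability, hence gives almost sure convergence.
\item It applies the classical dominated convergence theorem in that space.
\item It obtains the uniform $\Phi$ from the uniformity principle (Corollary~\ref{cor:main:model:unif:sat}).
\end{enumerate}
This argument is non-constructive and works for arbitrary $\Psi$. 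An explicit proof in your style would need an unbounded, answer-dependent recursion. An example is Avigad--Dean--Rute's recursion along the tree of unsecured sequences, a form of bar recursion, which is available when $\Psi$ is continuous.
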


\begin{remark}
A functional $\Psi\colon\QQ^+\times\QQ^+\times\NN^\NN\to\NN$ satisfying the
premise above is called a \emph{rate of pointwise metastable convergence};
possessing such a rate is equivalent to almost sure convergence
(cf.\ Theorem~3.2 of \cite{neri-powell:pp:martingale}).
\end{remark}

In \cite{DI2017}, essentially an application of the uniformity principle for positive bounded logic yields a  weaker version of
Theorem~\ref{thrm:avigad:dominated}, under the stronger assumption that the
rate of metastability is uniform across the entire sample space, i.e.\ that
there exists $\Phi$ satisfying
\[
  \forall \varepsilon \in \QQ^+\,\forall g:\NN\to\NN\,\forall\omega\in\Omega\,
  \exists n \le \Phi(\varepsilon,g)\,\forall i,j\in[n;\,n+g(n)]\,
  (f_{i,j}(\omega)\le\varepsilon).
\]
We show in Section~\ref{subsec:model:APP2} that the full Theorem~\ref{thrm:avigad:dominated} follows from the uniformity principle for
positive bounded logic. The main challenge is to formalise the assumption of pointwise metastable convergence in positive bounded logic. To do this, we crucially utilise the \emph{outer measure} approach from the proof-theoretic work of the second author, Oliva, and Pischke \cite{NeriOlivaPischke2026}.

We finish by noting that the extraction of explicit bounds for
Theorem \ref{thrm:avigad:dominated} was carried out in
\cite{Avigad-Dean-Rute:Dominated:12} via proof-theoretically motivated
methods. A formal explanation for the success of that extraction was given
by the second author and Pischke in \cite{neri-pischke:24:formal:pub}, and instantiating the metatheorem of Section~\ref{sec:pt} in this context recovers this result.

\section{Model theory for metric structures}
\label{sec:model}

\subsection{Henson structures and positive bounded logic}
\label{subsec:metric}
Following \cite{DI2017} closely, we start by recalling the relevant parts of the theory of Henson structures.

\begin{definition}[cf.\ Definition 6.1 and Definition 6.2 of \cite{DI2017}]
A \emph{Henson signature} is a many-sorted signature consisting of the followings:
\begin{itemize}
    \item A family $(s|s\in \mathbf{S})$ of sorts, together with a distinguished element $s_{\mathbb{R}}\in \mathbf{S}$. 
    \item A family of function symbols $f$ including:
        \begin{itemize}
        \item Binary functional symbol $d_s$ with domain $s\times s$ and range $s_{\mathbb{R}}$ for each sort $s\in \mathbf{S}$.
        \item Constant symbol $a_s$ of sort $s$ for each $s\in \mathbf{S}$.
        \item Constant symbol $r$ of sort $s_{\mathbb{R}}$ for each rational number $r$.
         \item Constant symbols $+_\RR, -_\RR, \cdot_\RR, \lor_\RR, \land_\RR: s_{\RR} \times s_{\RR} \to s_{\RR}$ and $|\cdot|:s_{\RR} \to s_{\RR}$ representing addition, subtraction, multiplication, maximum, minimum, and absolute value, respectively.
        \end{itemize}
\end{itemize}

Fixing a Henson signature $L$, a \emph{Henson structure} $\mathcal{M}$ is a $L$-structure such that 
\begin{itemize}
    \item $s^{\mathcal{M}}$ is a pointed metric space $M^{s}$ where $d_s^{\mathcal{M}}$ is the metric and $a_s^{\mathcal{M}}$ is the anchor point of $M^{s}$. 
    \item $s_{\mathbb{R}}^{\mathcal{M}}$ is the set $\mathbb{R}$ of real numbers, equipped with the standard metric and $0$ as the anchor point. Each $r^{\mathcal{M}}$ is the underlying rational number $r$  (hence we omit the difference between constant symbol $r$ and the underlying rational $r$). 
     
    \item $f^{\mathcal{M}}$ is a function $M^{s_1}\times\cdots \times M^{s_n}\to M^{s_0}$ that is  locally uniformly continuous and bounded with respect to the metrics. 
\end{itemize}

\end{definition}

For a Henson signature $L$, the $L$-terms are defined in the same way as in first-order logic. We now introduce the syntax of Henson structures.

\begin{definition}[cf.\ Definition 6.13 and Definition 6.15 of \cite{DI2017}]
A \emph{positive bounded $L$-formula} is constructed by the following rules:
\begin{itemize}
    \item If $t$ is a term of sort $s_{\mathbb{R}}$ (real-valued term) and $r$ is a rational constant, then 
    $$t\leq r\text{\quad and \quad} r\leq t$$
    are positive bounded $L$-formulas.
    \item If $\varphi$ and $\psi$ are positive bounded $L$-formulas, then 
    $$(\varphi\wedge \psi)\text{\quad and \quad} (\varphi \vee \psi)$$
    are positive bounded $L$-formulas. 
    \item If $\varphi$ is a positive bounded $L$-formulas, $r$ is a rational number, and $x$ is a variable, then 
    $$(\forall_r x\varphi)\text{\quad and \quad} (\exists_r x\varphi)$$
    are positive bounded $L$-formulas. 
\end{itemize}
As in the first order case, we call a positive bounded $L$- formula without free variables a \emph{sentence} and a $L$-theory is a collection of sentences.
\end{definition}

\begin{definition}[cf.\ Definition 6.16 and Definition 6.18 of \cite{DI2017}]
Let $\mathcal{M}$ be a $L$-Henson structure and $\varphi(x_1,\ldots,x_n)$ be a positive bounded formulas with free variables $x_i$ of sort $s_i$. For arbitrary $a_1,\ldots, a_n$ such that $a_i$ is an element of $M^{s_i}$, we define the \emph{discrete satisfaction relation} $\mathcal{M}\models \varphi[a_1,\ldots,a_n]$ inductively\footnote{Note that \emph{officially} the  elements $a_i$ of  $M^{s_i}$ are not plugged into the formula $\varphi[a_1,...,a_n]$, but names
$\bar{a}_1,...\bar{a_n}$ for them.}:
\begin{itemize}
    \item If $\varphi$ is of the form $t\leq r$ (or $r\leq t$), then $\mathcal{M}\models \varphi[a_1,\ldots,a_n]$ if and only if 
    \begin{eqnarray*}
    t^{\mathcal{M}}[a_1,\ldots,a_n]\leq r \\
   \text{(or } r\leq t^{\mathcal{M}}[a_1,\ldots,a_n]).
    \end{eqnarray*}
    \item If $\varphi$ is of the form $\psi_1\wedge \psi_2$ (or $\psi_1\vee \psi_2$), then $\mathcal{M}\models \varphi[a_1,\ldots,a_n]$ if and only if
   \begin{eqnarray*}
     && \mathcal{M}\models \psi_1[a_1,\ldots,a_n] \text{\quad and \quad} \mathcal{M}\models\psi_2[a_1,\ldots,a_n]\\
    \text{(or} && \mathcal{M}\models \psi_1[a_1,\ldots,a_n] \text{\quad or \quad} \mathcal{M}\models \psi_2[a_1,\ldots,a_n]).
   \end{eqnarray*} 
    \item If $\varphi$ is of the form $\forall_rx\psi$ with $x$ of sort $s$, then $\mathcal{M}\models \varphi[a_1,\ldots,a_n]$ if and only if $$\mathcal{M}\models \psi[a,a_1,\ldots,a_n] \text{ for all } a\in  B_{M^{s}}(r)$$    
    where $ B_{M^{s}}(r)$ denotes the open ball around the anchor point in $M^{s}$ of radius $r$.

    \item If $\varphi$ is of the form $\exists_rx\psi$ with $x$ of sort $s$, then $\mathcal{M}\models \varphi[a_1,\ldots,a_n]$ if and only if $$\mathcal{M}\models \psi[a,a_1,\ldots,a_n] \text{ for some } a\in B_{M^{s}}[r]$$  
     where $ B_{M^{s}}[r]$ denotes the closed ball around the anchor point in $M^{s}$ of radius $r$.
\end{itemize}

Given a positive bounded formula $\varphi$, we define the \emph{approximation of $\varphi$}, written as $\varphi'\Leftarrow_{\mathcal{A}}\varphi$, inductively on the complexity of $\varphi$:
\begin{eqnarray*}
\varphi  &\qquad& \text{Approximations of } \varphi\\
r\leq t &\qquad& r'\leq t \text{ where } r'<r\\
t\leq r &\qquad& t\leq r' \text{ where } r'>r\\
\varphi_1\wedge \varphi_2 &\qquad& \varphi_1'\wedge\varphi_2'  \text{ where $\varphi_i'$ approximates $\varphi_i$} \\
\varphi_1\vee \varphi_2 &\qquad& \varphi_1'\vee\varphi_2'  \text{ where $\varphi_i'$ approximates $\varphi_i$} \\
\forall_r x \varphi &\qquad& \forall_{r'} x \varphi' \text{ where } r'<r \text{ and $\varphi'$ approximates $\varphi$}\\
\exists_r x \varphi &\qquad& \exists_{r'} x \varphi' \text{ where } r'>r \text{ and $\varphi'$ approximates $\varphi$}.
\end{eqnarray*}
We say \emph{$\mathcal{M}$ approximately satisfies $\varphi[a_1,\ldots,a_n]$}, denoted as $\mathcal{M}\models_{\mathcal{A}} \varphi[a_1,\ldots,a_n]$, if $\mathcal{M}\models \varphi'[a_1,\ldots,a_n]$ for every approximation $\varphi'$ of $\varphi$. Given $\varphi(x_1,\ldots, x_n)$ a set of positive bounded formulas with free variable $x_1,\ldots, x_n$ and $\mathcal{M}$ a Henson structure,  we say $\varphi(x_1,\ldots, x_n)$ is \emph{discretely satisfiable in $\mathcal{M}$} if there exists $a_1,\ldots,a_n$ in $\mathcal{M}$ such that $\mathcal{M}\models \varphi[a_1,\ldots, a_n]$ and $\varphi(x_1,\ldots, x_n)$ is \emph{approximately satisfiable in $\mathcal{M}$} if $\mathcal{M}\models_{\mathcal{A}} \varphi[a_1,\ldots, a_n]$.
\end{definition}
\begin{remark}
\label{rem:met:vs:normed}
We note the crucial difference in the semantics of bounded quantification between the normed and metric cases. Unlike the metric case, the semantics of both bounded existential and universal quantification in the normed setting \cite{henson2002ultraproducts} is taken over closed balls. Although this difference restricts the set of formulas that can be expressed in positive bounded logic in the metric setting (in the normed space setting, one can express bounded universal quantification over open balls in positive bounded logic, but in the metric setting, one cannot express universal quantification over closed balls), this restriction is a requirement to develop a sensible model theory on metric structures. However, as noted in \cite{henson2002ultraproducts}, both choices of semantics work in the normed setting.   
\end{remark}

It is clear that discrete  satisfaction implies approximate  satisfaction.

\begin{definition}[cf.\ Definition 6.22 of \cite{DI2017}]
Given $\Gamma$ a set of positive bounded formulas, we write $\Gamma^+$ to denote the set of all approximations of formulas in $\Gamma$. Given $\Gamma$ a collection of positive bounded formulas, we call $\Gamma$ \emph{a positive bounded $L$-theory} if $T$ consists of only positive bounded sentences, namely formulas positive bounded with no free variables. Given an $L$-structure $\mathcal{M}$ and an $L$-theory $T$, we say that $\mathcal{M}$ is \emph{a (approximate) model of $T$}, denoted as $\mathcal{M}\models_{\mathcal{A}} T$, if $\mathcal{M}\models_{\mathcal{A}} \varphi$ for every $\varphi$ in $T$. Given an $L$-structure $\mathcal{M}$, we write $\mathrm{Th}_{\mathcal{A}}(\mathcal{M})$ for the set of positive bounded sentences that are approximately satisfied by $\mathcal{M}$. Given a Henson $L$-theory $T$, we use $\mathrm{Mod}_{\mathcal{A}}(T)$ to denote the class of all approximate models of $T$. For Henson $L$-structures $\mathcal{M}$ and $\mathcal{N}$ , we say $\mathcal{M}$ and $\mathcal{N}$ are \emph{(approximately) elementarily equivalent}, written $\mathcal{M}\equiv_{\mathcal{A}}\mathcal{N}$, if $\mathrm{Th}_{\mathcal{A}}(\mathcal{M})=\mathrm{Th}_{\mathcal{A}}(\mathcal{N})$. In the discrete context, we also have the corresponding notions $\mathcal{M}\models \Gamma$, $\mathrm{Th}(\mathcal{M})$, $\mathrm{Mod}(T)$, and $\equiv$ by replacing approximate satisfaction in the definitions with discrete satisfaction. In the remainder of this article, we will assume the approximate versions unless otherwise specified. When needed, we will emphasize the discrete versions explicitly.
\end{definition}

\begin{remark}
Positive bounded logic is usually referred to as the positive and bounded fragment of first-order logic by viewing Henson structures as first-order structures and bounded quantifications as 
$$\forall_rx\varphi:=\forall x(d(x,a)< r\to \varphi)\text{ and } \exists_rx\varphi:=\exists x(d(x,a)\leq  r\wedge \varphi)$$

In this paper, we instead adopt another convention by interpreting first-order logic as the discrete part of positive bounded logic. Namely, first-order structures are interpreted as Henson structures endowed with the discrete metric. Formulas in first-order logic are treated as formulas in positive bounded logic by viewing 
\begin{itemize}
    \item $\forall x\varphi:=\forall_2x\varphi$ and $\exists x\varphi:=\exists_2x\varphi$.
    \item First-order predicates are identified with their indicator function with value $\leq \frac{1}{2}$. In particular, $x=y:=d(x,y)\leq \frac{1}{2}$.
    \item $\neg \varphi$ is defined using De Morgan's rules and passed down to the predicate level. The negation of a predicate is the positive bounded formula expressing the indicator function $\geq \frac{1}{2}$. In particular, $\neg(x=y):=d(x,y)\geq \frac{1}{2}$.
\end{itemize}
Our model-theoretic and proof-theoretic treatments of positive bounded logic in this paper thus extend naturally to first-order logic for discrete structures by the above interpretation. 
\end{remark}

\begin{definition}[cf.\ Definition 6.6 and Definition 6.22 of \cite{DI2017}]
Given a Henson signature $L$, a collection $\mathscr{C}$ of Henson $L$-structures is called a \emph{uniform class} if for every function symbol $f: s_1\times \cdots \times s_n\to s_0$ in $L$ and every $r>0$, the following two conditions hold:
\begin{enumerate}
    \item (Equiboundedness on bounded sets) There exists $\Omega_{f,r}\in [0,\infty)$ such that for every $\mathcal{M}$ in $\mathscr{C}$, 
    $$x\in B_{\mathcal{M}^{(s_1)}}(r)\times \cdots \times B_{\mathcal{M}^{(s_n)}}(r)\to f^{\mathcal{M}}\in  B_{\mathcal{M}^{(s_0)}}(\Omega_{f,r}) .$$ 
    In other words, $\Omega_{f,r}$ is a bound uniform over $\mathscr{C}$ for $f$ on domains bounded by $r$. 
    \item (Equicontinuity on bounded sets) There exists $\Delta_{f,r}:(0,\infty)\to (0,\infty)$ such that for every $\mathcal{M}$ in $\mathscr{C}$ and $x,y$ in $B_{\mathcal{M}^{(s_1)}}(r)\times \cdots \times B_{\mathcal{M}^{(s_n)}}(r)$, 
    $$d_{s_1\times\cdots \times s_n}^{\mathcal{M}}(x,y)<\Delta_{f,r}(\epsilon)\to d_{s_0}^{\mathcal{M}}(f^{\mathcal{M}}(x),f^{\mathcal{M}}(y))<\epsilon.$$
    $\Delta_{f,r}$ is a modulus of uniform continuity, which is uniform over $\mathscr{C}$, for $f$ on domains bounded by $r$. 
\end{enumerate}
We say an $L$-theory $T$ is \emph{uniform} if $\mathrm{Mod}_{\mathcal{A}}(T)$ is a uniform class.
\end{definition}
We have the compactness theorem for positive bounded formulas.
\begin{theorem}[cf.\ Theorem 6.31 of \cite{DI2017}]
\label{thrm:compactness}
    Let $L$ be a Henson signature, $\mathscr{C}$ be a uniform class of  Henson $L$-structures and let $T$ be an $L$-theory. If every finite subset of $T^+$ has a discrete model in $\mathscr{C}$, then $T$ has a model in $\mathscr{C}$.
\end{theorem}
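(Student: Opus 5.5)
The plan is the standard ultraproduct proof of compactness, adapted to Henson structures. For every finite $\Delta\subseteq T^+$, choose a structure $\mathcal{M}_\Delta\in\mathscr{C}$ that discretely satisfies $\Delta$. Let $I$ be the set of finite subsets of $T^+$. For each $\varphi\in T^+$ put $\hat\varphi=\{\Delta\in I:\varphi\in\Delta\}$. These sets have the finite intersection property, so there is an ultrafilter $\mathcal{U}$ on $I$ containing all of them.

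Next we form the metric ultraproduct $\mathcal{M}=\prod_{\mathcal{U}}\mathcal{M}_\Delta$ as in \cite{DI2017}.
\begin{itemize}
\item In each sort we take sequences $(x_\Delta)$ with $\sup_\Delta d(x_\Delta,a_\Delta)<\infty$.
\item We quotient by $\lim_{\mathcal{U}}d(x_\Delta,y_\Delta)=0$.
\item The real sort is identified with $\RR$ through $\mathcal{U}$-limits of bounded sequences.
\end{itemize}
Function symbols are interpreted coordinatewise. This is where uniformity of $\mathscr{C}$ enters. Equiboundedness $\Omega_{f,r}$ ensures that coordinatewise application sends bounded sequences to bounded sequences. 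The common modulus $\Delta_{f,r}$ ensures that the interpretation respects the quotient and is again locally uniformly continuous and bounded. Hence $\mathcal{M}$ is a Henson structure. Membership of $\mathcal{M}$ in $\mathscr{C}$ is part of the standing assumption in \cite{DI2017}: uniform classes of the relevant kind, e.g.\ $\mathrm{Mod}_{\mathcal{A}}$ of a uniform theory, are closed under ultraproducts. We would invoke that, or prove it directly via the Łoś-type lemma below.

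The key step, and the main obstacle, is a Łoś-type lemma in the right direction for approximate satisfaction. We prove by induction on positive bounded $\varphi$: if for every approximation $\varphi'$ of $\varphi$ we have $\{\Delta:\mathcal{M}_\Delta\models\varphi'[\bar a_\Delta]\}\in\mathcal{U}$, then $\mathcal{M}\models_{\mathcal{A}}\varphi[\bar a]$. The cases are as follows.
\begin{itemize}
\item Atomic case: term values converge along $\mathcal{U}$ by equicontinuity. Weak inequalities $t\le r'$ for all $r'>r$ pass to the limit as $t\le r$, which suffices.
\item Conjunction: immediate.
\item Disjunction: one disjunct holds on a $\mathcal{U}$-large set by the ultrafilter property, applied separately for each pair of approximations. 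Approximations are nested, so a cofinal choice works.
\item $\exists_r$: for $\varphi'=\exists_{r'}x\psi'$ with $r'>r$, pick witnesses $b_\Delta$ in the closed $r''$-ball for some $r<r''<r'$. The resulting sequence is bounded, and its class lies in the closed $r''$-ball, inside the closed $r'$-ball. Applying the induction hypothesis to the set of these witnesses requires choosing them for a countable cofinal family of approximations simultaneously; we handle this by a diagonal choice indexed by rational $r''\downarrow r$.
\item $\forall_r$: this is where open balls are essential. An element of $\mathcal{M}$ in the open ball $B(r')$ with $r'<r$ has representatives with $d(b_\Delta,a)<r'$ on a $\mathcal{U}$-large set. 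Since the $\Delta$-level formula $\forall_{r'}$ quantifies over the open ball, the instances apply to those representatives.
\end{itemize}
With closed balls this last case would fail, which is exactly the remark after the definition of approximate satisfaction.

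Finally, every $\varphi\in T$ has all its approximations $\varphi'\in T^+$. By construction $\{\Delta:\mathcal{M}_\Delta\models\varphi'\}\supseteq\hat{\varphi'}\in\mathcal{U}$. By the lemma, $\mathcal{M}\models_{\mathcal{A}}\varphi$, so $\mathcal{M}$ is a model of $T$ in $\mathscr{C}$.
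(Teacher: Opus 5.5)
The paper gives no proof of this statement; it only cites Theorem 6.31 of \cite{DI2017}. Your ultraproduct argument is the standard route, and its core is sound: the index set of finite subsets of $T^+$, the ultrafilter through the sets $\hat\varphi$, the metric ultraproduct made well defined by the uniform moduli, and a one-directional {\L}o\'s lemma for approximate satisfaction. But the step ``$\mathcal{M}\in\mathscr{C}$'' cannot be carried out for an arbitrary uniform class. In fact the statement as printed is false without an extra hypothesis. Take $\mathscr{C}$ to be all finite sets with the discrete metric; only the mandatory symbols are present, so the class is trivially uniform. Take $T=\{\exists_2x_1\cdots\exists_2x_n\bigwedge_{i<j}d(x_i,x_j)\ge\frac12 : n\in\NN\}$. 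Every finite subset of $T^+$ is discretely satisfied by a large enough finite set, but no finite set approximately satisfies $T$. So you must either assume that $\mathscr{C}$ is closed under ultraproducts, or conclude only that $T$ has a model which is an ultraproduct of members of $\mathscr{C}$ (which is what your construction actually proves). Your fallback, ``prove it directly via the {\L}o\'s-type lemma'', works only when $\mathscr{C}=\mathrm{Mod}_{\mathcal{A}}(T_0)$ for a uniform theory $T_0$. In that case every approximation of every $\psi\in T_0$ holds in every $\mathcal{M}_\Delta$, and your lemma gives $\mathcal{M}\models_{\mathcal{A}}T_0$. This is the only case the paper uses, in Theorem \ref{thrm:main:model:unif} with $\mathscr{C}=\mathrm{Mod}_{\mathcal{A}}(T)$. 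With that hypothesis stated explicitly, your proof covers what is needed.

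There are two smaller points.

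First, in the $\exists_r$ case, your diagonal choice along $r''\downarrow r$ works only because $\mathcal{U}$ is countably incomplete, and you do not check this. It does hold when $T\neq\emptyset$: $T^+$ is infinite and each $\Delta$ is finite, so $\bigcap_n(\hat\varphi_1\cap\dots\cap\hat\varphi_n)=\emptyset$. For a principal ultrafilter the diagonalization would fail. It is also unnecessary. Prove instead that discrete truth of $\varphi[\bar a_\Delta]$ on a $\mathcal{U}$-large set implies $\mathcal{M}\models_{\mathcal{A}}\varphi[\bar a]$. Every case is then immediate, with a single choice of witnesses in the $\exists$ case. To finish, take any approximation $\varphi'$ of $\varphi\in T$ and choose an intermediate approximation $\varphi''\in T^+$ of $\varphi$ (componentwise midpoints) of which $\varphi'$ is still an approximation. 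Then $\mathcal{M}\models_{\mathcal{A}}\varphi''$ gives $\mathcal{M}\models\varphi'$.

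Second, your remark that the $\forall_r$ case would fail with closed balls is wrong. Approximations strictly shrink the radius. A point with $d(b,a)\le r'<r$ still has representatives inside the $r''$-ball, for any $r'<r''<r$, on a $\mathcal{U}$-large set. So the {\L}o\'s direction also goes through with closed balls, as in the normed setting of \cite{henson2002ultraproducts}. Open balls are what Theorem \ref{thrm:approx:model} needs (agreement of approximate and discrete satisfaction in saturated structures), not what compactness needs.
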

An important application of the compactness theorem is the following uniformity principle: 
\begin{theorem}
\label{thrm:main:model:unif}
  Let $L$ be a Henson signature, $T$ a uniform $L$-theory, $X$ an arbitrary set, and $\varphi_{x,n}$ a positive bounded $L$-sentence for all $x \in X$ and $n \in \NN$. Suppose for all $\mathcal{M}\models_{\mathcal{A}}T$, we have that: 
  \[
  \mbox{for all } x \in X \mbox{ there exists } n \in \NN \mbox{ such that }
    \mathcal{M}\nvDash_{\mathcal{A}} \varphi_{x,n}.
  \]
Then for all $x \in X$ there exists $N \in \NN$ such that for all $\mathcal{M}\models_{\mathcal{A}}T$ there exists $n \le N$ such that  
\[
\mathcal{M} \nvDash_{\mathcal{A}} \varphi_{x,n}.
\]
\end{theorem}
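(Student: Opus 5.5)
We argue by contradiction, fixing $x \in X$ and using the compactness theorem (Theorem~\ref{thrm:compactness}) applied to the uniform class $\mathscr{C} := \mathrm{Mod}_{\mathcal{A}}(T)$. Suppose that for every $N \in \NN$ there is some $\mathcal{M}_N \models_{\mathcal{A}} T$ with $\mathcal{M}_N \models_{\mathcal{A}} \varphi_{x,n}$ for all $n \le N$. Consider the positive bounded theory
\[
T' := T \cup \{\varphi_{x,n} \mid n \in \NN\}.
\]
We want a single model $\mathcal{M} \models_{\mathcal{A}} T'$. Such an $\mathcal{M}$ would be a model of $T$ with $\mathcal{M} \models_{\mathcal{A}} \varphi_{x,n}$ for every $n$, contradicting the hypothesis.

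To apply Theorem~\ref{thrm:compactness} we must check that every finite subset $\Delta \subseteq (T')^+$ has a \emph{discrete} model in $\mathscr{C}$. Such a $\Delta$ consists of finitely many approximations $\psi'$ of sentences $\psi \in T \cup \{\varphi_{x,n} \mid n \le N\}$, for some $N$. We take $\mathcal{M}_N$. It lies in $\mathscr{C}$ since it approximately satisfies $T$. It approximately satisfies every $\psi \in T$ and every $\varphi_{x,n}$ with $n\le N$. By the definition of approximate satisfaction, $\mathcal{M}_N \models \psi'$ discretely for every approximation $\psi'$ of such $\psi$, so $\mathcal{M}_N$ is a discrete model of $\Delta$. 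Compactness then yields a model $\mathcal{M}\in\mathscr{C}$ of $T'$, meaning $\mathcal{M}\models_{\mathcal{A}}\chi$ for all $\chi\in T'$.

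The main point to verify is that the conclusion of Theorem~\ref{thrm:compactness}, ``$T'$ has a model in $\mathscr{C}$'', means an approximate model. This is the convention fixed in the paper (approximate versions unless stated otherwise). It then gives both $\mathcal{M}\models_{\mathcal{A}}T$, which we also get from $\mathcal{M}\in\mathscr{C}=\mathrm{Mod}_{\mathcal{A}}(T)$, and $\mathcal{M}\models_{\mathcal{A}}\varphi_{x,n}$ for all $n$. Hence there is no $n$ with $\mathcal{M}\nvDash_{\mathcal{A}}\varphi_{x,n}$, which contradicts the hypothesis for this $x$. So some $N$ works for $x$. Since $x$ was arbitrary and the argument only uses the fixed sentences $\varphi_{x,n}$ for that $x$, the parameters cause no additional difficulty.

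One should also note that uniformity of $T$ is used exactly to make $\mathscr{C}$ a uniform class, as Theorem~\ref{thrm:compactness} requires. If $\mathrm{Mod}_{\mathcal{A}}(T)$ is empty, the conclusion holds vacuously.
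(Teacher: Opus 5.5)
Your proposal is correct and is essentially the paper's own argument: negate the conclusion to get models $\mathcal{M}_N$, then apply the compactness theorem (Theorem~\ref{thrm:compactness}) in the uniform class $\mathrm{Mod}_{\mathcal{A}}(T)$, using $\mathcal{M}_N$ with $N=\max_i n_i$ as a discrete model of each finite set of approximations. The only cosmetic difference is that you apply compactness to $T\cup\{\varphi_{x,n}\mid n\in\NN\}$, whereas the paper uses just $\{\varphi_{x,n}\mid n\in\NN\}$; adding $T$ is redundant, since the class already consists of models of $T$, but it is harmless.
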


\begin{proof}
       Suppose, for contradiction, that the conclusion of the theorem was false. Then we have some $x \in X$ such that for each $N \in \NN$ there exist $\mathcal{M}_N\models_{\mathcal{A}}T$ such that for all $n \le N$ we have 
    \[
 \mathcal{M}_N \models_{\mathcal{A}} \varphi_{x,n}.
    \]
Consider the theory $\Gamma:=\{\varphi_{x,n}| n \in \NN\}$. We show that $\Gamma$ has a model in $\mathrm{Mod}_{\mathcal{A}}(T)$ which will contradict the premise of the theorem and we are done. By the compactness theorem (Theorem \ref{thrm:compactness}), it suffices to show that $\{\varphi'_{x,n_1},\ldots,\varphi'_{x,n_k}\}$ has a discrete model in $\mathrm{Mod}_{\mathcal{A}}(T)$ for each $k,n_1,\ldots,n_k \in \NN$, and $\varphi'_{x,n_i}\Leftarrow_{\mathcal{A}}\varphi_{x,n_i}$. Take $N:= \max_{i\le k} n_i$ and we consider $\mathcal{M}_N$, which is a discrete model in $\mathrm{Mod}_{\mathcal{A}}(T)$. For all $n\le N$, we have that $\mathcal{M}_N\models_{\mathcal{A}}\varphi_{x,n}$ and the result follows, since for each $n_i$ and $\varphi'_{x,n_i}$, we have 
       \[
 \mathcal{M}_N \models \varphi'_{x,n_i}.
    \]
\end{proof}
Theorem \ref{thrm:main:model:unif} was already established in \cite{DI2017} in the context of metastable convergence and called the \emph{Uniform Metastability Principle} by the authors (cf.\ Proposition 2.4 of \cite{DI2017}).

We actually have a strengthening of Theorem \ref{thrm:main:model:unif}, where the premise of the theorem doesn't have to hold for all models of $T$, but a special subset of models.

\begin{definition}
      Let $L$ be a Henson signature. A class of Henson $L$-structures $\mathscr{C}$ is called \emph{relatively consistent} if for every uniform $L$-theory $T$, we have
      \[
      \mathrm{Mod}_{\mathcal{A}}(T)\neq \emptyset \to  \mathrm{Mod}_{\mathcal{A}}(T)\cap \mathscr{C} \neq \emptyset.
      \]
\end{definition}
We have the following strengthening of Theorem \ref{thrm:main:model:unif}.
\begin{theorem}
\label{thrm:main:model:unif:gen}
  Let $L$ be a Henson signature, $T$ a uniform $L$-theory, $\mathscr{C}$ a relatively consistent class of Henson $L$-structures, $X$ an arbitrary set, and $\varphi_{x,n}$ a positive bounded $L$-sentence for all $x \in X$ and $n \in \NN$. Suppose for all $\mathcal{M} \models_{\mathcal{A}} T$ in $\mathscr{C}$, we have that: 
  \[
  \mbox{for all } x \in X \mbox{ there exists } n \in \NN \mbox{ such that }
    \mathcal{M}\nvDash_{\mathcal{A}} \varphi_{x,n}.
  \]
Then for all $x \in X$ there exists $N \in \NN$ such that for all $\mathcal{M}\models_{\mathcal{A}} T$ there exists $n \le N$ such that  
\[
\mathcal{M} \nvDash_{\mathcal{A}} \varphi_{x,n}.
\]
\end{theorem}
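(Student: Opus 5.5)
We argue by contradiction, exactly as in the proof of Theorem \ref{thrm:main:model:unif}, and use relative consistency of $\mathscr{C}$ at the point where compactness produced a model. Suppose the conclusion fails. Then there is some $x \in X$ such that for every $N \in \NN$ we can pick $\mathcal{M}_N \models_{\mathcal{A}} T$ with $\mathcal{M}_N \models_{\mathcal{A}} \varphi_{x,n}$ for all $n \le N$.

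Consider the theory
\[
T' := T \cup \{\varphi_{x,n} \mid n \in \NN\}.
\]
The first step is to check that $T'$ is a uniform $L$-theory. This holds because $\mathrm{Mod}_{\mathcal{A}}(T') \subseteq \mathrm{Mod}_{\mathcal{A}}(T)$, and a subclass of a uniform class is uniform: the same bounds $\Omega_{f,r}$ and moduli $\Delta_{f,r}$ still work.

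The second step is to show $\mathrm{Mod}_{\mathcal{A}}(T') \neq \emptyset$, using the compactness theorem (Theorem \ref{thrm:compactness}) with the uniform class $\mathscr{C}' := \mathrm{Mod}_{\mathcal{A}}(T)$. Take a finite subset of $T'^+$. It consists of finitely many approximations of sentences of $T$ together with approximations $\varphi'_{x,n_1},\ldots,\varphi'_{x,n_k}$. Set $N := \max_i n_i$. Since approximate satisfaction means discrete satisfaction of every approximation, $\mathcal{M}_N$ is a discrete model of this finite set. Moreover $\mathcal{M}_N$ lies in $\mathscr{C}'$. Compactness therefore gives a model of $T'$ in $\mathscr{C}'$, so in particular $\mathrm{Mod}_{\mathcal{A}}(T') \neq \emptyset$.

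The third step applies relative consistency of $\mathscr{C}$ to the uniform theory $T'$. This yields some $\mathcal{M} \in \mathscr{C}$ with $\mathcal{M} \models_{\mathcal{A}} T'$. Then $\mathcal{M} \models_{\mathcal{A}} T$, $\mathcal{M} \in \mathscr{C}$, and $\mathcal{M} \models_{\mathcal{A}} \varphi_{x,n}$ for every $n$. This contradicts the hypothesis at our chosen $x$.

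The only point requiring care is the first step: relative consistency is stated only for uniform theories. So we must make sure uniformity of $T$ transfers to $T'$ under the paper's definition, namely uniformity of the class of approximate models. It does, since adding sentences can only shrink the class of models.
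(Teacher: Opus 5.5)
Your proof is correct and follows the paper's argument. You rerun the compactness step of Theorem \ref{thrm:main:model:unif} to get an approximate model of $T\cup\{\varphi_{x,n}\mid n\in\NN\}$ inside $\mathrm{Mod}_{\mathcal{A}}(T)$, then use relative consistency of $\mathscr{C}$ to obtain such a model in $\mathscr{C}$, contradicting the hypothesis. Your explicit check that $T\cup\{\varphi_{x,n}\mid n\in\NN\}$ is a uniform theory (its approximate models form a subclass of the uniform class $\mathrm{Mod}_{\mathcal{A}}(T)$) is left implicit in the paper, and it is exactly what the definition of relative consistency requires.
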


\begin{proof}
We argue by contradiction. Following the exact proof of Theorem \ref{thrm:main:model:unif}, we have that for some $x$, $\{\varphi_{x,n}| n \in \NN\}$ has a model in $\mathrm{Mod}_{\mathcal{A}}(T)$. So, the fact that $\mathscr{C}$ is relatively consistent implies there is a structure in $\mathscr{C}$ that is a model of $T\cup \{\varphi_{x,n}| n \in \NN\}$, which contradicts the assumption of the theorem.
\end{proof}
The most important relatively consistent class is that of saturated models.

\begin{definition}[cf. Definition 6.45 of \cite{DI2017} and the comment before Proposition 6.46 in \cite{DI2017}]
\label{def:sat}
Given $\mathcal{M}$ a Henson $L$-structure and $\kappa \le |\mathcal{M}|$ an infinite cardinal, we say $\mathcal{M}$ is \emph{$\kappa$-saturated} if whenever $C$ is a subset of the universe of $\mathcal{M}$ of cardinality $<\kappa$ and $\Gamma(x_1,\ldots,x_n)$ is a set of positive bounded $L(C)$-formulas\footnote{We write $L(C)$ for the Henson signature which extends $L$ with fresh constant symbols for each element in $C$. We interpret the new constants coming from $C$, semantically, as the corresponding element in $C$.}, if there exists $r$ such that for any finite $\Delta\subseteq \Gamma$ 
$$(\mathcal{M},C)\models_{\mathcal{A}}\exists_r x_1\ldots \exists_r x_n\, \bigwedge_{\varphi \in \Delta} \varphi(x_1,\ldots,x_n)$$
then there exists $c_1,\ldots,c_n$ in the universe of $\mathcal{M}$ such that 
$$(\mathcal{M},C)\models_{\mathcal{A}}\varphi(c_1,\ldots,n_n)$$
for all $\varphi \in \Gamma$. We say $\mathcal{M}$ is \emph{countably saturated} if $\mathcal{M}$ is $\aleph_1$-saturated.
\end{definition}

\begin{theorem}[cf. Theorem 6.47 of \cite{DI2017}]
\label{thrm:approx:model}
    Let $L$ be a Henson signature and $\mathcal{M}$ a countably saturated Henson $L$-structure. For all positive bounded $L$-formula $\varphi(x_1, . . . , x_n)$ and elements $a_1,\ldots, a_n$  of suitable sorts, we have:
    \[
    \mathcal{M}\models_{\mathcal{A}} \varphi[a_1,\ldots, a_n] \leftrightarrow \mathcal{M}\models \varphi[a_1,\ldots, a_n].
    \]
\end{theorem}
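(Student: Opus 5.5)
We prove Theorem~\ref{thrm:approx:model} by induction on the complexity of $\varphi$, proving the equivalence simultaneously for all tuples of parameters $a_1,\ldots,a_n$. Discrete satisfaction always implies approximate satisfaction, as noted above. So the real content is the direction $\mathcal{M}\models_{\mathcal{A}}\varphi[\bar a]\Rightarrow\mathcal{M}\models\varphi[\bar a]$. For the induction to run, we will use a convenient fact about approximations. If $\varphi'\Leftarrow_{\mathcal{A}}\varphi$ and $\varphi''\Leftarrow_{\mathcal{A}}\varphi'$, then $\varphi''\Leftarrow_{\mathcal{A}}\varphi$. Conversely, every approximation $\varphi'$ of $\varphi$ admits an intermediate approximation $\varphi^{*}$ with $\varphi^{*}\Leftarrow_{\mathcal{A}}\varphi$ and $\varphi'\Leftarrow_{\mathcal{A}}\varphi^{*}$. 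We get $\varphi^{*}$ by taking the midpoints of all strict inequalities between rationals. Consequently, $\mathcal{M}\models_{\mathcal{A}}\varphi$ implies $\mathcal{M}\models_{\mathcal{A}}\varphi'$ for every approximation $\varphi'$ of $\varphi$.

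\emph{Atomic case.} Suppose $t^{\mathcal{M}}[\bar a]\le r'$ holds for all rationals $r'>r$. Then $t^{\mathcal{M}}[\bar a]\le r$, and symmetrically for $r\le t$. \emph{Conjunction.} This is immediate from the induction hypothesis, since approximations of $\psi_1\wedge\psi_2$ are exactly conjunctions of approximations. \emph{Disjunction.} Suppose every $\psi_1'\vee\psi_2'$ holds discretely. If some $\psi_1'$ and some $\psi_2'$ both failed, take common refinements $\psi_i''\Leftarrow_{\mathcal{A}}\psi_i$ that are approximations of $\psi_i'$. Discrete truth of $\psi_i''$ implies discrete truth of $\psi_i'$, by monotonicity of discrete satisfaction under loosening the bounds. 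So $\psi_1''\vee\psi_2''$ fails, which is a contradiction. Hence one disjunct is approximately satisfied, and the induction hypothesis applies. Here one should check that any two approximations of $\psi_i$ have a common refinement; this is clear by taking the tighter rational at each place.

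\emph{Universal quantifier} $\forall_r x\psi$. Take $a\in B(r)$, so $d(a,\text{anchor})<r$. Every approximation $\psi'$ can be paired with some $r'$ satisfying $d(a,\text{anchor})<r'<r$. Then $\forall_{r'}x\psi'$ holds, so $\psi'[a,\bar a]$ holds for all $\psi'$. Thus $\mathcal{M}\models_{\mathcal{A}}\psi[a,\bar a]$, and by the induction hypothesis $\mathcal{M}\models\psi[a,\bar a]$. This is exactly where the open-ball semantics is needed. No saturation is used in this case.

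\emph{Existential quantifier} $\exists_r x\psi$. This is the main step, and the only place saturation enters. Let $C=\{a_1,\ldots,a_n\}$, which is finite. Consider the countable set of $L(C)$-formulas
\[
\Gamma(x):=\{\psi'(x,\bar a)\mid \psi'\Leftarrow_{\mathcal{A}}\psi\}\cup\{d(x,a_s)\le r'\mid r'>r \text{ rational}\}.
\]
We need finite approximate satisfiability inside a common bounded ball, say of radius $r+1$. A finite $\Delta\subseteq\Gamma$ is implied by a single approximation $\psi''$ together with a single radius $r''>r$, using common refinements. The hypothesis gives $\mathcal{M}\models\exists_{r''}x\,\psi''$, which yields a witness. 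We also need the witnessing statement to hold approximately and not only discretely. We handle this by applying the hypothesis to a tighter approximation and using the intermediate-approximation fact above. Countable saturation (Definition~\ref{def:sat}) then gives $c$ approximately satisfying all of $\Gamma$. From this we get $d(c,a_s)\le r$ and $\mathcal{M}\models_{\mathcal{A}}\psi[c,\bar a]$. By the induction hypothesis, $\mathcal{M}\models\psi[c,\bar a]$, so $\mathcal{M}\models\exists_r x\psi[\bar a]$.

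The main obstacle is the bookkeeping in the existential case. We must make sure that the finite subsets are satisfied in the precise form Definition~\ref{def:sat} demands, namely approximately and within a uniform radius. Since approximate satisfaction of $\exists_{r}x\bigwedge\Delta$ quantifies over all approximations, we verify it via the density and refinement properties of approximations.
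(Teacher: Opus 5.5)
Your argument is correct. The paper gives no proof of its own and simply cites Theorem~6.47 of \cite{DI2017}; your induction on formulas is the standard route. The open-ball semantics makes the $\forall_r$ step saturation-free, and saturation over the finite set $\{a_1,\ldots,a_n\}$ is used only for $\exists_r$, so in fact $\aleph_0$-saturation already suffices.

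Two small tidy-ups:
\begin{itemize}
\item In the disjunction case no refinement is needed. If $\psi_1'$ and $\psi_2'$ both fail discretely, then $\psi_1'\vee\psi_2'$ is itself a failing approximation of $\psi_1\vee\psi_2$. As written, the direction is also reversed: for ``truth of $\psi_i''$ implies truth of $\psi_i'$'' you need $\psi_i'\Leftarrow_{\mathcal{A}}\psi_i''$, not the other way round.
\item In the existential case, the finite subsets of $\Gamma$ are automatically approximately satisfied, because discrete satisfaction implies approximate satisfaction. The intermediate-approximation fact is needed only at the end, to pass from $\mathcal{M}\models_{\mathcal{A}}\psi'[c,\bar a]$ for all $\psi'\Leftarrow_{\mathcal{A}}\psi$ to $\mathcal{M}\models_{\mathcal{A}}\psi[c,\bar a]$.
\end{itemize}
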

\begin{remark}
    If one takes the semantics for bounded quantification to be as that of the normed case, cf.\ Remark \ref{rem:met:vs:normed}, then Theorem \ref{thrm:approx:model} fails, since if we consider $\NN$ with the discrete metric $d(x,y):= |x-y|$, then it is clear that 
    \[
    \forall n \le 1\, (d(x,0) =0)
    \]
    is false, but for all $\varepsilon >0$ and $r<1$
    \[
    \forall n \le r\, (d(x,0) \le \varepsilon) 
    \]
    is true. In the normed space case, one must crucially use the uniform continuity of the function symbols, as well as the fact that norm spaces have `intermediate points', which is not the case for general metric spaces. We note here, however, that for metric spaces, with `intermediate points', such as geodesic spaces, one can argue as in the normed space case and obtain Theorem \ref{thrm:approx:model} for the more liberal semantics of bounded universal quantification.
\end{remark}
\begin{theorem}[Existence of Saturated Models]
\label{thrm:exist:sat}
    Let $L$ be a Henson signature and $\kappa$ be an infinite cardinal. The class of all $\kappa$-saturated Henson $L$-structures is relatively consistent.
\end{theorem}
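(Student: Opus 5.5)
To show that the $\kappa$-saturated Henson $L$-structures form a relatively consistent class, I would fix a uniform $L$-theory $T$ with $\mathrm{Mod}_{\mathcal{A}}(T)\neq\emptyset$ and exhibit a $\kappa$-saturated approximate model of $T$. Pick any $\mathcal{M}\models_{\mathcal{A}}T$. The plan is to replace $\mathcal{M}$ by an ultrapower (or iterated elementary extension) that is $\kappa$-saturated and still approximately satisfies $T$. The two ingredients are: (a) a \L o\'s-type theorem for approximate satisfaction of positive bounded formulas in ultraproducts of a uniform class; and (b) the standard model-theoretic construction of saturated models, carried out within $\mathrm{Mod}_{\mathcal{A}}(T)$.

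For (a), I would use that $\mathrm{Mod}_{\mathcal{A}}(T)$ is a uniform class. The uniform bounds $\Omega_{f,r}$ and moduli $\Delta_{f,r}$ make the metric ultraproduct $\prod_{\mathcal{U}}\mathcal{M}$ (bounded sequences modulo $d=0$ in the $\mathcal{U}$-limit) a well-defined Henson structure. Real-valued terms are then computed as $\mathcal{U}$-limits. I would prove by induction on formulas that $\prod_\mathcal{U}\mathcal{M}_i\models_{\mathcal{A}}\varphi[(a_i)]$ iff for every approximation $\varphi'$ of $\varphi$ the set $\{i:\mathcal{M}_i\models\varphi'[a_i]\}$ lies in $\mathcal{U}$. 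This is the analogue of the argument behind Theorem~\ref{thrm:compactness}, and it shows that ultrapowers of approximate models of $T$ are again approximate models of $T$, with the diagonal embedding elementary for $\models_{\mathcal{A}}$. The open/closed ball convention matters here. For $\exists_r$ one chooses witnesses coordinatewise in balls of radius $r'>r$. For $\forall_r$ with open balls, an element of the ultraproduct of norm $<r$ has coordinates of norm $<r''<r$ on a $\mathcal{U}$-large set, so every approximation $\forall_{r'}$ with $r'<r$ transfers.

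For (b), when $\kappa=\aleph_1$ it suffices to take a countably incomplete ultrafilter on $\NN$. Given a countable set of parameters and a type $\Gamma$ that is finitely approximately satisfiable within a fixed radius $r$, enumerate $\Gamma^+$ as $\psi_0,\psi_1,\ldots$. For each $k$, choose in a large set of coordinates a witness tuple of norm $\le r+1/k$ discretely satisfying $\psi_0,\ldots,\psi_k$. The resulting element of the ultraproduct then approximately satisfies all of $\Gamma$ by (a); the radius is bounded, so it is a genuine element. For general $\kappa$, I would use a $\kappa$-good countably incomplete ultrafilter on a set of size $\geq\kappa$, following Keisler's argument adapted to positive bounded logic. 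Alternatively, one can build an elementary chain of length $\kappa^+$: at each stage, realize all types over small parameter sets using compactness (Theorem~\ref{thrm:compactness}) applied to the elementary diagram together with $T$, which remains a uniform theory because the new constants do not change the function symbols. The union at limit stages is then completed.

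I expect the main obstacle to be the \L o\'s step for the universal quantifier with open balls combined with approximations. One must check carefully that approximate satisfaction, not discrete satisfaction, is what transfers, and that completing the union at limit stages of the chain preserves approximate satisfaction. I would handle completion by uniform continuity (the moduli $\Delta_{f,r}$), which shows that each approximation is preserved under metric limits.
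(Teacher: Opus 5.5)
Your route differs from the paper's. The paper gives no argument and just invokes Proposition~6.48 of \cite{DI2017}. You instead reconstruct the standard proof behind such a result: a \L o\'s theorem for $\models_{\mathcal{A}}$, followed by saturation of suitable ultrapowers (or an elementary chain built with Theorem~\ref{thrm:compactness}).

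Your handling of the open-ball ($\forall_r$) and closed-ball ($\exists_r$) cases in the \L o\'s step is sound. It does show that ultrapowers of approximate models of $T$ are again approximate models of $T$. For an ultrapower of a single structure you do not even need uniformity of $T$; that matters only for ultraproducts of different models. Your version makes visible where countable incompleteness and the open-ball convention are used, whereas the citation buys brevity.

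One step is wrong as stated, however. The definition of $\kappa$-saturation bounds $|C|$ but not $|\Gamma|$, and $L$ is an arbitrary Henson signature. For uncountable $L$ there is no enumeration $\psi_0,\psi_1,\ldots$ of $\Gamma^+$, and the claim that a nonprincipal ultrafilter on $\NN$ suffices for $\aleph_1$-saturation is actually false. Here is a counterexample:
\begin{itemize}
\item Give $\NN$ the discrete metric and a $\{0,1\}$-valued symbol $\chi_S$ for every $S\subseteq\NN$.
\item Pick a nonprincipal ultrafilter $\mathcal{V}$ with $\mathcal{V}\neq f_*\mathcal{U}$ for every $f\colon\NN\to\NN$. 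This is possible because there are only $2^{\aleph_0}$ such images but $2^{2^{\aleph_0}}$ ultrafilters.
\item The parameter-free type $\{1/2\le\chi_S(x) : S\in\mathcal{V}\}$ is finitely satisfiable within radius $1$ in the $\mathcal{U}$-ultrapower.
\item It is not realized there, since $[f]$ realizes it iff $f_*\mathcal{U}=\mathcal{V}$.
\end{itemize}

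For the same reason, Keisler's theorem for $\kappa$-good ultrafilters needs $|L|<\kappa$. For $\kappa=\aleph_1$ every ultrafilter is $\kappa$-good, so your general-$\kappa$ route fails in exactly the same way.

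There are two fixes. One is to use a countably incomplete $\lambda^+$-good ultrafilter with $\lambda\ge\kappa+|L|$, which exists by Kunen's theorem. The other is your chain construction, which has no restriction on $|L|$.

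If you take the chain route, two points need to be added:
\begin{itemize}
\item Constants are $0$-ary function symbols, so the expanded theory is uniform only once you bound them. The diagram bounds $d(c_m,a)$, but the type realizers need an explicit radius bound.
\item You must reflect finite approximate satisfiability of a type over $C\subseteq\mathcal{M}_\alpha$ from the final structure down to $\mathcal{M}_\alpha$. That needs two-sided elementarity of the extensions. In positive bounded logic this comes from approximate (weak) negations of approximations, not from upward preservation of $\models_{\mathcal{A}}$ alone.
\end{itemize}
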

\begin{proof}
    The result is immediate from Proposition 6.48 of \cite{DI2017}.
\end{proof}
We now have the following uniformity principle from which many uses of the ultraproduct to show the existence of uniform bounds follow:
\begin{corollary}
    \label{cor:main:model:unif:sat}
  Let $L$ be a Henson signature, $T$ a uniform $L$-theory, $\kappa$ an infinite cardinal, $X$ an arbitrary set, and $\varphi_{x,n}$ a positive bounded $L$-sentence for all $x \in X$ and $n \in \NN$. Suppose for all $\kappa$-saturated models $\mathcal{M} \in \mathrm{Mod}(T)$, we have that: 
  \[
  \mbox{for all } x \in X \mbox{ there exists } n \in \NN \mbox{ such that }
    \mathcal{M}\nvDash\varphi_{x,n}.
  \]
Then for all $x \in X$ there exists $N \in \NN$ such that for all $\mathcal{M}\in \mathrm{Mod}(T)$ there exists $n \le N$ such that  
\[
\mathcal{M} \nvDash_{\mathcal{A}} \varphi_{x,n}.
\]
\end{corollary}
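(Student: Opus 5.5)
The plan is to derive the corollary from Theorem~\ref{thrm:main:model:unif:gen}, taking for $\mathscr{C}$ the class of all $\kappa$-saturated Henson $L$-structures. By Theorem~\ref{thrm:exist:sat} this class is relatively consistent. The only remaining task is to check the hypothesis of Theorem~\ref{thrm:main:model:unif:gen}. That hypothesis is phrased with approximate satisfaction, whereas the corollary's hypothesis is phrased with discrete satisfaction. So the work consists of translating between the two notions on saturated structures.

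Concretely, I would fix a $\kappa$-saturated structure $\mathcal{M}\in\mathscr{C}$ with $\mathcal{M}\models_{\mathcal{A}}T$, and first show that $\mathcal{M}\in\mathrm{Mod}(T)$, i.e.\ $\mathcal{M}\models\psi$ for every $\psi\in T$. Since $\kappa$ is infinite, $\kappa\ge\aleph_0$, and I need countable saturation, i.e.\ $\aleph_1$-saturation. If $\kappa\ge\aleph_1$, then $\kappa$-saturation gives $\aleph_1$-saturation immediately: parameter sets of size $<\aleph_1$ have size $<\kappa$. Theorem~\ref{thrm:approx:model} then applies to the sentence $\psi$ and converts $\mathcal{M}\models_{\mathcal{A}}\psi$ into $\mathcal{M}\models\psi$.

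With this in hand, the corollary's hypothesis applies to $\mathcal{M}$: for each $x\in X$ there is $n$ with $\mathcal{M}\nvDash\varphi_{x,n}$. Applying Theorem~\ref{thrm:approx:model} again, now to $\varphi_{x,n}$, gives $\mathcal{M}\nvDash_{\mathcal{A}}\varphi_{x,n}$. This is exactly the hypothesis of Theorem~\ref{thrm:main:model:unif:gen}. Its conclusion is then that for every $x$ there is $N$ such that every $\mathcal{M}\models_{\mathcal{A}}T$ has some $n\le N$ with $\mathcal{M}\nvDash_{\mathcal{A}}\varphi_{x,n}$.

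It remains to pass from $\mathcal{M}\models_{\mathcal{A}}T$ to $\mathcal{M}\in\mathrm{Mod}(T)$. Discrete satisfaction implies approximate satisfaction, so $\mathrm{Mod}(T)\subseteq\mathrm{Mod}_{\mathcal{A}}(T)$. The conclusion over all approximate models therefore restricts to the desired conclusion over all $\mathcal{M}\in\mathrm{Mod}(T)$.

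The main obstacle is the case $\kappa=\aleph_0$. Here $\aleph_0$-saturation only allows finitely many parameters, and Theorem~\ref{thrm:approx:model} is stated for countable saturation. However, the equivalence between approximate and discrete satisfaction is applied only to parameter-free sentences ($\psi$ and $\varphi_{x,n}$). Its proof, by induction on formulas, only realizes types over the finitely many parameters introduced by the quantifiers so far. So I would argue that $\aleph_0$-saturation suffices for sentences, by inspecting that induction. Alternatively, I would sidestep the issue entirely: replace $\kappa$ by $\max(\kappa,\aleph_1)$, since every $\max(\kappa,\aleph_1)$-saturated model is $\kappa$-saturated and the corollary's hypothesis is inherited, and run the argument above with this larger cardinal.
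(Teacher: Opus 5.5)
Your proposal is correct and follows the paper's own one-line proof: take $\mathscr{C}$ to be the $\kappa$-saturated structures, which are relatively consistent by Theorem~\ref{thrm:exist:sat}, apply Theorem~\ref{thrm:main:model:unif:gen}, and use Theorem~\ref{thrm:approx:model} to pass between $\nvDash$ and $\nvDash_{\mathcal{A}}$ on saturated models. Your extra steps fill in details the paper leaves implicit: you check that a saturated approximate model of $T$ is a discrete one, you restrict the conclusion via $\mathrm{Mod}(T)\subseteq\mathrm{Mod}_{\mathcal{A}}(T)$, and you replace $\kappa$ by $\max(\kappa,\aleph_1)$ so that Theorem~\ref{thrm:approx:model} actually applies when $\kappa=\aleph_0$.
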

\begin{proof}
    The result then follows from Theorems \ref{thrm:exist:sat} and \ref{thrm:main:model:unif:gen}, with the weakening of the premise from $\nvDash_{\mathcal{A}}$ to $\nvDash$ is justified by Theorem \ref{thrm:approx:model}. 
\end{proof}

\subsection{Loeb structures}
\label{subsec:ultra}
Since both our applications, Theorems \ref{thrm:meta:main:strong} and \ref{thrm:avigad:dominated}, concern probability measures, we need to introduce so-called pre-Loeb structures \cite{DI2017}. These are Henson structures containing finitely additive probability measures.
\begin{definition}[cf. Definition 3.1 of \cite{DI2017}]
    The \emph{signature for Loeb structures}, denoted by $\mathcal{L}$, is a \emph{Henson signature} with sorts
    \[
    \mathcal{S} := \{s_\RR, \Omega, \mathcal{F}\},
    \]
    where $\Omega$ is interpreted as a set and $\mathcal{F}$ as a Boolean algebra of subsets of $\Omega$. Since $\mathcal{L}$ is a Henson signature, it comes equipped with the following function symbols:
    \begin{itemize}
        \item $d_{s_\RR}: s_\RR\times s_\RR\to s_\RR$, $d_{\Omega}: \Omega \times \Omega \to s_\RR$, and $d_{\mathcal{F}}: \mathcal{F} \times \mathcal{F} \to \RR$ representing metrics on the respective sorts.
        \item $\omega_0: \Omega$, and $\emptyset: \mathcal{F}$ representing anchor points on the respective spaces.
   
    \end{itemize}

    In addition, we introduce the following symbols:
    \begin{itemize}
        \item $\in: \Omega \times \mathcal{F} \to s_\RR$ representing the element relation.
        \item $\cup: \mathcal{F} \times \mathcal{F} \to \mathcal{F}$ representing union.
        \item $\cap: \mathcal{F} \times \mathcal{F} \to \mathcal{F}$ representing intersection.
        \item $(\cdot)^c: \mathcal{F} \to \mathcal{F}$ representing complementation.
        \item $\emptyset: \mathcal{F}$ representing the empty set.
        \item $\mu: \mathcal{F} \to s_\RR$ representing the probability measure.
    \end{itemize}
    We write $\model{\omega \in A}$ for $\in(\omega,A)$.
    
       A \emph{pre}-Loeb (probability) structure is a Henson structure $\mathcal{M}$ with signature $\mathcal{L}$ satisfying, for all $A, B \in \mathcal{F}^{\mathcal{M}}$ and $\omega \in \Omega^{\mathcal{M}}$:
    \begin{itemize}
        \item $d_{\mathcal{F}}$ and $d_{\Omega}$ are discrete metrics;
        \item $\model{\omega \in^{\mathcal{M}} A} \in \{0,1\}$;
        \item $d_{\mathcal{F}}(A,B) = \sup_{\omega \in^{\mathcal{M}} \Omega} \big| \model{\omega \in^{\mathcal{M}} A} - \model{\omega \in^{\mathcal{M}} B} \big|$;
        \item $\model{\omega \in^{\mathcal{M}} \emptyset} = 0$;
        \item $\model{\omega \in^{\mathcal{M}} \Omega} = 1$;
        \item $\model{\omega \in^{\mathcal{M}} (A \cup B)} = \model{\omega \in^{\mathcal{M}} A} \lor \model{\omega \in^{\mathcal{M}} B}$;
        \item $\model{\omega \in^{\mathcal{M}} (A \cap B)} = \model{\omega \in^{\mathcal{M}} A} \land \model{\omega \in^{\mathcal{M}} B}$;
        \item $\model{\omega \in^{\mathcal{M}} A} + \model{\omega \in^{\mathcal{M}} A^c} = 1$;
        \item $\mu(\hat{\Omega}^\mathcal{M}) = 1$;
        \item $0 \le \mu^{\mathcal{M}}(A) \le 1$;
        \item $\mu^{\mathcal{M}}(A \cup B) + \mu^{\mathcal{M}}(A \cap B) = \mu^{\mathcal{M}}(A) + \mu^{\mathcal{M}}(B)$.
    \end{itemize}
  Here and throughout, we write $\hat{\Omega}:\equiv \emptyset^{c}$.
\end{definition}
It is clear that the axioms pre-Loeb structures can be expressed by positive bounded  $\mathcal{L}$-formulas and the class of pre-Loeb structures is axiomatizble by a uniform $\mathcal{L}$-theory. In general, a Loeb structure is not itself a \emph{real} finitely additive probability space, but it can be naturally associated with one.
\begin{definition}[cf. Definition 3.3 of \cite{DI2017}]
    Let $\mathcal{M}$ be a pre-Loeb probability structure.  
    The \emph{external probability space} on $\Omega^{\mathcal{M}}$ is defined as follows:
    \begin{itemize}
        \item For each $A \in \mathcal{F}^{\mathcal{M}}$, define $[A]^{\mathcal{M}} := \{ \omega \in \Omega^{\mathcal{M}} : \model{\omega \in^{\mathcal{M}} A} = 1 \} \subseteq \mathcal{P}(\Omega^{\mathcal{M}})$.
        \item Let $[\mathcal{F}]^{\mathcal{M}} := \{ [A]^{\mathcal{M}} : A \in \mathcal{F}^{\mathcal{M}} \}$.
            \medskip
        \item Define $[\mu]^{\mathcal{M}} : [\mathcal{F}]^{\mathcal{M}} \to \RR$ by $[\mu]^{\mathcal{M}}([A]^{\mathcal{M}}) := \mu^{\mathcal{M}}(A)$.
    \end{itemize}
    The mapping $[\cdot]^{\mathcal{M}}: \mathcal{F}^{\mathcal{M}} \to [\mathcal{F}]^{\mathcal{M}}$ is an isomorphism of Boolean algebras, so $[\mu]^{\mathcal{M}}$ is well defined.  
    The triple $(\Omega^{\mathcal{M}}, [\mathcal{F}]^{\mathcal{M}}, [\mu]^{\mathcal{M}})$ is called the \emph{external probability space} of $\mathcal{M}$, and it is indeed a finitely additive probability space.
\end{definition}

When $\mathcal{M}$ is fixed, we write $[\mu], [\mathcal{F}], [A]$ for $[\mu]^{\mathcal{M}}, [\mathcal{F}]^{\mathcal{M}}, [A]^{\mathcal{M}}$,  respectively.

\begin{theorem}
\label{thrm:extensionLoeb}
    Let $\mathcal{M}$ be an $\aleph_1$-saturated pre-Loeb structure.  
    The external probability space $([\Omega], [\mathcal{F}], [\mu])$ of $\mathcal{M}$ extends uniquely to a $\sigma$-additive probability space $(\Omega, \mathcal{F}_L, [\mu]_L)$ with $\mathcal{F}_L:= \sigma([\mathcal{F}])$ the $\sigma$-algebra generated by $[\mathcal{F}]$.
\end{theorem}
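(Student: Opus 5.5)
This is the classical Loeb measure construction, so I will verify the hypotheses of Carathéodory's extension theorem (in its Hahn--Kolmogorov form): a finitely additive probability on an algebra of sets extends uniquely to a $\sigma$-additive measure on the generated $\sigma$-algebra, provided it is $\sigma$-additive on the algebra. The definition of the external probability space already gives us that $[\mathcal{F}]$ is an algebra of subsets of $\Omega^{\mathcal{M}}$ and that $[\mu]$ is a finitely additive probability. It therefore suffices to show that $[\mu]$ is countably additive on $[\mathcal{F}]$. Uniqueness then follows from the $\pi$--$\lambda$ theorem, since $[\mathcal{F}]$ is a $\pi$-system generating $\mathcal{F}_L$ and any two extensions are finite measures agreeing on it.

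The key step is a compactness property. If $([A_i])_{i\in\NN}$ is a decreasing sequence in $[\mathcal{F}]$ with $\bigcap_i [A_i]=\emptyset$, I claim that $[A_N]=\emptyset$ for some $N$. Suppose instead that every $[A_i]$ is nonempty. Take $C=\{A_i : i\in\NN\}$, which is countable and so of cardinality $<\aleph_1$, and consider the set of $L(C)$-formulas
\[
\Gamma(x) := \{\, 1\le \model{x\in A_i} \,:\, i\in \NN\,\}.
\]
Here $x$ is of sort $\Omega$, and $\model{x\in A_i}$ is the real-valued term $\in(x,A_i)$. Since $d_\Omega$ is discrete, taking $r=2$ makes the closed ball of radius $r$ the whole of $\Omega^{\mathcal{M}}$. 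For a finite $\Delta\subseteq\Gamma$ with largest index $k$, any $\omega\in[A_k]$ satisfies every formula of $\Delta$ discretely, because the sequence is decreasing. Hence $(\mathcal{M},C)\models_{\mathcal{A}}\exists_2 x\bigwedge\Delta$.

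By $\aleph_1$-saturation (Definition~\ref{def:sat}) there is then some $c\in\Omega^{\mathcal{M}}$ that approximately satisfies every formula in $\Gamma$. Approximate satisfaction only gives $r'\le\model{c\in A_i}$ for all $r'<1$, but the value lies in $\{0,1\}$, so $\model{c\in A_i}=1$ for every $i$. Alternatively, Theorem~\ref{thrm:approx:model} converts this to discrete satisfaction directly. Either way $c\in\bigcap_i[A_i]$, which is a contradiction.

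From this, $\sigma$-additivity follows by the standard argument. Let $[B]=\bigsqcup_n [B_n]$ be a countable disjoint union inside $[\mathcal{F}]$, and set $[A_N]:=[B]\setminus\bigcup_{n\le N}[B_n]$. This set lies in $[\mathcal{F}]$ because $[\cdot]$ is a Boolean algebra isomorphism. The sequence $[A_N]$ decreases to $\emptyset$, so by the claim it is eventually empty. The countable union is therefore really a finite union with all later $[B_n]$ empty, and finite additivity gives $[\mu]([B])=\sum_n[\mu]([B_n])$. Carathéodory's theorem now produces the extension $[\mu]_L$ on $\sigma([\mathcal{F}])=\mathcal{F}_L$.

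I expect the main obstacle to be the saturation step: checking that the type $\Gamma$ fits the precise format of Definition~\ref{def:sat}, in particular the bounded quantifier radius and the approximate-versus-discrete passage. Discreteness of $d_\Omega$ together with the $\{0,1\}$-valuedness of $\in$ handles both points.
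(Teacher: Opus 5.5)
Your proposal is correct. The paper gives no argument of its own here and simply cites Proposition 3.4 of \cite{DI2017}, which is the standard Loeb construction you reproduce: $\aleph_1$-saturation forces decreasing sequences in $[\mathcal{F}]$ with empty intersection to be eventually empty, which gives $\sigma$-additivity on the algebra, and then Carath\'eodory plus the $\pi$--$\lambda$ theorem give existence and uniqueness.
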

\begin{proof}
    This is immediate from  Proposition 3.4 of \cite{DI2017}.
\end{proof} 

\subsection{Application I: A model-theoretic proof of Theorem  \ref{thrm:meta:main:strong}}
\label{subsec:model:APP1}
Theorem  \ref{thrm:meta:main:strong} will follow from Corollary \ref{cor:main:model:unif:sat}. We first demonstrate that the context of the result can be captured in positive bounded logic. To do this, we first add a group structure to Loeb structures.

We introduce the following Henson signature $\mathcal{L}_{G}$, which will extend $\mathcal{L}$ by the following:
    \begin{itemize}
        \item A binary function symbol $\cdot: \Omega \times \Omega \to \Omega$
        \item A binary function symbol $\cdot:  \mathcal{F}\times \Omega \to \mathcal{F}$
        \item A binary function symbol $\cdot: \Omega \times \mathcal{F} \to \mathcal{F}$
        \item A unary constant symbol $(\cdot)^{-1}: \Omega \to \omega$
        \item A constant $e : \Omega$
        \item A constant $A: \mathcal{F}$
    \end{itemize}
       Now define $\mathscr{C}$ to be the uniform class of $\mathcal{L}_{G}$-structures, $\mathcal{M}$, which are pre-Loeb structures, satisfy that $(\Omega^\mathcal{M},\cdot^\mathcal{M},e^\mathcal{M})$ forms a group as well as
       \begin{itemize}
           \item $\forall a_1^, \ldots, a_k, b_1,\ldots,b_k \in \Omega^{\mathcal{M}}\, \bigvee_{i,j \le k}(\neg (\model{(a_i\cdot b_j) \in A} =1 \leftrightarrow i\le j))$
           \item $\forall H \in \mathcal{F}^{\mathcal{M}}\, \forall g \in \Omega^{\mathcal{M}}\,(\mu(g\cdot H) = \mu(H)=\mu(H\cdot g))$
             \item $\forall H \in \mathcal{F}^{\mathcal{M}}\, \forall g,q \in \Omega^{\mathcal{M}}\,(\model{q \in H\cdot g} =1 \leftrightarrow \exists h\in \Omega^{\mathcal{M}}\,(\model{h \in H} =1 \land q = h\cdot g))$
           \item $\forall H \in \mathcal{F}^{\mathcal{M}}\, \forall g,q \in \Omega^{\mathcal{M}}\,(\model{q \in g\cdot H} =1 \leftrightarrow \exists h\in \Omega^{\mathcal{M}}\,(\model{h \in H} =1 \land q =g\cdot h))$
       \end{itemize}
The above axioms can be expressed in positive bounded logic. In particular, we can express the final axiom as
\begin{equation*}
    \begin{aligned}
        & \forall_2 H\, \forall_2 g\,\forall_2 q((\model{q \in g\cdot H} =0 \lor (\exists_1 h\,(\model{h \in H} =1 \land q =g\cdot h))\\
        &\land ((\forall_2 h\,(\model{h \in H} =0 \lor h \neq g\cdot p))\lor\model{h \in g\cdot H} =1)) 
    \end{aligned}
\end{equation*}
where $ h \neq g\cdot p$ is an abbreviation for $d_\Omega(h,g\cdot p)\ge 1$. Furthermore, it is clear that $\mathscr{C}$ is axiomatizable by a uniform $\mathcal{L}_{G}$-theory, call this theory $T$. Finally, note that if $\mathcal{M} \in \mathscr{C}$ then for $H \in \mathcal{F}^{\mathcal{M}}$ and $g \in \Omega^\mathcal{M}$, one can easily verify that 
    \[
    [g\cdot H] = g \cdot[H] \mbox{ and } [H\cdot g] = [H]\cdot g.
    \]
Theorem \ref{thrm:meta:main:strong} will follow from Corollary \ref{cor:main:model:unif:sat} and the following result:
\begin{theorem}
\label{thrm:main:nonquant:strong}
Suppose $k \ge 1$, $G$ is a group and $A$ is a $k$-stable subset of $G$. Let $\mathcal{F}$ be a bi-invariant Boolean algebra containing $A$. Let $\mu$ be a bi-invariant finitely additive probability measure on $\mathcal{F}$. Then there exists $m$ such that $\mathrm{Stab}_m(A) \in \mathcal{F}$, $\mathrm{Stab}_m(A) \le G$, $\mathrm{Stab}_m(A)$ has finite index, and $\mathrm{Stab}(A) = \mathrm{Stab}_m(A)$. In addition,  there exists a subset $Y \subseteq G$, which is a union of cosets of $\mathrm{Stab}_m(A)$, such that $\mu(A \triangle Y ) =0$.
\end{theorem}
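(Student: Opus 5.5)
The plan is to argue in three stages: first establish the algebraic behaviour of the sets $\mathrm{Stab}_m(A)$, then use $k$-stability to get finite index and stabilization of the chain, and finally build $Y$. If the finitely additive setting turns out to be too weak, I will work in an $\aleph_1$-saturated elementary extension. This is permissible because the statement of Theorem~\ref{thrm:main:nonquant:strong} is what Corollary~\ref{cor:main:model:unif:sat} requires in saturated models, and there Theorem~\ref{thrm:extensionLoeb} supplies a $\sigma$-additive Loeb measure.

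\emph{Stage 1: basic properties.} By bi-invariance of $\mu$ we have $\mu(Ag^{-1}\triangle A)=\mu(A\triangle Ag)$, so each $\mathrm{Stab}_m(A)$ is symmetric. The triangle inequality
\[
\mu(Agh\triangle A)\le \mu(Agh\triangle Ah)+\mu(Ah\triangle A)=\mu(Ag\triangle A)+\mu(Ah\triangle A)
\]
gives $\mathrm{Stab}_{m+1}(A)\cdot\mathrm{Stab}_{m+1}(A)\subseteq \mathrm{Stab}_m(A)$. Consequently $\mathrm{Stab}(A):=\{g:\mu(Ag\triangle A)=0\}=\bigcap_m \mathrm{Stab}_m(A)$ is a subgroup. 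Moreover, as soon as $\mathrm{Stab}_m(A)=\mathrm{Stab}_{m'}(A)$ for all $m'\ge m$, the set $\mathrm{Stab}_m(A)$ is a subgroup equal to $\mathrm{Stab}(A)$.

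\emph{Stage 2: finite index and stabilization.} This is where $k$-stability enters.

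For finite index, a $k$-stable set gives a set system $\{Ag : g\in G\}$ of VC-dimension bounded in terms of $k$. A packing argument (Haussler-type, carried out for the Loeb measure of a saturated extension, or directly by a finite-sampling argument) bounds the size of any family of translates $Ag_1,\ldots,Ag_N$ that are pairwise $2^{-m-1}$-far in $\mu(\cdot\triangle\cdot)$. For a maximal such family, the sets $g_i\,\mathrm{Stab}_{m+1}(A)$ then cover $G$. Combined with Stage 1, this shows each $\mathrm{Stab}_m(A)$ is generic, with boundedly many left translates covering $G$.

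For stabilization, I would use the stable-group fact that the $\mu$-type of $A$ is definable. Concretely, the function $g\mapsto \mu(A\cap Ag)$ restricted to a threshold should be given by a finite Boolean combination of translates $hA$, $Ah$. This is the analogue of definability of types for the stable formula $x\cdot y\in A$, obtained via the order-property bound; in the saturated setting it follows from stability of the Loeb measure. Granting this, the descending chain $\mathrm{Stab}_m(A)$ consists of finite Boolean combinations of translates of $A$. A Baldwin--Saxl style chain condition for uniformly definable, nearly subgroup families in a stable formula then forces $\mathrm{Stab}_m(A)=\mathrm{Stab}_{m+1}(A)=\cdots$ for some $m$. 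By bi-invariance of $\mathcal{F}$, such a Boolean combination lies in $\mathcal{F}$, so $H:=\mathrm{Stab}_m(A)\in\mathcal{F}$ is a subgroup of finite index equal to $\mathrm{Stab}(A)$.

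\emph{Stage 3: construction of $Y$.} Let $g_1,\ldots,g_n$ be coset representatives of $H$, and set
\[
Y:=\bigcup\{g_iH : \mu(A\cap g_iH)>\tfrac12\mu(H)\}.
\]
It suffices to show that every coset satisfies $\mu(A\cap g_iH)\in\{0,\mu(H)\}$. Since $\mu(Ah\triangle A)=0$ for all $h\in H$, the set $A$ is right $H$-invariant up to null sets. In the Loeb measure I would apply Fubini to the map $(a,h)\mapsto \mathbf 1_A(ah)$ on $g_iH\times H$: averaging over $h$ shows that $\mathbf 1_A$ is a.e.\ equal on $g_iH$ to its average, which therefore takes only the values $0$ and $1$. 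Transferring back requires that these values are expressible by positive bounded sentences, which they are.

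I expect the main obstacle to be the stabilization step in Stage 2: proving that $\mathrm{Stab}_m(A)$ is eventually constant and lies in $\mathcal{F}$, which requires the definability of the measure-stabilizer from stability. I would first try to prove this in the $\aleph_1$-saturated setting, where Loeb $\sigma$-additivity and the standard theory of stable formulas are available.
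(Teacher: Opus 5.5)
Your Stage~1 and the packing part of Stage~2 are fine. The argument breaks exactly where you expect: nothing in the proposal shows that the chain $\mathrm{Stab}_m(A)$ stabilizes, and neither mechanism you invoke delivers this.

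What stability/VC arguments give (and what underlies Lemma~\ref{lem:Stab:in:F}) is a set of $\mathcal{F}$ \emph{sandwiched} between $\mathrm{Stab}_n(A)$ and $\mathrm{Stab}_m(A)$ for $n>m$. An exact level set of $g\mapsto\mu(Ag\triangle A)$ lies in $\mathcal{F}$ only once a gap $\mathrm{Stab}_n(A)=\mathrm{Stab}_m(A)$ is known. That is precisely the hypothesis of Lemma~\ref{lem:Stab:in:F}, so ``definability'' cannot be used to \emph{produce} stabilization.

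Baldwin--Saxl type chain conditions concern uniformly definable subgroups. The $\mathrm{Stab}_m(A)$ are not subgroups before stabilization, and you have no a priori bound, uniform in $m$, on the complexity of any definition of them. Genericity of each $\mathrm{Stab}_m(A)$ does not help either, since the number of covering translates grows with $m$.

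The missing idea is the gap lemma (Lemma~\ref{lem:bound:symm:nonquant}): there is $m$ such that $\mu(Ag\triangle Ah)>0$ implies $\mu(Ag\triangle Ah)>2^{-m}$ for all $g,h$. The paper proves it by contradiction. If arbitrarily small positive values occur, one inductively builds $g_1,h_1,\ldots,g_s,h_s$ so that every half-graph cell $\bigcap_{i\le t}(Ag_i\triangle Ah_i)\cap\bigcap_{t<i\le n}(Ag_i\triangle Ah_i)^c$ has positive measure. At each step, take $g,h$ with $\mu(Ag\triangle Ah)$ positive but below half the smallest cell. Then right-translate $Ag\triangle Ah$ (generic by Lemma~\ref{lem:pos:gen}) so that it meets the full intersection in positive measure. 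The resulting configuration contradicts the stability of $x\in Ay\triangle Az$ (Lemma~\ref{lem:stable:symm}).

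Given the gap lemma, everything follows at once:
\begin{itemize}
\item $\mathrm{Stab}_{m'}(A)=\mathrm{Stab}(A)$ for all $m'\ge m$.
\item Distinct classes $[Ag]$ are $2^{-m}$-separated, so Theorem~\ref{thrm:packing} bounds the orbit of $[A]$, and orbit--stabilizer gives finite index.
\item $\mathrm{Stab}_{m+1}(A)=\mathrm{Stab}_m(A)$ feeds Lemma~\ref{lem:Stab:in:F}, giving $\mathrm{Stab}_m(A)\in\mathcal{F}$.
\end{itemize}

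Stage~3 has a second gap. $\sigma$-additivity of the Loeb measure does not license Fubini, because $\{(a,h):ah\in A\}$ need not be measurable for the product of the Loeb $\sigma$-algebras. Your argument never uses stability, and without stability the conclusion is false. In $G=\mathbb{Z}$ with $\mathcal{F}=\mathcal{P}(\mathbb{Z})$, take $A=\bigcup_k[4^k,2\cdot 4^k)$. Then $\mu((A+h)\triangle A)=0$ for every invariant mean $\mu$ and every $h$, since this set has upper Banach density $0$. Yet some invariant mean gives $\mu(A)=\tfrac12$. For that $\mu$ this persists in a saturated elementary extension, where $\mathrm{Stab}(A)=G$; the two iterated integrals of $|\mathbf 1_A(a)-\mathbf 1_A(a+h)|$ are then $0$ and $\tfrac12$.

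The coset dichotomy must therefore come from stability, for example as in the paper. Suppose $B=H\cap g^{-1}A$ and $C=H\cap g^{-1}A^c$ both had positive measure. Then $B$ is right generic by Lemma~\ref{lem:pos:gen}, so $\mu(Bh\cap C)>0$ for some $h$. This forces $h\in H$ and $\mu(Ah\cap A^c)>0$, contradicting $h\in\mathrm{Stab}(A)$.

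Finally, proving the result only in an $\aleph_1$-saturated extension proves less than what is asserted. The theorem concerns every $(G,\mathcal{F},\mu)$ with $\mu$ merely finitely additive. The paper avoids this by using finitely additive tools throughout: Conant's genericity theorem and the Lov\'asz--Szegedy packing lemma. If you stay in the extension, you owe a transfer argument back to the original structure.
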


Assuming Theorem \ref{thrm:main:nonquant:strong}, our first step is to state the non-uniform version of Theorem \ref{thrm:meta:main:strong} as a positive bounded $\mathcal{L}_{G}$ formula. For each $f: \NN \times \NN \to \NN$ and $n,m \in \NN$ we define the  positive bounded $\mathcal{L}_{G}$ formula $\varphi_{f,n,m}$ via
\begin{equation*}
    \begin{aligned}
            \varphi_{f,n,m}:\equiv& \forall_2 H\,\forall_2 Y((\mbox{$H$ is not a subgroup})\lor(\mbox{$H$ does not have index at most $n$})\lor\\
            &(\mbox{not }(\mathrm{Stab}_m^{<}(A)= H  =\mathrm{Stab}_m(A)))\lor (\mbox{$Y$ is not a union of cosets of $H$})\lor\\
            &(\mu(A \triangle Y)\ge 2^{-f(n,m)}))
    \end{aligned}
\end{equation*}
where
\begin{equation*}
    \begin{aligned}
&\mbox{$H$ is not a subgroup}:\equiv \exists_1 g\, \exists_1 h\,((\model{g \in H} = 1 \land \model{h \in H} = 1) \land \model{g\cdot h^{-1} \in H} = 0 ) \lor H = \emptyset\\
& \mbox{$H$ does not have index at most $n$}:\equiv \forall_2 g_1\ldots\forall_2 g_n\,\left( d_{\mathcal{F}}\left(\bigcup_{i\le n}g_i\cdot H, \hat{\Omega}\right)=1  \right)
    \end{aligned}
\end{equation*}
\begin{equation*}
    \begin{aligned}
\mbox{not }(\mathrm{Stab}_m^{<}(A)= H  =\mathrm{Stab}_m(A)):\equiv& (\exists_1 h\,((\mu( A\cdot h \triangle A)\le 2^{-m})\land(\model{h \in H} = 0) ))\lor \\
&(\exists_1 h\,((\model{h \in H} = 1)\land(\mu(A\cdot h \triangle A)\ge 2^{-m}) ))
    \end{aligned}
\end{equation*}
and
\begin{equation*}
\mbox{$Y$ is not a union of cosets of $H$}:\equiv \forall_2 g_1\ldots\forall_2 g_n\,\exists_1 \omega\,\left( \model{\omega \in Y\triangle \bigcup_{i\le n}g_i\cdot H}=1 \right). 
\end{equation*}
In the above we write $P\triangle Q:\equiv (P\cap Q^c) \cup (Q\cap P^c)$ and $\bigcup_{i\le n}$ is `meta' notation for the application of $\cup$ $n$ times. We now apply Corollary \ref{cor:main:model:unif:sat} with $X= \NN^{\NN \times \NN}$. Take $\mathcal{M} \in \mathscr{C}=\mathrm{Mod}(T)$. Then $(\Omega^{\mathcal{M}},\cdot^{\mathcal{M}},e^\mathcal{M})$ is a group, the external probability measure $[\mu]$ will be bi-invariant, the external algebra $[\mathcal{F}]$ will be bi-invariant and contain $[A]$ and $[A]$ will be $k$-stable. Thus, Theorem \ref{thrm:main:nonquant:strong} implies there exists $m \in \NN$,$[H]\in [\mathcal{F}]$ which will correspond to some $H \in \mathcal{F}^{\mathcal{M}}$, and  $[Y] \in [\mathcal{F}]$ corresponding to some $Y \in \mathcal{F}^{\mathcal{M}}$ such that $H$ is a finite index subgroup with $\mathrm{Stab}^<_m([A])=[H]=\mathrm{Stab}_m([A])$, $[Y]$ a union of cosets of $[H]$ and $[\mu]([A]\triangle[Y]) =0$. This will imply that for all $f:\NN \times \NN \to \NN$ there exists $n,m \in \NN$ such that
\begin{equation*}
    \begin{aligned}
            &\exists [H] \in [\mathcal{F}]\,\exists [Y]\in [\mathcal{F}]\,((\mbox{$[H]$ is a subgroup})\land(\mbox{$[H]$ has index at most $n$})\land\\
            &(\mathrm{Stab}^<_m([A])=[H]=\mathrm{Stab}_m([A]))\land (\mbox{$[Y]$ is a union of cosets of $[H]$})\land\\
            &([\mu]([A] \triangle [Y])< 2^{-f(n,m)}))
    \end{aligned}
\end{equation*}
since for a given $f$ we can just take  $m, [H]$ and $[Y]$ from Theorem \ref{thrm:main:nonquant:strong} and $n$ the index of $[H]$. It then follows that $\mathcal{M}\nvDash \varphi_{f,n,m}$ and so the result follows from Corollary \ref{cor:main:model:unif:sat}, noting that one can view $(m,n)$ as a single variable via an appropriate paring function. All that remains is to prove Theorem \ref{thrm:main:nonquant:strong}. 

Throughout the remainder of the section, fix $G,A,k, \mathcal{F}, \mu$ as in the statement of Theorem \ref{thrm:main:nonquant:strong}. To prove Theorem \ref{thrm:main:nonquant:strong}, we need a series of lemmas. The first result is taken from \cite{Conant:2021:stability:in:group}. To state this result, we must first introduce the definition of a subset of $G$ being generic.
\begin{definition}
    A subset of $X \subseteq G$ is said to be (left) $n$-generic, if there exists $n$ (left) translates of $X$ covering $G$, that is, there exists $g_1,\ldots g_n \in G$ such that 
    \[
    G= \bigcup_{i = 1}^n g_iX.
    \]
    $X$ is generic if it is $n$-generic for some $n\ge 1$.
\end{definition}
\begin{lemma}
\label{lem:pos:gen}
    For all stable $B \in \mathcal{F}$, $\mu(B) >0$ iff $B$ is left  generic iff $B$ is right generic.
\end{lemma}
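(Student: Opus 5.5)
The plan is to prove the two equivalences $\mu(B)>0 \Leftrightarrow B$ left generic and $\mu(B)>0 \Leftrightarrow B$ right generic. The right-handed one is the mirror image of the left-handed one: replace left translates $gB$ by right translates $Bg$ and use right invariance of $\mu$ and of $\mathcal{F}$. So I only describe the left-handed case.

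\emph{Generic implies positive measure.} This direction is immediate. If $G=\bigcup_{i\le n} g_iB$, then every $g_iB$ lies in $\mathcal{F}$ by bi-invariance. Finite subadditivity and left invariance then give
\[
1=\mu(G)\le \sum_{i\le n}\mu(g_iB)=n\,\mu(B),
\]
so $\mu(B)\ge 1/n>0$. Stability is not used here.

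\emph{Positive measure implies generic.} This is the substantial direction, and I would argue by contradiction. Suppose $\mu(B)=\delta>0$, $B$ is $k$-stable, but $B$ is not left generic. The aim is to build $a_1,\dots,a_k,b_1,\dots,b_k$ with $a_ib_j\in B\leftrightarrow i\le j$. Setting $a_i=g_i^{-1}$, it suffices to find $g_1,\dots,g_k$ and points $x_1,\dots,x_k$ such that $x_j\in g_iB$ exactly when $i\le j$. I would construct them inductively, maintaining two invariants:
\begin{itemize}
\item the set $C_j:=g_1B\cap\dots\cap g_jB\in\mathcal{F}$ is not covered by finitely many further left translates of $B$, and still carries enough measure to continue;
\item $x_j\in C_j$.
\end{itemize}
At stage $j+1$ I need $g_{j+1}$ such that $g_{j+1}B$ avoids $x_1,\dots,x_j$, i.e.\ $g_{j+1}\notin\bigcup_{i\le j}x_iB^{-1}$, while $C_j\cap g_{j+1}B$ keeps positive measure. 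I would then pick $x_{j+1}$ in $C_{j+1}$. Points in $C_j\setminus g_{j+1}B$, which exist by non-genericity, are what provide the ``$x_j\notin g_iB$ for $i>j$'' pattern. If this has to be arranged in the opposite order, I would rebuild the sequence backwards.

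\emph{Main obstacle: getting positive-measure intersections.} The only nontrivial input is a pigeonhole argument. Among any $N>1/\delta$ translates $h_1B,\dots,h_NB$, each of measure $\delta$, we have $\sum_i\mu(h_iB)=N\delta>1$. The finitely many Boolean combinations of these sets lie in $\mathcal{F}$, and finite additivity over these atoms shows that some atom lying in at least $\lceil N\delta\rceil\ge 2$ of the translates has positive measure. Iterating this inside $C_j$, with finite Ramsey's theorem to pass to large subfamilies all of whose $j$-fold intersections have positive measure, should keep the induction alive for $k$ steps. The two places where care is needed are these:
\begin{itemize}
\item the choice of $g_{j+1}$ must avoid the finitely many sets $x_iB^{-1}$, and I would handle this by choosing from a large Ramsey-homogeneous family and discarding the finitely many bad members;
\item the non-genericity hypothesis must be applied to the right finite family of translates.
\end{itemize}
If this direct construction becomes unwieldy, the fallback is to cite the corresponding statement in \cite{Conant:2021:stability:in:group}, which characterises genericity of stable sets via positive invariant measure, and check that its proof uses only finite additivity and bi-invariance.
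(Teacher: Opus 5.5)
Your direction ``generic $\Rightarrow \mu(B)>0$'' is correct: $G=\bigcup_{i\le n}g_iB$ together with subadditivity and left invariance gives $1\le n\,\mu(B)$, and the right-handed case is the mirror image. The converse, however, is not established by your construction. The gap sits exactly where stability and non-genericity have to interact.

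At stage $j+1$ you need some $g$ with $\mu(C_j\cap gB)>0$ and $x_1,\dots,x_j\notin gB$. Equivalently, $S_j:=\{g:\mu(C_j\cap gB)>0\}\not\subseteq\bigcup_{i\le j}x_iB^{-1}$. Nothing in the sketch yields this.
\begin{itemize}
\item ``Discard the finitely many bad members'' fails, because the bad members (those $g$ with $x_i\in gB$ for some $i\le j$) need not be finitely many. The averaging you use yourself shows that many points of $C_j$ lie in a positive proportion of the members of your family. So an arbitrarily chosen $x_i\in C_i$ may lie in all but finitely many members, and then no admissible $g_{j+1}$ exists in that family.
\item The invariant ``$C_j$ is not covered by finitely many further translates'' is unsupported. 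Non-genericity of $B$ is about covering $G$, not $C_j$; already $C_1=g_1B$ is covered by a single translate. You also never show how the invariant passes from $C_j$ to $C_{j+1}$.
\item Even the bare existence of some $g\notin\bigcup_{i\le j}x_iB^{-1}$ is equivalent to $\bigcup_{i\le j}Bx_i^{-1}\neq G$, a statement about \emph{right} translates. You assumed that $B$ is not \emph{left} generic. This is fixable by swapping sides, but it shows the hypothesis is not yet doing any work.
\end{itemize}

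To see that this is the real difficulty, note that for $j=1$ the step can be done. A maximal family of translates $fB$, $f\in F$, with pairwise null intersections has $|F|\le 1/\delta$. Maximality gives $G=Fg_1^{-1}S_1$. Hence $S_1\subseteq x_1B^{-1}$ would force $G=Bx_1^{-1}g_1F^{-1}$, so $B$ would be right generic. For $j\ge 2$, however, the same argument needs finitely many left translates of $C_j$ to cover $G$ up to measure less than $\delta$. Since $C_j$ is itself a stable set of positive measure, this is essentially the genericity the lemma asserts. So your induction is circular without a genuinely new ingredient; in Conant's work this is supplied by local stable group theory.

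Your proof therefore rests entirely on the fallback, and that is indeed the paper's route. The paper, though, justifies the citation more cleanly than by checking that Conant's proof ``uses only finite additivity and bi-invariance''. By part (a) of Theorem~1.1 of \cite{Conant:2021:stability:in:group}, together with the remark after it, the Boolean algebra of stable sets carries a \emph{unique} bi-invariant finitely additive probability measure. So $\mu$ restricted to the stable sets in $\mathcal{F}$ is Conant's measure, and part (b) then gives $\mu(B)>0\iff B$ left generic $\iff B$ right generic directly, with no need to audit his argument.
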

\begin{proof}
     $\mu$ restricted to the Boolean algebra of stable sets will be the unique bi-invariant finitely additive measure by point (a) Theorem 1.1 of \cite{Conant:2021:stability:in:group} (and the remark after the statement of the theorem). Thus, the result follows from point (b) of Theorem 1.1 of \cite{Conant:2021:stability:in:group}.
\end{proof}
 We must now recall some results from $\mathrm{VC}$ theory.

\begin{definition}
A set system on a set $X$ is a set $\mathcal{S} \subseteq \mathcal{P}(X)$ of subsets of $X$. For a set $X$, a set system on $X$ $\mathcal{S} \subseteq \mathcal{P}(X)$ and $W \subseteq X$, we say that $\mathcal{S}$ shatters $W$ if $\mathcal{P}(W) =\{A \cap S : S \in \mathcal{S}\}$. The $\mathrm{VC}$-dimension of a set system $\mathcal{S}$ on a set $X$ is $\mathrm{VC}(S) = \sup\{n \in \NN : \mathcal{S} \mbox{ shatters some } W \subseteq X \mbox{ of size }n\}$. If $B \subseteq G$, write $\mathrm{VC}_l(B):= \mathrm{VC}(\{gB|g \in G\})$ and $\mathrm{VC}_r(B):= \mathrm{VC}(\{Bg|g \in G\})$.
\end{definition}

The following result linking $\mathrm{VC}$ theory to stability is immediate.

\begin{theorem}[cf.\ Fact 2.8 \cite{conant2021quantitative}]
\label{thrm:stab:vc}
    If a set $B \subseteq G$ is $k$-stable, then $\mathrm{VC}_l(B), \mathrm{VC}_r(B) < k$.
\end{theorem}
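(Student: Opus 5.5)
We argue by contraposition: assuming $\mathrm{VC}_l(B)\ge k$, we build a witness to the $k$-order property for $B$. The right-translate case is symmetric.

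\textbf{Extracting the order witness.} Suppose $\{gB \mid g\in G\}$ shatters a set $W=\{b_1,\ldots,b_k\}\subseteq G$ of size $k$. For each $i\le k$ consider the subset $W_i:=\{b_j \mid j\ge i\}\subseteq W$. By shattering, there is $g_i\in G$ with $g_iB\cap W = W_i$. Thus $b_j\in g_iB$ iff $i\le j$, that is, $g_i^{-1}b_j\in B$ iff $i\le j$.

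Now set $a_i:=g_i^{-1}$. Then for all $i,j\le k$,
\[
a_i\cdot b_j\in B \leftrightarrow i\le j,
\]
so $B$ has the $k$-order property. This contradicts $k$-stability, and hence $\mathrm{VC}_l(B)<k$.

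\textbf{The right-translate case.} Suppose instead that $\{Bg \mid g\in G\}$ shatters $W=\{a_1,\ldots,a_k\}$. For each $j\le k$, choose $h_j$ with $Bh_j\cap W=\{a_i \mid i\le j\}$. Then $a_i\in Bh_j$ iff $i\le j$, that is, $a_ih_j^{-1}\in B$ iff $i\le j$. Putting $b_j:=h_j^{-1}$ gives exactly the $k$-order pattern, so again $B$ is not $k$-stable. Hence $\mathrm{VC}_r(B)<k$.

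\textbf{Where care is needed.} The only delicate point is orienting the index sets correctly: upper segments $\{j\ge i\}$ in the left case and lower segments $\{i\le j\}$ in the right case, so that the resulting relation is $i\le j$ rather than $i\ge j$. If a segment is oriented the wrong way, one can reindex by $i\mapsto k+1-i$ on both families to flip the order pattern.

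We also note that shattering a $k$-set uses only $k$ of its $2^k$ traces. So the argument in fact shows the slightly stronger statement that a shattered $k$-set already yields the order property, which is more than is needed.
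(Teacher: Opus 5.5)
Your proof is correct. Taking the $k$ upper segments in the left case (respectively the lower segments in the right case) of a shattered $k$-set produces exactly a half-graph witnessing the $k$-order property, which is the standard argument behind Fact 2.8 of Conant; the paper gives no proof of its own and simply cites that fact as immediate. The one step you leave implicit, passing from $\mathrm{VC}_l(B)\ge k$ to a shattered set of size exactly $k$, holds because subsets of shattered sets are again shattered.
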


The next result we need is a generalisation of Haussler's Packing Lemma 
\cite{haussler1995sphere} to finitely additive measures given in \cite{lovasz2010regularity}:

\begin{theorem}[cf.\ Lemma 4.6 of \cite{lovasz2010regularity}]
\label{thrm:packing}
    Let $m \in \NN$ and $\mathcal{S} \subseteq \mathcal{F}$ be a set system on $G$ with  $\mathrm{VC}$-dimension $k$ and $\mathcal{H} \subseteq \mathcal{S}$ satisfying  $\mu(B \triangle C) \ge 2^{-m}$ for all distinct $C, B \in \mathcal{H}$. Then $|\mathcal{H}| \le  \omega_k(m):=(80k)^k2^{20mk}$.
\end{theorem}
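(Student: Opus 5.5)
Since $\mathcal{H}$ is bounded as soon as all of its finite subfamilies are, I fix a finite $\mathcal{H}_0\subseteq\mathcal{H}$. I then reduce the finitely additive situation to a finite probability space and run a random-sampling argument combined with the Sauer--Shelah lemma.

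\emph{Step 1: reduction to a finite space.} The sets in $\mathcal{H}_0$ generate a finite Boolean subalgebra of $\mathcal{F}$ with atoms $P_1,\dots,P_r$. I discard the atoms with $\mu(P_i)=0$. From each remaining atom I pick a point $x_i\in P_i$, and I put the weight $w_i:=\mu(P_i)$ on it, so the weights sum to $1$. By finite additivity, every $B,C\in\mathcal{H}_0$ satisfy $\mu(B\triangle C)=\sum\{w_i : x_i\in B\triangle C\}$, because each atom lies entirely inside or entirely outside $B\triangle C$. The traces of $\mathcal{S}$ on $X_0:=\{x_1,\dots,x_r\}$ form a set system of VC-dimension at most $k$, since restricting to a subset cannot increase the VC-dimension. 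Distinct members of $\mathcal{H}_0$ stay separated by at least $2^{-m}$ in the weighted counting measure $\nu$ on $X_0$. So it suffices to prove the bound for the finite probability space $(X_0,\nu)$.

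\emph{Step 2: sampling.} Suppose, for a contradiction, that $|\mathcal{H}_0|>\omega_k(m)$. Shrinking $\mathcal{H}_0$ if necessary, I may assume $N:=|\mathcal{H}_0|=\omega_k(m)+1$. I draw $s$ points $y_1,\dots,y_s$ independently according to $\nu$. For fixed distinct $B,C\in\mathcal{H}_0$, the probability that $B$ and $C$ have the same trace on the sample is at most
\[
(1-2^{-m})^s\le e^{-s2^{-m}}.
\]
A union bound over the fewer than $N^2$ pairs shows that, once $s> 2^{m}\cdot 2\ln N$, some sample $W=\{y_1,\dots,y_s\}$ gives all members of $\mathcal{H}_0$ pairwise distinct traces. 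I therefore take $s:=\lceil 3\cdot 2^m\ln N\rceil$.

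\emph{Step 3: Sauer--Shelah and the arithmetic.} The traces on $W$ form a set system on at most $s$ points of VC-dimension at most $k$. By Sauer--Shelah there are at most $(es/k)^k$ of them, so $N\le (es/k)^k$. With $\ln N\approx k\ln(80k)+20mk\ln 2$, this gives
\[
es/k\lesssim 3e\,2^m\bigl(\ln(80k)+14m\bigr),
\]
and I then check that this is strictly less than $80k\cdot 2^{20m}$. That check yields $(es/k)^k<(80k)^k2^{20mk}<N$, the desired contradiction.

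The main obstacle is this last inequality. The constants have to absorb the rounding in $s$, the small cases $m=0$ and $k=1$, and the case where the VC-dimension is exactly $k$ rather than below it. If the slack turns out to be too tight, I would fall back on a cruder sampling size or on Haussler's original packing bound, which is stronger, applied to the finite space obtained in Step 1.
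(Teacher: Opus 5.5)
Your argument is correct. The paper itself gives no proof of this statement; it imports it from Lemma 4.6 of \cite{lovasz2010regularity} as a finitely additive form of Haussler's packing lemma. Your Step 1 is exactly what makes finite additivity harmless. Passing to the atoms of the finite subalgebra generated by $\mathcal{H}_0$ gives an honest finite probability space $(X_0,\nu)$ with the same distances $\mu(B\triangle C)$ and no larger VC-dimension, so i.i.d.\ sampling needs neither $\sigma$-additivity nor product measures. Steps 2--3 are the standard elementary packing argument: a random sample separating all pairs, then Sauer--Shelah. Run without fixing $N$ in advance, it gives a bound of the shape $(C\,2^m(m+1))^k$ for an absolute constant $C$. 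That loses a logarithmic factor per dimension against Haussler's sharp bound, but lies well below $\omega_k(m)$. The citation buys brevity; your route buys a self-contained proof in exactly the finitely additive generality the paper needs.

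The inequality you flag as the main obstacle is far from tight. From $s\le 3\cdot 2^m\ln N+1$ and $\ln N\le\ln(2\omega_k(m))\le\ln 2+k\ln(80k)+14mk$ you get
\[
\frac{es}{k}\le 3e\,2^m\bigl(\ln 2+\ln(80k)+14m\bigr)+e .
\]
For $m=0$ this is below $80k$ (about $44$ when $k=1$). For $m\ge 1$ it is dwarfed by the extra factor $2^{19m}$ in $80k\cdot 2^{20m}$. Hence $(es/k)^k\le\omega_k(m)<N$, and no fallback is needed.

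Two further checks also go through. First, $s\ge 3k\ln 80>k$, so the estimate $\sum_{i\le k}\binom{s}{i}\le(es/k)^k$ is legitimate. Second, Sauer--Shelah with parameter $k$ applies to any VC-dimension at most $k$, so dimension exactly $k$ is not a special case; the paper in fact uses the result with $\mathrm{VC}<k$.

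The one case you should add is $k=0$, where $(es/k)^k$ is meaningless but the claim is trivial. VC-dimension $0$ means each point lies in all or in none of the members of $\mathcal{H}$, so $|\mathcal{H}|\le 1=\omega_0(m)$ with the convention $0^0=1$. This case is not vacuous, since the paper invokes $\omega_{k-1}$ for $k\ge 1$.
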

We must now recall the definition of general stable relations.
\begin{definition}
    For sets $X, Y$, a relation $\phi(x,y)$ on $X \times Y$ is said to be $k$-stable if there does not exist $a_1,\ldots,a_k$ and $b_k,\ldots,b_k$ satisfying 
    \[
    \forall i,j\le k \,(\phi(a_i,b_j)  \leftrightarrow i \le j). 
    \] 
\end{definition}
\begin{lemma}[cf.\ Proposition 3.7 of \cite{conant2021quantitative}]
\label{lem:stable:symm}
    The relation $\phi(x,(y,z))$ on $G \times G^2$ defined via $x \in Ay \triangle Az$ is $n$-stable, where $n= R(R(k, k + 1), R(k, k + 1))+ 1$ and $R(x,y)$ is the usual two-color Ramsey number for graphs.
\end{lemma}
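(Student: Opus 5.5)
The plan is to argue by contradiction with a two-stage Ramsey argument. This reduces an order property for $\phi$ to an order property of length $k$ for the basic relation $\psi(x,y):\equiv x\in Ay$, which is the same as $xy^{-1}\in A$.

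First observe that $\psi$ is $k$-stable. Suppose $a_1,\ldots,a_k,b_1,\ldots,b_k$ satisfy $\psi(a_i,b_j)\leftrightarrow i\le j$. Then $a_i\cdot b_j^{-1}\in A\leftrightarrow i\le j$, so the elements $a_i$ and $b_j^{-1}$ witness the $k$-order property for $A$. The same holds for the reversed pattern $i\ge j$ and, after reindexing, for the strict patterns $i<j$ and $i>j$ on one more element. More precisely, if $\psi(a_i,b_j)\leftrightarrow i<j$ for $i,j\le k+1$, then the pairs $(a_i,b_{i+1})$, $i\le k$, give the pattern $i\le j$. Symmetrically, $\psi(a_i,b_j)\leftrightarrow i>j$ yields the pattern $i\ge j$ via $(a_{i+1},b_i)$, and reversing the indexing turns that into $i\le j$. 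So it suffices to find such a pattern of length $k$ (non-strict) or $k+1$ (strict).

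Now suppose $a_1,\ldots,a_n$ and $(b_1,c_1),\ldots,(b_n,c_n)$ satisfy $a_i\in Ab_j\triangle Ac_j\leftrightarrow i\le j$. For $i<j$ we then know two things:
\begin{itemize}
\item $a_i$ lies in exactly one of $Ab_j$ and $Ac_j$;
\item $a_j$ lies in both $Ab_i$ and $Ac_i$, or in neither.
\end{itemize}
Work on the index set $\{1,\ldots,n-1\}$ and colour each pair $i<j$ twice.

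The first colour records whether $a_i\in Ab_j$ (equivalently $a_i\notin Ac_j$) or $a_i\in Ac_j$. Since $n-1=R(N,N)$ with $N=R(k,k+1)$, we obtain a homogeneous set $S$ of size $N$. Swapping the roles of $b$ and $c$ if necessary, which is harmless since $\triangle$ is symmetric, we may assume that $a_i\in Ab_j$ and $a_i\notin Ac_j$ for all $i<j$ in $S$.

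The second colour, on pairs in $S$, records whether $a_j\in Ab_i\cap Ac_i$ or $a_j\notin Ab_i\cup Ac_i$. Ramsey gives a homogeneous subset of size $k$ in one colour or size $k+1$ in the other, and I will arrange the colours to match the two cases below.
\begin{itemize}
\item In the ``neither'' case, for $i<j$ we have $\psi(a_i,b_j)$ and $\neg\psi(a_j,b_i)$. Off the diagonal, $\psi(a_i,b_j)\leftrightarrow i<j$, and the strict-pattern reduction with $k+1$ elements gives a contradiction.
\item In the ``both'' case, for $i<j$ we have $\neg\psi(a_i,c_j)$ and $\psi(a_j,c_i)$. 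Off the diagonal, $\psi(a_i,c_j)\leftrightarrow i>j$, which is again a strict pattern.
\end{itemize}

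The main obstacle is the bookkeeping that makes the numbers exactly $R(k,k+1)$ and the extra $+1$. Both strict cases nominally need $k+1$ elements, but only one colour class of $R(k,k+1)$ is guaranteed to have size $k+1$. In the case allotted only $k$ elements, I would try to use the diagonal information as well. On the diagonal $a_i$ lies in exactly one of $Ab_i$ and $Ac_i$, which is exactly the bit needed to extend a strict pattern to a non-strict one. One can either colour the diagonal too, which is what the spare index $n$ and the passage to $\{1,\ldots,n-1\}$ are for, or choose in the first stage whether to work with $b$ or with $c$ according to that bit. Getting this alignment right, so that one case genuinely needs only $k$ indices and the other uses the $k+1$, is the step I expect to require the most care.
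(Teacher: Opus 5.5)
The paper gives no proof of this lemma; it only cites Proposition~3.7 of Conant. Judged on its own, your argument has a genuine gap, and it is larger than you indicate. It is not only the case allotted $k$ indices that needs diagonal information; both cases do. Your strict-to-non-strict reduction (pairs $(a_i,b_{i+1})$) uses the entries $\psi(a_{j+1},b_{j+1})$, and these must be false. In the ``neither'' case you only know that $a_i$ lies in exactly one of $Ab_i$ and $Ac_i$, so $\psi(a_i,b_i)$ may be true and the reduction breaks. Likewise, in the ``both'' case the reduction via $(a_{i+1},c_i)$ needs $a_i\notin Ac_i$, which you do not know. After your \emph{global} swap of $b$ and $c$ the diagonal bits are uncontrolled. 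They are properties of single indices, so no colouring of pairs, and no spare index, makes them homogeneous for free. Pigeonholing them first works but costs a factor of $2$ and weakens the bound.

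The repair is your second suggestion, carried out \emph{before} any colouring and \emph{per index}. For each $j$, swap $b_j$ and $c_j$ if necessary so that $a_j\in Ab_j$ and $a_j\notin Ac_j$; this is legitimate since $Ab_j\triangle Ac_j=Ac_j\triangle Ab_j$. Now the diagonal is known, but you can no longer assume the first colour is ``$a_i\in Ab_j$'', so you must treat both first-stage colours. On a doubly homogeneous set, with all entries including the diagonal now determined, the four cases are:
\begin{itemize}
\item first colour $b$ and ``neither'': $\psi(a_i,b_j)\leftrightarrow i\le j$;
\item $b$ and ``both'': $\psi(a_i,c_j)\leftrightarrow i>j$;
\item $c$ and ``both'': $\psi(a_i,b_j)\leftrightarrow i\ge j$;
\item $c$ and ``neither'': $\psi(a_i,c_j)\leftrightarrow i<j$.
\end{itemize}
The non-strict patterns need $k$ indices and the strict ones $k+1$. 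For each first-stage colour, exactly one second-stage colour is the one needing only $k$. Hence the second stage needs $|S|=R(k,k+1)=N$ and the first stage only $R(N,N)$ indices. This proves $R(N,N)$-stability, which implies the stated $(R(N,N)+1)$-stability; your passage to $\{1,\ldots,n-1\}$ is unnecessary.
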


\begin{lemma}
\label{lem:bound:symm:nonquant}
    There exists $m \in \NN$ such that for all $g,h \in G$, 
    \[
    \mu(Ag \triangle Ah) >0 \to \mu(Ag \triangle Ah)>2^{-m}
    \]
\end{lemma}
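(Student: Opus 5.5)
The plan is first to reduce to translates of $A$ itself, and then to show that the positive values of $\mu(Ag\triangle A)$ cannot accumulate at $0$. The stabilisation step (the third paragraph) is the part I am least sure of.

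\textbf{Reduction to one variable.} By right invariance of $\mu$ we have $Ag\triangle Ah=(Agh^{-1}\triangle A)h$. Hence
\[
\mu(Ag\triangle Ah)=\mu(Agh^{-1}\triangle A),
\]
so it suffices to find $m$ such that $\mu(Ag\triangle A)>0$ implies $\mu(Ag\triangle A)>2^{-m}$ for all $g\in G$. Set $\delta(g):=\mu(Ag\triangle A)$ and $D_m:=\{g\in G\mid \delta(g)<2^{-m}\}$. By the triangle inequality for $\mu(\cdot\triangle\cdot)$ and bi-invariance, each $D_m$ is symmetric, contains $e$, and satisfies $D_{m+1}D_{m+1}\subseteq D_m$. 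The set $\mathrm{Stab}(A)=\bigcap_m D_m=\{g\mid \delta(g)=0\}$ is a subgroup. The statement to prove is then exactly that the chain $D_m$ stabilises, i.e.\ $D_m=\mathrm{Stab}(A)$ for some $m$.

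\textbf{Finiteness at each scale.} By Theorem~\ref{thrm:stab:vc}, the family $\{Ag\mid g\in G\}$ has VC-dimension $<k$. Theorem~\ref{thrm:packing} then shows that any family of right translates that is pairwise $2^{-m}$-separated has at most $\omega_k(m)$ elements. Choosing a maximal separated family $Ag_1,\dots,Ag_N$, every $g$ satisfies $\mu(Ag\triangle Ag_i)<2^{-m}$ for some $i$, that is, $gg_i^{-1}\in D_m$. So $G=\bigcup_{i\le N}D_mg_i$, and every $D_m$ is generic with at most $\omega_k(m)$ translates. In particular $G/\mathrm{Stab}(A)$, with the metric induced by $\delta$, is totally bounded.

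\textbf{Stabilisation (main obstacle).} It remains to exclude an infinite strictly decreasing chain of values of $\delta$ tending to $0$. Here stability must enter beyond VC-dimension, since total boundedness alone permits accumulation. Suppose, for contradiction, that $g_j\notin\mathrm{Stab}(A)$ with $\delta(g_j)\to0$. Each set $B_j:=Ag_j\triangle A$ is stable with a uniform bound. Indeed, $ab\in B_j$ iff $a\in Ag_jb^{-1}\triangle Ab^{-1}$, so $B_j$ is $n$-stable with $n$ as in Lemma~\ref{lem:stable:symm}.

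Since $\mu(B_j)>0$, Lemma~\ref{lem:pos:gen} makes each $B_j$ generic. The aim is to play these $B_j$ against the packing bound to produce an $n$-ladder, contradicting Lemma~\ref{lem:stable:symm}. The first thing I would try is to use genericity of $B_j$ together with the near-coverings $G=\bigcup_i D_m g_i$. From these I would pick elements $a_1,\dots,a_n$ lying in successively smaller $D_{m_j}$-neighbourhoods but outside $\mathrm{Stab}(A)$, and parameters $(y_j,z_j)$ taken from translates of the $g_j$, arranged so that $a_i\in Ay_j\triangle Az_j$ holds exactly when $i\le j$.

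If this direct ladder construction resists, my fallback is to invoke the stable-group fact that $\mathrm{Stab}(A)$ is an intersection of a descending chain of generic sets $D_m$ attached to a stable relation. For such a chain, stability together with Lemma~\ref{lem:pos:gen} forces $\mathrm{Stab}(A)$ itself to have positive measure, hence finite index. Once $\mathrm{Stab}(A)$ has finite index, $\delta$ is constant on cosets of $\mathrm{Stab}(A)$ and takes only finitely many values. Taking $2^{-m}$ below the smallest positive one then finishes the proof.
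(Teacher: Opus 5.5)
\textbf{Gap: the stabilisation step is never proved.} Your reduction to $\delta(g)=\mu(Ag\triangle A)$ is correct, and so is the packing argument that each $D_m$ is generic. But the packing step plays no role here; in the paper it is used only afterwards, in Lemma~\ref{lem:fin:orbit:nonquant}. The third paragraph carries the whole content of the lemma, and it contains no argument.

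Your first attempt only describes the desired end state: points $a_i$ and parameters $(y_j,z_j)$ with $a_i\in Ay_j\triangle Az_j$ iff $i\le j$. It gives no mechanism for reaching it. Placing the $a_i$ in shrinking neighbourhoods $D_{m_j}$ gives no control over membership of points in the sets $Ay_j\triangle Az_j$, because $\delta$ compares translates of $A$, not points.

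The fallback is circular relative to the paper. That $\mathrm{Stab}(A)$ has finite index is exactly what the paper deduces later, in the proof of Theorem~\ref{thrm:main:nonquant:strong}, from Lemma~\ref{lem:fin:orbit:nonquant}, which itself rests on the present lemma. Moreover, $\mathrm{Stab}(A)\in\mathcal{F}$ is only obtained via Lemma~\ref{lem:Stab:in:F} after the chain has stabilised. So ``$\mathrm{Stab}(A)$ has positive measure'' cannot even be fed into Lemma~\ref{lem:pos:gen}. No cited result supplies the ``stable-group fact'' you invoke.

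\textbf{The missing idea.} Build the ladder among the \emph{sets}, playing smallness against genericity, and choose the points only at the very end. Let $s$ be the stability bound from Lemma~\ref{lem:stable:symm}, and assume the lemma fails. Inductively construct sets $E_1,\dots,E_n$, each a right translate of some $Ag\triangle Ah$, such that every cell $\bigcap_{i\le t}E_i\cap\bigcap_{t<i\le n}E_i^c$ with $1\le t\le n$ has positive measure. Start with any $E_1=Ag\triangle Ah$ of positive measure.

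For the inductive step, take $2^{-m}$ below all current cell measures. Choose $E=Ag\triangle Ah$ with $0<\mu(E)\le 2^{-(m+1)}$; this is exactly where the failure of the lemma, i.e.\ accumulation of positive values at $0$, is used.
\begin{itemize}
\item $E$ is uniformly stable, as you observed, and has positive measure, so it is generic by Lemma~\ref{lem:pos:gen}.
\item Hence finitely many right translates of $E$ cover $\bigcap_{i\le n}E_i$, and some $E_{n+1}:=Ev$ meets it in positive measure.
\item Since $\mu(E_{n+1})\le 2^{-(m+1)}$, intersecting each older cell with $E_{n+1}^c$ leaves it with measure at least $2^{-(m+1)}>0$.
\end{itemize}
After $s$ steps, pick one point from each cell. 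This gives an $s$-ladder for the relation $x\in Ay\triangle Az$, contradicting Lemma~\ref{lem:stable:symm}. The argument needs neither the packing lemma nor any prior information about $\mathrm{Stab}(A)$.
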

\begin{proof}
Write $\phi(x,(y,z))$ for the relation $x \in Ay \triangle Az$, which we know is $s$-stable for some $s$ by Lemma \ref{lem:stable:symm}. We argue by contradiction. Suppose for each $m \in \NN$, there exists $g,h \in G$ such that $2^{-m}\ge \mu(Ag \triangle Ah) >0$. 
By induction on $1 \le n \le s$,  for $1 \le i \le n$ we will construct $g_i,h_i \in G$, such that for all $1 \le t \le n$
\[
\mu\left(\bigcap_{i=1}^t Ag_i\triangle Ah_i \cap \bigcap_{i=t+1}^n(Ag_i\triangle Ah_i)^c\right)>0.
\]
Given this, for $1\le i \le s$, we can pick 
\[
a_i \in \bigcap_{i=1}^t Ag_i\triangle Ah_i \cap \bigcap_{i=t+1}^n(Ag_i\triangle Ah_i)^c
\]
which will imply that $\phi(a_i,(g_j,h_j))$ iff $i \ge j$. Taking $x_i = a_{s-i+1}$, $y_j = g_{s-j+1}$ and $z_i = h_{s-j+1}$ implies that $\phi(x_i,(y_j,z_j))$ iff $i \le j$ contradicting the stability of $\phi$. For the case $n =1$, by our assumption (with $m=0$ say) we can take $g,h \in G$ such that $\mu(Ag\triangle Ah) >0$. Now fix $1\le n < s$ and suppose we have constructed $g_i, h_i$ with $1 \le i \le n$ satisfying the desired condition, so we can take $m \in \NN$ such that, for all $1 \le t \le n$
\[
\mu\left(\bigcap_{i=1}^t Ag_i\triangle Ah_i \cap \bigcap_{i=t+1}^n(Ag_i\triangle Ah_i)^c\right)\ge 2^{-m}.
\]
By assumption, take $g,h \in G$ satisfying $2^{-(m+1)}\ge \mu(Ag \triangle Ah) >0$. Thus, Lemma \ref{lem:pos:gen} implies that $Ag \triangle Ah$ is generic. Setting 
\[
B:= \bigcap_{i=1}^n Ag_i\triangle Ah_i
\]
which will have measure greater than $2^{-m}$, we will have, for some $q \in \NN$ and $v_1, \ldots, v_q \in G$ that 
\[
B = \bigcup_{i=1}^q (B \cap v_i(Ag \triangle Ah))
\]
since $Ag \triangle Ah$ is generic. Thus we can take some $v \in G$ such that $\mu(B \cap Agv \triangle Ahv)=\mu(B \cap (Ag \triangle Ah)v) >0$. Setting $g_{n+1} = gv$ and $h_{n+1} = hv$ yields that 
\[
\mu\left(\bigcap_{i=1}^{n+1} Ag_i\triangle Ah_i \right)>0
\]
and for $1 \le t \le n$ we have 
\begin{equation*}
    \begin{aligned}
        \mu\left(\bigcap_{i=1}^t Ag_i\triangle Ah_i \cap \bigcap_{i=t+1}^{n+1}(Ag_i\triangle Ah_i)^c\right)&\ge \mu\left(\bigcap_{i=1}^t Ag_i\triangle Ah_i \cap \bigcap_{i=t+1}^n(Ag_i\triangle Ah_i)^c\right) - \mu\left(Ag_{n+1}\triangle hg_{n+1}\right)\\
        & \ge 2^{-m} - \mu\left((Ag\triangle Ah)v\right) \ge 2^{-(m+1)}>0.
    \end{aligned}
\end{equation*}
We use left invariance of $\mu$ and the fact that $2^{-(m+1)}\ge \mu(Ag \triangle Ah)$ to obtain the last line. 
\end{proof}
The next result we need  concerns the relation $\sim$ on $\mathcal{F}$ defined via $B \sim C$ iff $\mu(B \triangle C)=0$. One can easily verify that $\sim$ is an equivalence relation and we denote the equivalence class of $B \in \mathcal{F}$ as $[B]$. From the left invariance of $\mu$, one can easily verify that for all $B, C \in \mathcal{F}$ and $g \in G$, if $B \sim C$ then $Bg \sim Cg$, this yields an action of $G$ on the equivalence classes of $\sim$, $\mathcal{F}/\sim$ via $[A]\cdot g = [A\cdot g]$. 

\begin{lemma}
\label{lem:fin:orbit:nonquant}
    The orbit of $[A]$ under the action of $G$ on $\mathcal{F}/\sim$ is finite.
\end{lemma}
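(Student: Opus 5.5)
The plan is to combine Lemma \ref{lem:bound:symm:nonquant} with the packing bound of Theorem \ref{thrm:packing}. Fix $m$ as given by Lemma \ref{lem:bound:symm:nonquant}, so that for all $g,h\in G$ either $\mu(Ag\triangle Ah)=0$ or $\mu(Ag\triangle Ah)>2^{-m}$. The orbit of $[A]$ is $\{[Ag] : g\in G\}$. We will show that any family of pairwise distinct classes in this orbit has size at most $\omega_{k}(m)$, or more precisely $\omega_{d}(m)$ where $d=\mathrm{VC}_r(A)$.

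To bound such families, take distinct classes $[Ag_1],\ldots,[Ag_N]$ in the orbit. Distinctness means $\mu(Ag_i\triangle Ag_j)>0$ for $i\neq j$, so by the choice of $m$ we get $\mu(Ag_i\triangle Ag_j)>2^{-m}\ge 2^{-m}$. Put $\mathcal{H}:=\{Ag_1,\ldots,Ag_N\}$ and $\mathcal{S}:=\{Ag : g\in G\}$. We have $\mathcal{S}\subseteq\mathcal{F}$ because $\mathcal{F}$ is bi-invariant and contains $A$. The sets in $\mathcal{H}$ are pairwise distinct, since distinct classes force distinct sets, and they are pairwise $2^{-m}$-separated in measure.

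By Theorem \ref{thrm:stab:vc}, $\mathcal{S}$ has VC-dimension $d:=\mathrm{VC}_r(A)<k$. Theorem \ref{thrm:packing} as stated asks for VC-dimension exactly some value, so we apply it with $d$ and get $N\le \omega_d(m)\le\omega_k(m)$, using that $\omega$ is monotone in its index (one could equally read the packing lemma as "at most $k$"). There is one degenerate case: if $d=0$, the family $\mathcal{S}$ shatters no singleton, so all right translates of $A$ coincide on every point and hence are equal. The orbit then has a single element, and the conclusion is immediate.

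Since every finite set of distinct orbit elements has size at most $\omega_k(m)$, the orbit is finite. The only delicate point is confirming that Lemma \ref{lem:bound:symm:nonquant} supplies a single uniform $m$ that works for all pairs. This is exactly what makes the separation hypothesis of the packing lemma applicable, and it is what converts "positive measure" into "uniformly separated". The remaining steps are bookkeeping.
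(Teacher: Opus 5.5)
Your proof is correct and is essentially the paper's argument: uniform separation from Lemma~\ref{lem:bound:symm:nonquant}, the bound $\mathrm{VC}_r(A)<k$ from Theorem~\ref{thrm:stab:vc}, and the packing lemma (Theorem~\ref{thrm:packing}) applied to $\mathcal{S}=\{Ag\mid g\in G\}$. The only difference is that you bound every finite set of distinct orbit elements directly, which gives the explicit bound $\omega_{\mathrm{VC}_r(A)}(m)$ on the orbit size, whereas the paper argues by contradiction from an infinite sequence of pairwise inequivalent translates; your checks that $\mathcal{S}\subseteq\mathcal{F}$ and of the VC-dimension $0$ case fill in details the paper leaves implicit.
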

\begin{proof}
If this was not the case, we could take a sequence $(g_i) \subseteq G$ satisfying that for all $i < j, \mu(Ag_i \triangle Ag_j) >0$. Thus, Lemma \ref{lem:bound:symm:nonquant} implies there exists $m \in \NN$ such that for all $i < j, \mu(Ag_i \triangle Ag_j) \ge 2^{-m}$. Setting $\mathcal{S}:= \{Ag|g \in G\}$ and $\mathcal{H}:= \{Ag_i|i \in \NN\}$, Theorem \ref{thrm:stab:vc} implies $\mathrm{VC}(\mathcal{S}) < k$ and so Theorem \ref{thrm:packing} implies that $\mathcal{H}$ is finite, which is a contradiction.
\end{proof}
The last result we shall need, concerns a criteria for when we can conclude $\mathrm{Stab}_m^\mu(A)$ is is the algebra $\mathcal{F}$.
\begin{lemma}[cf.\ \cite{conant2021quantitative}]
\label{lem:Stab:in:F}
    Let $B \in \mathcal{F}$ with $\mathrm{VC}_r(B) < \infty$ (in particular, if $B$ is stable) and $m \in \NN$. If there exists $m< n$ such that $\mathrm{Stab}_n^\mu(B) = \mathrm{Stab}_m^\mu(B)$ then $\mathrm{Stab}_m^\mu(B) \in \mathcal{F}$. 
\end{lemma}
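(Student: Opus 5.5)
The plan is to show that the gap hypothesis lets us write $\mathrm{Stab}_m^\mu(B)$ as a \emph{finite} Boolean combination of left translates of $B$. Such sets lie in $\mathcal{F}$ because $\mathcal{F}$ is a bi-invariant Boolean algebra. Here
\[
\mathrm{Stab}_m^\mu(B)=\{g\in G \mid \mu(Bg\triangle B)\le 2^{-m}\}.
\]

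First I will record what the hypothesis gives. Since $\mathrm{Stab}_n^\mu(B)=\mathrm{Stab}_m^\mu(B)$ with $m<n$, no $g\in G$ has $\mu(Bg\triangle B)\in(2^{-n},2^{-m}]$. Put $\delta:=2^{-m}-2^{-n}>0$ and $\tau:=(2^{-m}+2^{-n})/2$. Then for every $g$,
\[
g\in \mathrm{Stab}_m^\mu(B)\iff \mu(Bg\triangle B)\le 2^{-n}\iff \mu(Bg\triangle B)<\tau ,
\]
and the value $\mu(Bg\triangle B)$ always stays at distance at least $\delta/2$ from $\tau$. Next, the set system $\mathcal{S}:=\{Bg\triangle B \mid g\in G\}$ has finite VC-dimension. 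This follows from $\mathrm{VC}_r(B)<\infty$ by the standard Sauer--Shelah argument: the traces of $\{Bg\}$ on an $N$-element set number polynomially many in $N$, so the traces of the symmetric differences with the fixed set $B$ do too. In particular, if $B$ is $k$-stable this holds by Theorem~\ref{thrm:stab:vc}.

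The key step is to find an $\varepsilon$-approximation for $\mathcal{S}$ with respect to $\mu$, where $\varepsilon<\delta/2$. Concretely, I want points $x_1,\ldots,x_N\in G$ such that for every $g\in G$,
\[
\Bigl|\mu(Bg\triangle B)-\tfrac1N\bigl|\{i \mid x_i\in Bg\triangle B\}\bigr|\Bigr|<\varepsilon .
\]
For $\sigma$-additive measures this is the VC theorem. Here $\mu$ is only finitely additive, so this is the main obstacle. I would first try to invoke the finitely additive version of the VC/$\varepsilon$-approximation theorem, which holds for families of finite VC-dimension inside the measured Boolean algebra; this is used in \cite{conant2021quantitative}, and the argument of \cite{lovasz2010regularity} already behind Theorem~\ref{thrm:packing} should adapt. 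Failing a citation, I would prove it with the usual double-sampling argument. All the events that argument needs are finite Boolean combinations of sets in $\mathcal{F}$, evaluated on the product of finitely many copies. So only finite additivity is needed, after passing to the finitely additive product measure on the algebra generated by rectangles.

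Given the approximation, the conclusion follows. Whether $g\in\mathrm{Stab}_m^\mu(B)$ depends only on the empirical frequency, namely whether it is $<\tau$. That frequency depends only on the truth pattern of the conditions $x_i\in Bg$, since each $x_i\in B$ is a fixed truth value. Now $x_i\in Bg$ holds iff $g^{-1}\in x_i^{-1}B$. Moreover $\mu(Bg^{-1}\triangle B)=\mu(Bg\triangle B)$ by right invariance, applied after translating by $g^{-1}$, so $\mathrm{Stab}_m^\mu(B)$ is closed under inverses. Hence
\[
\mathrm{Stab}_m^\mu(B)=\bigcup_{\sigma\in P}\ \bigcap_{i\le N}\bigl(x_i^{-1}B\bigr)^{\sigma(i)},
\]
where $P\subseteq\{0,1\}^N$ is the finite set of patterns whose empirical frequency is $<\tau$, and $(\cdot)^{1}$, $(\cdot)^{0}$ denote the set and its complement. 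Each $x_i^{-1}B$ lies in $\mathcal{F}$ by left invariance of $\mathcal{F}$, so $\mathrm{Stab}_m^\mu(B)\in\mathcal{F}$.
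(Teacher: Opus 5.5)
The gap is the $\varepsilon$-approximation step, and it is false for finitely additive $\mu$, not just unproved. Your double-sampling justification breaks exactly where you say only finite Boolean combinations occur. The bad event is the set of $N$-tuples $(x_1,\dots,x_N)\in G^N$ for which some $g\in G$ has empirical frequency of $Bg\triangle B$ off by at least $\varepsilon$. That is a union over all $g\in G$, and it need not lie in the product algebra. Running the argument on finitely many $g$ at a time (i.e.\ on the finitely many atoms they generate) only gives, for each finite $G_0\subseteq G$, a sample that depends on $G_0$. There is no compactness in $G$ to glue these samples together.

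Concretely, let $G=\mathbb{Z}$, $B=\mathbb{N}$ and $\mu$ an invariant mean on $\mathcal{P}(\mathbb{Z})$. Every $(B+g)\triangle B$ is finite, hence $\mu$-null. Yet for any $x_1,\dots,x_N$, the set $(B+g)\triangle B$ contains all non-negative $x_i$ when $g>\max_i x_i$, and all negative $x_i$ when $g<\min_i x_i$. So for some $g$ the empirical frequency is at least $1/2$ while the measure is $0$.

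More seriously, with only $\mathrm{VC}_r(B)<\infty$ the conclusion itself can fail, so the statement as written is too general and no repair using finite VC-dimension alone can work. Take $G=\mathbb{Z}$ and $B=\{x\ge 0 : x\text{ even}\}\cup\{x<0 : x\text{ odd}\}$. Let $\mathcal{F}$ be the (bi-invariant) Boolean algebra generated by the translates $B+g$, and $\mu$ the restriction of an invariant mean. Each $B+g$ is $[g,\infty)$ symmetric-differenced with the odds ($g$ even) or with the evens ($g$ odd), so $\mathrm{VC}_r(B)\le 3$. The set $(B+g)\triangle B$ is finite for $g$ even and cofinite for $g$ odd, so $\mathrm{Stab}^\mu_1(B)=\mathrm{Stab}^\mu_2(B)=2\mathbb{Z}$. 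But for any $g_1,\dots,g_r$, a sufficiently large positive even integer and a sufficiently large negative odd integer lie in exactly the same sets $B+g_i$. Hence $2\mathbb{Z}\notin\mathcal{F}$.

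So the stability of $B$ has to enter; this is the only case the paper uses, for the $k$-stable $A$. The paper gives no argument of its own here; it defers to Proposition 4.5 of \cite{conant2021quantitative} together with Lemma 4.3 of \cite{chernikov2016definable}. Your architecture matches that, but the approximation input must come from stability. One way to get it:
\begin{enumerate}
\item The relation $x\in Bg\triangle B$ is stable, so the Stone space of the algebra generated by the sets $Bg\triangle B$ is scattered.
\item Hence $\mu$ corresponds to a countable convex combination $\sum_j w_j\delta_{p_j}$ of ultrafilters.
\item Each $p_j$ consists of subsets of $G$, so it is finitely satisfiable in $G$. It is therefore defined by a majority vote over finitely many points of $G$.
\item Truncate at $J$ with $\sum_{j>J}w_j<\delta/4$. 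This gives a function of finitely many conditions $a\in Bg$ (with $a\in G$) lying within $\delta/4$ of $\mu(Bg\triangle B)$.
\end{enumerate}
From there your threshold, inverse-closure and Boolean-combination argument goes through unchanged.
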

In the case when $G$ is finite and $\mu$ is the counting measure, the above lemma follows immediately from Proposition 4.5 of \cite{conant2021quantitative}. The general case can be proven by combining the proof of Proposition 4.5 of \cite{conant2021quantitative} and Lemma 4.3 of \cite{chernikov2016definable} as described in Remark 5.2 of \cite{conant2021quantitative}.
\begin{remark}
    Note that if $m < n$ then $\mathrm{Stab}_n^\mu(B) \subseteq \mathrm{Stab}_m^\mu(B)$. Thus the condition that $\mathrm{Stab}_n^\mu(B) = \mathrm{Stab}_m^\mu(B)$ is equivalent to $\mathrm{Stab}_m^\mu(B) \subseteq \mathrm{Stab}_n^\mu(B)$ or that for all $g \in G$, 
    \[
    \mu(Bg \triangle B) \le 2^{-m} \to  \mu(Bg \triangle B) \le 2^{-n}.
    \]
\end{remark}
\begin{proof}[Proof of Theorem \ref{thrm:main:nonquant:strong}]
    The stabilizer of $[A]$ under this action satisfies $\mathrm{Stab}([A]) = \{g \in G| \mu(Ag \triangle A) =0\}$. Lemma \ref{lem:fin:orbit:nonquant} and the orbit-stabilizer theorem implies $\mathrm{Stab}([A])$ has finite index. Theorem \ref{lem:bound:symm:nonquant} implies there exists $m \in \NN$ such that $\mathrm{Stab}([A]) = \mathrm{Stab}_m^\mu(A)$ and this will be the $m$ we take. The fact that $\mathrm{Stab}_m^\mu(A) \in \mathcal{F}$ follows from Lemma \ref{lem:Stab:in:F}. 

    Now, set $H:= \mathrm{Stab}([A]) = \mathrm{Stab}_m^\mu(A)$. We argue as in the proof of Theorem 1.3 in \cite{conant2021quantitative}. We show that for each $g \in G$ either $\mu(gH \cap A) =0$ or $\mu(gH \cap A^c) =0$.
    
    Suppose not. Then setting $B:=H \cap g^{-1}A$ and $C:=H \cap g^{-1}A^c$, would imply $\mu(B), \mu(C) >0$. Thus, Lemma \ref{lem:pos:gen} implies $B$ is right generic and so since $\mu(C)>0$ there exists $h \in G$ such that $\mu(Bh \cap C)>0$. For such an $h$, we have $Bh \cap C = H \cap Hh \cap g^{-1}(Ah \cap A^c)$ which implies  $h \in H$ and $\mu(Ah \cap A^c)=\mu(g^{-1}(Ah \cap A^c))>0$. But, $h \in H=\mathrm{Stab}([A])$ implies $\mu(Ah \cap A^c) \le \mu(Ah \triangle A)=0$ a contradiction.

    Now let $C_1,\ldots C_n$ be the left cosets of $H$. Let $I$ be the set of $1 \le i \le n$ such that $\mu(C_i \cap A) >0$, so if $i \in I$ we have $\mu(C_i \cap A^c) = 0$. Set 
    \[
    Y:= \bigcup_{i\in I}C_i.
    \]
    So 
    \begin{equation*}
        \begin{aligned}
            \mu(A \triangle Y) = \mu(A \cap Y^c) + \mu(A^c \cap Y)
            =\mu\left(\bigcup_{i \notin I} C_i \cap A\right)+ \mu\left(\bigcup_{i \in I} C_i \cap A^c\right) =0
        \end{aligned}
    \end{equation*}
    and the result follows.
\end{proof}

\subsection{Application II: A model-theoretic proof of Theorem  \ref{thrm:avigad:dominated}}
\label{subsec:model:APP2}
As with Theorem  \ref{thrm:meta:main:strong}, Theorem \ref{thrm:avigad:dominated} will follow from Corollary \ref{cor:main:model:unif:sat} after demonstrating that the context of the result can be captured in positive bounded logic. To do this, we must treat integrable functions. Our treatment is different from that of \cite{DI2017} and is informed by the  characterisation of integrable functions on finitely additive probability spaces \cite{RR1983}.  

For a probability space $(\Omega, \mathcal{F},\mu)$, a function $f:\Omega \to [0,1]$ is measurable if and only if there exists a sequence of \emph{simple functions} $\seq{s_n}$ (linear combinations of indicator functions of measurable sets) satisfying 
\begin{equation}
\label{eqn:pointwise:simple:bound}
    \forall \omega \in \Omega\,(|s_n(\omega)-f(\omega)| \le 2^{-n})
\end{equation}

for each $n \in \NN$. One direction follows from the well known fact that pointwise limits of simple functions are measurable. For the other direction, for a measurable function $f$ we can take 
\[
s_n(\omega):= \sum_{k=0}^{2^n}\frac{k}{2^{n}}I_{\{\omega| k2^{-n}\le f(\omega)< (k+1)2^{-n}\}}.
\]
Furthermore, the integral of a measurable function $f:\Omega \to [0,1]$ will be the limit of the integrals of the the simple functions representing $f$ (furthermore, a function being integrable is equivalent to the convergence of the integrals of the simple functions that represent it) since (\ref{eqn:pointwise:simple:bound}) and the properties of the integral imply\footnote{We note that our approach to integrable functions in the context of model theory for metric structures differs from that in \cite{DI2017} and is motivated by the proof-theoretic treatment given by the second author and Pischke \cite{neri-pischke:24:formal:pub}.}

\[
\left|\int_{\Omega}s_n(\omega)d\mu(\omega)-\int_{\Omega}f(\omega)d\mu(\omega)\right| \le \int_{\Omega}\left|s_n(\omega)-f(\omega)\right|d\mu(\omega) \le 2^{-n}.
\]
Next, for a measurable function $f : \Omega \to [0,1]$ and $\varepsilon > 0$, we need to be able to formally refer to (bounds for) the measures of unions and intersections of sets of the form $\{\omega \mid f(\omega) \le \varepsilon\}$. A related representation problem was addressed in the proof-theoretic work of the second author, Oliva and Pischke \cite{NeriOlivaPischke2026}. We adopt their approach here, and it forms a crucial ingredient of our model-theoretic proof of Theorem \ref{thrm:avigad:dominated}. A central contribution of that work was a novel \emph{outer measure} translation, whereby one could formalise that quantified statements hold with a specified degree of certainty in a way that naturally recasts the computational content of such statements in a stochastic setting. Concretely, we define the outer measure (cf.\ Definition~4.1.3 of \cite{RR1983}) $\mu^* : \mathcal{P}(\Omega) \to [0,1]$ by
\[
  \mu^*(F) := \inf\{\mu(B) : F \subseteq B \in \mathcal{F}\}.
\]
 and, following the approach of \cite[Section 3] {NeriOlivaPischke2026}, we have that for all $F \in \mathcal{P}(\Omega)$ and $\lambda \in \RR$  
\[
\mu^*(F) \ge \lambda \leftrightarrow \forall A\in \mathcal{F} (\forall \omega \in \Omega\,(\omega \in F \to \omega \in A) \to \mu(A) \ge \lambda)
\]
equivalently 
\[
\mu^*(F) < \lambda \leftrightarrow \exists A\in \mathcal{F} (\forall \omega \in \Omega\,(\omega \in F \to \omega \in A) \land \mu(A)<  \lambda).
\]
Thus, if a set $F \in \mathcal{F}$ is represented by some quantified statement $\varphi(\omega)$, that is, $F = \{\omega \mid \varphi(\omega)\}$, the above allows one to formalise that the measure of the set represented by $\varphi(\omega)$ exceeds some $\lambda$ without having to comprehend $\varphi(\omega)$ as a measurable set $F$ \cite[Section~3]{NeriOlivaPischke2026}. Even further, in the case that $\varphi(\omega)$ does not correspond to a measurable set, the above still allows for a meaningful interpretation of $\varphi(\omega)$ holding with a certain degree of certainty. Moreover, it is shown in \cite[Section 5]{NeriOlivaPischke2026} that the outer measure translation transforms the computational content of a quantified statement (now formally expressed in a suitable logical system) into an analogous stochastic variant. Lastly, we note that $\mu^*(F) = \mu(F)$ for all $F \in \mathcal{F}$.

Now, to show Theorem \ref{thrm:avigad:dominated} follows from Corollary \ref{cor:main:model:unif:sat}. We introduce the following Henson signature $\mathcal{L}_B$ which will be an extension of $\mathcal{L}$ by the following
\begin{itemize}
      \item A unary function symbol $f_{n,m}:\Omega \to \RR$ for each $n,m \in \NN$.
    \item A unary function symbol $s^{m,n}_i:\Omega \to \RR$ for each  $n,m,i \in \NN$.
    \item A constant symbol $A^{m,n,i}_j: \mathcal{F}$ for each $n,m,i \in \NN$
     \item A constant symbol $I_{n,m}:\RR$ for each $n,m \in \NN$. 
\end{itemize}
fix a functional $\Phi:\QQ^+\times \QQ^+\times \NN^\NN\to \NN$. We define $\mathscr{C}$ to be the uniform class of $\mathcal{L}_B$ structures, $\mathcal{M}$, that are Loeb structures, and additionally satisfy (in the following we suppress the superscript $\mathcal{M}$ for the interpretation of terms to enhance readability) that for each $m,n \in \NN$, $f_{n,m}(\omega) \in [0,1]$, which can be expressed in positive bounded logic. To deal with the measurability and integrability of $f_{n,m}$, we work with simple functions as above. For each $n,m,i \in \NN$ we have $s^{m,n}_i$ are the simple functions (determined by the measurable sets $A^{m,n,i}_j$) that converge uniformly to $f_{n,m}$, that is we have:
\begin{itemize}
    \item $A^{m,n,i}_q \cap A^{m,n,i}_p = \emptyset$ for each $n,m,i,p,q \in \NN$ with $p \neq q$.
    \item $A^{m,n,i}_0 \cup A^{m,n,i}_1 \ldots \cup A^{m,n,i}_{2^{i}}= \Omega$ for each $n,m,i \in \NN$
    \item $\forall \omega \in \Omega^\mathcal{M}\,(s^{m,n}_i(\omega) = 0 \lor s^{m,n}_i(\omega)=\frac{1}{2^i} \lor \ldots \lor s^{m,n}_i(\omega) =1)$ for each $n,m,i\in \NN$
    \item $\forall \omega \in \Omega^\mathcal{M}\,(\model{\omega \in A^{m,n,i}_q} =1 \to s^{m,n}_i(\omega)=\frac{q}{2^i} )$ for each $n,m,i\in \NN$ and $q \in[0;2^i]$
\end{itemize}
It is clear that the above can be expressed in positive bounded logic and expresses the fact that $s^{m,n}_i$ are simple functions taking values $0, \frac{1}{2^i},\ldots 1$. We now need to express that these simple functions converge uniformly to $f_{n,m}$. This can be expressed simply as 
\[
\forall \omega \in \Omega^\mathcal{M}\,(|s^{m,n}_i(\omega)-f_{n,m}(\omega)| \le 2^{-i})
\]
for each $n,m,i \in \NN$. Furthermore, we have the $I_{n,m}$ are the values of the integrals of $f_{n,m}$, which will be the uniform limit of the integrals of the simple functions:
\[
 \left|I_{n,m}- \sum_{i=0}^{2^k}\frac{i}{2^{k}}\mu(A^{m,n,k}_i)\right| \le 2^{-k}
\]
for each $n,m,k\in \NN$. Again, it is clear that the above formulas can be expressed in positive bounded logic. The last property we need is that $\Phi_{\lambda,\varepsilon,g}$ is a rate of metastable pointwise convergence. We use the outer measure trick we stated above and include the following axiom:
\[
\exists A \in \mathcal{F}^\mathcal{M} \left(\forall \omega\in \Omega^\mathcal{M} \,\left(\bigwedge_ {n\leq \Phi(\lambda,\varepsilon,g)}\, \bigvee_{i,j\in [n;n+g(n)]}(f_{i,j}(\omega)> \varepsilon) \to \model{\omega \in A}=1\right) \land \mu(A) \le \lambda\right)
\]
for each $\varepsilon, \lambda \in \QQ^+$ and $g: \NN \to \NN$. We note that this axiom is actually a weakening of the fact that $\Phi_{\lambda,\varepsilon,g}$ is a rate of metastable pointwise convergence (to fully characterize this, we would require the last inequality to be strict. However this is not possible in positive bounded logic), but this shall be enough for our purposes. It is clear this can be expressed in positive bounded logic and that the class $\mathscr{C}$ is axiomatizable by a uniform $\mathcal{L}_B$-theory which we call $T$.

Now that we have the required setup, we are ready to prove Theorem \ref{thrm:avigad:dominated}. Let $\mathcal{M}$ be an $\aleph_1$-saturated model in $\mathscr{C}=\mathrm{Mod}(T)$. Then, by Theorem \ref{thrm:extensionLoeb}, $(\Omega,\mathcal{F}_L,[\mu]_L)$ is a probability space and we have have for each $n,m \in \NN$ that $\seq{s_i^{n,m}}$ will be a sequence of simple functions (determined by the measureable set $[A^{m,n,i}] \in [\mathcal{F}] $) converging pointwise to $f_{n,m}$. Thus, $f_{n,m}$ is a random variable with integral $I_{n,m}$. Furthermore, suppose $\varepsilon, \lambda \in \QQ^+$ and $g: \NN \to \NN$ are give then there exists $[A] \in [\mathcal{F}]$ such that
 \[
 \forall \omega^\mathcal{M} \in \Omega \,\left(\bigwedge_ {n\leq \Phi(\lambda,\varepsilon,g)}\, \bigvee_{i,j\in [n;n+g(n)]}(f_{i,j}(\omega)> \varepsilon) \to \omega \in [A]\right) \land [\mu]([A]) \le \lambda.
 \]
 Now suppose we have 
 \[
 [\mu]_L\left(\left\{\omega \in \Omega^\mathcal{M} :\bigwedge_ {n\leq \Phi(\lambda,\varepsilon,g)}\, \bigvee_{i,j\in [n;n+g(n)]}(f_{i,j}(\omega)> \varepsilon)\right\}\right) > \lambda.
 \]
But that would imply $[\mu]_L([A]) > \lambda$ which is a contradiction as $[\mu]_L([A]) = [\mu]([A]) \le \lambda$. Thus $\Phi(\lambda,\varepsilon,g)$ is a rate of pointwise metastability for $\seq{f_{n,m}}$ with respect to $[\mu]_L$. Now, as possessing a rate of pointwise metastability is equivalent to almost sure convergence (cf.\ Theorem 3.2 of \cite{neri-powell:pp:martingale}), we have that $\seq{f_{n,m}}$ converges $[\mu]_L$ almost surely and so the dominated convergence theorem (Theorem \ref{thrm:DCT}) implies $\seq{I_{n,m}}$ converges and thus is metastable, that is, we have for all $\varepsilon \in \QQ^+$ and $g:\NN \to \NN$ there exists $n \in \NN$
\[
\forall i,j \in [n;n+g(n)](I_{i,j}<\varepsilon)
\]
Thus, for all $\varepsilon \in \QQ^+, g:\NN \to \NN$, and $n \in \NN$, defining $\varphi_{(g,\varepsilon),n}$ by the positive bounded formula
\[
\varphi_{(g,\varepsilon),n}:\equiv \bigvee_{i,j \in [n;n+g(n)]}(I_{i,j} \ge \varepsilon).
\]
yields  for all $\varepsilon \in \QQ^+$ and $g:\NN \to \NN$ there exists $n \in \NN$ such that $\mathcal{M}\nvDash\varphi_{(g,\varepsilon),n}$ and the result follows by Corollary \ref{cor:main:model:unif:sat} with $X = \NN^\NN \times \QQ^+$.

\begin{remark}
 We conclude by remarking that, instead of appealing to Corollary~\ref{cor:main:model:unif:sat} to prove Theorem~\ref{thrm:avigad:dominated}, one could apply the uniform metastability principle of Due\~{n}ez and Iovino \cite{DI2017}. The key insight of the proof we present was to use the outer measure translation of \cite{NeriOlivaPischke2026} to appropriately express the assumptions of Theorem~\ref{thrm:avigad:dominated} as positive bounded formulas; we note that the use of the outer measure in model theory to represent stochastic properties in positive bounded logic was already anticipated in \cite[Section~1.4]{NeriOlivaPischke2026}.

\end{remark}

\section{A proof-theoretic treatment of abstract metric spaces axiomatized in positive bound logic}
\label{sec:pt}
We now demonstrate that large classes of proofs appealing to the
model-theoretic uniformity principle (Corollary~\ref{cor:main:model:unif:sat})admit uniform bounds that can be extracted directly from those proofs. Our starting point is the system $\mathcal{A}^\omega = \WEPAomega + \QFAC +
\DC$, which formalises classical analysis in all finite types via a weakly
extensional variant of Peano arithmetic in all finite types, together with
certain choice principles (see e.g.\ \cite{kohlenbach2005some}). We briefly
sketch the key features of $\mathcal{A}^\omega$ and refer the reader to
\cite{Kohlenbach2008} for further details. Concretely, $\mathcal{A}^\omega$ is built on many-sorted first-order classical
logic, with a set of sorts (or types) $T$ defined by
\[
  0 \in T^{\Omega,S}, \quad \rho, \tau \in T \rightarrow \rho(\tau) \in T.
\]
We use natural numbers to denote pure types, writing $n+1:=0(n)$. For each
$\rho \in T^{\Omega,S}$, we have variables $x^\rho, y^\rho, z^\rho, \dots$ and
quantifiers ranging over them. The only primitive relation symbol is $=_0$,
representing equality at type $0$; equality at higher types is defined as an
abbreviation via
\[
  x^{\tau(\xi)} =_{\tau(\xi)} y^{\tau(\xi)} := \forall z^\xi \left( xz =_\tau yz \right).
\]
Beyond $=_0$, the language contains constants for zero and the successor
function, as well as constants corresponding to Sch\"onfinkel's combinators
\cite{Schoenfinkel1924}, which suffice to formalise $\lambda$-abstraction. It
further includes constants for simultaneous primitive recursion in the sense of
G\"odel \cite{Goe1958} and Hilbert \cite{Hil1926}, together with their
standard defining axioms (see \cite{Kohlenbach2008} for explicit details).

The system admits only a quantifier-free extensionality rule:
\[
  \frac{A_0 \to s =_\rho t}{A_0 \to r[s/x^\rho] =_\tau r[t/x^\rho]}
  \tag{$\QFER$}
\]
where $A_0$ is a quantifier-free formula, $s$ and $t$ are terms of type $\rho$,
and $r$ is a term of type $\tau$. Crucially, the system does not include the
full axiom of extensionality; see \cite{Kohlenbach2008} for a discussion of the
limitations that full extensionality imposes on systems suitable for program
extraction. In addition, $\QFER$ is enough to prove the following rule
\[
  \frac{A_0 \to s =_\rho t}{A_0 \to (B[s/x^\rho] \to B[t/x^\rho])}
\]
where $B$ is an arbitrary formula and $s,t$ are free for $x^\rho$ in $B$.

The representation of the real numbers within $\WEPAomega$ will be central to
the present work. We sketch the construction here and refer the reader to
Chapter~4 of \cite{Kohlenbach2008} for full details. 

Natural numbers are represented as objects of type $0$. Rational numbers are
encoded as pairs of natural numbers via the canonical pairing function $j$,
defined by
\[
  j(n^0,m^0) := \begin{cases}
    \min u \leq_0 (n+m)^2+3n+m \bigl[2u =_0 (n+m)^2+3n+m\bigr]
      & \text{if existent,} \\
    0^0 & \text{otherwise.}
  \end{cases}
\]
Through terms operating on such codes, one can primitively recursively define
the standard operations $+_\mathbb{Q}$, $\cdot_\mathbb{Q}$,
$\vert\cdot\vert_\mathbb{Q}$, and so on; the relations $=_\mathbb{Q}$,
$<_\mathbb{Q}$, and so on are likewise definable by quantifier-free formulas. For $r \in \QQ$, we write $\langle r \rangle$ for the type 0 object that codes for $r$.

The reals are represented as fast-converging Cauchy sequences with a fixed
modulus of convergence. Concretely, an object $f^1$ of type $1$ is interpreted
as a sequence of rationals, and we identify it with a real number when it
satisfies
\[
  \forall n^0\; \lvert f(n) -_{\QQ} f(n+1) \rvert_\QQ \le_\QQ 2^{-n-1}.
\]
To allow implicit quantification over the reals, we introduce the operator
$\widehat{\cdot}$, which turns any $f$ of type $1$ into a fast-converging
Cauchy sequence $\widehat{f}$ via
\[
  \widehat{f}(n) := \begin{cases}
    f(n)
      & \text{if } \forall k <_0 n\,
        \bigl(\lvert f(k) -_\mathbb{Q} f(k+1) \rvert_\mathbb{Q}
        <_\mathbb{Q} 2^{-k-1}\bigr), \\
    f(k)
      & \text{for $k <_0 n$ least with }
        \lvert f(k) -_\mathbb{Q} f(k+1) \rvert_\mathbb{Q}
        \geq_\mathbb{Q} 2^{-k-1}, \text{ otherwise.}
  \end{cases}
\]
One can verify that $\widehat{\cdot}$ ensures every type-$1$ object codes a
unique real: if $f^1$ is already a fast-converging Cauchy sequence as above,
then $\forall n^0\,(f(n) =_0 \widehat{f}(n))$. Unlike the rationals, equality
on the reals is not given by a quantifier-free formula; instead, it is expressed by a $\Pi^0_1$ formula. Similarly, $<_\RR$ and $\le_\RR$ are defined by $\Sigma^0_1$ and $\Pi^0_1$
formulas, respectively. The natural numbers and rationals embed into $\RR$ via
constant sequences, and the standard operations $+_\mathbb{R}$,
$\cdot_\mathbb{R}$, $\vert\cdot\vert_\mathbb{R}$, and so on are primitively
recursively definable. Throughout, we follow the standard convention of
suppressing type subscripts and the symbol $\cdot_\mathbb{R}$ whenever the
context makes them clear. For a rational number $r$, we $(r)_\RR:\equiv \lambda k. \langle r \rangle$ i.e.\ we recognise $r$ as a real number via the constant sequence. When it is clear, we shall just write the rational number $r$ (we typically only write $(r)_\RR$ when $r$ is a natural number).

We will also require a canonical selection of a Cauchy sequence representing a given real number. Following \cite[Defintion 17.7]{Kohlenbach2008}, for non-negative real numbers $x$ we define the the function $(x)_\circ$ by
\[
(x)_\circ(n):=j(2k_0,2^{n+1}-1),\label{def:circDef}
\]
where
\[
k_0:=\max k\left[\frac{k}{2^{n+1}}\leq x\right].
\]
We will need an extension of this function $(\cdot)_\circ$ to all real numbers. So for $x<0$, we define  $(x)_\circ(n):=-_\mathbb{Q}(\vert x\vert)_\circ(n)$. We shall need the following properties of $(\cdot)_\circ$:

\begin{lemma}[Lemma 17.8 of \cite{Kohlenbach2008} and Lemma 2.1 of \cite{Pis2023} ]
\label{lem:circprop}
Let $x\in\mathbb{R}$. Then:
\begin{enumerate}
\item $(x)_\circ$ is a representation of $x$ in the sense of the above (see again e.g.\ \cite{Kohlenbach2008}), in particular $\widehat{(x)_\circ}=_1 (x)_\circ$.
\item For $s\in [0,\infty)$, if $\vert x\vert \leq s$, then $(x)_\circ\leq_1 (s)_\circ$, i.e.\ $(x)_\circ(n)\leq (s)_\circ(n)$ for all $n\in\mathbb{N}$.
\item $(x)_\circ$ is nondecreasing (as a type 1 function), i.e.\  $(x)_\circ(n)\leq (x)_\circ(n+1)$ for all $n\in\mathbb{N}$.
\end{enumerate} 
\end{lemma}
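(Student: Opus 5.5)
The plan is to argue directly from the coding conventions of \cite{Kohlenbach2008} rather than from the real-number semantics. There are two facts to use throughout. First, the pairing function $j$ is strictly increasing in each argument, since $u\mapsto ((n+m)^2+3n+m)/2$ is. Second, I assume the rational coding of \cite{Kohlenbach2008}: $j(n,m)$ codes $\frac{n/2}{m+1}$ if $n$ is even and $-\frac{(n+1)/2}{m+1}$ if $n$ is odd. I should check this against Chapter 4 of \cite{Kohlenbach2008}, because part (2) depends on it. Under this coding, the normalised negation sends the code $j(2k,m)$ to $j(2k-1,m)$ when $k\ge 1$, and to $j(0,m)$ when $k=0$. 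Write $k_0(x,n):=\max k\,[k/2^{n+1}\le |x|]$. Then every value of $(x)_\circ$ is of the form $j(a,2^{n+1}-1)$, where $a=2k_0(x,n)$ if $x\ge 0$, and $a=\max(2k_0(x,n)-1,0)$ if $x<0$.

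For (1), note that $k_0/2^{n+1}\le |x|<(k_0+1)/2^{n+1}$. Hence $|(x)_\circ(n)-x|\le 2^{-n-1}$ as rationals, and the sign convention handles $x<0$. Moreover $2k_0(n)\le k_0(n+1)\le 2k_0(n)+1$. This gives
\[
\bigl|(x)_\circ(n)-_\QQ(x)_\circ(n+1)\bigr|_\QQ\le 2^{-n-2}<2^{-n-1}.
\]
So $(x)_\circ$ is fast-converging with the strict inequality required in the definition of $\widehat{\cdot}$, and therefore $\widehat{(x)_\circ}=_1(x)_\circ$. The limit is $x$.

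For (3), use the same estimate $k_0(n+1)\ge 2k_0(n)\ge k_0(n)$. In the case $x\ge0$, the first argument $2k_0$ is nondecreasing in $n$. In the case $x<0$, the first argument $\max(2k_0-1,0)$ is nondecreasing in $k_0$ and hence in $n$. In both cases the second argument $2^{n+1}-1$ strictly increases. Monotonicity of $j$ in both arguments then gives $(x)_\circ(n)\le(x)_\circ(n+1)$ as type-$0$ codes.

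For (2), fix $n$. Since $|x|\le s$ and $s\ge 0$, we get $k_0(x,n)\le k_0(s,n)$, and both values share the second argument $2^{n+1}-1$. So it suffices to compare first arguments, $a_x\le 2k_0(s,n)$. If $x\ge0$ this is $2k_0(x,n)\le 2k_0(s,n)$. If $x<0$, then $a_x\le 2k_0(x,n)\le 2k_0(s,n)$. This is the only point where the odd-code convention matters. With a $2k+1$ convention for negatives the claim would fail whenever $k_0(x,n)=k_0(s,n)$. I therefore expect the main obstacle to be confirming that $-_\QQ$ produces exactly this normalised odd code, including the edge case $k_0=0$ where $-0$ must be coded as $j(0,\cdot)$. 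The rest is routine.
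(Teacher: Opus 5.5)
Your proof is correct, and it is the direct verification from the coding conventions on which the cited results rest. The paper gives no argument of its own; it defers to Lemma 17.8 of \cite{Kohlenbach2008} for $x\ge 0$ and Lemma 2.1 of \cite{Pis2023} for the extension to $x<0$. The convention-dependent step you flag is the right one. Under Kohlenbach's coding, an odd first component $n$ codes $-\frac{(n+1)/2}{m+1}$, so the natural code of $-\frac{k}{m+1}$ is $j(2k-1,m)$. Part (2) would survive even if $-_\QQ$ normalised to lowest terms, since that only decreases both arguments of $j$.
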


Naturally, such an association of a real number with a fast converging Cauchy sequence will be non-effective; however, we note that if $x^0$ then $(x)_\circ$ can be given by a closed term in the system. For $x\in \NN$ one can define $(x)_\circ(n):= j(x\cdot 2^{n+2},2^{n+1}-1)$, see \cite[p.426 and p.383]{Kohlenbach2008}.

The rule obtained from $\QFER$, as above, with  $=_\rho$ replaced with $=_\RR$ is not valid in the system. For a formula $B$ and $\sigma$ a type we write $\QFER^\sigma_\RR(B(x))$ if the following rule is admissible in $\mathcal{A}^\omega$
\[
  \frac{A_0 \to \forall z^\sigma\,( s(z) =_\RR t(z))}{A_0 \to (B[s/x^{1(\sigma)}] \to B[t/x^{1(\sigma)}])}
\]
for all terms $s,t$.  In this section, we consider systems that extend $\mathcal{A}^\omega$. If $B$ is a formula in such an extension, we write $\QFER^\sigma_\RR(B(x))$ if the above rule is admissible in this extension.

The system $\mathcal{A}^\omega$ also includes the \emph{quantifier-free axiom
of choice} schema in all types,
\[
  \forall \underline{x}\,\exists \underline{y}\, A_0(\underline{x},\underline{y})
  \;\to\;
  \exists \underline{Y}\,\forall \underline{x}\, A_0(\underline{x},\underline{Y}\underline{x})
  \tag{$\mathsf{QF}$-$\mathsf{AC}$}
\]
where $A_0$ is quantifier-free and the tuples $\underline{x}$, $\underline{y}$
may range over arbitrary types, as well as the principle of dependent choice
$\DC$, comprising the schemata $\DC^{\underline{\rho}}$ for all type-tuples
$\underline{\rho}$:
\[
  \forall x^0,\underline{y}^{\underline{\rho}}\,
  \exists \underline{z}^{\underline{\rho}}\, A(x,\underline{y},\underline{z})
  \;\to\;
  \exists \underline{f}^{\underline{\rho}(0)}\,
  \forall x^0\, A\bigl(x,\underline{f}(x),\underline{f}(S(x))\bigr)
  \tag{$\DC^{\underline{\rho}}$}
\]
where $\underline{f}^{\underline{\rho}(0)}$ stands for
$f_1^{\rho_1(0)},\dots,f_k^{\rho_k(0)}$ and $A$ may be arbitrary. Since $\DC$
permits arbitrary comprehension over natural numbers, full second-order
arithmetic (in the sense of reverse mathematics \cite{simpson2009subsystems})
embeds into $\mathcal{A}^\omega$, identifying subsets of $\NN$ with their
characteristic functions.

\subsection{A formal representation of model theory for metric structures}
Let $L$ be a Henson signature and $\mathcal{T}$ a uniform $L$-theory. We extend the set of types $T$ in  $\mathcal{A}^\omega$ by introducing abstract types for each of the sorts in $L$ to form the set of types $T^L$ defined by 
\[
0,X_s\in T^L,\quad \rho,\tau\in T^L\rightarrow \rho(\tau)\in T^L,
\]
for each sort $s \in \mathbf{S}\setminus\{s_\RR\}$ from $L$. We shall write $X_{s_\RR}:\equiv 0(0)\equiv 1$, which will represent the type for the standard encoding of the reals as fast-converging Cauchy sequences.

In $\mathcal{A}^\omega$, we can define the closed term $P$ of type $0(0)$ with satisfying 
\[
\forall n^0( n >_\QQ 0 \to P(n)=_0 n) \land \forall n^0( n \le_\QQ 0 \to P(n)=_0 \langle 1\rangle).
\]
Using $P$, we are able to quantifier over the set of strictly positive rationals $\QQ^*_+$. Here we use the fact that rationals are encoded by natural numbers using the Cantor pairing function (see \cite{Kohlenbach2008} for more details). In particular, whenever we speak about a rational number $r$, we strictly mean a code for this natural number $\langle r\rangle$ (which we simply write as $r$ when the context is clear).
\begin{definition}
     $\mathcal{A}^\omega[L]$ will be the system resulting from $\mathcal{A}^\omega$ over the augmented language
including the types $X_s$ for each sort $s \in \mathbf{S}\setminus\{s_\RR\}$ from $L$ (where all the respective constants and axioms now also refer to these new types, if applicable) by extending this system, for each $f:s_1 \times \ldots s_n \to s_0 $ in $L$, with the following constant:
\begin{itemize}
    \item $\Tilde{f}$ of type $X_{s_0}(X_{s_n})\ldots(X_{s_1})$.
\end{itemize}
Furthermore, we fixed closed terms of $\mathcal{A}^\omega$:
\begin{itemize}
    \item $\Delta_f$ of type $0(0)(0)$.
    \item $\Omega_f$ of type $0(0)(0)$.
\end{itemize}
\end{definition}
We would like  $\mathcal{A}^\omega[L]$ to have axioms stating that $\Delta_f$ is a modulus of equicontinuity on bounded sets and $\Omega_f$  is a modulus of equiboundedness on bounded sets:

\begin{align*}
&\forall x_1^{X_{s_1}},\ldots,x_n^{X_{s_n}},k^0 \,\left(\bigwedge_{i\le n} (\Tilde{d}_{s_i}(x_i,a_i) <_\RR (k)_\RR) \to \Tilde{d}_{s_0}(\Tilde{f}(x_1,\ldots,x_n),a_i) \le_\RR (\Omega_f(k))_\RR \right).\tag*{$(B)_f$}\\
&\forall x_1^{X_{s_1}},\ldots,x_n^{X_{s_n}},y_1^{X_{s_1}},\ldots,y_n^{X_{s_n}},k^0,r^0\tag*{$(C)_f$} \\
&\Bigg(\bigwedge_{i\le n} (\Tilde{d}_{s_i}(x_i,a_i) <_\RR (r)_\RR\land \Tilde{d}_{s_i}(y_i,a_i) <_\RR (r)_\RR\land \Tilde{d}_{s_i}(x_i,y_i) <_\RR 2^{-\Delta_f(k,r)}) \to\\
&\Tilde{d}_{s_0}(\Tilde{f}(x_1,\ldots,x_n),\Tilde{f}(y_1,\ldots,y_n)) \le_\RR 2^{-k} \Bigg).
\end{align*}
Next, we suitably interpret the terms of $L$ in $\mathcal{A}^\omega[L]$. In the above, and throughout this article, we write $\Tilde{a}_{s_\RR} :\equiv 0_\RR$ and $\Tilde{d}_{s_\RR}(x,y) :\equiv |x-y|_\RR$ for $x^1,y^1$.
\begin{definition}
    Associate each variable, $x$, of sort $s$ in positive bounded logic with a variable, which we still call $x$, of type $X_s$ in $\mathcal{A}^\omega[L]$. For each term $t$ of positive bounded logic, define the term, $\Tilde{t}$, of $\mathcal{A}^\omega[L]$, as follows:
       \begin{itemize}
    \item If $t\equiv x$ a variable of sort $s$, $\Tilde{t}:= x$. 
    \item If $t\equiv f(t_1,\ldots,t_n)$ with $f:s_1 \times \ldots s_n \to s_0 $ a function symbol in $L$ and $t_1,\ldots,t_n$ terms of positive bounded logic with $\Tilde{t}_1,\ldots,\Tilde{t}_1$ defined. Then $\Tilde{t}:\equiv \Tilde{f}(\Tilde{t}_1,\ldots,\Tilde{t}_n)$. 
    \end{itemize}
\end{definition}
It is clear that $t \mapsto \Tilde{t}$ provides an embedding of terms of positive bounded logic into terms of $\mathcal{A}^\omega[L]$. When it is clear, we shall write $\Tilde{a}_s$ and $\Tilde{d}_s$ simply as $a_s$ and $d_s$ respectively.

Next, we define an embedding of the formulas of positive bounded logic in our system $\mathcal{A}^\omega[L]$. For a formula $\varphi$ of $\mathcal{A}^\omega[L]$, $s \in \mathbf{S}$ , and $r^0$ In the following, we introduce the abbreviations:
\[ 
\forall_rx^{X_s}\varphi:\equiv \forall x^{X_s}(\Tilde{d_s}(x,a_s)<_\RR P(r) \to \varphi ) \mbox{ and } \exists_rx^{X_s}\varphi:\equiv \exists x^{X_s}(\Tilde{d_s}(x,a_s)\le_\RR P(r) \land \varphi)
\]
Formally, $P(r)$ is a type 0 object which codes for a non-negative number. So when we write $\Tilde{d_s}(x,a_s)<_\RR P(r)$, for example, we actually mean $\Tilde{d_s}(x,a_s)<_\RR (\lambda k. P(r))$, where $(\lambda k. P(r))$ is the embedding of the nonnegative rational number encoded by $P(r)$  into the real numbers via the constant sequence. We make similar simplifications throughout.
\begin{definition}
    For each positive bounded formula $\varphi$, define the formula, $\Tilde{\varphi}$, of $\mathcal{A}^\omega[L]$, as follows:
       \begin{itemize}
    \item If $\varphi \equiv t\le r$ (respectively $r \le t$) for a term $t$ and rational $r$, then  $\Tilde{\varphi}:\equiv \Tilde{t}\le_\RR r$ (respectively $r \le_\RR \Tilde{t}$). 
    \item If $\varphi \equiv \varphi_1 \land \varphi_2$ (respectively $\varphi_1 \lor \varphi_2$) with $\Tilde{\varphi}_1$ and $\Tilde{\varphi}_2$ defined then  $\Tilde{\varphi}:\equiv \Tilde{\varphi}_1 \land\Tilde{\varphi}_2$ (respectively $\Tilde{\varphi}_1 \lor\Tilde{\varphi}_2$).
    \item If $\varphi \equiv \forall_r x \varphi$ (respectively $\exists_r x \varphi$) with $r$ positive rational and $\Tilde{\varphi}$  defined then  $\Tilde{\varphi}:\equiv \forall_r x \Tilde{\varphi}$ (respectively $\exists_r x \Tilde{\varphi}$). 
    \end{itemize}
\end{definition}
Again, it is clear that $\varphi \mapsto \Tilde{\varphi}$ embeds positive bounded formulas into $\mathcal{A}^\omega[L]$.

\begin{definition}
   We add the following axioms to the theory  $\mathcal{A}^\omega[L]$:
   \begin{itemize}
       \item $(C)_f$ and $(B)_f$ for each $f:s_1 \times \ldots s_n \to s_0 $ in $L$.
       \item (Pseudo-) metric axioms for $d_s$. That is, for each $s \in \mathbf{S}\setminus \{s_\RR\}$ we have the axioms:
       \begin{enumerate}
           \item $\forall x^{X_s}\,(d_s(x,x)=_\RR 0)$
            \item $\forall x^{X_s},y^{X_s}\,(d_s(x,y)=_\RR d_s(y,x))$
            \item $\forall x^{X_s},y^{X_s},z^{X_s}\,(d_s(x,y)\le_\RR d_s(x,z)+d_s(z,y)))$
       \end{enumerate}
   \end{itemize}
   As with $\mathcal{A}^\omega$ equality at type 0 is still the only a primitive predicate. For $s \in \mathbf{S}$, we define $x^{X_s} =_{X_s} y^{X_s}$ as $d_s(x,y)=_\RR 0$. Equality for complex types is defined as before as extensional equality using $=_0$ and $=_{X_s}$ for the base cases.
\end{definition}

\begin{definition}[cf.\ Definition 3.1 of \cite{kohlenbach2005some}]
    The full set-theoretic type structure $\mathcal{S}^{\omega,\mathscr{M}}= \langle S_\rho\rangle_{\rho \in T^L}$ over $\NN$ and the family of spaces $\mathscr{M}:=(M_s\,|\, s \in \mathbf{S})$ is defined via $\mathcal{S}_0:=\mathbb{N}$, $S_{X_s}:= M_s$, for each $ s \in \mathbf{S}\setminus \{s_\RR\}$  and
\[
\mathcal{S}_{\tau(\xi)}:=\mathcal{S}_{\tau}^{\mathcal{S}_{\xi}}.
\]
\end{definition}
 For $x \in S_1=\NN^\NN$, define $[x]$ to be the real number corresponding to the fast converging Cauchy sequence $\hat{x}$. For $x \in \mathcal{M}_{X_{s}}=M_s$ for $s \in \mathbf{S}\setminus \{s_\RR\}$ set $[x]:=x$. 
\begin{proposition}[cf.\ Definition 3.2 of \cite{kohlenbach2005some}]
\label{prop:model:set}
 Let $\mathscr{M}$ be a Henson $L$-structure with universe $(M_s\,|\, s \in \mathbf{S})$ and respective moduli of equicontinuity and equiboundedness on
bounded sets $\Delta_f$ and $\Omega_f$ for each function symbol \footnote{Here we actually mean the interpretations of the constants $\Delta_f$ and $\Omega_f$ in $\mathcal{S}^{\omega,\mathscr{M}}$.}, $f$, in $L$. Then $\mathcal{S}^{\omega,\mathscr{M}}$ becomes a model of $\mathcal{A}^\omega[L]$ by letting the variables of type $\rho$ range over $S_\rho$ if the additional constant symbols coming from $L$ interpreted appropriately, namely: 
\[
\tilde{f}(x_1,\ldots,x_n):=\begin{cases}(f^{\mathscr{M}}([x_1],\ldots,[x_n]))_\circ&\text{ if } s_0 =s_\RR,\\ f^{\mathscr{M}}([x_1],\ldots,[x_n]) &\text{otherwise}\end{cases}
\]
 for $f:s_1 \times \ldots s_n \to s_0 $ a function symbol in $L$. 
\end{proposition}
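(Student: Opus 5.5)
We must check that $\mathcal{S}^{\omega,\mathscr{M}}$ with the indicated interpretation satisfies every axiom of $\mathcal{A}^\omega[L]$. The plan splits the axioms into three groups: those of $\mathcal{A}^\omega$ over the extended type structure, the metric axioms, and the new axioms $(B)_f$ and $(C)_f$.

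\textbf{The base system.} We follow the argument for $\mathcal{A}^\omega[X,d]$ in \cite{kohlenbach2005some}. The full set-theoretic type structure interprets the combinators and the G\"odel--Hilbert recursors at all types in $T^L$, since these are defined by recursion on type $0$ independently of the base sets $M_s$. $\QFER$ is sound because at type $0$ equality is genuine identity and at $X_s$ it is $d_s(x,y)=_\RR 0$. Since each $M_s$ is a genuine metric space, $=_{X_s}$ coincides with set-theoretic identity, so every element of $\mathcal{S}^{\omega,\mathscr{M}}$ respects equality and the rule holds. At type $1$, equality of codes is only $=_1$; but $\QFER$ only substitutes terms equal in the system's sense, which at higher types is pointwise equality down to these base cases, so soundness follows by induction on terms. $\QFAC$ and $\DC$ hold because we work in the full set-theoretic structure and assume the axiom of choice in the metatheory.

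\textbf{Metric axioms and well-definedness of $\tilde f$.} The metric axioms for $d_s$ with $s\neq s_\RR$ follow directly, because $\tilde d_s(x,y)=(d_s^{\mathscr{M}}(x,y))_\circ$ is a representative of the true distance, by Lemma \ref{lem:circprop}(1). Then $[\tilde d_s(x,y)]=d_s^{\mathscr{M}}(x,y)$, and the real relations $=_\RR,\le_\RR$ in the model agree with the true relations on $[\cdot]$. Next we note that $\tilde f$ is a total function of the right type. Arguments of real type are passed through $[\cdot]$, which uses $\widehat{\cdot}$ and hence is total on $\NN^\NN$. Outputs of real type are canonical representatives via $(\cdot)_\circ$. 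Consequently, for any term $t$, $[\tilde t]$ equals $t^{\mathscr M}$ evaluated at the $[\cdot]$-values of its variables. This is a simple induction on terms.

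\textbf{The axioms $(B)_f$ and $(C)_f$.} Here we use that $\Omega_f$ and $\Delta_f$ are interpreted as genuine moduli of equiboundedness and equicontinuity for $\mathscr{M}$, which is exactly the hypothesis. For $(B)_f$: suppose $d(x_i,a_i)<k$. Then the $[x_i]$ lie in the open ball of radius $k$, so $d(f^{\mathscr M}([\underline x]),a)\le\Omega_f(k)$, and the translated inequality follows from the term-evaluation fact above. For $(C)_f$ the reasoning is the same, using the modulus with $\epsilon=2^{-k}$ and radius $r$.

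The main obstacle is the bookkeeping at the real sort. The hypotheses on the arguments are stated for the codes $x_i$, while $f^{\mathscr M}$ is applied to $[x_i]$. We must check that $\tilde d_{s_\RR}(x,y)=|x-y|_\RR$ on codes agrees with $|[x]-[y]|$. For arbitrary type-$1$ objects this uses the normalisation $\widehat{\cdot}$ built into the real operations, and it is what makes the translated hypotheses match the semantic ones. Once this check and the term-evaluation lemma are in place, every remaining verification is routine.
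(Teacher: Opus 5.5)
Your proposal is correct and is exactly the routine verification the paper leaves implicit (it gives no proof, only the pointer to Definition 3.2 of \cite{kohlenbach2005some}). Since each $M_s$ is a genuine metric space, $=_{X_s}$ and hence every higher-type equality is set-theoretic identity, so the $\mathcal{A}^\omega$-axioms, $\QFER$, $\QFAC$ and $\DC$ hold in the full type structure. The metric axioms and $(B)_f$, $(C)_f$ then reduce, via $[(y)_\circ]=y$ and the $\widehat{\cdot}$-normalised real operations, to the assumed moduli. You are also right to keep $=_1$ rather than $=_\RR$ as the equality at the real sort $X_{s_\RR}\equiv 1$; the paper's definition of $=_{X_s}$ must be read this way for $\QFER$ to remain sound.
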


\begin{definition}[cf.\ Definition 3.2 of \cite{kohlenbach2005some}]
     A sentence of the language of $\mathcal{A}^\omega[L]$ holds in an Henson $L$-structure $\mathscr{M}$ if it is true in the models of $\mathcal{A}^\omega[L]$ obtained from $\mathcal{S}^{\omega,\mathscr{M}}$ as specified in Proposition \ref{prop:model:set}.
\end{definition}
We now introduce a collection of formulas in $\mathcal{A}^\omega[L]$ that will represent the class of positive bounded formulas taking parameters. First, we introduce the following abbreviations:
\[ 
\forall_rx^{X_s(0)}\varphi:\equiv\forall x^{X_s(0)}(\forall n^0\,(\Tilde{d_s}(x(n),a_s)<_\RR P(r))\to \varphi)
\]
and 
\[
\exists_rx^{X_s(0)}\varphi:\equiv \exists x^{X_s(0)}(\forall n\, (\Tilde{d_s}(x(n),a_s)\le_\RR P(r))\land \varphi).
\]
Furthermore, as in \cite[p. 201]{Kohlenbach2008}, for $s \in \mathbf{S}$, $k^0$ and $x^{X_s(0)}$, we can define $(\overline{x,l})$ such that   
\[
(\overline{x,l})(i)= \begin{cases} x(i) \text{ if } &i<l \\
a_s&\text{otherwise}\end{cases}
\]
where the equality is $=_1$ for $s = s_\RR$ and $=_{X_s}$ otherwise.
\begin{definition}
Let $\mathcal{PBL}[L]$ denote the class of formulas in $\mathcal{A}^\omega[L]$ of the form:
\begin{align*}
    \Theta_m(T,\underline{r},l,\underline{z}):\equiv &\forall_{r_1(\underline{z})} x_1^{X_{s_1}(0)} \exists_{r_2(\underline{z})} y_1^{X_{s_2}(0)} \ldots \forall_{r_{2m-1}(\underline{z})} x_m^{X_{s_{2m-1}}(0)}\exists_{r_{2m}(\underline{z})} y_m^{X_{s_{2m}}(0)}\\&(T((\overline{\underline{x},l(\underline{z})}),(\overline{\underline{y},l(\underline{z})}),\underline{z})=_\RR 0)
\end{align*}
For, $k$ of type $0$, $\underline{z}$ a sequence of terms with respective arbitrary types $\underline{\sigma}:= \sigma_1,\ldots,\sigma_p$, for each $1\le i \le 2m$ we have $r_i$ a closed term of type $0(\sigma_p)\ldots(\sigma_1)$, $T$ a closed term of type $1(\sigma_p)\ldots(\sigma_1)(X_{s_{2m}}(0))\ldots$ $(X_{s_{1}}(0))$, and $l$ is a closed term of type $0(\sigma_p)\ldots(\sigma_1)$. Here, and throughout, we write $(\overline{\underline{x},l(\underline{z})}):\equiv (\overline{x_1,l(\underline{z})}), \ldots, (\overline{x_m,l(\underline{z})})$ and define $(\overline{\underline{y},l(\underline{z})})$ similarly. Furthermore, for $i >0$, if $s_{2i-1}=s_\RR$ (respectively, $s_{2i}=s_\RR$) we shall assume that $T$ satisfies $\QFER^0_\RR(|T(x_i)|\le_\RR 2^{-k})$ (respectively $\QFER^0_\RR(|T(y_i)|\le_\RR  2^{-k})$) for each $k^0$, where we write $\QFER^0_\RR(|T(x_i)|\le_\RR 2^{-k})$ (respectively $\QFER^0_\RR(|T(y_i)|\le_\RR  2^{-k})$)  for $\QFER^0_\RR(A(x_i))$ (respectively $\QFER^0_\RR(B(y_i))$) with 
\[
A(x_i) :\equiv T(\underline{x},\underline{y}, \underline{z}) \le_\RR 2^{-k} \mbox{ (respectively } B(y_i) :\equiv T(\underline{x},\underline{y}, \underline{z})  \le_\RR 2^{-k} \mbox{)}.
\]
We make similar abbreviations throughout.
\end{definition}

\begin{proposition}
\label{prop:equiv:pbl:prenex}
    If $\varphi$ is a positive bounded formula then $\Tilde{\varphi}$ is provably equivalent over $\mathcal{A}^\omega[L]$ to a formula in $\mathcal{PBL}[L]$:
      \[
  \Theta_m^{\varphi}(T,\underline{r},1):\equiv\forall_{r_1} x_1^{X_{s_1}(0)} \exists_{r_2} y_1^{X_{s_2}(0)} \ldots \forall_{r_{2m-1}} x_m^{X_{s_{2m-1}}(0)}\exists_{r_{2m}} y_m^{X_{s_{2m}}(0)}\,(T((\overline{\underline{x},1}),(\overline{\underline{y},1}))=_\RR 0)
\]
for a term $T$ and (codes for) positive rationals $r_1,\ldots,r_{2m}$.
\end{proposition}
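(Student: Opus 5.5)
Our approach is induction on the construction of $\varphi$, first proving the stronger normal form in which the matrix is a single real-valued term set equal to $0$, and then padding with dummy quantifiers so the blocks strictly alternate $\forall\exists$. Each element variable is replaced by a sequence variable of type $X_s(0)$, of which only the $0$-th component $(\overline{x,1})(0)=x(0)$ is used; this is why $l=1$. The key device is that atomic formulas become equations $=_\RR 0$ via truncation.
\begin{itemize}
\item $\Tilde t\le_\RR r$ is equivalent to $\max(\Tilde t-r,0)=_\RR 0$.
\item $r\le_\RR\Tilde t$ is equivalent to $\max(r-\Tilde t,0)=_\RR 0$.
\end{itemize}
The map $\max(\cdot,0)$ is given by a closed term on representatives, and these equivalences are provable in $\mathcal{A}^\omega$.

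For the connectives, use that for nonnegative reals $a,b$ we have $(a=0\wedge b=0)\leftrightarrow \max(a,b)=0$ and $(a=0\vee b=0)\leftrightarrow \min(a,b)=0$. First check that $T\ge_\RR 0$ holds for every term we build. To handle quantifiers in $\varphi_1\wedge\varphi_2$ or $\varphi_1\vee\varphi_2$, prenex the two formulas by renaming their bound variables apart. The variables of $\varphi_2$ do not occur in $\varphi_1$, so bounded quantifiers can be pulled out. This is valid classically provided the bounding balls are nonempty. The anchor $a_s$ lies in the closed ball and in every open ball of positive radius, so pulling out $\exists_r$ (closed) and $\forall_r$ (open, $r>0$) is sound. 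For $\forall_r x\,\psi$ and $\exists_r x\,\psi$, simply prepend the quantifier. Finally, interleave the prefixes of the two conjuncts into an alternating $\forall\exists$ pattern, inserting dummy quantifiers such as $\exists_1 y$ or $\forall_1 x$ of some sort, whose variables do not occur in $T$. The term $T$ is then formed by $\lambda$-abstraction over all sequence variables, evaluating at index $0$.

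The remaining point is to replace a quantifier over $x^{X_s}$ by one over $x^{X_s(0)}$ under the abbreviated bounds. For the forward direction, given $x$ in the ball, use the constant sequence $\lambda n.x$. For the converse, given a sequence all of whose entries lie in the ball, its $0$-th entry lies in the ball. Under $(\overline{x,1})$ the matrix only sees $x(0)$, so the two formulations are equivalent. The other entries of $(\overline{x,1})$ are $a_s$, which is harmless.

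The main obstacle we expect is the case of a real sort $s_\RR$. The equivalence has to go through $=_\RR$ instead of a primitive equality, so we must check that substituting $\hat x$ or $(x)_\circ$ preserves the truth of the matrix. This uses that all constructed terms respect $=_\RR$ in their real arguments (the $\QFER^0_\RR$ hypotheses), which holds because $\max$, $\min$ and arithmetic are defined via $\widehat{\cdot}$ and the $\Tilde f$ are built from $\Tilde d$-continuous data.

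One further point needs care: $\Tilde f$ is only assumed uniformly continuous on bounded sets via $(C)_f$, which yields extensionality of $\Tilde f$ under $=_{X_s}$ on balls. Since all quantifiers are bounded, this is sufficient, and we will verify it by induction on terms.
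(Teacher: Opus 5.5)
Your proposal is correct and follows essentially the same route as the paper. You prenex the bounded quantifiers and encode atoms as equations $=_\RR 0$ combined by $\max$/$\min$; the paper uses $\min\{r,\Tilde t\}-r$ and absolute values where you use truncation and nonnegativity, which is cosmetic. You then lift to sequence variables by evaluating at index $0$ and get the $\QFER^0_\RR$ requirement from uniform continuity via $(C)_f$, just as the paper does, while your explicit nonemptiness check for the balls (needed to pull quantifiers out and insert dummy ones) fills in a step the paper dismisses as clear.
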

\begin{proof}
It is clear that if $\varphi$ is a positive bounded formula then $\Tilde{\varphi}$ is equivalent to a formula in prenex normal form with bounded quantifiers which is built up
from formulas $r\le_\RR \Tilde{t}$  and $\Tilde{t} \le_\RR r$ viewed as prime formulas, by and $\lor$ and $\land$, for terms $t$ in $L$. Now, $\Tilde{\varphi}$ is equivalent to 
\[
\forall_{r_1} x_1^{X_{s_1}} \exists_{r_2} y_1^{X_{s_2}} \ldots \forall_{r_{2m-1}} x_m^{X_{s_{2m-1}}}\exists_{r_{2m}} y_m^{X_{s_{2m}}}\,(Q(\underline{x},\underline{y})=_\RR 0)
\]
as in Lemma 6.12 of \cite{GuK2016}, $Q$ is constructed by induction on the complexity of $\varphi$:
\begin{enumerate}
    \item $r\le_\RR \Tilde{t}$  is replaced by $\min\{r,\Tilde{t}\}-r=_\RR 0$
    \item $\Tilde{t}\le_\RR r$  is replaced by $\min\{r,\Tilde{t}\}-\Tilde{t}=_\RR 0$
    \item $\phi =_\RR 0 \lor \psi =_\RR 0$ is replaced by $\min\{|\phi|,|\psi|\}=_\RR 0$
    \item $\phi =_\RR 0 \land \psi =_\RR 0$ is replaced by $\max\{|\phi|,|\psi|\}=_\RR 0$.
\end{enumerate}
Now we can define $T$ of type $1(X_{s_{2m}}(0))\ldots(X_{s_{1}}(0))$ via 
\[
T:\equiv \lambda \underline{x},\underline{y}. Q(x_1(0),\ldots, x_m(0),y_1(0),\ldots,y_m(0)).
\]
Now, $\QFER^0_\RR(|T(x_i)| \le_\RR  2^{-k})$ and $\QFER^0_\RR(|T(y_i)|\le_\RR  2^{-k})$ will follow from the fact that $Q$ can be shown to be uniformly continuous by induction on the complexity of $\varphi$ and the axioms $(C)_f$. Thus, we actually have extensionality implicicatively.
\end{proof}

\begin{example}
\label{ex:pbl}
In addition to the class of formulas obtained via our embedding of positive bounded logic into our system, $\mathcal{PBL}[L]$ represents an embedding of parameterised positive bounded formulas. For example, in the context of $L:= L_{\mathcal{G}}$ defined  in Section \ref{subsec:model:APP1}, we had the following positive bounded formula
\[
\mbox{$H$ does not have index at most $n$}:\equiv \forall_2 g_1\ldots\forall_2 g_n\,\left( d_{\mathcal{F}}\left(\bigcup_{i\le n}g_i\cdot H, \hat{\Omega}\right)=1  \right)
\]
where, here $H$ is a variable and $n$ is a natural number parameter. This formula can be expressed as a formula in $\mathcal{PBL}[\mathcal{L}_{G}]$ via
\[
\forall_{2} g^{X_{\Omega}(0)}\,\left( d_{\mathcal{F}}\left(\Tilde{\bigcup}_{i\le n}g(i)\cdot H, \Tilde{\hat{\Omega}}\right)-1=_\RR 0 \right),
\]
where now $n$ is a free type 0 parameter and, 
\[
\Tilde{\bigcup}_{i\le n}g(i)\cdot H:=R_{X_\mathcal{F}}(n,g(0)\tilde{\cdot} H,\lambda B,m.(B\tilde{\cup} (g(m+1)\tilde{\cdot} H)))
\]
where $R_{X_\mathcal{F}}$ is a (single) type $X_\mathcal{F}$ recursor constant (cf.\ \cite{Kohlenbach2008}, p.\ 48). From this example, one sees why it is important to allow $\mathcal{PBL}[L]$ to contain formulas with quantification over bounded finite sequences.
\end{example}
\begin{remark}
    We note that the rules corresponding to $\QFER^0_\RR(|T(x_i)| \le_\RR 2^{-k})$ and $\QFER^0_\RR($ $|T(y_i)|\le_\RR2^{-k})$ in the case where the sort $s$ is not $s_\RR$ follows from $\QFER$. $\QFER^0_\RR(|T(x_i)| \le_\RR r)$ and $\QFER^0_\RR(|T(y_i)|\le_\RR r)$ represent the minimal extensionality assumption on $=_\RR$ we need to prove our metatheorems. However, we note that when treating concrete instances of parametrised positive bounded formulas, as we did in Proposition \ref{prop:equiv:pbl:prenex} and Example \ref{ex:pbl}, one actually has that $T$ is uniformly continuous on bounded sets, which yields the extensionality implicitatively rather than as a rule.
\end{remark}

\begin{remark}
\label{rem:PBL:diff}
Our approach to embedding positive bounded formulas in a system of finite type arithmetic differs to \cite{GuK2016}, where (working with norm structures, as opposed to metric structures with a single sort), the class $\mathcal{PBL}$ is defined to capture the satisfiability of a countable collection of positive bounded formulas indexed by natural numbers $n$ and not containing further parameters. Furthermore, as \cite{GuK2016} is done in the normed setting, bounded universal quantification is taken over closed balls (cf.\ Remark \ref{rem:met:vs:normed}). The assumption that the term $T$ associated with a $\mathcal{PBL}$ is extensional with respect to real variables shall be important later on. The assumption ensures that $T$ \emph{treats} type 1 variables as real numbers and does not perform manipulations on their representations as type 1 objects.

\end{remark}

We now formally write down the approximate satisfiability of a positive bounded formula.

\begin{definition}
Let
\begin{align*}
    \Theta_m(T,\underline{r},l,\underline{z}):\equiv &\forall_{r_1(\underline{z})} x_1^{X_{s_1}(0)} \exists_{r_2(\underline{z})} y_1^{X_{s_2}(0)} \ldots \forall_{r_{2m-1}(\underline{z})} x_m^{X_{s_{2m-1}}(0)}\exists_{r_{2m}(\underline{z})} y_m^{X_{s_{2m}}(0)}\\&(T((\overline{\underline{x},l(\underline{z})}),(\overline{\underline{y},l(\underline{z})}),\underline{z})=_\RR 0)
\end{align*}
be a formula in $\mathcal{PBL}[L]$. The following formula expresses the \emph{$k$th approximate of} $\Theta_m(T,\underline{r},\underline{z})$:
 \begin{align*}
          &\Delta_{m,\mathcal{A}}(T,\underline{r},l,\underline{z},k):\equiv k>_0\max\{\ceil{-\log_2(P(r_{2i-1}))}\,|\, i \in [1;m]\} \to\\ &\forall_{P(r_1(\underline{z}))-2^{-k}} x_1^{X_{s_1}(0)} \exists_{P(r_2(\underline{z}))+2^{-k}} y_1^{X_{s_2}(0)} \ldots \forall_{P(r_{2m-1}(\underline{z}))-2^{-k}} x_m^{X_{s_{2m-1}}(0)}\exists_{P(r_{2m}(\underline{z}))+2^{-k}} y_m^{X_{s_{2m}}(0)}\\&(|T((\overline{\underline{x},l(\underline{z})}),(\overline{\underline{y},l(\underline{z})}),\underline{z})|\le_\RR 2^{-k}).
    \end{align*} 
In addition, the following formula expresses the approximate truth of
\[
\Theta_{m,\mathcal{A}}(T,\underline{r},l,\underline{z}):\equiv \forall k^0\,\Delta_{m,\mathcal{A}}(T,\underline{r},l,\underline{z},k).
\]
\end{definition}
The following proposition shows the connection between $\Theta_{m,\mathcal{A}}$ and approximate satisfiability and is immediate:

\begin{proposition}
\label{prop:approx:equiv}
    Suppose $\varphi$ is a positive bounded formula. If $\mathscr{M}$ is a Henson $L$-structure, then  
    \[
    \mathscr{M} \models_\mathcal{A} \varphi \mbox{ if and only if } \mathcal{S}^{\omega,\mathscr{M}} \models \Theta_{m,\mathcal{A}}^{\varphi}(T,\underline{r}).
    \]
\end{proposition}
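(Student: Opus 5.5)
We argue by a direct comparison of the two sides, using the construction of $\Theta^{\varphi}_m(T,\underline{r},1)$ in Proposition~\ref{prop:equiv:pbl:prenex}. First we reduce to the prenex case. Every approximation $\varphi'\Leftarrow_{\mathcal{A}}\varphi$ is, up to renaming, an approximation of the prenex form of $\varphi$ and conversely. Moreover, pulling bounded quantifiers out over $\land,\lor$ preserves discrete satisfaction for each fixed choice of radii. So we may assume $\varphi$ is prenex, $\forall_{r_1}x_1\exists_{r_2}y_1\cdots\exists_{r_{2m}}y_m\,\psi(\underline{x},\underline{y})$ with $\psi$ quantifier-free positive. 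In this case $\psi\leftrightarrow Q(\underline{x},\underline{y})=_\RR 0$ in $\mathcal{S}^{\omega,\mathscr{M}}$, and $T$ merely reads off $x_i(0),y_i(0)$. The quantification over sequences $x^{X_s(0)}$ with truncation $(\overline{x,1})$ then collapses to quantification over single elements of $M_s$: the bound condition for all $n$ holds for $x(0)$ and trivially for the anchor points. So $\mathcal{S}^{\omega,\mathscr{M}}\models\Delta_{m,\mathcal{A}}(T,\underline{r},1,k)$ exactly when $\mathscr{M}$ discretely satisfies $\forall_{r_1-2^{-k}}\cdots\exists_{r_{2m}+2^{-k}}\,(|Q|\le 2^{-k})$.

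Next we relate $|Q|\le 2^{-k}$ to approximations of the matrix $\psi$. By induction on the construction of $Q$ (min/max of $|\cdot|$ and terms $\min\{r,\tilde t\}-r$, $\min\{r,\tilde t\}-\tilde t$), we show two implications for $\delta>0$. First, $|Q|\le\delta$ implies that the approximation of $\psi$ obtained by replacing each atom $r\le t$ by $r-\delta\le t$ and $t\le r$ by $t\le r+\delta$ holds. Second, conversely, if that shifted approximation holds with shift $\delta$, then $|Q|\le\delta$. Both follow since $|\min\{r,t\}-r|\le\delta\iff r-\delta\le t$ and similarly for the other atom, while $\min$ and $\max$ of absolute values match $\lor$ and $\land$.

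With this, both directions follow. ($\Rightarrow$) Given $k$ above the threshold (so $r_{2i-1}-2^{-k}>0$), the formula $\forall_{r_1-2^{-k}}\cdots\exists_{r_{2m}+2^{-k}}\psi_{2^{-k}}$ is an approximation of $\varphi$, where $\psi_{2^{-k}}$ is the uniformly shifted matrix. So it holds discretely in $\mathscr{M}$, and the converse implication turns it into $\Delta_{m,\mathcal{A}}(\ldots,k)$. ($\Leftarrow$) An arbitrary approximation $\varphi'$ uses radii $r'_{2i-1}<r_{2i-1}$, $r'_{2i}>r_{2i}$ and atom shifts; pick $k$ so large that $2^{-k}$ is below all these gaps and above the threshold. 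Then $\Delta_{m,\mathcal{A}}(\ldots,k)$ gives satisfaction with the tighter radii and a uniform shift $2^{-k}$. Monotonicity finishes the argument: shrinking universal balls, enlarging existential balls and loosening atoms preserve discrete satisfaction. This yields $\mathscr{M}\models\varphi'$.

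The main obstacle is the bookkeeping in the prenex reduction. Approximations are defined on the original syntax tree, whereas $\Theta^\varphi_m$ uses the prenex form, so we must check that approximating commutes with prenexing up to monotonicity. We also need to check the strict-versus-nonstrict interplay of open and closed balls. Open balls for $\forall$ and closed balls for $\exists$, together with strict shrinking and enlarging of radii, make the monotonicity arguments go through; radii equal to $r\pm 2^{-k}$ are harmless because the approximation relation only ever asks about strictly smaller or larger radii.
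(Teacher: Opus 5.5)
Your proof is correct and is the natural unfolding of what the paper calls immediate; the paper gives no written proof. The ingredients are all there: collapsing the sequence quantifiers via $(\overline{x,1})$, the exact equivalence between $|Q|\le\delta$ and the uniformly $\delta$-shifted matrix, and the cofinality of the approximations with radii $r_i\pm 2^{-k}$ and shift $2^{-k}$ among all approximations.

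One claim needs tightening: that prenexing preserves discrete satisfaction ``for each fixed choice of radii''. The definition of approximation does not require $r'>0$, so a universal radius can become nonpositive, giving an empty open ball. In that case, pulling $\forall_{r'}$ out of a conjunction, or deleting a dummy $\forall$, is not truth-preserving. The fix is to first strengthen such an approximation by monotonicity, e.g.\ replacing $r'$ by $\max\{r',r/2\}$, before passing between $\varphi'$ and its prenex form.
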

We shall need the following immediate proposition later on:

\begin{proposition}
\label{prop:equiv:dual}
    Let
\begin{align*}
    \Theta_m(T,\underline{r},l,\underline{z}):\equiv &\forall_{r_1(\underline{z})} x_1^{X_{s_1}(0)} \exists_{r_2(\underline{z})} y_1^{X_{s_2}(0)} \ldots \forall_{r_{2m-1}(\underline{z})} x_m^{X_{s_{2m-1}}(0)}\exists_{r_{2m}(\underline{z})} y_m^{X_{s_{2m}}(0)}\\&(T((\overline{\underline{x},l(\underline{z})}),(\overline{\underline{y},l(\underline{z})}),\underline{z})=_\RR 0)
\end{align*}
be a formula in $\mathcal{PBL}[L]$. Set 
\begin{align*}
          &\Delta^D_{m,\mathcal{A}}(T,\underline{r},l,\underline{z},k):\equiv k>_0\max\{\ceil{-\log_2(P(r_{2i-1}))}\,|\, i \in [1;m]\} \to\\ &\Bar{\forall}_{P(r_1(\underline{z}))-2^{-k}} x_1^{X_{s_1}(0)} \Bar{\exists}_{P(r_2(\underline{z}))+2^{-k}} y_1^{X_{s_2}(0)} \ldots \Bar{\forall}_{P(r_{2m-1}(\underline{z}))-2^{-k}} x_m^{X_{s_{2m-1}}(0)}\Bar{\exists}_{P(r_{2m}(\underline{z}))+2^{-k}} y_m^{X_{s_{2m}}(0)}\\&(|T((\overline{\underline{x},l(\underline{z})}),(\overline{\underline{y},l(\underline{z})}),\underline{z})|\le_\RR 2^{-k}).
    \end{align*}
    Then $\mathcal{A}^\omega[L]$ proves $\Theta_{m,\mathcal{A}}(T,\underline{r},l,\underline{z})$ and $\forall k^0\,\Delta^D_{m,\mathcal{A}}(T,\underline{r},l,\underline{z},k)$ are equivalent. Where we write:
\[ 
\Bar{\forall}_rx^{X_s(0)}\varphi:\equiv\forall x^{X_s(0)}(\forall n^0\,(\Tilde{d_s}(x(n),a_s)\le_\RR P(r))\to \varphi)
\]
and 
\[
\Bar{\exists}_rx^{X_s(0)}\varphi:\equiv \exists x^{X_s(0)}(\forall n\, (\Tilde{d_s}(x(n),a_s)<_\RR P(r))\land \varphi).
\]
\end{proposition}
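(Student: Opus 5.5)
The plan is to prove the equivalence by comparing, for each $k$, the $k$th approximation $\Delta_{m,\mathcal{A}}$ with its dual $\Delta^D_{m,\mathcal{A}}$ at a neighbouring precision. The only differences between $\forall_r$/$\exists_r$ and $\Bar{\forall}_r$/$\Bar{\exists}_r$ are strict versus non-strict inequalities in the radius conditions, so shifting the radius by a small rational absorbs them.

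First I note a monotonicity fact provable in $\mathcal{A}^\omega[L]$. For rationals $\rho'\le\rho$, $\forall_{\rho}$ implies $\forall_{\rho'}$, since the premise $d<\rho'$ implies $d<\rho$. For $\rho\le\rho'$, $\exists_{\rho}$ implies $\exists_{\rho'}$. Also $|T|\le 2^{-k}$ implies $|T|\le 2^{-k+1}$. Next, compare strict and closed radii:
\begin{itemize}
\item $\Bar{\forall}_{\rho}$ implies $\forall_{\rho}$, since $d<\rho$ implies $d\le\rho$.
\item $\forall_{\rho+\delta}$ implies $\Bar{\forall}_{\rho}$ for $\delta>0$.
\item $\Bar{\exists}_{\rho}$ implies $\exists_{\rho}$.
\item $\exists_{\rho}$ implies $\Bar{\exists}_{\rho+\delta}$.
\end{itemize}
All of these are purely logical consequences of the definitions of $<_\RR$ and $\le_\RR$ on rational constants together with the pointwise conditions on the sequences $x(n)$, $y(n)$. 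Since the radius conditions on the sequence variables are quantified over all $n$, they transfer componentwise.

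For the direction $\forall k\,\Delta^D\to\Theta_{m,\mathcal{A}}$, fix $k$ above the threshold and use $\Delta^D$ at level $k$. The barred quantifiers weaken directly to unbarred ones at the same radii: $\Bar{\forall}$ becomes $\forall$ (the domain shrinks) and $\Bar{\exists}$ becomes $\exists$. This proves $\Delta_{m,\mathcal{A}}(\ldots,k)$ outright.

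For the converse, given $k$, apply $\Delta_{m,\mathcal{A}}$ at level $k+1$. Its universal quantifiers range over open balls of radius $P(r_{2i-1})-2^{-k-1}$, which contain the closed balls of radius $P(r_{2i-1})-2^{-k}$. Its existential witnesses lie in closed balls of radius $P(r_{2i})+2^{-k-1}$, hence in open balls of radius $P(r_{2i})+2^{-k}$. The matrix bound $2^{-k-1}\le 2^{-k}$ holds as well. The quantifier alternation is handled by induction on $m$ (or directly, by pushing the implication through the prenex prefix). Each universal step restricts to a smaller domain and each existential step keeps the same witness, so no choice principle is needed. Finally, the hypothesis $k+1>\max\lceil-\log_2 P(r_{2i-1})\rceil$ follows from $k>\max\ldots$, so the guards match.

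The main obstacle is bookkeeping rather than any real difficulty. One check is that the truncations $(\overline{\underline{x},l(\underline{z})})$ do not interfere: they only replace entries by the anchor, which satisfies every radius condition. The other check is that the strict and non-strict comparisons of reals with rationals behave as claimed inside $\mathcal{A}^\omega[L]$. Both follow from the definitions of $<_\RR$ and $\le_\RR$, and no extensionality of $T$ is used, because the same sequences are plugged into $T$ on both sides.
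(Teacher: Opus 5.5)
Your argument is correct. It is exactly the argument behind the paper calling this proposition immediate; no proof is given there. The direction $\Delta^D_{m,\mathcal{A}}(T,\underline{r},l,\underline{z},k)\to\Delta_{m,\mathcal{A}}(T,\underline{r},l,\underline{z},k)$ holds at the same level because closed balls contain the open balls of the same radius. The direction $\Delta_{m,\mathcal{A}}(T,\underline{r},l,\underline{z},k+1)\to\Delta^D_{m,\mathcal{A}}(T,\underline{r},l,\underline{z},k)$ holds by absorbing the strict/non-strict discrepancy into the gap $2^{-k-1}$.

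One bookkeeping point should be stated explicitly. The guard $k>\max_i\lceil-\log_2 P(r_{2i-1})\rceil$ makes the radii $P(r_{2i-1})-2^{-k}$ and $P(r_{2i-1})-2^{-k-1}$ strictly positive, so the normalisation $P$ built into $\forall_r$, $\exists_r$, $\Bar{\forall}_r$, $\Bar{\exists}_r$ acts as the identity on them. Without that, your radius-shifting bullets and your general monotonicity remark can fail for non-positive radii, where $P$ returns $1$.
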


We want to add the axioms expressing approximate satisfiability of a given positive bounded formula in $\mathcal{T}$ to $\mathcal{A}^\omega[L]$. To show that such axioms will be admissible with respect to program extraction, we shall show that they are equivalent to universal formulas with respect to a suitable extension of  $\mathcal{A}^\omega[L]$. We first need to introduce a version of the retraction map from \cite{GuK2016} (see Definition 6.15 of that reference) to allow intensional expression of bounded quantification in our metric setting.
\begin{definition}
\label{def:retr}
Denote by $\mathcal{A}^\omega[L,\mathrm{retr}]$ the extension of $\mathcal{A}^\omega[L]$ where, for each sort $s\in \mathbf{S}$, we add a constant, $\mathrm{retr}_s$, of type $X_s(0)(X_s)$ and the following axioms:
\begin{itemize}
    \item $\forall x^{X_s}, r^0(\mathrm{retr}_s(x,r) =_{X_s} x \lor \mathrm{retr}_s(x,r) =_{X_s}  a_{s})$
    \item $\forall x^{X_s}, r^0(d_s(x,a_s)<_\RR P(r) \to \mathrm{retr}_s(x,r) =_{X_s}  x)$
    \item $\forall x^{X_s}, r^0(   d_s(x,a_{s})>_{\RR} P(r) \to \mathrm{retr}_s(x,r) =_{X_s}  a_{s} )$
    \item $\forall x^{X_s}, r^0(\mathrm{retr}_s(\mathrm{retr}_s(x,r),r) =_{X_s} \mathrm{retr}_s(x,r))$
\end{itemize}

Lastly, we define the term, $\Tilde{\mathrm{retr}}_s(x^{X_s(0)},r^0):= \lambda n^0.\mathrm{retr}_s(x(n),r)$.
We shall write $ \Tilde{\mathrm{retr}}_s$ simply as $\mathrm{retr}_s$ when the context is clear.
\end{definition}
\begin{remark}
\label{rem:retr}
The intended semantics of $\mathrm{retr}$ is to fix every point in the closed
ball and to map every point outside the ball to the centre. However, axioms
that fully specify this behaviour are not admissible in our system. We
therefore give an \emph{intensional} treatment of this mapping, which leaves
the behaviour on the boundary of the ball unspecified. By contrast, in the normed setting the existence of intermediate points permits
an \emph{extensional} retraction map whose semantics are fully captured by the
system (cf.\ \cite[Definition 6.15]{GuK2016}): one can define, as a term, the map that fixes every point in the ball
and sends each exterior point to the intersection of the line segment from that
point to the centre with the boundary of the ball.
\end{remark}
\begin{proposition}
\label{prop:retr:int}
Let $\mathscr{M}$ be a Henson $L$-structure with universe $(M_s\,|\, s \in \mathbf{S})$ and respective moduli of equicontinuity and equiboundedness on
bounded sets $\Delta_f$ and $\Omega_f$ for each function symbol, $f$, in $L$. Then $\mathcal{S}^{\omega,\mathscr{M}}$ becomes a model of $\mathcal{A}^\omega[L,\mathrm{retr}]$ by letting the variables of type $\rho$ range over $S_\rho$ if all constant of $\mathcal{A}^\omega[L]$ are interpreted as in Proposition \ref{prop:model:set} and the constants $\mathrm{retr}_s$ are interpreted via:
\[
\mathrm{retr}_s(x^{X_s},r):=\begin{cases} x \text{ if } &d_s(x,a_s) \le_\RR P(r) \\a_s&\text{otherwise}\end{cases}
\]
\end{proposition}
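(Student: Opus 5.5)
By Proposition \ref{prop:model:set}, $\mathcal{S}^{\omega,\mathscr{M}}$ with the stated interpretation of the constants of $\mathcal{A}^\omega[L]$ is already a model of $\mathcal{A}^\omega[L]$. The plan is therefore to check only two things. First, the proposed interpretation of each $\mathrm{retr}_s$ is a legitimate element of $S_{X_s(0)(X_s)}$. Second, the four axioms of Definition \ref{def:retr} hold in $\mathcal{S}^{\omega,\mathscr{M}}$. The full set-theoretic type structure contains \emph{all} set-theoretic functions of the relevant types, so the first point is immediate. Note that the case distinction $d_s(x,a_s)\le P(r)$ is non-effective but harmless here, since we need no computability or continuity of $\mathrm{retr}_s$. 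This is exactly why the axioms were designed intensionally (cf.\ Remark \ref{rem:retr}). The extended language contains only the new constants $\mathrm{retr}_s$, with no new rules or schemata. Hence the axiom and rule schemata of $\mathcal{A}^\omega$ (combinators, recursors, $\QFAC$, $\DC$, $\QFER$) remain valid for formulas mentioning $\mathrm{retr}_s$, by the same argument as in Proposition \ref{prop:model:set}: they hold for arbitrary elements of the full type structure.

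For the axioms, fix $x\in M_s$ and $r\in\NN$. First I recall how $=_{X_s}$ is interpreted: $x=_{X_s}y$ abbreviates $d_s(x,y)=_\RR 0$, and $\tilde d_s$ is interpreted as $(d_s^{\mathscr M}(x,y))_\circ$. By Lemma \ref{lem:circprop}(1) this type-$1$ object represents the real number $d_s^{\mathscr M}(x,y)$, so $x=_{X_s}y$ holds in the model iff $d^{\mathscr M}_s(x,y)=0$. Similarly, $\tilde d_s(x,a_s)<_\RR P(r)$ and $\tilde d_s(x,a_s)>_\RR P(r)$ are true iff the corresponding strict inequalities between real numbers hold.

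With this in hand the axioms are checked in turn. (i) By definition $\mathrm{retr}_s(x,r)$ is literally $x$ or $a_s$, and $d_s(y,y)=0$ since $d_s$ is a metric. (ii) If $d_s(x,a_s)<P(r)$ then a fortiori $d_s(x,a_s)\le P(r)$, so $\mathrm{retr}_s(x,r)=x$. (iii) If $d_s(x,a_s)>P(r)$, the first case fails and $\mathrm{retr}_s(x,r)=a_s$. (iv) Idempotence: if $d_s(x,a_s)\le P(r)$ then $\mathrm{retr}_s(x,r)=x$ and applying $\mathrm{retr}_s$ again returns $x$. Otherwise $\mathrm{retr}_s(x,r)=a_s$, and $d_s(a_s,a_s)=0\le P(r)$ because $P(r)$ is positive, so $\mathrm{retr}_s(a_s,r)=a_s$. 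The case $s=s_\RR$ is handled identically, reading $x$ as a type-$1$ object, $d$ as $|[x]-[y]|$, and $a_s$ as $0_\RR$. In that case $=_{X_s}$ is $=_\RR$, so the identities hold up to $=_\RR$ and in fact literally.

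I do not expect a real obstacle. The only point needing care is the passage between the formal relations $=_\RR$, $<_\RR$ on $(\cdot)_\circ$-representations and the true real (in)equalities, which Lemma \ref{lem:circprop} supplies. One also has to note that the derived term $\tilde{\mathrm{retr}}_s$ requires no separate verification, as it is a $\lambda$-term built from $\mathrm{retr}_s$.
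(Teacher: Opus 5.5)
Your proposal is correct and is exactly the routine verification the paper leaves implicit (it states Proposition~\ref{prop:retr:int} without proof): validity in the full type structure is automatic, and the four axioms of Definition~\ref{def:retr} follow by case distinction using $P(r)>0$ and Lemma~\ref{lem:circprop}(1). One nuance: right after the statement, the paper interprets $\mathrm{retr}_{s_\RR}$ as $([x])_\circ$ resp.\ $(0)_\circ$, which is needed later for majorizability in Proposition~\ref{lem:majmainresult}; for that sort the first three axioms hold only up to $=_\RR$ rather than literally, which your appeal to Lemma~\ref{lem:circprop}(1) covers, while idempotence still holds literally.
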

For $s \in \mathbf{S}\setminus \{s_\RR\}$ and 
\[
\mathrm{retr}_{s_\RR}(x^1,r):=\begin{cases} ([x])_\circ \text{ if } &|x|_\RR \le_\RR P(r) \\(0)_\circ&\text{otherwise.}\end{cases}
\]
We say a formula $A(x^{X_s(0)})$ is uniformly continuous if $\mathcal{A}^\omega[L,\mathrm{retr}]$ proves $\exists k^0\forall x^{X_s(0)}\, (A((\overline{x,k})) \leftrightarrow A(x))$.
\begin{proposition}
\label{prop:retr}
Let  $s \in \mathbf{S}$ and $A(x^{X_s(0)})$ be a uniformly contentious formula. If $s=s_\RR$ suppose we have \textup{\textsf{QF}\mbox{-}\textsf{ER}}$^0_\RR(A(x))$. We have the following:
    \begin{itemize}
         \item $\mathcal{A}^\omega[L,\mathrm{retr}]\vdash \forall r^0\,( \forall x^{X_s(0)}A(\mathrm{retr}_s(x,r)) \to \forall_{r} x^{X_s(0)}A(x)).$
         \item $\mathcal{A}^\omega[L,\mathrm{retr}]\vdash  \forall r^0\,(\exists x^{X_s(0)}A(\mathrm{retr}_s(x,r))\to \exists_{r}x^{X_s(0)}A(x)).$
    \end{itemize}
  
\end{proposition}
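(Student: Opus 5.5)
The plan is to prove each of the two implications by exhibiting the right object and then moving the formula $A$ along a pointwise equality. We will use only the four axioms on $\mathrm{retr}_s$ from Definition~\ref{def:retr}, the uniform continuity of $A$, and, when $s=s_\RR$, the assumed rule $\QFER^0_\RR(A(x))$. Fix $r^0$ throughout and write $\mathrm{retr}_s(x,r)=\lambda n.\mathrm{retr}_s(x(n),r)$.

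\emph{Second item.} Assume $A(\mathrm{retr}_s(x,r))$ for some $x^{X_s(0)}$, and take $y:=\mathrm{retr}_s(x,r)$ as the witness. It then suffices to show $\forall n\,(\tilde d_s(y(n),a_s)\le_\RR P(r))$. Fix $n$. By the first axiom, either $y(n)=_{X_s}a_s$ or $y(n)=_{X_s}x(n)$. In the first case $\tilde d_s(y(n),a_s)=_\RR 0\le_\RR P(r)$. In the second case we argue classically on whether $\tilde d_s(x(n),a_s)>_\RR P(r)$. If it does not hold, then $\tilde d_s(x(n),a_s)\le_\RR P(r)$, and the triangle inequality and symmetry axioms for $d_s$ transfer this to $y(n)$. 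If it does hold, the third axiom gives $y(n)=_{X_s}a_s$ and we are back in the first case. The object $y$ is exactly what $A$ was asserted of, so no extensionality is needed here at all. This is the easy direction.

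\emph{First item.} Assume $\forall x\,A(\mathrm{retr}_s(x,r))$ and let $x$ satisfy $\forall n\,(\tilde d_s(x(n),a_s)<_\RR P(r))$. The second axiom gives $\mathrm{retr}_s(x(n),r)=_{X_s}x(n)$ for every $n$, so $\mathrm{retr}_s(x,r)$ and $x$ agree pointwise, and we would like to conclude $A(x)$ from $A(\mathrm{retr}_s(x,r))$. The obstacle is that the system only has $\QFER$, whereas $=_{X_s}$, being $\tilde d_s(\cdot,\cdot)=_\RR 0$, is a $\Pi^0_1$ statement. Moreover, the hypothesis of the ball condition is not quantifier-free, so the extensionality rule cannot simply be applied under it.

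To get around this, I first use uniform continuity to fix $k$ with $A(z)\leftrightarrow A((\overline{z,k}))$ for all $z$. This reduces the task to the finitely many coordinates $i<k$, and it also makes the tails of $(\overline{\mathrm{retr}_s(x,r),k})$ and $(\overline{x,k})$ literally the same term $a_s$. I then apply the derived extensionality rule, with premise $\forall i<k\,(\mathrm{retr}_s(z(i),r)=_{X_s}z(i))$, to the idempotent-corrected term $z:=\mathrm{retr}_s(x,r)$; the fourth axiom gives this premise for $z$ outright, without any ball hypothesis. For $s\neq s_\RR$ the remark after Example~\ref{ex:pbl} tells us the rule follows from $\QFER$. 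For $s=s_\RR$ this is precisely where the hypothesis $\QFER^0_\RR(A(x))$ is used.

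Finally I combine the two equalities $x=\mathrm{retr}_s(x,r)$ (from the second axiom, valid on the ball) and $\mathrm{retr}_s(x,r)=\mathrm{retr}_s(\mathrm{retr}_s(x,r),r)$ (from the fourth axiom) on the first $k$ coordinates to transport $A$ and obtain $A((\overline{x,k}))$, hence $A(x)$. I expect this transport step, that is, making the extensionality rule available with only the finitely many real or metric equalities furnished by uniform continuity, to be the main point requiring care. Everything else is routine manipulation of the retraction axioms.
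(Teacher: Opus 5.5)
Your second item is correct and is essentially the paper's argument. Your triangle-inequality step from $\tilde d_s(x(n),a_s)\le_\RR P(r)$ to $y(n)$ is even a little more explicit than the paper's.

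The first item, however, has a genuine gap. The fourth $\mathrm{retr}$-axiom gives the unconditional equality $\mathrm{retr}_s(\mathrm{retr}_s(x,r),r)=\mathrm{retr}_s(x,r)$. So $\QFER$ lets you move $A$ between these two objects. But you already have $A(\mathrm{retr}_s(x,r))$ by instantiating the hypothesis at $x$, so this step buys nothing. The transport you actually need is from $A((\overline{\mathrm{retr}_s(x,r),k}))$ to $A((\overline{x,k}))$. It runs along the equalities $\mathrm{retr}_s(x(i),r)=_{X_s}x(i)$ for $i<k$, which hold only under the ball hypothesis. ``Combining the two equalities'' does not justify this step, and it is exactly the step you said the rule cannot perform. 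You also cannot repair it by stating the derived rule with premise $\forall i<k\,(\mathrm{retr}_s(z(i),r)=_{X_s}z(i))$ for free $z$ and instantiating $z:=x$ later. That premise is $\Pi^0_1$, and $\QFER$ does not admit universal premises; otherwise, taking the premise to be $u=v$ itself would give full extensionality.

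The missing observation is that your worry about the ball condition disappears once uniform continuity has cut it down to finitely many coordinates. The formula $\forall i<_0k\,(\tilde d_s(x(i),a_s)<_\RR P(r))$ is equivalent to an existential formula $\exists w\,B_0(w)$ with $B_0$ quantifier-free, because $<_\RR$ is $\Sigma^0_1$. This is one place where it matters that $\forall_r$ refers to the open ball. The extensionality rule does extend to existential premises:
\begin{itemize}
\item from $\vdash\exists w\,B_0(w)\to u=v$ you get $\vdash B_0(w)\to u=v$ with $w$ free;
\item apply $\QFER$, or $\QFER^0_\RR(A(x))$ when $s=s_\RR$, to obtain $B_0(w)\to(A(u)\leftrightarrow A(v))$;
\item re-quantify over $w$.
\end{itemize}
Take $u:=(\overline{\mathrm{retr}_s(x,r),k})$ and $v:=(\overline{x,k})$, with the premise supplied by the second $\mathrm{retr}$-axiom. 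This gives $A((\overline{x,k}))$ under the ball hypothesis, and uniform continuity finishes. That is the paper's proof (see its footnote on existential premises). The idempotence axiom plays no role in this proposition; it is only needed later, in the proof of Proposition~\ref{prop:F:to:sat}.
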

\begin{proof}
  Take $k^0$ realising that $A(x)$ is uniformly continuous.
  \begin{itemize}
      \item Take $x^{X_s}$. Then the second axiom of $\mathrm{retr}$ (cf.\ Definition \ref{def:retr}) implies, that
      \[
      \forall n <_0 k\, (d_s(x(n),a_s)<_\RR P(r)) \to \forall n^0\,((\overline{\mathrm{retr}_s(x,r),k})(n) =_{X_s}  (\overline{x,k})(n))
      \]
       So $\QFER$ ($\QFER^0_\RR(A(x))$ in the case $s= s_\RR$) implies\footnote{An easy consequence of $\QFER$ is the extensionality rule for existential premises, as one may prenex the existential quantifier as universal and apply $\QFER$ with the universally quantified variable treated as a parameter. More precisely, if $B_0(z^\sigma)$ is quantifier-free and $C(x^\rho)$ is an arbitrary formula, then if $\exists z^\sigma B_0(z) \to u=_{\rho} v$ we have $\forall z^\sigma (B_0(z) \to u=_{\rho} v)$ and so for all $z^\sigma$, applying $\QFER$ to $B_0(z) \to u=_{\rho} v$ yields that $B_0(z) \to (C(u)\leftrightarrow C(v))$ which implies $\exists z^\sigma B_0(z) \to (C(u)\leftrightarrow C(v))$.}  
         \[
      \forall n <_0 k\, (d_s(x(n),a_s)<_\RR P(r)) \to (A(\overline{\mathrm{retr}_s(x,r),k})) \leftrightarrow  A((\overline{x,k})))
      \]
     and the result follows from the uniform continuity of $A$.
      \item Let $r^0$ be given. If we have $x^{X_s(0)}$ satisfying $A(\mathrm{retr}_s(x,r))$ then we are done once we have shown $\forall n^0\,(d_s(\mathrm{retr}_s(x(n),r),a_s)\le_{\RR} P(r))$. The first axiom of $\mathrm{retr}$ (cf.\ Definition \ref{def:retr}) yields $\forall n^0\,(\mathrm{retr}_s(x(n),r) =_{X_s} x(n) \lor \mathrm{retr}_s(x(n),r) =_{X_s}  a_{s})$, so if for some $n$ we have $\mathrm{retr}_s(x(n),r) =_{X_s}  a_{s}$ then $d_s(\mathrm{retr}_s(x(n),r),a_s) =_\RR 0\le_{\RR} P(r)$ and if not then we have $\mathrm{retr}_s(x(n),r) =_{X_s} x(n)$ and by the third axiom, we have $d_s(x(n),a_s) \le P(r)$  which implies $d_s(\mathrm{retr}_s(x(n),r),a_s) =_\RR d_s(x(n),a_s)\le_{\RR} P(r)$.
  \end{itemize}

\end{proof}
\begin{remark}
 A result analogous to Proposition \ref{prop:retr} is established in
Lemma 6.16 of \cite{GuK2016}, in the context of their extensionally defined
retraction map (cf.\ Remark \ref{rem:retr}) and with bounded universal
quantification taken over closed balls (cf.\ Remarks \ref{rem:met:vs:normed}
and \ref{rem:PBL:diff}). In \cite{GuK2016}, the authors establish the converse of both implications in
their setting. By contrast, each direction of Proposition \ref{prop:retr}
yields only a partial converse of the other; however, this will suffice for
our purposes. A further difference is that in~\cite{GuK2016} the formula $A$ is required to
be extensional in the free variable $x^{\Tilde{X}_{s}}$, for sorts $s \neq s_\RR$.
With our intensionally defined retraction map, we only need an extensionailty rule for $s_\RR$.
\end{remark}

Using $\mathrm{retr}$ we define an intensional notion of approximate satisfiability:

\begin{definition}
    Let
\begin{align*}
    \Theta_m(T,\underline{r},l,\underline{z}):\equiv &\forall_{r_1(\underline{z})} x_1^{X_{s_1}(0)} \exists_{r_2(\underline{z})} y_1^{X_{s_2}(0)} \ldots \forall_{r_{2m-1}(\underline{z})} x_m^{X_{s_{2m-1}}(0)}\exists_{r_{2m}(\underline{z})} y_m^{X_{s_{2m}}(0)}\\&(T((\overline{\underline{x},l(\underline{z})}),(\overline{\underline{y},l(\underline{z})}),\underline{z})=_\RR 0)
\end{align*}
be a formula in $\mathcal{PBL}[L]$. We define
 \begin{align*}
          &\Delta^\mathrm{retr}_{m,\mathcal{A}}(T,\underline{r},l,\underline{z},k):\equiv k>_0\max\{\ceil{-\log_2(P(r_{2i-1}))}\,|\, i \in [1;m]\} \to\\ &\forall x_1^{X_{s_1}(0)} \exists y_1^{X_{s_2}(0)} \ldots \forall x_m^{X_{s_{2m-1}}(0)}\exists y_m^{X_{s_{2m}}(0)}(|T((\overline{\mathrm{retr}(\underline{x}),l(\underline{z})}),(\overline{\mathrm{retr}(\underline{y}),l(\underline{z})}),\underline{z})|\le_\RR 2^{-k}).
    \end{align*} 
Where we write $(\overline{\mathrm{retr}(\underline{x}),l(\underline{z})})$ for the tuple $(\overline{\mathrm{retr}_{s_1}(x_1,P(r_1(\underline{z}))-2^{-k}),l(\underline{z})}),\ldots$ and define  $(\overline{\mathrm{retr}(\underline{y}),l(\underline{z})})$ similarly. In addition, we define
\[
\Theta^\mathrm{retr}_{m,\mathcal{A}}(T,\underline{r},l,\underline{z}):\equiv \forall k^0\,\Delta^\mathrm{retr}_{m,\mathcal{A}}(T,\underline{r},l,\underline{z},k).
\]
\end{definition}
\begin{proposition}
\label{prop:ext:approx}
    Let
\begin{align*}
    \Theta_m(T,\underline{r},l,\underline{z}):\equiv &\forall_{r_1(\underline{z})} x_1^{X_{s_1}(0)} \exists_{r_2(\underline{z})} y_1^{X_{s_2}(0)} \ldots \forall_{r_{2m-1}(\underline{z})} x_m^{X_{s_{2m-1}}(0)}\exists_{r_{2m}(\underline{z})} y_m^{X_{s_{2m}}(0)}\\&(T((\overline{\underline{x},l(\underline{z})}),(\overline{\underline{y},l(\underline{z})}),\underline{z})=_\RR 0)
\end{align*}
be a formula in $\mathcal{PBL}[L]$. Then
\[
\mathcal{A}^\omega[L,\mathrm{retr}]\vdash \Theta^\mathrm{retr}_{m,\mathcal{A}}(T,\underline{r},l,\underline{z}) \leftrightarrow \Theta_{m,\mathcal{A}}(T,\underline{r},l,\underline{z}).
\]
\end{proposition}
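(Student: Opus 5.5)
We prove the two implications separately, in each case for each fixed $k$, allowing a shift of the precision index in one direction. Throughout, write $A_j$ for the formula obtained from $\Delta_{m,\mathcal{A}}$ (resp.\ $\Delta^{\mathrm{retr}}_{m,\mathcal{A}}$) by stripping off the first $j-1$ quantifier pairs. Every such $A_j$, viewed as a formula in the next quantified variable, is uniformly continuous in the sense defined before Proposition \ref{prop:retr}: all sequence variables only enter $T$ through $(\overline{\cdot,l(\underline{z})})$, so $k:=l(\underline{z})$ witnesses it. For real sorts, the required $\QFER^0_\RR$ instances are exactly the hypotheses $\QFER^0_\RR(|T(x_i)|\le_\RR 2^{-k})$ and $\QFER^0_\RR(|T(y_i)|\le_\RR 2^{-k})$ built into the definition of $\mathcal{PBL}[L]$.

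\emph{($\Theta^{\mathrm{retr}}_{m,\mathcal{A}}\to\Theta_{m,\mathcal{A}}$).} Fix $k$ and assume $\Delta^{\mathrm{retr}}_{m,\mathcal{A}}(\dots,k)$. We peel the quantifiers from the inside out. Proposition \ref{prop:retr}, applied with radius $P(r_{2i})+2^{-k}$, turns $\exists y_i\,A(\mathrm{retr}(y_i))$ into $\exists_{P(r_{2i})+2^{-k}}y_i\,A(y_i)$. Applied with radius $P(r_{2i-1})-2^{-k}$, it turns $\forall x_i\,A(\mathrm{retr}(x_i))$ into $\forall_{P(r_{2i-1})-2^{-k}}x_i\,A(x_i)$. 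Induction on $m$ then gives $\Delta_{m,\mathcal{A}}(\dots,k)$ for the same $k$. The only care needed is that the inner formula, after the earlier replacements, is still uniformly continuous and satisfies the $\QFER$ rule; both properties are inherited because the substitutions only affect the first $l(\underline{z})$ entries of each sequence.

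\emph{($\Theta_{m,\mathcal{A}}\to\Theta^{\mathrm{retr}}_{m,\mathcal{A}}$).} Given $k$, we use $\Delta_{m,\mathcal{A}}(\dots,k+1)$; the hypothesis $k>\max\{\ceil{-\log_2 P(r_{2i-1})}\}$ transfers to $k+1$. We play the game against an arbitrary $x_1$.

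First, by the first and third axioms of $\mathrm{retr}$, each entry of $\mathrm{retr}(x_1)$ taken with radius $P(r_1)-2^{-k}$ lies in the closed ball of that radius. Hence it lies in the open ball of radius $P(r_1)-2^{-k-1}$, so $\mathrm{retr}(x_1)$ is a legitimate input for the bounded universal quantifier at level $k+1$.

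Second, that quantifier provides a $y_1$ with all entries of distance at most $P(r_2)+2^{-k-1}<P(r_2)+2^{-k}$ from the anchor. By the second axiom of $\mathrm{retr}$, $\mathrm{retr}(y_1)=y_1$ pointwise at radius $P(r_2)+2^{-k}$. So $y_1$ itself witnesses the unbounded $\exists y_1$, after rewriting $y_1$ as $\mathrm{retr}(y_1)$ inside $T$; this rewriting uses $\QFER$, or $\QFER^0_\RR$ for the real sort.

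Continuing through all $m$ rounds, we arrive at $|T(\dots)|\le 2^{-k-1}\le 2^{-k}$, which is $\Delta^{\mathrm{retr}}_{m,\mathcal{A}}(\dots,k)$.

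\emph{Main obstacle.} The delicate step is the boundary behaviour of the intensional $\mathrm{retr}$, which is unspecified on the sphere. We cannot show that a bounded witness at the same level $k$ is fixed by $\mathrm{retr}$, nor that $\mathrm{retr}(x)$ lies in the open ball. The shift from $k$ to $k+1$ is exactly what absorbs both problems, and this is why the equivalence holds only for the $\forall k$ closures rather than levelwise.
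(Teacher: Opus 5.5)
Your proposal is correct and follows the paper's argument. The direction $\Theta^{\mathrm{retr}}_{m,\mathcal{A}}\to\Theta_{m,\mathcal{A}}$ is the paper's levelwise implication $\Delta^{\mathrm{retr}}_{m,\mathcal{A}}(\dots,k)\to\Delta_{m,\mathcal{A}}(\dots,k)$ via Proposition \ref{prop:retr}. Your direct step $\Delta_{m,\mathcal{A}}(\dots,k+1)\to\Delta^{\mathrm{retr}}_{m,\mathcal{A}}(\dots,k)$ is the paper's composite of two steps: Proposition \ref{prop:equiv:dual}, which passes to $\Delta^D_{m,\mathcal{A}}$ (closed balls for $\forall$, open balls for $\exists$), followed by the implication $\Delta^D_{m,\mathcal{A}}\to\Delta^{\mathrm{retr}}_{m,\mathcal{A}}$ in $(\star)$; the boundary behaviour of the intensional $\mathrm{retr}$ that you identify is exactly why the paper routes through $\Delta^D_{m,\mathcal{A}}$.
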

\begin{proof}
 Proposition $\ref{prop:retr}$ immediately yields that for all $k^0$
\[
\Delta^D_{m,\mathcal{A}}(T,\underline{r},l,\underline{z},k) \to \Delta^\mathrm{retr}_{m,\mathcal{A}}(T,\underline{r},l,\underline{z},k) \to \Delta_{m,\mathcal{A}}(T,\underline{r},l,\underline{z},k)\tag{$\star$}
\]
and the result follows from Proposition \ref{prop:equiv:dual}. Note that $\QFER$ implies that for each $i$ the formulas $|T((\overline{x_i,l(\underline{z})}))|\leq_\RR 2^{-k}$ and $|T((\overline{y_i,l(\underline{z})}))|\leq_\RR 2^{-k}$ are uniformly continuous.
\end{proof}

\begin{definition}[cf.\ Definition 6.26 of \cite{GuK2016}]
\label{def:phi}
Denote by $\mathcal{A}^\omega[L,\mathrm{retr},\phi]$ the extension of $\mathcal{A}^\omega[L,\mathrm{retr}]$ where, for each $s\in \mathbf{S}$, we add a constant, $\phi_s$, of type $X_s(0)(0)(0(X_s(0)))$ and the axiom:
\[
   \forall x^{X_s(0)},r^0,z^{0(X_s(0))}\,(z(\mathrm{retr}_s(x,r))=_0 0 \to z(\mathrm{retr}_s(\phi_s(z,r),r))=_0 0)\tag{$\phi_s$}
\]
\end{definition}
\begin{proposition}[cf.\ Definition 6.27 of \cite{GuK2016}]
\label{prop:int:phi}
Let $\mathscr{M}$ be a Henson $L$-structure with universe $(M_s\,|\, s \in \mathbf{S})$ and respective moduli of equicontinuity and equiboundedness on
bounded sets $\Delta_f$ and $\Omega_f$ for each function symbol, $f$, in $L$. Then $\mathcal{S}^{\omega,\mathscr{M}}$ becomes a model of $\mathcal{A}^\omega[L,\mathrm{retr},\phi]$ by letting the variables of type $\rho$ range over $S_\rho$ if all constant of $\mathcal{A}^\omega[L,\mathrm{retr}]$ are interpreted as in Proposition \ref{prop:retr:int} and the constants $\phi_s$ are interpreted by any function with the
semantics:
\[
\phi_s(z^{0(X_s(0))},r):= \begin{cases}\mathrm{retr}_s(x,r)&\text{for }x^{X_s(0)}\text{ with }z(\mathrm{retr}_s(x,r))=_00\text{ if existent},\\ \lambda n. a_s&\text{otherwise}\end{cases}
\]
\end{proposition}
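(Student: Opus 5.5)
The claim is that $\mathcal{S}^{\omega,\mathscr{M}}$, with $\phi_s$ interpreted as stated, validates every axiom of $\mathcal{A}^\omega[L,\mathrm{retr},\phi]$. My plan is to reduce this to Proposition~\ref{prop:retr:int} and then check the single new axiom $(\phi_s)$ directly. Adding the constants $\phi_s$ does not change the type structure. The axioms and rules of $\mathcal{A}^\omega[L,\mathrm{retr}]$ — the combinator and recursor equations, $\QFER$, $\QFAC$, $\DC$ (valid in the full set-theoretic structure, using the axiom of choice in the metatheory), $(B)_f$, $(C)_f$, the pseudo-metric axioms, and the $\mathrm{retr}_s$ axioms — are therefore already validated by Proposition~\ref{prop:retr:int}. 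Schemata such as $\QFAC$, $\DC$ and the recursor axioms are now instantiated with terms possibly containing $\phi_s$. This causes no problem, since $\mathcal{S}^{\omega,\mathscr{M}}$ is the full function space: every term denotes some element of the appropriate $S_\rho$, and the validity arguments quantify over all such elements.

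First I would note that the interpretation of $\phi_s$ is a well-defined element of $S_{X_s(0)(0)(0(X_s(0)))}$. Given $z\in S_{0(X_s(0))}$ and $r\in\NN$, consider the set of $x\in S_{X_s(0)}$ with $z(\mathrm{retr}_s(x,r))=0$, where $\mathrm{retr}_s(x,r)$ abbreviates $\lambda n.\mathrm{retr}_s(x(n),r)$. If this set is nonempty, choose one such $x$ using the axiom of choice in the metatheory, uniformly in $(z,r)$, and put $\phi_s(z,r):=\mathrm{retr}_s(x,r)$. Otherwise put $\phi_s(z,r):=\lambda n.a_s$. For $s=s_\RR$, $\lambda n.a_s$ is read as $\lambda n.(0)_\circ$.

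Next I would verify $(\phi_s)$ itself. Fix $x,r,z$ with $z(\mathrm{retr}_s(x,r))=0$. Then the set above is nonempty, so $\phi_s(z,r)=\mathrm{retr}_s(x',r)$ for some $x'$ with $z(\mathrm{retr}_s(x',r))=0$. It remains to show $z(\mathrm{retr}_s(\phi_s(z,r),r))=0$, and for this I would use idempotency of the interpreted $\mathrm{retr}_s$ pointwise, as a literal identity in $S_{X_s(0)}$:
\[
\mathrm{retr}_s(\mathrm{retr}_s(x'(n),r),r)=\mathrm{retr}_s(x'(n),r).
\]
For $s\neq s_\RR$ this holds by cases. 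If $d_s(x'(n),a_s)\le P(r)$, then $\mathrm{retr}_s$ fixes $x'(n)$, so it fixes $\mathrm{retr}_s(x'(n),r)=x'(n)$. Otherwise $\mathrm{retr}_s(x'(n),r)=a_s$, and since $d_s(a_s,a_s)=0\le P(r)$, $a_s$ is fixed. Hence $\mathrm{retr}_s(\phi_s(z,r),r)=\phi_s(z,r)$ as functions, so $z$ applied to it is $z(\mathrm{retr}_s(x',r))=0$.

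The main obstacle is $s=s_\RR$, because $z$ is not assumed extensional with respect to $=_\RR$. Equality only up to real equality would therefore not suffice; we need equality of type-$1$ objects. Here I would use Lemma~\ref{lem:circprop}: the interpretation of $\mathrm{retr}_{s_\RR}$ always outputs a canonical representative $(y)_\circ$. Applying it again yields $([(y)_\circ])_\circ=(y)_\circ$, because $[(y)_\circ]=y$ and $|y|\le P(r)$ is preserved; in the other case $(0)_\circ$ maps to $(0)_\circ$. So idempotency holds literally as type-$1$ functions, and the argument above goes through verbatim, completing the proof.
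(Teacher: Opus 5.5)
Your proof is correct and is the natural direct verification. The paper gives no separate proof and treats the proposition as immediate (cf.\ Definition 6.27 of \cite{GuK2016}). You also handle the one point that needs care: the fourth $\mathrm{retr}$ axiom only gives idempotency up to $=_{X_s}$, which is useless for a non-extensional $z$, so you check that the interpreted $\mathrm{retr}_s$ is idempotent as a literal identity, which for $s=s_\RR$ follows from the canonicity of $(\cdot)_\circ$ (Lemma~\ref{lem:circprop}).
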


We now show that $\mathcal{A}^\omega[L,\mathrm{retr},\phi]$ prove that the approximate satisfiability of formulas in $\mathcal{PBL}$ is equivalent to a universal formula. 
\begin{proposition}
\label{prop:approx:univ}
Let
\begin{align*}
    \Theta_m(T,\underline{r},l,\underline{z}):\equiv &\forall_{r_1(\underline{z})} x_1^{X_{s_1}(0)} \exists_{r_2(\underline{z})} y_1^{X_{s_2}(0)} \ldots \forall_{r_{2m-1}(\underline{z})} x_m^{X_{s_{2m-1}}(0)}\exists_{r_{2m}(\underline{z})} y_m^{X_{s_{2m}}(0)}\\&(T((\overline{\underline{x},l(\underline{z})}),(\overline{\underline{y},l(\underline{z})}),\underline{z})=_\RR 0)
\end{align*}
be a formula in $\mathcal{PBL}[L]$. Then there is a quantifier-free formula $\theta_\mathrm{qf}(k^0,\underline{z})$ in the language of  $\mathcal{A}^\omega[L,\mathrm{retr},\phi]$ such that
\[
\mathcal{A}^\omega[L,\mathrm{retr},\phi]\vdash \Theta_{m,\mathcal{A}}(T,\underline{r},l,\underline{z}) \leftrightarrow \forall k^0\theta_\mathrm{qf}(k,\underline{z}).
\]
\end{proposition}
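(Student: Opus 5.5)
By Proposition~\ref{prop:ext:approx} it suffices to show that $\Theta^\mathrm{retr}_{m,\mathcal{A}}(T,\underline{r},l,\underline{z})$ is equivalent to a purely universal formula. The plan is to eliminate, for each fixed $k$, the quantifier prefix of $\Delta^\mathrm{retr}_{m,\mathcal{A}}(T,\underline{r},l,\underline{z},k)$ from the inside out, using the constants $\phi_s$ as Skolem-type witnesses for the existential quantifiers.

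\textbf{Step 1: make the matrix quantifier-free.} The matrix $|T(\ldots)|\le_\RR 2^{-k}$ is $\Pi^0_1$, i.e.\ of the form $\forall n^0\, t(n,\ldots)=_0 0$. I first aim to turn it into a $\Sigma^0_1$-free condition on a single type-$0$ value. Since the retracted arguments lie in fixed bounded balls and the approximations are arbitrarily fine, I would replace $\le_\RR 2^{-k}$ by the quantifier-free condition $|T(\ldots)(k+1)|_\QQ \le_\QQ 2^{-k}+2^{-k-1}$, with $k$ shifted. Under $\forall k$ this is equivalent to the original family, because $T(\ldots)(k')$ approximates the real value up to $2^{-k'}$. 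I will define a closed term $z_k(\underline{x},\underline{y})$ of type $0$ that equals $0$ iff this rational inequality holds.

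\textbf{Step 2: eliminate the innermost $\exists y_m$.} The subformula $\exists y_m\, z(\ldots,\mathrm{retr}(y_m))=_00$ is, by the axiom $(\phi_s)$ applied to $\lambda y.z(\ldots,y)$ (with the other variables as parameters), equivalent to $z(\ldots,\mathrm{retr}(\phi_{s_{2m}}(\lambda y.\ldots,\rho),\rho))=_00$, where $\rho$ is the radius. The direction from right to left is trivial, and the direction from left to right is exactly the axiom. Here I must check that the radius used in $\Delta^\mathrm{retr}$, namely $P(r_{2m})+2^{-k}$, is representable as a type-$0$ rational code usable as the second argument of $\mathrm{retr}$ and $\phi$. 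This is fine since everything is a rational code, possibly after a harmless reencoding via $P$.

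\textbf{Step 3: iterate.} The remaining formula $\forall x_m\,(\ldots)$ is quantifier-free under $\forall x_m$, so its negation is again of the form $\exists x_m\, z'(\mathrm{retr}(x_m))\neq 0$. I would pull $\forall x_m$ to the front past the next existential by treating the quantifier-free matrix, now depending on $x_m$, as a type-$0$ functional. The cleanest approach is to proceed outward, alternately using $\phi$ to replace each $\exists y_i$ and, for each $\forall x_i$ sitting inside an $\exists y_{i-1}$, using $\phi$ on the negation: $\forall x\, A_0(\mathrm{retr}(x))\leftrightarrow A_0(\mathrm{retr}(\phi(\lambda x.\chi_{\neg A_0}(x),\rho),\rho))$. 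This makes every alternation collapse to a term, and the whole $\Delta^\mathrm{retr}$ becomes quantifier-free. The outermost universal over $x_1$ may simply be kept as a universal quantifier.

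\textbf{Step 4: conclude.} With this done, $\theta_\mathrm{qf}(k,\underline{z})$ is $\forall \underline{x}$ applied to the quantifier-free term matrix (or fully $\phi$-collapsed). Universal quantifiers over higher types can be absorbed into $k$ only if $\underline{x}$ is eliminated, so I would collapse all quantifiers via $\phi$ so that only $\forall k$ remains. The premise $k>\max\{\ldots\}$ is quantifier-free.

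\textbf{Main obstacle.} I expect the main obstacle to be Step 1. Replacing the $\Pi^0_1$ real inequality by a rational test must commute with the retracted quantifiers, and under $\forall k$ it must yield an equivalence. This needs the slack between successive approximations, using a $k\mapsto k+1$ shift on both radii and error, together with the implications $(\star)$ from the proof of Proposition~\ref{prop:ext:approx} to absorb the change of radius.
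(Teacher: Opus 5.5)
Your proposal is correct and follows the paper's route: reduce to $\Theta^{\mathrm{retr}}_{m,\mathcal{A}}$ via Proposition~\ref{prop:ext:approx}, replace the $\Pi^0_1$ matrix by a rational test at a shifted index (the paper's $(\dagger)$), and then collapse the prefix from the inside out with the $\phi_s$, applying them to the negated matrix at each universal quantifier until only $\forall k^0$ remains. Your explicit handling of the accompanying change of radii is more careful than the paper's $(\dagger)$; for real sorts this must be done at the level of the real inequality, where $\QFER^0_\RR$ is available. Also make sure the rational test evaluates the normalized sequence $\widehat{|T(\ldots)|}$, since an arbitrary type-$1$ output $T(\ldots)$ need not approximate its real value within $2^{-k}$.
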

\begin{proof}
We show that
\[
\mathcal{A}^\omega[L,\mathrm{retr},\phi]\vdash \Theta^\mathrm{retr}_{m,\mathcal{A}}(T,\underline{r},l,\underline{z}) \leftrightarrow \forall k^0\theta_\mathrm{qf}(k,\underline{z}).
\]
and the result follows follows from Proposition \ref{prop:ext:approx}. Fix $T,\underline{r},\underline{z}$ and let 
    \begin{align*}
          &\tilde{\Delta}^\mathrm{retr}_{m,\mathcal{A}}(T,\underline{r},l,\underline{z},k):\equiv k>_0\max\{\ceil{-\log_2(P(r_{2i-1}))}\,|\, i \in [1;m]\} \to\\ &\forall x_1^{X_{s_1}(0)} \exists y_1^{X_{s_2}(0)} \ldots \forall x_m^{X_{s_{2m-1}}(0)}\exists y_m^{X_{s_{2m}}(0)}
\,(\widehat{|T((\overline{\mathrm{retr}(\underline{x}),l(\underline{z})}),(\overline{\mathrm{retr}(\underline{y}),l(\underline{z})}),\underline{z})|}(k) -_\QQ 2^{-k}<_\QQ 2^{-k}).
    \end{align*} 
Now, we have, for all $k^0$
\[
\tilde{\Delta}^\mathrm{retr}_{m,\mathcal{A}}(T,\underline{r},l,\underline{z},k+2) \to \Delta^\mathrm{retr}_{m,\mathcal{A}}(T,\underline{r},l,\underline{z},k) \to \tilde{\Delta}^\mathrm{retr}_{m,\mathcal{A}}(T,\underline{r},l,\underline{z},k)\tag{$\dagger$}
\]
since for all $\underline{x},\underline{y}$, \cite[Lemma 4.2]{Kohlenbach2008} yields that 
\[
\widehat{|T(\underline{x},\underline{y},\underline{z})|}(k+2) -_\QQ 2^{-(k+2)}<_\QQ 2^{-(k+2)}
\]
implies  $|T(\underline{x},\underline{y},l,\underline{z})|<_\RR  3 \cdot 2^{-(k+2)}\le _\RR 2^{-k}$ and the first implication follows. The second implication is immediate from \cite[Lemma 4.2]{Kohlenbach2008}. These implications imply that $\Theta^{\mathrm{retr}}_{m,\mathcal{A}}(T,\underline{r},l,\underline{z})$ is equivalent to $\forall k^0\,\tilde{\Delta}^\mathrm{retr}_{m,\mathcal{A}}(T,\underline{r},l,\underline{z},k)$. Thus, our goal, will be to show $\tilde{\Delta}^\mathrm{retr}_{m,\mathcal{A}}(T,\underline{r},\underline{z},l,k)$ is equivalent to a quantifier-free formula, from which the result will follow. First observe that 
\[
\widehat{|T((\overline{\underline{x},l}),(\overline{\underline{y},l}),\underline{z})|}(k) -_\QQ 2^{-k}<_\QQ 2^{-k}.
\]
is quantifier-free. Call this formula $\theta^1_{\mathrm{qf}}(k,\underline{x},\underline{y},\underline{z})$.  Then there is a closed term $t_\theta$ that satisfies
\[
t_{\theta^1}(k,\underline{x},\underline{y},\underline{z})=_0 0 \leftrightarrow \theta^1_{\mathrm{qf}}(k,\underline{x},\underline{y},\underline{z}). 
\]
Now we apply $(\phi_{s_{2m}})$ to $z := \lambda y_m^{X_{s_{2m}}(0)}. t_{\theta^1}(k,\underline{x},\underline{y},y_m,\underline{z})$ and $P(r_{2m}(\underline{z}))+2^{-k}$, yields
\[
 \exists y_m^{X_{s_{2m}}(0)}\,\theta^1_{\mathrm{qf}}(k,\underline{x},\underline{y},\mathrm{retr}_{s_{2m}}(y_{m},P(r_{2m}(\underline{z}))+2^{-k}),\underline{z})
\]
is equivalent to the quantifier-free formula
\[
\theta^2_\mathrm{qf}(k,\underline{x},\underline{y},\underline{z}):=\theta^1_{\mathrm{qf}}(k,\underline{x},\underline{y},\mathrm{rect}_{s_{2m}}(\phi_{s_{2m}}(z,P(r_{2m}(\underline{z}))+2^{-k})),P(r_{2m}(\underline{z}))+2^{-k}),\underline{z}).
\]
Though a similar argument, considering 
\[
\neg(\exists x_m^{X_{s_{2m-1}}} \neg \theta^2_\mathrm{qf}(k,\underline{x},\mathrm{retr}(x_m,P(r_{2m-1}(\underline{z}))-2^{-k}),\underline{y},\underline{z})) 
\]
we can find a quantifier-free $\theta^3_\mathrm{qf}(k,\underline{x},\underline{y},\underline{z})$ equivalent to
\[
 \forall x_m^{X_{s_{2m-1}}}\exists y_m^{X_{s_{2m}}}\,\theta^1_{\mathrm{qf}}(k,\underline{x},\mathrm{retr}(x_m,P(r_{2m-1}(\underline{z}))-2^{-k}),\underline{y},\mathrm{retr}_{s_{2m}}(y_{m},P(r_{2m}(\underline{z}))+2^{-k}),\underline{z}).
\]
Continuing, yields a quantifier-free formula equivalent to $\tilde{\Delta}^\mathrm{retr}_{m,\mathcal{A}}(T,\underline{r},l,\underline{z},k)$. The result follows. 
\end{proof}

\begin{definition}
    We shall write $\mathcal{A}^\omega[L,\mathcal{T}]$ for the extension of $\mathcal{A}^\omega[L,\mathrm{retr},\phi]$ with axioms $\Theta_{m,\mathcal{A}}^\varphi(T,\underline{r},1)$ for each $\varphi$ in the theory $\mathcal{T}$.
\end{definition}

\begin{proposition}[cf.\ Definition 3.2 of \cite{kohlenbach2005some}]
\label{prop:model:set:T}
 Let $\mathscr{M}\models_\mathcal{A} \mathcal{T}$ be a Henson $L$-structure with universe $(M_s\,|\, s \in \mathbf{S})$ modelling $\mathcal{T}$ and respective moduli of equicontinuity and equiboundedness on
bounded sets $\Delta_f$ and $\Omega_f$ for each function symbol, $f$, in $L$. Then $\mathcal{S}^{\omega,\mathscr{M}}$ becomes a model of $\mathcal{A}^\omega[L,\mathcal{T}]$ by letting the variables of type $\rho$ range over $S_\rho$ if all constant of $\mathcal{A}^\omega[L,\mathrm{retr},\phi]$ are interpreted as in Proposition \ref{prop:int:phi}.
\end{proposition}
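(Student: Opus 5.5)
The plan is to reduce the statement to Proposition \ref{prop:int:phi}: by that result, $\mathcal{S}^{\omega,\mathscr{M}}$ with the indicated interpretations of $\tilde f$, $\mathrm{retr}_s$ and $\phi_s$ is already a model of $\mathcal{A}^\omega[L,\mathrm{retr},\phi]$. This uses only that $\mathscr{M}$ is a Henson $L$-structure whose function symbols have moduli $\Delta_f,\Omega_f$, so that $(B)_f$ and $(C)_f$ hold. It therefore remains to check the new axioms $\Theta^\varphi_{m,\mathcal{A}}(T,\underline{r},1)$ for $\varphi\in\mathcal{T}$.

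For the new axioms, fix $\varphi\in\mathcal{T}$. By assumption $\mathscr{M}\models_\mathcal{A}\varphi$. Proposition \ref{prop:approx:equiv} gives directly that $\mathcal{S}^{\omega,\mathscr{M}}\models\Theta^\varphi_{m,\mathcal{A}}(T,\underline{r})$, which is exactly the added axiom (with $l=1$).

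To be safe, I would unwind Proposition \ref{prop:approx:equiv} for this case, as this is the only step with content. One must check that the prenex translation of Proposition \ref{prop:equiv:pbl:prenex}, read in the full type structure, matches approximate satisfaction:
\begin{itemize}
\item Bounded quantifiers over sequences truncated at $l=1$ reduce to quantifiers over single elements of the open ball ($\forall$) or the closed ball ($\exists$).
\item The shrinking of radii by $2^{-k}$ for $\forall$, and their enlargement for $\exists$, together with the tolerance $|T|\le 2^{-k}$, corresponds to the approximations $\varphi'$ of $\varphi$.
\end{itemize}
The translation $Q$ replaces $r\le t$ by $\min\{r,t\}-r=0$ and so on. A relaxed inequality $r'\le t$ with $r'<r$ is then implied by $|\min\{r,t\}-r|\le 2^{-k}$ once $2^{-k}<r-r'$. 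Conversely, an approximation with slack yields the $2^{-k}$ bound. For $\lor$ and $\land$ the same holds because $\min$ and $\max$ of absolute values behave correctly.

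The interpretation $(\cdot)_\circ$ of real-valued function symbols gives genuine representatives, by Lemma \ref{lem:circprop}. Hence $=_\RR$ and $\le_\RR$ in the model agree with real equality and order on $[x]$. Every approximation $\varphi'$ is dominated by some $k$-th approximant and vice versa, so $\forall k\,\Delta_{m,\mathcal{A}}$ holds iff $\mathscr{M}\models\varphi'$ for all $\varphi'$. This matching of the two approximation schemes, including the hypothesis $k>\max\lceil-\log_2 P(r_{2i-1})\rceil$ that keeps radii positive, is the main obstacle. It is, however, routine bookkeeping by induction on $\varphi$.
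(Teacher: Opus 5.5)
Your proposal is correct and follows essentially the paper's route. The paper gives no separate proof of this proposition: it is an immediate consequence of Proposition \ref{prop:int:phi} for the base system $\mathcal{A}^\omega[L,\mathrm{retr},\phi]$, together with Proposition \ref{prop:approx:equiv} for the added axioms $\Theta^\varphi_{m,\mathcal{A}}(T,\underline{r},1)$, which is exactly your reduction; your further unwinding of Proposition \ref{prop:approx:equiv} via the construction in Proposition \ref{prop:equiv:pbl:prenex} is sound.
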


\begin{definition}[cf.\ Definition 3.2 of \cite{kohlenbach2005some}]
     A sentence of the language of $\mathcal{A}^\omega[L]$ holds in a model of $\mathcal{T}$ if it holds in the models of $\mathcal{A}^\omega[L,\mathcal{T}]$ obtained from $\mathcal{S}^{\omega,\mathscr{M}}$ (cf.\ Proposition \ref{prop:model:set:T}).
\end{definition}

\subsection{A bound extraction theorem for $\mathcal{A}^\omega[L,\mathcal{T}]$}
The main result of this section will be to obtain a bound extraction theorem, similar to Proposition 6.35 of \cite{GuK2016}, from which our proof-theoretic version of the uniformity principle, Corollary \ref{cor:main:model:unif:sat}, will be a consequence. The main tool for the bound extraction theorem G\"odel's Dialectica interpretation \cite{Goe1958} combined with a negative translation by Kuroda \cite{Kur1951}.

\begin{definition}[\cite{Goe1958,Tro1973}]
The Dialectica interpretation $A^D=\exists\underline{x}\forall\underline{y} A_D(\underline{x},\underline{y})$ of a formula $A$ in the language of $\mathcal{A}^\omega[L,\mathcal{T}]$  is defined via the following recursion on the structure of the formula:
\begin{enumerate}
\item $A^D:=A_D:=A$ for $A$ being a prime formula.
\end{enumerate}
If $A^D=\exists\underline{x}\forall\underline{y} A_D(\underline{x},\underline{y})$ and $B^D=\exists\underline{u}\forall\underline{v} B_D(\underline{u},\underline{v})$, we set
\begin{enumerate}
\setcounter{enumi}{1}
\item $(A\land B)^D:=\exists\underline{x},\underline{u}\forall\underline{y},\underline{v}(A\land B)_D$\\ where $(A\land B)_D(\underline{x},\underline{u},\underline{y},\underline{v}):=A_D(\underline{x},\underline{y})\land B_D(\underline{u},\underline{v})$,
\item $(A\lor B)^D:=\exists z^0,\underline{x},\underline{u}\forall\underline{y},\underline{v}(A\lor B)_D$\\ where $(A\lor B)_D(z^0,\underline{x},\underline{u},\underline{y},\underline{v}):=(z=0\rightarrow A_D(\underline{x},\underline{y}))\land (z\neq 0\rightarrow B_D(\underline{u},\underline{v}))$,
\item $(A\rightarrow B)^D:=\exists\underline{U},\underline{Y}\forall\underline{x},\underline{v}(A\rightarrow B)_D$\\ where $(A\rightarrow B)_D(\underline{U},\underline{Y},\underline{x},\underline{v}):=A_D(\underline{x},\underline{Y}\underline{x}\underline{v})\to B_D(\underline{U}\underline{x},\underline{v})$,
\item $(\exists z^\tau A(z))^D:=\exists z,\underline{x}\forall\underline{y}(\exists z^\tau A(z))_D$\\ where $(\exists z^\tau A(z))_D(z,\underline{x},\underline{y}):=A_D(\underline{x},\underline{y},z)$,
\item $(\forall z^\tau A(z))^D:=\exists\underline{X}\forall z,\underline{y}(\forall z^\tau A(z))_D$\\ where $(\forall z^\tau A(z))_D(\underline{X},z,\underline{y}):=A_D(\underline{X}z,\underline{y},z)$.
\end{enumerate}
\end{definition}

\begin{definition}[\cite{Kur1951}]
The negative translation of $A$ is defined by $A':=\neg\neg A^*$ where $A^*$ is defined by the following recursion on the structure of $A$:
\begin{enumerate}
\item $A^*:= A$ for prime $A$;
\item $(A\circ B)^*:= A^*\circ B^*$ for $\circ\in\{\land,\lor,\rightarrow\}$;
\item $(\exists x^\tau A)^*:=\exists x^\tau A^*$;
\item $(\forall x^\tau A)^*:=\forall x^\tau \neg\neg A^*$.
\end{enumerate}
\end{definition}
We now have the following soundness result that allows us to extract quantitative bounds for suitable reformulations of theorems proven in $\mathcal{A}^\omega[L,\mathcal{T}]$. Let us write $\mathcal{A}^\omega[L,\mathcal{T}]^-$ for $\mathcal{A}^\omega[L,\mathcal{T}]$ without $\QFAC$ and $\DC$.

\begin{proposition}
[essentially \cite{kohlenbach2005some}]\label{lem:ndinterpretation}
Let $A(\underline{a})$ be an arbitrary formula (with only the variables $\underline{a}$ free) in the language of $\mathcal{A}^\omega[L,\mathcal{T}]$. Then the rule
\[
\begin{cases}\mathcal{A}^\omega[L,\mathcal{T}]\vdash A(\underline{a})\Rightarrow\\
\mathcal{A}^\omega[L,\mathcal{T}]^-+(\mathrm{BR})\vdash\forall\underline{a},\underline{y}(A')_D(\underline{t}\underline{a},\underline{y},\underline{a})\end{cases}
\]
holds where $\underline{t}$ is a tuple of closed terms of $\mathcal{A}^\omega[L,\mathcal{T}]^-+(\mathrm{BR})$ which can be extracted from the respective proof and $(\mathrm{BR})$ is the schema of \emph{simultaneous bar-recursion} of Spector \cite{Spe1962}, here extended to all types from $T^{L}$ (similar as in e.g.\ \cite{Kohlenbach2008}). Furthermore, this result extends to any extension of the language of $\mathcal{A}^\omega[L,\mathcal{T}]$ by new types and constants together with any number of additional universal axioms in that language (noting that the additional axioms added to $\mathcal{A}^\omega$ to get $\mathcal{A}^\omega[L,\mathcal{T}]$ are (equivalent to) universal axioms.
\end{proposition}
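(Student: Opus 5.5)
The plan is the standard route for soundness of the negative-translation-plus-Dialectica interpretation, following \cite{kohlenbach2005some} and \cite{Kohlenbach2008}, Chapters 10--11. The abstract types $X_s$ are treated exactly like the type $0$ base cases in those proofs, with the constants $\tilde f$, $\mathrm{retr}_s$ and $\phi_s$ as further constants of $\mathcal{A}^\omega[L,\mathcal{T}]^-+(\mathrm{BR})$.

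First I would establish that every non-logical axiom of $\mathcal{A}^\omega[L,\mathcal{T}]$ other than $\QFAC$ and $\DC$ is, provably in $\mathcal{A}^\omega[L,\mathcal{T}]^-$, equivalent to a purely universal sentence $\forall \underline{u}\,B_{\mathrm{qf}}(\underline{u})$. This covers the metric axioms, $(B)_f$, $(C)_f$, the $\mathrm{retr}$-axioms and $(\phi_s)$. Once real inequalities are unfolded, these are $\Pi^0_1$ statements in their free variables. For instance, $x<_\RR y$ appearing as a hypothesis becomes a $\Sigma^0_1$ antecedent, which turns into a universal quantifier, and $\le_\RR$ is $\Pi^0_1$. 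The axioms $\Theta^{\varphi}_{m,\mathcal{A}}(T,\underline r,1)$ for $\varphi\in\mathcal{T}$ are equivalent to $\forall k^0\theta_{\mathrm{qf}}(k)$ by Proposition \ref{prop:approx:univ}. For universal axioms, $(A')_D$ is (equivalent to) the axiom itself with an empty tuple of realizers, so they are trivially interpreted. This is why the metatheorem extends to arbitrary further universal axioms, as the last sentence of the proposition asserts.

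Second, I would carry over verbatim the standard facts from \cite{Kohlenbach2008}. Kuroda's negative translation embeds classical $\mathcal{A}^\omega[L,\mathcal{T}]$ into its intuitionistic counterpart plus the (negatively translated) axioms. The intuitionistic logical axioms and rules, the combinator and recursor axioms (now including recursors of types involving $X_s$), and $\QFER$ are all Dialectica interpretable by closed terms of $\mathcal{A}^\omega[L,\mathcal{T}]^-$. Here $\QFER$ is handled as a rule in the usual way, since the premise is quantifier-free and the conclusion is then quantifier-free as well. The negative translation of $\QFAC$ is interpreted using Markov's principle and $\QFAC$ in the interpreting system, both of which are trivially D-interpreted. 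This follows \cite[Chapter 10]{Kohlenbach2008}, and because the interpreting system lacks $\QFAC$, one checks that the realizing terms verify $(A')_D$ directly.

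The main obstacle is $\DC$. Its negative translation requires Spector's bar recursion, and I would follow Spector \cite{Spe1962} together with Luckhardt's and Howard's refinements as presented in \cite[Chapter 11]{Kohlenbach2008}. Countable choice is reduced to $\QFAC$ plus the double-negation shift. The double-negation shift is then realized by simultaneous bar recursion at types in $T^L$. The point to verify is that the verification only uses the defining equations of $(\mathrm{BR})$ and no properties of the base type beyond those available for type $0$. This holds because the $X_s$ enter only as parameters, so the proof is type-uniform, exactly as in \cite{kohlenbach2005some}. Finally, an induction over the proof yields closed terms $\underline t$ with $\mathcal{A}^\omega[L,\mathcal{T}]^-+(\mathrm{BR})\vdash\forall\underline a,\underline y\,(A')_D(\underline t\underline a,\underline y,\underline a)$. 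Closing over the free variables $\underline a$ gives the stated form.
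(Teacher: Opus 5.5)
Your proposal takes essentially the same route as the paper, which simply defers to the standard soundness proof of \cite{kohlenbach2005some}. That route is Kuroda's negative translation followed by the Dialectica interpretation. The added axioms, including the $\Theta^{\varphi}_{m,\mathcal{A}}$ via Proposition~\ref{prop:approx:univ}, are equivalent to universal sentences and so are interpreted by empty tuples, $\QFER$ is treated as a rule, and $\DC$ is handled by bar recursion.

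One small correction to your last step. What you describe is the reduction of the negative translation of countable choice, not of $\DC$. That reduction uses intuitionistic choice for arbitrary (negatively translated) formulas plus the double-negation shift, not $\QFAC$. The bar-recursive treatment of $\DC$ itself is what you import from \cite{Kohlenbach2008}, which is fine.
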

The proof of this result is exactly as the analogous soundness result in \cite{kohlenbach2005some}, so we omit it.

\begin{remark}
\label{rem:soundness}
If $A:\equiv \forall x\,\exists y\, A_0(x,y)$ for $A_0$ quantifier-free then $(A')_D$ will be equivalent to $\exists f\, \forall x\, A(x,f(x))$ and \ref{lem:ndinterpretation} will guarantee that we can extract a term $t$ witnessing $\exists f$ from the proof of $A$. However, it is well known $(\mathrm{BR})$ is not a set-theoretically valid principle (cf.\ p.214 of \cite{Kohlenbach2008}) and so the validity of $t$ in models of $\mathcal{T}$ is in question. 
\end{remark}
\begin{definition}[essentially \cite{GeK2008}]
Define $\widehat{\tau}\in T$, given $\tau\in T^{\Omega,S}$, by recursion on the structure via
\[
\widehat{0}:=0,\;\widehat{\Omega}:=0,\;\widehat{S}:=0,\;\widehat{\tau(\xi)}:=\widehat{\tau}(\widehat{\xi}).
\]
\end{definition}

The majorizability relation $\gtrsim$ is then defined in tandem with the structure of all strongly majorizable functionals.

\begin{definition}[essentially Definition 9.1 of \cite{GeK2008}]
Let $\mathscr{M}$ be a Henson $L$-structure with universe $(M_s\,|\, s \in \mathbf{S})$, metrics $(d_s\,|\, s \in \mathbf{S})$ and reference points $(a_s\,|\, s \in \mathbf{S})$ that models $\mathcal{T}$. The structure $\mathcal{M}^{\omega,\mathscr{M}}$ and the majorizability relation $\gtrsim_\rho$ are defined,
\[
\begin{cases}
\mathcal{M}_0:=\mathbb{N}, n\gtrsim_0 m:=n\geq m\land n,m\in\mathbb{N},\\
\mathcal{M}_{X_s}:= M_s, n\gtrsim_s x:= n\geq d_s(x,a_s)\land n\in \mathcal{M}_0,x\in \mathcal{M}_s,\\
f\gtrsim_{\tau(\xi)}x:=f\in \mathcal{M}_{\widehat{\tau}}^{\mathcal{M}_{\widehat{\xi}}}\land x\in \mathcal{M}_\tau^{\mathcal{M}_\xi}\\
\qquad\qquad\qquad\land\forall g\in \mathcal{M}_{\widehat{\xi}},y\in \mathcal{M}_\xi(g\gtrsim_\xi y\rightarrow fg\gtrsim_\tau xy)\\
\qquad\qquad\qquad\land\forall g,y\in \mathcal{M}_{\widehat{\xi}}(g\gtrsim_{\widehat{\xi}}y\rightarrow fg\gtrsim_{\widehat{\tau}}fy),\\
\mathcal{M}_{\tau(\xi)}:=\left\{x\in \mathcal{M}_\tau^{\mathcal{M}_\xi}\mid \exists f\in \mathcal{M}^{\mathcal{M}_{\widehat{\xi}}}_{\widehat{\tau}}:f\gtrsim_{\tau(\xi)}x\right\}.
\end{cases}
\]
where $s$ ranges over $\mathbf{S}\setminus\{s_\RR\}$ and the additional constant symbols coming from $L$ interpreted appropriately.
\end{definition}
\begin{definition}
    We call a type $\xi$ \emph{of degree $n$} if $\xi\in T$ and it has degree $\leq n$ in the usual sense (see e.g.\ \cite{Kohlenbach2008}). Further we call $\xi$ \emph{small} if it is of the form $\xi=\xi_0(0)\dots(0)$ for $\xi_0\in\{0\}\cup\{X_s\,|\, s \in \mathbf{S}\}$ (including $\{0\}\cup\{X_s\,|\, s \in \mathbf{S}\}$) and call it \emph{admissible} if it is of the form $\xi=\xi_0(\tau_k)\dots(\tau_1)$ where each $\tau_i$ is small and $\xi_0\in\{0,\Omega,S\}$ (also including $\{0\}\cup\{X_s\,|\, s \in \mathbf{S}\}$).
\end{definition}
By essentially the same argument presented in Lemma 5.7 of \cite{GuK2016}, for small types $\rho$ we have $M_\rho=S_\rho$, and for admissible types $\rho$, we have $M_\rho\subseteq S_\rho$ (for which it is important that admissible types take arguments of small types). 

 We shall show that $\mathcal{M}^{\omega,\mathscr{M}}$ is a model of $\mathcal{A}^\omega[L,\mathcal{T}]^-+(\mathrm{BR})$. This fact along with the relationship between $\mathcal{M}^{\omega,\mathscr{M}}$ and $\mathcal{S}^{\omega,\mathscr{M}}$ mentioned above will address the problem raised in Remark \ref{rem:soundness}. In fact we may consider further set-theoretically invalid principles: 
\begin{definition}[cf.\ Page 526 of \cite{GuK2016}]
Define 
\[
F^X:=\bigcup_{s \in \mathbf{S}}  F^{X_s}
\]
where,
    \[
     F^{X_s}:\equiv 
            \forall \Phi^{0(X_s(0))}\forall r^0 \exists_{r} y^{X_s(0)}\forall x^{X_s(0)}\,(\Phi(\mathrm{retr}_s(x,r))\le_0 \Phi(y))
    \]
\end{definition}
 Clearly $F^X$ is not a set-theoretic valid principle as it entails that all functions $\Phi: B_r(a_s) \to \NN$ are bounded. We shall see in the next section that  $F^X$ can be used to entail a large class of saturation arguments, covering many applications of saturation used in ultraproduct arguments. 
\begin{definition}
 A formula $F$ of $\mathcal{A}^\omega[L,\mathcal{T}]$ is called a $\forall$-formula (resp. $\exists$-formula) if it
has the form $F \equiv \forall \underline{a}^{\underline{\sigma}}
F_{\mathrm{qf}}(a, b)$ (resp. $F \equiv\exists \underline{a}^{\underline{\sigma}}
F_{\mathrm{qf}}(a, b)$) where $F_{\mathrm{qf}}$ does not contain any quantifiers and the types in $\underline{\sigma}$ are admissible and $b$ are parameters of arbitrary finite type.
\end{definition}

\begin{proposition}
\label{lem:majmainresult}
Let $\mathscr{M}$ be a Henson $L$-structure with universe $(M_s\,|\, s \in \mathbf{S})$, metrics $(d_s\,|\, s \in \mathbf{S})$ and reference points $(a_s\,|\, s \in \mathbf{S})$ that models $\mathcal{T}$. Then $\mathcal{M}^{\omega,\mathscr{M}}$ is a model of $\mathcal{A}^\omega[L,\mathcal{T},\mathscr{Y}]^-+\Tilde{F}^X+(\mathrm{BR})$, where $\mathcal{A}^\omega[L,\mathcal{T},\mathscr{Y}]^-$ is the extension of $\mathcal{A}^\omega[L,\mathcal{T},\mathscr{Y}]$ with an additional constant, $\mathscr{Y}$, of type $X_s(0)(0)(0(X_s(0)))$ and $\Tilde{F}^X$ is the collection of $\Tilde{F}^{X_s}$ for $s \in \mathbf{S}$ with
\[
\Tilde{F}^{X_s}:\equiv \forall \Phi^{0(X_s(0))}\,\forall r^0\, \forall x^{X_s(0)}\,(\forall n^0(d_s(\mathscr{Y}(\Phi,r)(n),a_s) \le_\RR r)\land\Phi(\mathrm{retr}_s(x,r))\le_0 \Phi(\mathscr{Y}(\Phi,r)))
\]
Moreover, for any closed term $t$ of $\mathcal{A}^\omega[L,\mathcal{T},\mathscr{Y}]^-+\Tilde{F}^X+(\mathrm{BR})$, one can construct a closed term $t^*$ of $\mathcal{A}^\omega+(\mathrm{BR})$ such that
\[
\mathcal{M}^{\omega,\mathscr{M}}\models\left( t^*\gtrsim t\right).
\]
\end{proposition}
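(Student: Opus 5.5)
We follow the strategy of the analogous results in \cite{GeK2008} (Theorem 9.? / Lemma 9.?) and \cite{GuK2016} (Proposition 6.33--6.34): interpret every constant of $\mathcal{A}^\omega[L,\mathcal{T},\mathscr{Y}]^-+\Tilde{F}^X+(\mathrm{BR})$ in $\mathcal{M}^{\omega,\mathscr{M}}$ and simultaneously give an explicit majorant $c^*$ (a closed term of $\mathcal{A}^\omega+(\mathrm{BR})$) for each constant $c$. The claim about arbitrary closed terms $t$ then follows by induction on the structure of $t$: majorizability is preserved under application (by the defining clause of $\gtrsim_{\tau(\xi)}$), and the combinators, $0$, successor, the recursors (including the new recursors of types involving $X_s$, majorized by recursors of type $\widehat\rho$ as in \cite{Kohlenbach2008}) and the bar recursors have the usual majorants. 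For $(\mathrm{BR})$ we invoke the standard fact that $\mathcal{M}^{\omega,\mathscr{M}}$, being a type structure of strongly majorizable functionals over $\NN$ and the $M_s$, satisfies Spector's bar recursion in all types of $T^L$, with $\mathrm{B}^*$ a bar recursor of the corresponding $\widehat{\cdot}$-types; this is proved exactly as in \cite{GeK2008}, since only the bar-recursion-relevant types of the form $\rho(0)$ are involved and $\mathcal{M}_{X_s}$ plays the role of a base type majorized by $\NN$.

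The next step is verification of the $L$-specific constants and axioms. For $\Tilde{f}$ we take $\Tilde{f}^*:=\lambda n_1,\ldots,n_k.\,\Omega_f(\max_i n_i+1)$ (plus a shift for real-valued output, using Lemma \ref{lem:circprop}(2) so that $(f^{\mathscr{M}}(\cdot))_\circ\le_1 (\Omega_f(\cdot))_\circ$, which is majorized by a closed term since $\Omega_f(\cdot)$ is natural), using $(B)_f$ which holds in $\mathscr{M}$; monotonicity of $\Omega_f$ can be assumed after replacing it by its monotone hull. The constants $a_s$ are majorized by $0$, $\mathrm{retr}_s$ by $\lambda n,r.\max(n,r)$ (indeed $\mathrm{retr}_s(x,r)$ lies within distance $\min(d_s(x,a_s),P(r))$ of $a_s$, and $P(r)\le r$ as codes), and $\phi_s$ and $\mathscr{Y}$ by $\lambda Z,r,n.\, r$, since their values are sequences lying in the closed ball of radius $P(r)$, respectively $r$. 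All axioms of $\mathcal{A}^\omega[L,\mathcal{T}]$ are (equivalent to) universal sentences by Proposition \ref{prop:approx:univ}; as $\mathcal{M}^{\omega,\mathscr{M}}$ agrees with $\mathcal{S}^{\omega,\mathscr{M}}$ on small types and is contained in it on admissible types, and the universal quantifiers of these axioms range over admissible types, they transfer from $\mathcal{S}^{\omega,\mathscr{M}}$ (Proposition \ref{prop:model:set:T}) provided the interpretation of $\phi_s$ (a choice of witness) lies in $\mathcal{M}$, which it does by the majorant above.

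The main obstacle is $\Tilde{F}^{X_s}$ together with the interpretation of $\mathscr{Y}$, since $\Phi$ ranges over $\mathcal{M}_{0(X_s(0))}$, i.e. majorizable functionals only. Given $\Phi\in\mathcal{M}_{0(X_s(0))}$ with majorant $\Phi^*$, every $\mathrm{retr}_s(x,r)$ is majorized by $\lambda n.\,r$ (as a sequence in the ball of radius $P(r)\le r$), so $\Phi(\mathrm{retr}_s(x,r))\le\Phi^*(\lambda n.r)$; hence $\{\Phi(\mathrm{retr}_s(x,r)) : x\}$ is a bounded set of naturals and attains its maximum at some $x_0$. We interpret $\mathscr{Y}(\Phi,r):=\mathrm{retr}_s(x_0,r)$ (choosing via the axiom of choice), using idempotence of $\mathrm{retr}_s$ to get $\Phi(\mathrm{retr}_s(x,r))\le \Phi(\mathscr{Y}(\Phi,r))$ and the closed-ball bound for the first conjunct; for $\Phi$ outside the relevant domain any value in the ball works. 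Its majorant is again $\lambda \Phi,r,n.\,r$, which also satisfies the self-majorization clause. The delicate points here are checking that this choice function itself belongs to $\mathcal{M}_{X_s(0)(0)(0(X_s(0)))}$ (which follows since the majorant does not depend on $\Phi$) and handling the sort $s_\RR$, where one must use $(\cdot)_\circ$ representatives and Lemma \ref{lem:circprop} to keep the majorants.
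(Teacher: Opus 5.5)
Your proposal follows the paper's proof essentially step by step. You interpret $\Tilde{f}$ and $\mathrm{retr}_s$ as in $\mathcal{S}^{\omega,\mathscr{M}}$ and $\phi_s$ by restriction, and exhibit majorants. You transfer the universal axioms using that their bound variables have admissible type. Finally, you obtain $\mathscr{Y}$ by the argument of Theorem 17.101 of \cite{Kohlenbach2008}: a majorant of $\Phi$ bounds $\Phi$ on retracted sequences, so a maximum is attained, and then choice plus a constant majorant finish the job.

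There is, however, one small but genuine slip. The inequality ``$P(r)\le r$ as codes'' fails for $r=0$. Since $0$ codes the rational $0$, we get $P(0)=\langle 1\rangle$, which codes $1$. So $\mathrm{retr}_s(x,0)$, $\phi_s(z,0)$ and your $\mathscr{Y}(\Phi,0)=\mathrm{retr}_s(x_0,0)$ may consist of points at distance up to $1$ from $a_s$, while $r^*=0$ is a legitimate majorant of $r=0$. Consequently:

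\begin{itemize}
\item $\lambda Z,r,n.\,r$ is not a majorant of $\phi_s$ or of $\mathscr{Y}$;
\item $\lambda n.\,r$ need not majorize $\mathrm{retr}_s(x,r)$, and your bound $\Phi(\mathrm{retr}_s(x,r))\le\Phi^*(\lambda n.\,r)$ relies on exactly that.
\end{itemize}

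Replacing $r$ by $\max\{r,1\}$ in these places, as the paper does, repairs this. Your majorant $\lambda n,r.\max(n,r)$ for $\mathrm{retr}_s$ itself is unaffected, since $d_s(\mathrm{retr}_s(x,r),a_s)\le d_s(x,a_s)$. Once corrected, your majorant for $\phi_s$ is the appropriate one. The paper's $\lambda z,r.(\lambda n.\,n)$ would force $d_s(\phi_s(z,r)(0),a_s)=0$ and looks like a slip.

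Separately, your majorant $\lambda n_1,\ldots,n_k.\,\Omega_f(\max_i n_i+1)$ only covers inputs of abstract sorts. For an argument of sort $s_\RR$, the majorant receives a type-$1$ object $m$, which must first be turned into a natural-number bound on $|x|$, e.g.\ $m(0)+1$ as in the paper.
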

\begin{proof}
    The proof of this result is standard and similar to those in the literature (see e.g.\ Lemma 4.7 of \cite{kohlenbach2005some} and \cite{Kohlenbach2008}). In particular, by the exact arguments in \cite{Kohlenbach2008} we have $\mathcal{M}^{\omega,\mathscr{M}}$ models $\mathcal{A}^\omega + (\mathrm{BR})$, with the axioms and constants augmented via the inclusion of the abstract types $X_s$, and we may choose suitable interpretations and majorants for terms of this system. Furthermore majorizing a composition of terms is also argued as in \cite{Kohlenbach2008}. Since the additional axioms are universal, it only remains to show that suitable interpretations of the newly introduced constants, namely $\mathscr{Y}, \phi_s, \mathrm{retr}_s$ and $\tilde{f}$, are majorizable (with $f$ a function symbol from $L$). As the types of $\mathrm{retr}_s$ and $\tilde{f}$ are small, we can interpret them as we did in $\mathcal{S}^{\omega,\mathscr{M}}$ (cf.\ Propositions \ref{prop:model:set} and \ref{prop:retr:int}) and we may construct majorants as follows: 
   
   Let $f:s_1 \times \ldots s_n \to s_0 $ a function symbol in $L$. Now, in the following, if $m$ is of type $1$, write $\Tilde{m}:=m(0)+1$ and if $m$ is of type $0$ then $\Tilde{m}:= m$. For $s_0\neq s_\RR$ we have 
\[
\lambda m_1,\ldots,m_n.\Omega^M_f(\max(\Tilde{m}_1,\ldots,\Tilde{m}_n))\gtrsim \tilde{f}
\]
where $\Omega^M_f:\equiv \lambda k. \max_{n\le k}\Omega_f(n)$ and 
\[
\lambda m_1,\ldots,m_n.(\Omega^M_f(\max(\Tilde{m}_1,\ldots,\Tilde{m}_n)))_\circ\gtrsim \tilde{f}
\]
when $s_0=s_\RR$, since, if we have $m_1,\ldots,m_n$ and $x_1,\ldots,x_n$ with $m_i \gtrsim x_i$ then if $s_i \neq s_\RR$ we have $m_i \ge d_{s_i}(x_i,a_{s_i})$, otherwise we will have $m_i(n) \ge_0 x_i(n)$ for all $n^0$, which implies $m_i(n) \ge_0 \hat{x}_i(n)$ for all $n$ (as $m_i$ is non-decreasing). So, $\langle m_i(0)+1\rangle \ge_\QQ \langle \hat{x}_i(0) +1\rangle  \ge_\QQ |\hat{x}_i(0)|_\QQ +_\QQ 1 \ge _\QQ |\hat{x}_i(n)|_\QQ$ for all $n^0$. Thus, we have $\Tilde{m}_i=m_i(0)+1 \ge_\RR |x_i|_\RR=_\RR d_{s_\RR}(x_i,a_{s_\RR})$. Thus we have $\max(\Tilde{m}_1,\ldots,\Tilde{m}_n)=:k \ge_\RR d_{s_i}(x_i,a_{s_i})$ for all $i=1,\ldots,n$, thus, $(B)_f$ implies $\Omega^M_f(k)\ge \Omega_f(k) \ge f(x_1,\ldots,x_n)$ and the result follows (using Lemma \ref{lem:circprop} for the case $s=s_\RR$). 

For $\mathrm{retr}$, we have 
\[
\lambda m^0,n^0.\max\{n,1\} \gtrsim \mathrm{retr}_s
\]
for $s \in \mathbf{S}\setminus \{s_\RR\}$ as, for all $x \in M_s$ and $n,r \in \NN$ with $n \ge r$, suppose $r$ codes for $q \in \QQ$. If $q>0$ then $d_s(\mathrm{retr}_s(x,r),a_s)\le q \le r \le n$ (where we use the fact that all codes are at least the rational number they code for) and if $q\le 0$ then $d_s(\mathrm{retr}_s(x,r),a_s)\le 1  \le \max\{n,1\} $. Furthermore, 
\[
\lambda m^1,n^0.(\max\{n,1\} )_\circ \gtrsim \mathrm{retr}_{s_\RR}
\]
as for $x \in \NN^\NN$, and $n,r \in \NN$ with $n \ge r$, suppose that $r$ codes for $q \in \QQ$. By definition, $\mathrm{retr}_{s_\RR}(x,r)= ([x])_\circ$ or $(0)_\circ$. If $\mathrm{retr}_{s_\RR}(x,r)= ([x])_\circ$ and $q>0$ then $|[x]|\le q \le n$ and if $q\le 0$ then $|[x]|\le 1 \le \max\{n,1\}$, so $([x])_\circ \le_1 (\max\{n,1\} )_\circ$, by Lemma \ref{lem:circprop}. Thus, $\mathrm{retr}_{s_\RR}(x,r)\le_1 (n)_\circ$.

Now, as the axioms that come from $\mathcal{T}$ and those for $\mathrm{retr}$ (cf.\ Definition \ref{def:retr}) are (equivalent to) $\forall$-formulas, we have that they also hold in $\mathcal{M}^{\omega,\mathscr{M}}$ (using that the variable bounded to the universal quantification of a $\forall$-formulas is of admissible type).

For $\phi_s$, we argue in the exact same way as in the proof of Theorem 6.36 of \cite{GuK2016}. We take the interpretation of $\phi_s$ in $\mathcal{M}^{\omega,\mathscr{M}}$ to be the restriction of it's interpretation on $\mathcal{S}^{\omega,\mathscr{M}}$ (cf.\ Proposition \ref{prop:int:phi}) to arguments of $\mathcal{M}^{\omega,\mathscr{M}}$. Furthermore, with this interpretation, we have $\phi_s$ is majorized by $\lambda z^{2},r^0.(\lambda n^0.  n)$. Furthermore, as the arguments of $\phi_s$ are admissible and its defining axioms $(\phi_s)$ (cf.\ Definition \ref{def:phi}) are $\forall$-formulas, we have that these axioms also hold in $\mathcal{M}^{\omega,\mathscr{M}}$.

All that remains is to deal with $\Tilde{F}^X$. The argument is as in that of Theorem 17.101 of \cite{Kohlenbach2008}. Let $\Phi^{0(X_s(0))}, r^0$ be in $\mathcal{M}^{\omega,\mathscr{M}}$ and $\Phi^*, r^*$ their respective majorants. Then, we have
\[
\forall x^{X_s(0)}\,(\Phi(\mathrm{retr}_s(x,r))\le_0 \Phi^*(\lambda n.\max\{r^*,1\}) 
\]
for $s \in \mathbf{S}\setminus \{s_\RR\}$
and 
\[
\forall x^{1(0)}\,(\Phi(\mathrm{retr}_{s_\RR}(x,r))\le_0 \Phi^*(\lambda n.(\max\{r^*,1\} )_\circ). 
\]
Thus, we have $\max\{\Phi(\mathrm{retr}_s(x,r)): x \in X_s(0)\}$ exists, and so we have 
\[
\forall \Phi\in \mathcal{M}_{0(X_s(0))}\,\forall r\in \NN\, \exists y \in \mathcal{M}_{X_s(0)} \,\forall x\in \mathcal{M}_{X_s(0)}\,(\forall n \in \NN\,(d_s(y(n),a_s) \le _\RR r)\land\Phi(\mathrm{retr}_s(x,r))\le_0 \Phi(y)).
\]
So, by the axiom of choice, we obtain a functional satisfying $\Tilde{F}^X$ and we take this for the interpretation of $\mathscr{Y}$. Thus, it remains to construct a majorant for this choice of interpretation to show that it is in $\mathcal{M}^{\omega,\mathscr{M}}$. For this, we have
\[
 \lambda \Phi^*,r^*.(\lambda n.\max\{r^*,1\} ) \gtrsim \mathscr{Y}
\]
for $s \in \mathbf{S} \setminus \{s_\RR\}$
and 
\[
 \lambda \Phi^*,r^*.(\lambda n.(\max\{r^*,1\})_\circ) \gtrsim \mathscr{Y}
\]
for $s = s_\RR$.

\end{proof}

\begin{theorem}\label{thm:metatheorem}
 Let $\rho$ be admissible and let $B_\forall(x,u)$/$C_\exists(x,v)$ be $\forall$-/$\exists$-formulas of $\mathcal{A}^\omega[L,\mathcal{T}]$ with only $x,u$/$x,v$ free. If
\[
\mathcal{A}^\omega[L,\mathcal{T}]+F^X\vdash\forall x^\rho\left(\forall u^0 B_\forall(x,u)\rightarrow\exists v^0 C_\exists(x,v)\right),
\]
then one can extract a partial functional $\Phi:\mathcal{S}_{\widehat{\rho}}\rightharpoonup\mathbb{N}$ which is total and (bar-recursively) computable on $\mathcal{M}_{\widehat{\rho}}$ and the following holds in all non-trivial models  $\mathscr{M}$ of $\mathcal{T}$: for all $x\in \mathcal{S}_\rho$, $x^*\in \mathcal{S}_{\widehat{\rho}}$, if $x^*\gtrsim x$, then
\[
\forall u\leq_0\Phi(x^*) B_\forall(x,u)\rightarrow\exists v\leq_0\Phi(x^*)C_\exists(x,v).
\]
\end{theorem}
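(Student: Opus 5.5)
We follow the standard route of the metatheorems in \cite{kohlenbach2005some,GeK2008,GuK2016}: eliminate $F^X$, apply the soundness result for the negative translation and Dialectica interpretation (Proposition \ref{lem:ndinterpretation}), and interpret the extracted term in the majorizability model $\mathcal{M}^{\omega,\mathscr{M}}$ via Proposition \ref{lem:majmainresult}.

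\textbf{Eliminating $F^X$.} First replace $F^X$ by $\Tilde{F}^X$. Over $\mathcal{A}^\omega[L,\mathcal{T}]$ extended by the constant $\mathscr{Y}$, each instance of $F^{X_s}$ follows from $\Tilde{F}^{X_s}$ by taking $y:=\mathscr{Y}(\Phi,r)$. One point needs care: $\Tilde{F}^{X_s}$ gives the closed bound $d_s(y(n),a_s)\le r$, matching the closed-ball semantics of $\exists_r$. Since $\Tilde{F}^X$ is a universal axiom, the assumed proof yields
\[
\mathcal{A}^\omega[L,\mathcal{T},\mathscr{Y}]+\Tilde{F}^X\vdash\forall x^\rho\,(\forall u^0B_\forall\to\exists v^0C_\exists).
\]
This system is $\mathcal{A}^\omega[L,\mathcal{T}]$ extended by a new constant and universal axioms. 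The axioms $\Theta^\varphi_{m,\mathcal{A}}$ from $\mathcal{T}$ are equivalent to universal sentences by Proposition \ref{prop:approx:univ}, and the axioms for $\mathrm{retr}_s$, $\phi_s$, $(B)_f$, $(C)_f$ and the metric are universal as well. Hence the final clause of Proposition \ref{lem:ndinterpretation} applies.

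\textbf{Extracting the term.} Write $B_\forall\equiv\forall a\,B_0(x,u,a)$ and $C_\exists\equiv\exists b\,C_0(x,v,b)$ with $a,b$ of admissible type. The conclusion is then, up to classical logic, $\forall x\,\exists u,v,b\,\forall a\,(B_0(x,u,a)\to C_0(x,v,b))$. We apply the negative translation and Dialectica interpretation. For the witnesses $u,v$ we choose the monotone formulation, as in \cite[Theorem 17.52]{Kohlenbach2008}: we interpret $\forall x(\exists u\,\neg B_\forall\lor\exists v\,C_\exists)$, and since $\forall u^0$ ranges over type $0$, bounding $u,v$ by a single number is harmless. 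Proposition \ref{lem:ndinterpretation} gives closed terms $t_u,t_v$ of $\mathcal{A}^\omega[L,\mathcal{T},\mathscr{Y}]^-+(\mathrm{BR})$ together with the verified Dialectica matrix. By Proposition \ref{lem:majmainresult}, $\mathcal{M}^{\omega,\mathscr{M}}$ is a model of this system in which $\mathscr{Y}$ is interpreted. Hence, for every $x\in\mathcal{M}_\rho$, the term instances satisfy the matrix in $\mathcal{M}^{\omega,\mathscr{M}}$.

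Proposition \ref{lem:majmainresult} also yields closed majorants $t_u^*,t_v^*$ of $\mathcal{A}^\omega+(\mathrm{BR})$. We set $\Phi(x^*):=\max(t_u^*x^*,t_v^*x^*)$. This is total on $\mathcal{M}_{\widehat\rho}$ because it is built from bar-recursive terms, and it is partial on $\mathcal{S}_{\widehat\rho}$. For $x^*\gtrsim x$ we get $t_ux\le\Phi(x^*)$ and $t_vx\le\Phi(x^*)$. Using monotonicity in $u,v$ (if $B_\forall$ holds for all $u\le\Phi(x^*)$, then in particular at $t_ux$), the Dialectica statement collapses in $\mathcal{M}^{\omega,\mathscr{M}}$ to
\[
\forall u\le\Phi(x^*)\,B_\forall(x,u)\to\exists v\le\Phi(x^*)\,C_\exists(x,v).
\]

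\textbf{Transferring to $\mathcal{S}^{\omega,\mathscr{M}}$.} The universal quantifiers in $B_\forall$ and the existential ones in $C_\exists$ range over admissible types, for which $\mathcal{M}_\rho\subseteq\mathcal{S}_\rho$ (small types even coincide). A universal statement that is true in $\mathcal{S}$ restricts to $\mathcal{M}$, and an existential witness found in $\mathcal{M}$ is also a witness in $\mathcal{S}$. For $x\in\mathcal{S}_\rho$ having a majorant $x^*$, we have $x\in\mathcal{M}_\rho$ by definition, and the constants of $\mathcal{A}^\omega[L,\mathcal{T}]$ receive the same interpretations in both models. The conclusion therefore holds in $\mathcal{S}^{\omega,\mathscr{M}}$, i.e.\ in every model of $\mathcal{T}$. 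Non-triviality is needed so that the $\mathcal{M}$-interpretations are well defined, for example for the majorants of $a_s$ and $\mathrm{retr}_s$.

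\textbf{Main obstacle.} We expect the main difficulty to be the compatibility of the two interpretations. The constants $\phi_s$ and $\mathscr{Y}$ are interpreted in $\mathcal{M}^{\omega,\mathscr{M}}$ only on majorizable arguments, and $\mathscr{Y}$ has no counterpart in $\mathcal{S}$. We must therefore check that $B_\forall$ and $C_\exists$, which are formulas of $\mathcal{A}^\omega[L,\mathcal{T}]$, do not mention $\mathscr{Y}$. We must also check that their $\phi_s$-occurrences are evaluated on the same arguments in both structures. This holds because the interpretation of $\phi_s$ in $\mathcal{M}$ is defined as the restriction of its interpretation in $\mathcal{S}$, as in \cite[Theorem 6.36]{GuK2016}.
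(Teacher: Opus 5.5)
Your proposal follows the paper's proof essentially step for step. You replace $F^X$ by the universal axiom $\Tilde{F}^X$ with the new constant $\mathscr{Y}$, apply Proposition \ref{lem:ndinterpretation} to get $t_u,t_v$, and set $\Phi(x^*):=\max\{t_u^*(x^*),t_v^*(x^*)\}$ using the majorants from Proposition \ref{lem:majmainresult}. You then transfer the result from $\mathcal{M}^{\omega,\mathscr{M}}$ to $\mathcal{S}^{\omega,\mathscr{M}}$ using admissibility of the types, as in \cite[Theorem 17.52]{Kohlenbach2008}.

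There is one slip, in your displayed prenex form. The premise's $\forall a$ must become existential, giving $\forall x\,\exists u,a,v,b\,(B_0(x,u,a)\to C_0(x,v,b))$ as in the paper. Your version $\exists u,v,b\,\forall a$ is the prenex form of $\forall u\,\exists a\,B_0\to\exists v,b\,C_0$, which does not follow from the hypothesis. Since you then actually interpret the correct disjunction $\forall x\,(\exists u\,\neg B_\forall\lor\exists v\,C_\exists)$, the argument is unaffected.
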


\begin{proof}
\[
\mathcal{A}^\omega[L,\mathcal{T}]+F^X\vdash\forall x^\rho\left(\forall u^0 B_\forall(x,u)\rightarrow\exists v^0 C_\exists(x,v)\right),
\]
implies 
\[
\mathcal{A}^\omega[L,\mathcal{T},\mathscr{Y}]+\Tilde{F}^X\vdash\forall x^\rho\left(\forall u^0 B_\forall(x,u)\rightarrow\exists v^0 C_\exists(x,v)\right).
\]
Now, we have $B_\forall(x,u)=\forall\underline{a} B_{qf}(x,u,\underline{a})$ and $C_\exists(x,v)=\exists\underline{b} C_{qf}(x,v,\underline{b})$ for quantifier-free $B_{qf}$ and $C_{qf}$. So prenexiation yields
\[
\mathcal{A}^\omega[L,\mathcal{T},\mathscr{Y}]+\Tilde{F}^X\vdash\forall x^\rho\exists u,v,\underline{a},\underline{b}(B_{qf}(x,u,\underline{a})\rightarrow C_{qf}(x,v,\underline{b})).
\]
Now applying Proposition \ref{lem:ndinterpretation} (noting that $\mathcal{A}^\omega[L,\mathcal{T},\mathscr{Y}]+\Tilde{F}^X$ extends $\mathcal{A}^\omega[L,\mathcal{T}]$ only by a new constant and purely universal axioms), we get closed terms $t_u,t_v$ of $\mathcal{A}^\omega[L,\mathcal{T},\mathscr{Y}]+\Tilde{F}^X+(\mathrm{BR})$ such that
\[
\mathcal{A}^\omega[L,\mathcal{T},\mathscr{Y}]+\Tilde{F}^X+(\mathrm{BR})\vdash\forall x^\rho(B_\forall (x,t_u(x))\rightarrow C_\exists(x,t_v(x))).
\]
By Proposition \ref{lem:majmainresult} there are closed terms $t^*_u,t^*_v$ of $\mathcal{A}^\omega+(\mathrm{BR})$ such that
\[
\mathcal{M}^{\omega,\mathscr{M}}\models t^*_u\gtrsim t_u\land t^*_v\gtrsim t_v\land\forall x^\rho(B_\forall(x,t_u(x))\rightarrow C_\exists(x,t_v(x)))
\]
for all non-trivial models $\mathscr{M}$ of $\mathcal{T}$. Define $\Phi(x^*):=\max\{t^*_u(x^*),t^*_v(x^*)\}$. Then
\[
\mathcal{M}^{\omega,\mathscr{M}}\models\forall u\leq_0\Phi(x^*) B_\forall(x,u)\rightarrow\exists v\leq_0\Phi(x^*) C_\exists(x,v)
\]
holds for all $x\in \mathcal{M}_\tau$ and $x^*\in \mathcal{M}_{\widehat{\tau}}$ with $x^*\gtrsim x$. That $\mathcal{S}^{\omega,\mathscr{M}}$ satisfies the same sentence, one uses the restriction on the types and argues as in the proof of \cite[Theorem 17.52]{Kohlenbach2008}.
\end{proof}
\begin{remark}
    In Theorem \ref{thm:metatheorem}, we further have the following standard additional remarks:
    \begin{enumerate}
\item If $\widehat{\tau}$ is of degree $1$, then $\Phi$ is a total computable functional. 
\item We may have tuples instead of single variables $x,y,z,u,v$ and a finite conjunction instead of a single premise $\forall u^0 B_\forall(x,y,z,u)$.
\item If the claim is proved without $\DC$, then $\tau$ may be arbitrary and $\Phi$ will be a total functional on $ \mathcal{S}_{\widehat{\tau}}$ which is primitive recursive in the sense of G\"odel \cite{Goe1958} and Hilbert \cite{Hil1926}. In that case, also plain majorization can be used instead of strong majorization \emph{(}see e.g.\ \cite{Kohlenbach2008}\emph{)}.
\end{enumerate}
\end{remark}
\begin{remark}
\label{rem:how:to:use}
    The algorithm for extracting $\Phi$ in Theorem \ref{thm:metatheorem} is hidden in the proofs of Propositions \ref{lem:ndinterpretation} and \ref{lem:majmainresult}. In particular, this process is modular: one can construct a majorant for the realisers of the Dialectica interpretation of the negative translation of a sentence by constructing majorants  for the realisers of the Dialectica interpretation of the negative translation of the lemmas used to prove the sentence. When metatheorems such as Theorem \ref{thm:metatheorem} are applied in proof mining, one very often exploits this observation. Instead of applying the proofs of Propositions \ref{lem:ndinterpretation} and \ref{lem:majmainresult} in one go to construct bounds for the theorem in question, one can break the theorem into smaller lemmas where one can use the proofs of Propositions \ref{lem:ndinterpretation} and \ref{lem:majmainresult} or mathematical intuition to extract marjorants for the realisers of the Dialectica interpretation of the negative translation of these lemmas. Then, one can obtain bounds for the main theorem in question.
\end{remark}

\subsection{Extracting bounds from the model-theoretic uniformity principle}
\label{sec:quant:proof}
We may formalise saturation in our system as follows:
\begin{definition}
 Let
 \begin{align*}
\Theta_m(T,\underline{r},l,u,p,n,\underline{z}):\equiv &\forall_{r_1(n)} x_1^{X_{s_1}(0)} \exists_{r_2(n)} y_1^{X_{s_2}(0)} \ldots \forall_{r_{2m-1}(n)} x_m^{X_{s_{2m-1}}(0)}\exists_{r_{2m}(n)} y_m^{X_{s_{2m}}(0)}\\&(T((\overline{\underline{x},l(n)}),(\overline{\underline{y},l(n)}),(\overline{u,p}),n,\underline{z})=_\RR 0)
\end{align*}
be a formula in $\mathcal{PBL}[L]$. We define the following saturation principle, 
\[
 \mathrm{Sat}(\Theta_m):\equiv
\begin{cases}
      \forall \underline{z},p\,(\exists r^0\forall n^*\exists_{P(r)+2^{-n^*}}u^{X_s(0)}\, \forall n\le_0 n^*\, \Delta_{m,\mathcal{A}}(T,\underline{r},l,u,p,n,\underline{z},n^*)\\\to\exists u^{X_s(0)}\, \forall n^0\, \Theta_{m,\mathcal{A}}(T,\underline{r},l,u,p,n,\underline{z})) 
\end{cases}
\] 
for $\Theta_m$ satisfying $\QFER^0_\RR(|T(u)|\le_\RR  2^{-k})$ for all $k^0$ in the case $s=s_\RR$. When we refer to the above principle by $\mathrm{Sat}$, we allow all instances of such $\Theta_m$.
\end{definition}

\begin{remark}
\label{rem:F:t:sat}
$\mathrm{Sat}$ formalises the restriction of $\kappa$-saturation (cf.\ Definition \ref{def:sat}) to a countable collection of positive bounded formulas of bounded quantifier rank (number of quantifier alternations). Concretely, in the case $\kappa = \aleph_1$, suppose we are given a countable subset $C$ of the metric structure and a set of positive bounded formulas
\[
\Gamma(u_1,\ldots,u_p) := \{\varphi_n(u_1,\ldots,u_p,\underline{z}) | n \in \NN,\ \underline{z} \subseteq C\}.
\]
We may formalise $C$ in the system as an object $z^{X_s(0)}$. If, moreover, each $\varphi_n(u_1,\ldots,u_p,\underline z)$ has at most $m$ alternating quantifiers, then each such formula can be formalised as a formula of $\mathcal{PBL}[L]$ of the form $\Theta_m(T,\underline r, l, u, p, n, z)$, for appropriate closed terms $T,\underline r, l$. (We note that it is not clear how the quantifier rank of a positive bounded formula could be encoded directly in the higher-type systems we work with; bounding the number of alternations, as above, is the workaround we adopt.) With this in hand, $\mathrm{Sat}(\Theta_m)$ formalises exactly the instance of saturation corresponding to $\Gamma(u_1,\ldots,u_p)$. The case $\kappa > \aleph_1$ can be handled analogously, by encoding $C$ via higher type objects. In practice, however, and in particular in the context of the model-theoretic uniformity principle (cf.\ Corollary \ref{cor:main:model:unif:sat}), only countable saturation is used, since the principle is typically applied in the context of ultraproducts (as described in the introduction), and the formulas considered typically have bounded quantifier complexity.

It was observed in \cite{GuK2016} that a formula analogous to $F^X$ (there defined in the context of normed structures) suffices to derive a principle $\sigUB$, which in turn yields many of the properties of ultrapowers of normed structures found in the literature, properties which, on inspection, are all instances of countable saturation\footnote{The similarities between the logical form of $\sigUB$ and countable saturation was communicated to the third author by Fernando Ferreira.}. In the following section (cf.\ Proposition \ref{prop:F:to:sat}) we extend this observation by showing that our formalisation of saturation, $\mathrm{Sat}$, which, as argued above, captures the typical instances of saturation occurring in the literature, is itself provable from $F^X$. We work throughout in the metric setting, with the restricted class of positive bounded formulas formalised in our system (cf.\ Remarks \ref{rem:met:vs:normed} and \ref{rem:PBL:diff}). We note, however, that an analogous saturation principle can also be formulated in the normed setting of \cite{GuK2016}, and more generally in any setting admitting \emph{intermediate points}, such as geodesic spaces; in that setting too, the principle can be shown to follow from an appropriately defined $F^X$, together with the additional extensionality assumptions used in \cite{GuK2016}, assumptions which, in any case, hold automatically for typical parametrised positive bounded formulas. We do not pursue this direction further, as it would take us too far afield.
\end{remark}

We show that $\mathcal{A}^\omega[L,\mathrm{retr},\phi] + \mathrm{Sat}$ proves the equivalence of satisfiability and approximate satisfiability (cf.\ Theorem \ref{thrm:equiv:approxsat:sat}). 
\begin{theorem}
\label{thrm:equiv:approxsat:sat}
    Let
\begin{align*}
    \Theta_m(T,\underline{r},l,\underline{z}):\equiv &\forall_{r_1(\underline{z})} x_1^{X_{s_1}(0)} \exists_{r_2(\underline{z})} y_1^{X_{s_2}(0)} \ldots \forall_{r_{2m-1}(\underline{z})} x_m^{X_{s_{2m-1}}(0)}\exists_{r_{2m}(\underline{z})} y_m^{X_{s_{2m}}(0)}\\&(T((\overline{\underline{x},l(\underline{z})}),(\overline{\underline{y},l(\underline{z})}),\underline{z})=_\RR 0)
\end{align*}
be a formula in $\mathcal{PBL}[L]$.
\[
\mathcal{A}^\omega[L,\mathrm{retr},\phi] + \mathrm{Sat}\vdash \Theta_m(T,\underline{r},l,\underline{z}) \leftrightarrow \Theta_{m,\mathcal{A}}(T,\underline{r},l,\underline{z}).
\]
\end{theorem}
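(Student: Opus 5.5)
The plan is to prove the two directions separately, with all the real work in ``approximate implies exact''.

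\textbf{Forward direction.} For $\Theta_m\to\Theta_{m,\mathcal{A}}$ no saturation is needed. Fix $k$ above the threshold $\max_i\ceil{-\log_2(P(r_{2i-1}))}$.

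A universal quantifier over the open ball of radius $P(r_{2i-1})-2^{-k}$ is weaker than one over radius $P(r_{2i-1})$. An existential quantifier over the closed ball of radius $P(r_{2i})$ yields a witness in the closed ball of radius $P(r_{2i})+2^{-k}$. Finally, $T=_\RR 0$ gives $|T|\le_\RR 2^{-k}$.

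This is a routine induction on $m$ using only the monotonicity of the bounded quantifiers in the radius.

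\textbf{Converse direction.} For $\Theta_{m,\mathcal{A}}\to\Theta_m$ I would proceed by induction on $m$, peeling off the outermost block $\forall_{r_1}x_1\exists_{r_2}y_1$. The case $m=0$ is just $\forall k\,(|T|\le 2^{-k})\to T=_\RR 0$.

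For the inductive step, assume $\Theta_{m,\mathcal{A}}$ and fix $x_1$ with $\forall n\,(d(x_1(n),a)<P(r_1))$. The aim is to produce $y_1$ in the closed $r_2$-ball such that the residual formula $\Theta_{m-1}(T',\dots)$ holds, where the parameters $\underline z$ are extended by $x_1,y_1$.

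By the induction hypothesis (applied with enlarged parameter tuple), it suffices to find $y_1$ with $\Theta_{m-1,\mathcal{A}}$ at $(x_1,y_1)$, i.e.\ with $\forall n\,\Delta_{m-1,\mathcal{A}}(\dots,y_1,n)$. This is exactly the conclusion of $\mathrm{Sat}$, taking $u:=y_1$, the sort $s:=s_2$, the radius $r:=r_2$, and the index $n$ playing the role of the approximation level $k$. The family should also include the constraints $\forall i\,(d(u(i),a)\le P(r_2))$, encoded as extra approximate conditions, say at even indices.

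It then remains to verify the finite-satisfiability premise of $\mathrm{Sat}$. We need: for each $n^*$ there is $u$ in the $(P(r_2)+2^{-n^*})$-ball with $\Delta_{m-1,\mathcal{A}}(\dots,u,n)$ for all $n\le n^*$. I would obtain this from $\Delta_{m,\mathcal{A}}$ at a single large level $k\ge n^*$, using two facts.
\begin{itemize}
\item $\Delta_{m-1,\mathcal{A}}$ at level $k$ implies it at all levels $n\le k$ (the same monotonicity as in the forward direction). Hence one witness at level $k=n^*$ handles all $n\le n^*$ simultaneously.
\item The fixed $x_1$ lies in the open ball of radius $P(r_1)$. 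I must ensure it also lies in the smaller ball of radius $P(r_1)-2^{-k}$ used by $\Delta_{m,\mathcal{A}}$.
\end{itemize}

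\textbf{Main obstacle.} The second point is the hard step. Since $x_1$ is a sequence, the bound $d(x_1(n),a)<P(r_1)$ holds only pointwise, with no uniform gap. Here I would use that $T$ only looks at $(\overline{x_1,l(\underline z)})$, which involves finitely many entries, padded with $a_s$ (distance $0$). By $\Sigma^0_1$ reasoning, each of the finitely many strict inequalities gives a common gap $2^{-k_0}$. Then for $k\ge\max(k_0,n^*)$, the sequence $(\overline{x_1,l})$ lies in the shrunken ball. This yields a witness $u$ for $\Delta_{m-1}$ at $(\overline{x_1,l})$, which we then transfer back to $x_1$, since $T$ agrees on the two.

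Two further details need care.
\begin{itemize}
\item The radius in $\mathrm{Sat}$ is $P(r)+2^{-n^*}$, matching the approximate existential radius $P(r_2)+2^{-k}$ with $k\ge n^*$; this matching is why the shift in $\Delta$ was chosen as it was.
\item The $\QFER^0_\RR$ hypothesis is needed to feed $\mathrm{Sat}$ when $s_2=s_\RR$. I would get it from the hypothesis that $\Theta_m$ is in $\mathcal{PBL}[L]$, together with Proposition~\ref{prop:retr}.
\end{itemize}
The induction hypothesis must be applied with the extended parameter tuple $(\underline z,x_1,y_1)$. This is why $\mathcal{PBL}[L]$ allows arbitrary parameters $\underline z$ and the terms $r_i,l$ may depend on them.
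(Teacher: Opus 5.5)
Your proposal is correct and follows essentially the same route as the paper. Both argue by induction on $m$, peel off $\forall_{r_1}x_1\exists_{r_2}y_1$, use the open-ball semantics of $\forall_{r_1}$ (via the finitely many entries of $x_1$ that $T$ actually sees) to place $x_1$ inside the shrunken ball, fold the distance bound on $y_1$ into the family of approximate conditions, and then apply $\mathrm{Sat}(\Theta_{m-1})$ followed by the induction hypothesis. One small adjustment: $\mathrm{Sat}$ only accesses $u$ through $(\overline{u,p})$ with $p$ fixed, so you can impose the ball constraint only on $u(i)$ for $i<l(\underline z)$, as the paper does, and not on all $i$; afterwards replace $y_1$ by $(\overline{y_1,l(\underline z)})$, which lies in the closed $r_2$-ball and gives the same value of $T$.
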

\begin{proof}
The proof will proceed by induction on $m$, with the case $m=0$ (the atomic case) trivially true. Now, for $m>0$, as the forward direction is trivial, it suffices to show $ \Theta_{m,\mathcal{A}}(T,\underline{r},l,\underline{z}) \to \Theta_m(T,\underline{r},l,\underline{z})$. Now $\Theta_{m,\mathcal{A}}(T,\underline{r},l,\underline{z})$ will be equivalent to 
\begin{align*}
    \forall k^0 &( k>_0\max\{\ceil{-\log_2(P(r_{2i-1}))}\,|\, i \in [1;m]\} \to\\ &\forall_{P(r_1(\underline{z}))-2^{-k}} x_1^{X_{s_1}(0)} \exists_{P(r_2(\underline{z}))+2^{-k}} y_1^{X_{s_2}(0)} \ldots \forall_{P(r_{2m-1}(\underline{z}))-2^{-k}} x_m^{X_{s_{2m-1}}(0)}\exists_{P(r_{2m}(\underline{z}))+2^{-k}} y_m^{X_{s_{2m}}(0)}\\&(|T((\overline{\underline{x},l(\underline{z})}),(\overline{\underline{y},l(\underline{z})}),\underline{z})|\le_\RR 2^{-k}))
\end{align*}
which is equivalent to 
\begin{align*}
    &\forall k^0\forall_{P(r_1(\underline{z}))-2^{-k}} x_1^{X_{s_1}(0)} \exists_{P(r_2(\underline{z}))+2^{-k}} y_1^{X_{s_2}(0)}( k>_0\max\{\ceil{-\log_2(P(r_{2i-1}))}\,|\, i \in [1;m]\} \to\\ & \ldots \forall_{P(r_{2m-1}(\underline{z}))-2^{-k}} x_m^{X_{s_{2m-1}}(0)}\exists_{P(r_{2m}(\underline{z}))+2^{-k}} y_m^{X_{s_{2m}}(0)}\\&(|T((\overline{\underline{x},l(\underline{z})}),(\overline{\underline{y},l(\underline{z})}),\underline{z})|\le_\RR 2^{-k}\land \forall n<_0l(\underline{z})\,(d(a_{s_2},y_1(n))\le P(r_2(\underline{z}))+2^{-k})))
\end{align*}
and will imply (crucially, we use that bounded universal quantification refers to an open ball)
\begin{align*}
     \forall_{P(r_1(\underline{z}))} x_1^{X_{s_1}(0)} \forall k^0\exists_{P(r_2(\underline{z}))+2^{-k}} y_1^{X_{s_2}(0)} \Delta_{m-1,\mathcal{A}}(\tilde{T},\underline{\tilde{r}},\tilde{l},y_1,l,\underline{z},x_1,k)
\end{align*}
for appropriately defined $\tilde{T},\underline{\tilde{r}}, \tilde{l}$ (cf.\ the proof of Proposition \ref{prop:equiv:pbl:prenex}). Applying  $\mathrm{Sat}(\Theta_{m-1})$ yields 
\[
\forall_{P(r_1(\underline{z}))} x_1^{X_{s_1}(0)} \exists y_1^{X_{s_2}(0)} \, \Theta_{m-1,\mathcal{A}}(\tilde{T},\underline{\tilde{r}},\tilde{l},y_1,l,\underline{z},x_1))
\]
which will be equivalent to 
\begin{align*}
    &\forall_{P(r_1(\underline{z}))} x_1^{X_{s_1}(0)} \exists y_1^{X_{s_2}(0)}\forall k^0( k>_0\max\{\ceil{-\log_2(P(r_{2i-1}))}\,|\, i \in [1;m]\} \to\\ & \ldots \forall_{P(r_{2m-1}(\underline{z}))-2^{-k}} x_m^{X_{s_{2m-1}}(0)}\exists_{P(r_{2m}(\underline{z}))+2^{-k}} y_m^{X_{s_{2m}}(0)}\\&(|T((\overline{\underline{x},l(\underline{z})}),(\overline{\underline{y},l(\underline{z})}),\underline{z})|\le_\RR 2^{-k}\land  \forall n<_0l(\underline{z})\,( d(a_{s_2},y_1(n))\le P(r_2(\underline{z}))+2^{-k})))
\end{align*}
which is equivalent to 
\begin{align*}
    &\forall_{P(r_1(\underline{z}))} x_1^{X_{s_1}(0)} \exists_{P(r_2(\underline{z}))} y_1^{X_{s_2}(0)}\forall k^0( k>_0\max\{\ceil{-\log_2(P(r_{2i-1}))}\,|\, i \in [1;m]\} \to\\ & \ldots \forall_{P(r_{2m-1}(\underline{z}))-2^{-k}} x_m^{X_{s_{2m-1}}(0)}\exists_{P(r_{2m}(\underline{z}))+2^{-k}} y_m^{X_{s_{2m}}(0)}(|T((\overline{\underline{x},l(\underline{z})}),(\overline{\underline{y},l(\underline{z})}),\underline{z})|\le_\RR 2^{-k})).
\end{align*}
The induction hypothesis yields that the above implies 
\begin{align*}
    &\forall_{P(r_1(\underline{z}))} x_1^{X_{s_1}(0)} \exists_{P(r_2(\underline{z}))} y_1^{X_{s_2}(0)}\ldots \forall_{r_{2m-1}(\underline{z})} x_m^{X_{s_{2m-1}}(0)}\exists_{r_{2m}(\underline{z})} y_m^{X_{s_{2m}}(0)}\,(T((\overline{\underline{x},l(\underline{z})}),(\overline{\underline{y},l(\underline{z})}),\underline{z})=_\RR 0).
\end{align*}
and the result follows.
\end{proof}
\begin{remark}
    A similar result was obtained in \cite[Theorem 6.33]{GuK2016}, in the context of normed structures, with the additional requirement that the term $T$ was extensional. The requirement of extensionality is crucial and is due to the broader class of positive bound formulas considered in \cite{GuK2016} (cf.\ Remarks \ref{rem:met:vs:normed} and \ref{rem:PBL:diff}).
\end{remark}

 We shall show that the system we have introduced, with $F^X$, proves $\mathrm{Sat}$ and as a consequence we shall obtain a proof-theoretic analogue of the model-theoretic uniformity principle, Corollary \ref{cor:main:model:unif:sat}. It will be convenient to define the following technical intensional saturation principle:
\begin{definition}
 Let
 \begin{align*}
\Theta_m(T,\underline{r},l,u,p,n,\underline{z}):\equiv &\forall_{r_1(n)} x_1^{X_{s_1}(0)} \exists_{r_2(n)} y_1^{X_{s_2}(0)} \ldots \forall_{r_{2m-1}(n)} x_m^{X_{s_{2m-1}}(0)}\exists_{r_{2m}(n)} y_m^{X_{s_{2m}}(0)}\\&(T((\overline{\underline{x},l(n)}),(\overline{\underline{y},l(n)}),(\overline{u,p}),\underline{z},n)=_\RR 0)
\end{align*}
be a formula in $\mathcal{PBL}[L]$. We define the following
intensional saturation principle
\[
\mathrm{Sat}_{-}(\Theta_m):\equiv
\begin{cases}
    \forall \underline{z},p\,(\exists r^0\forall n^*\exists u^{X_s(0)}\, \forall n\le_0 n^*\, \Delta^\mathrm{retr}_{m,\mathcal{A}}(T,\underline{r},l,\mathrm{retr}_s(u,P(r)+2^{-n^*}),p,\underline{z},n,n^*)\\
    \to \exists u^{X_s(0)}\forall n^0\, \Theta^\mathrm{retr}_{m,\mathcal{A}}(T,\underline{r},l,u,p,\underline{z},n))
\end{cases}
\] 
for $\Theta_m$ satisfying $\QFER^0_\RR(|T(u)|\le_\RR  2^{-k})$ for all $k^0$ in the case $s=s_\RR$. When we refer to the above principle by $\mathrm{Sat}_{-}$ we allow all instances of $\Theta_m$.
\end{definition}
\begin{proposition}
\label{prop:sat:int:to}
    $\mathcal{A}^\omega[L,\mathrm{retr},\phi]+\mathrm{Sat}_{-}(\Theta_m) \vdash \mathrm{Sat}(\Theta_m) $ 
\end{proposition}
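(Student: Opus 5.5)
The plan is to fix $\underline{z},p$ and an instance $\Theta_m$, assume the premise of $\mathrm{Sat}(\Theta_m)$, derive the premise of $\mathrm{Sat}_{-}(\Theta_m)$ from it, and then turn the conclusion of $\mathrm{Sat}_{-}(\Theta_m)$ back into the conclusion of $\mathrm{Sat}(\Theta_m)$. The two principles differ in two ways. First, $\mathrm{Sat}$ uses the explicit bounded quantifier $\exists_{P(r)+2^{-n^*}}u$ and the approximations $\Delta_{m,\mathcal{A}}$, $\Theta_{m,\mathcal{A}}$. Second, $\mathrm{Sat}_{-}$ uses an unbounded $\exists u$ applied through $\mathrm{retr}_s(u,P(r)+2^{-n^*})$, together with $\Delta^{\mathrm{retr}}_{m,\mathcal{A}}$ and $\Theta^{\mathrm{retr}}_{m,\mathcal{A}}$. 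So the proof is a bookkeeping argument that bridges these differences using Propositions \ref{prop:retr}, \ref{prop:ext:approx} and \ref{prop:equiv:dual}.

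The premise of $\mathrm{Sat}$ gives $r$ such that for every $n^*$ there is a $u$ with $\forall n\,(d_s(u(n),a_s)\le_\RR P(r)+2^{-n^*})$ and $\forall n\le n^*\,\Delta_{m,\mathcal{A}}(\ldots,u,\ldots,n,n^*)$.
\begin{enumerate}
\item Pass from $\Delta_{m,\mathcal{A}}$ to $\Delta^{\mathrm{retr}}_{m,\mathcal{A}}$. From $(\star)$ and Proposition \ref{prop:equiv:dual} we only have $\Delta^D\to\Delta^{\mathrm{retr}}\to\Delta$, not $\Delta\to\Delta^{\mathrm{retr}}$ at the same level. I would therefore enlarge the level: I expect $\Delta_{m,\mathcal{A}}(\ldots,k+1)\to\Delta^{D}_{m,\mathcal{A}}(\ldots,k)$, since shrinking the open radius to a closed one and loosening the existential radius and error by $2^{-k}$ versus $2^{-(k+1)}$ is harmless. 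I would then reindex the witness, taking for $n^*$ the $u$ supplied at $n^*+1$.
\item Replace $u$ by $\mathrm{retr}_s(u,P(r)+2^{-n^*})$. Because of the boundary behaviour, $u(n)$ need not lie in the open ball, so I would use a slightly larger radius. The $u$ for $n^*+1$ satisfies $d(u(n),a_s)\le P(r)+2^{-(n^*+1)}<P(r)+2^{-n^*}$, so the second axiom of $\mathrm{retr}$ gives $\mathrm{retr}_s(u,P(r)+2^{-n^*})(n)=_{X_s}u(n)$ for all $n$. The formula is then transported by $\QFER$ (or by $\QFER^0_\RR(|T(u)|\le_\RR 2^{-k})$ when $s=s_\RR$), as in the proof of Proposition \ref{prop:retr}.
\end{enumerate}
This establishes the premise of $\mathrm{Sat}_{-}(\Theta_m)$ with the same $r$.

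$\mathrm{Sat}_{-}$ then gives $u$ with $\forall n\,\Theta^{\mathrm{retr}}_{m,\mathcal{A}}(\ldots,u,\ldots,n)$. By Proposition \ref{prop:ext:approx} this is equivalent to $\forall n\,\Theta_{m,\mathcal{A}}(\ldots,u,\ldots,n)$, which is exactly the conclusion of $\mathrm{Sat}(\Theta_m)$ (no bound on $u$ is required there).

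The main obstacle is the level shifting in step 1. The thresholds $k>\max\lceil-\log_2 P(r_{2i-1})\rceil$, the radii $P(r_i)\pm2^{-k}$ and the error $2^{-k}$ all move simultaneously, and the outer $\forall n\le n^*$ must be kept aligned. Reindexing $n^*\mapsto n^*+1$ (or $n^*+2$) should absorb all of these at once. The only delicate point is ensuring that $\QFER$ applies, which it does because the retraction acts only on the parameter $u$ and the extensionality hypothesis on $T$ in $u$ is part of the statement.
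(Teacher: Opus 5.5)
Your proposal is correct and follows essentially the same route as the paper. Both arguments upgrade $\Delta_{m,\mathcal{A}}$ to $\Delta^D_{m,\mathcal{A}}$, pass to $\Delta^{\mathrm{retr}}_{m,\mathcal{A}}$ via $(\star)$, and replace $u$ by $\mathrm{retr}_s(u,P(r)+2^{-n^*})$ using the second $\mathrm{retr}$ axiom and $\QFER$ as in Proposition~\ref{prop:retr}. They then apply $\mathrm{Sat}_{-}(\Theta_m)$ and return via Proposition~\ref{prop:ext:approx}. Your explicit reindexing $n^*\mapsto n^*+1$ makes precise a step the paper leaves implicit: it calls the $\Delta$- and $\Delta^D$-premises ``equivalent'' and moves from the closed-ball $\exists_{P(r)+2^{-n^*}}$ to the open-ball $\Bar{\exists}_{P(r)+2^{-n^*}}$ at the same level $n^*$.
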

\begin{proof}
    Let $\underline{z},t$ be given. Suppose we have $r^0$ such that
   \[
   \forall n^*\exists_{P(r)+2^{-n^*}}u^{X_s(0)}\, \forall n\le_0 n^*\, \Delta_{m,\mathcal{A}}(T,\underline{r},l,u,p,n,\underline{z},n^*)
   \]
   then, this will be equivalent to
    \[
   \forall n^*\exists_{P(r)+2^{-n^*}}u^{X_s(0)}\, \forall n\le_0 n^*\, \Delta^D_{m,\mathcal{A}}(T,\underline{r},l,u,p,n,\underline{z},n^*)
   \]
   which implies, by $(\star)$,
      \[
   \forall n^*\Bar{\exists}_{P(r)+2^{-n^*}}u^{X_s(0)}\, \forall n\le_0 n^*\, \Delta^\mathrm{retr}_{m,\mathcal{A}}(T,\underline{r},l,u,p,n,\underline{z},n^*).
   \]
    Now, Proposition \ref{prop:retr} yields that $\forall n^*\Bar{\exists}_{P(r)+2^{-n^*}}u^{X_s(0)}\, \forall n\le_0 n^*\, \Delta^\mathrm{retr}_{m,\mathcal{A}}(T,\underline{r},l,u,p,n,\underline{z},n^*)$ implies $\forall n^*\exists u^{X_s(0)}\, \forall n\le_0 n^*\, \Delta^\mathrm{retr}_{m,\mathcal{A}}(T,\underline{r},l,\mathrm{retr}_s(u,P(r)+2^{-n^*}),p,n,\underline{z},n^*)$. Thus $\mathrm{Sat}_{-}(\Theta_m)$ yields $\exists u^{X_s(0)}\, \forall n^0$  $\Theta^\mathrm{retr}_{m,\mathcal{A}}(T,\underline{r},l,u,p,n,\underline{z})$ and the result follows from Proposition \ref{prop:ext:approx}.
\end{proof}
\begin{proposition}
\label{prop:F:to:sat}
 $\mathcal{A}^\omega[L,\mathrm{retr},\phi]+F^X \vdash\mathrm{Sat}$.    
\end{proposition}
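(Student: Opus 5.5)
By Proposition \ref{prop:sat:int:to}, it suffices to show that $\mathcal{A}^\omega[L,\mathrm{retr},\phi]+F^X$ proves $\mathrm{Sat}_{-}(\Theta_m)$ for every admissible $\Theta_m$. The strategy mirrors the derivation of $\sigUB$ from $F^X$ in \cite{GuK2016}: replace the conjunction of approximations by a single quantifier-free test, and use $F^X$ to find one point of the ball that is ``best'' for that test.

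\textbf{Step 1 (a quantifier-free test).} Fix $\underline{z},p$ and $r$ such that the premise of $\mathrm{Sat}_{-}$ holds. By the proof of Proposition \ref{prop:approx:univ}, using the $\phi_s$-constants, each formula $\Delta^\mathrm{retr}_{m,\mathcal{A}}(T,\underline{r},l,v,p,\underline{z},n,n^*)$ is provably equivalent to a quantifier-free formula. Modulo the shift $k\mapsto k+2$ from $(\dagger)$, this gives a closed term $t(v,n,n^*)$ of type $0$ with
\[
t(v,n,n^*)=_0 0 \leftrightarrow \Delta^\mathrm{retr}_{m,\mathcal{A}}(\ldots,v,\ldots,n,n^*).
\]
Using bounded search, define
\[
\Phi(v):=\min\{\,n^*\le_0 N \mid \exists n\le n^*\; t(v,n,n^*)\neq 0\,\}.
\]
A large value of $\Phi(v)$ means that $v$ satisfies many approximations.

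\textbf{Step 2 (removing the radius drift).} The premise only gives witnesses of the form $\mathrm{retr}_s(u,P(r)+2^{-n^*})$, and the radius depends on $n^*$. I would fix a single radius $R:=P(r)+1$. I would then check two things. First, $\mathrm{retr}_s(u,P(r)+2^{-n^*})$ lies in the closed $R$-ball, so it is fixed by $\mathrm{retr}_s(\cdot,R)$ on every component with $d<R$; this uses the axioms of Definition \ref{def:retr}. Second, by uniform continuity, only the first $p$ components of $u$ are used.

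I expect this step to be the main obstacle. Because $\mathrm{retr}$ is intensional, its behaviour on boundary points is not specified, so I would choose $R$ strictly larger than every radius in play. I would also need to ensure that the conclusion only requires $\Theta^\mathrm{retr}$ for $u$ itself, without any retraction applied to it.

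\textbf{Step 3 (the maximal point).} Apply $F^{X_s}$ to a functional that caps the search at $N$. Write
\[
\Phi_N(v):=\min\{\,n^*\le N \mid \exists n\le n^*\; t(v,n,n^*)\neq 0\,\}\quad\text{(or $N$ if no such $n^*$ exists)},
\]
and consider $\Psi(v):=\Phi_N(v)$. To avoid any dependence on $N$, I would instead use the unbounded version
\[
\Psi(v):=\min\{\,n^* \mid \exists n\le n^*\; t(v,n,n^*)\neq 0\,\}.
\]
This is not a term, so I would encode it through a $\phi$-type choice, or use the dependence on $N$ together with the fact that $F^X$ holds for all $\Phi$ uniformly. $F^X$ then yields $y$ in the $R$-ball with
\[
\Psi(\mathrm{retr}_s(x,R))\le \Psi(y)\quad\text{for all } x.
\]

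\textbf{Step 4 (conclusion).} Suppose some approximation $n\le n^*$ failed at $y$. Then $\Psi(y)\le n^*$. But the premise, applied at level $n^*+1$, supplies some $\mathrm{retr}_s(u,R)$ passing every test up to $n^*+1$, so $\Psi(\mathrm{retr}_s(u,R))>n^*$. This contradicts the maximality of $y$. Hence $y$ satisfies $\forall n\,\Theta^\mathrm{retr}_{m,\mathcal{A}}(\ldots,y,\ldots,n)$. Finally, I would verify that the radius bound on $y$ is the one required, namely $P(r)$, by taking $R$ of the form $P(r)+2^{-j}$ and letting $j$ range over all values. Handling this last point through the retraction axioms is again delicate. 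For the sort $s_\RR$, the $\QFER^0_\RR$ hypothesis is used to transfer the tests across $(\cdot)_\circ$-representations.
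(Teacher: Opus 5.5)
Your reduction to $\mathrm{Sat}_{-}$, the quantifier-free test from Proposition \ref{prop:approx:univ}, and the fixed radius $P(r)+1$ are as in the paper, and Step 2 is not where the difficulty lies: the retraction axioms, including idempotence, handle it. The genuine gap is in Step 3. You never obtain an object of type $0(X_s(0))$ to which $F^X$ can be applied, because $F^X$ quantifies over functionals of the system and your unbounded-search $\Psi$ is not one. The system $\mathcal{A}^\omega[L,\mathrm{retr},\phi]$ has no $\mu$-operator searching over $\NN$ uniformly in an argument $v^{X_s(0)}$. Nor can one be added: on the real sort it would define a functional unbounded on a ball, e.g.\ $v\mapsto$ least $j$ with $|\widehat{v(0)}(j)|_\QQ>2^{-j+1}$ (default $0$), contradicting $F^X$. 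Neither of your fallbacks closes this gap.
\begin{itemize}
\item The constants $\phi_s$ only select witnesses for existential quantifiers over a ball with quantifier-free matrix; they do not compute least numbers.
\item The capped $\Phi_N$ are terms, but $F^X$ gives one maximiser per functional. So you only get, for each $N$, some $y_N$ passing all tests up to level $N$, which is the premise again with radius $P(r)+1$.
\end{itemize}
Passing from $\forall N\,\exists y$ to $\exists y\,\forall N$ is exactly the content of $\mathrm{Sat}$. Merging the $\Phi_N$ into one functional brings back the unbounded search.

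The missing idea is to argue by contradiction \emph{before} constructing the functional, which is what the paper does.
\begin{itemize}
\item Fix $r$ and assume the conclusion fails: $\forall u\,\exists n\,\neg\Theta^\mathrm{retr}_{m,\mathcal{A}}(\ldots,u,\ldots,n)$.
\item Instantiate at $\mathrm{retr}_s(u,P(r)+1)$ and apply Proposition \ref{prop:approx:univ}. This gives $\forall u\,\exists n,k\,\neg\theta_{qf}(k,\mathrm{retr}_s(u,P(r)+1),\ldots,n)$ with quantifier-free matrix.
\item \QFAC{} then yields $\Phi_1,\Phi_2$ choosing a failing $k$ and $n$ for every $u$. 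These are total because every point fails, and no minimality is needed.
\item By idempotence of $\mathrm{retr}_s$ and \QFER{} (resp.\ $\QFER^0_\RR$), they may be composed with $\mathrm{retr}_s(\cdot,P(r)+1)$. $F^X$, used purely as a boundedness principle, then bounds both by a single $n^*$ uniformly in $u$.
\item By $(\dagger)$ and monotonicity of $\Delta^\mathrm{retr}_{m,\mathcal{A}}$ in its last argument, every $\mathrm{retr}_s(u,P(r)+2^{-n^*})$ fails some $\Delta^\mathrm{retr}_{m,\mathcal{A}}(\ldots,n,n^*)$ with $n\le n^*$. This contradicts the premise at level $n^*$.
\end{itemize}
Once this case split is made, your Step 4 maximality argument is essentially this contradiction.

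Finally, your concern about forcing $y$ into the ball of radius $P(r)$ is unnecessary. The conclusions of $\mathrm{Sat}$ and $\mathrm{Sat}_{-}$ place no radius bound on $u$; the bound is recovered in Theorem \ref{thrm:equiv:approxsat:sat} by building it into $\tilde T$. Letting $j$ vary, as you propose, would in any case only produce $j$-dependent witnesses.
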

\begin{proof}
We show that $\mathcal{A}^\omega[L,\mathrm{retr},\phi]+F^X \vdash\mathrm{Sat}_{-}$ and the result follows from Proposition \ref{prop:sat:int:to}. Let $\Theta_m(T,\underline{r},l,u,p,\underline{z},n)$ be a formula in $\mathcal{PBL}[L]$ and $\underline{z},l,p$ be given. Suppose we have
    \[
  \forall u^{X_s(0)}\, \exists n^0\, \neg\Theta^\mathrm{retr}_{m,\mathcal{A}}(T,\underline{r},l,u,p,n,\underline{z}).
  \]
  then 
  \[
  \forall r^0 \forall u^{X_s(0)}\, \exists n^0\, \neg\Theta^\mathrm{retr}_{m,\mathcal{A}}(T,\underline{r},l,\mathrm{retr}_s(u,P(r)+_\QQ1),p,n,\underline{z}).  \]
 Let $r^0$ be given. Then by Proposition \ref{prop:approx:univ}, there exists quantifier-free $\theta_{qf}(k,\mathrm{retr}_s(u,P(r)+_\QQ1),p,n,\underline{z})$ such that 
   \[
  \forall u^{X_s(0)}\, \exists n^0\, \exists k^0\,\neg\theta_{qf}(k,\mathrm{retr}_s(u,P(r)+_\QQ1),p,n,\underline{z}).
  \]
  Now, by $\QFAC$ we have 
   \[
 \exists \Phi_1^{0(X_s(0))}\,\exists \Phi_2^{0(X_s(0))}\,\forall u^{X_s(0)}\,\neg\theta_{qf}(\Phi_1(u),\mathrm{retr}_s(u,P(r)+_\QQ1),p,\Phi_2(u),\underline{z})
  \]
  which implies, from the proof of Proposition \ref{prop:approx:univ}, that
     \[
 \forall u^{X_s}\neg \Tilde{\Delta}^\mathrm{retr}_{m,\mathcal{A}}(T,\underline{r},l,\mathrm{retr}_s(u,P(r)+_\QQ1),p,\Phi_2(u),\underline{z},\Phi_1(u)).
  \]
Now, we have, from the axioms of $\mathrm{retr}$ that 
\[
\forall n^0\, (\mathrm{retr}_s(\mathrm{retr}_s(u(n),P(r)+_\QQ1),P(r)+_\QQ1) =_{X_s} \mathrm{retr}_s(u(n),P(r)+_\QQ1).
\]
Thus, $\QFER$ and $\QFER^0_\RR$ yield
   \[
 \forall u^{X_s}\neg \Tilde{\Delta}^\mathrm{retr}_{m,\mathcal{A}}(T,\underline{r},l,\mathrm{retr}_s(u,P(r)+_\QQ1),p,\Phi_2(\mathrm{retr}_s(u,P(r)+_\QQ1)),\underline{z},\Phi_1(\mathrm{retr}_s(u,P(r)+_\QQ1)))
  \]
and from $F^X$, we have $\exists n^* \forall u^{X_s}\,(\Phi_1(\mathrm{retr}_s(u,P(r)+_\QQ1)) \le_0 n^*\land \Phi_2(\mathrm{retr}_s(u,P(r)+_\QQ1)) \le_0 n^*)$. This implies
\[
\exists n^* \forall u^{X_s} \exists n,m \le_0 n^*\,\neg \Tilde{\Delta}^\mathrm{retr}_{m,\mathcal{A}}(T,\underline{r},l,\mathrm{retr}_s(u,P(r)+_\QQ1),p,n,\underline{z},m)
\]
and $(\dagger)$ (and the monotonicity of $\Delta^\mathrm{retr}_{m,\mathcal{A}}$ in its final argument) implies 
\[
\exists n^* \forall u^{X_s} \exists n \le_0 n^*\,\neg \Delta^\mathrm{retr}_{m,\mathcal{A}}(T,\underline{r},\mathrm{retr}_s(u,P(r)+_\QQ1),p,n,\underline{z},n^*)
\]
and the result follows by $\QFER$ and $\QFER^0_\RR$ since we have, provably, $\forall n^0\,(\mathrm{retr}_s(\mathrm{retr}_s(u,P(r)+2^{-n^*}),P(r)+_\QQ1)(n) =_{X_s} \mathrm{retr}_s(u,P(r)+2^{-n^*})(n))$.               
\end{proof}

\begin{theorem}\label{thm:metatheorem2}
 Let
 \begin{align*}
       \Theta_m(T,\underline{r},l,z^\rho,n^0):\equiv&\forall_{r_1(z,n)} x_1^{X_{s_1}(0)} \exists_{r_2(z,n)} y_1^{X_{s_2}(0)} \ldots \forall_{r_{2m-1}(z,n)} x_m^{X_{s_{2m-1}}(0)}\exists_{r_{2m}(z,n)} y_m^{X_{s_{2m}}(0)}\,\\&(T((\overline{\underline{x},l(z,n)}),(\overline{\underline{y},l(z,n)}),z,n)=_\RR 0)
 \end{align*}
be a formula in $\mathcal{PBL}[L]$ with $\rho$ admissible. If
\[
\mathcal{A}^\omega[L,\mathcal{T}]+F^X\vdash \forall z^\rho\, \exists n^0\,\neg\Theta_{m}(T,\underline{r},l,z,n)
\]
then one can extract a partial functional $\Phi:\mathcal{S}_{\widehat{\rho}}\rightharpoonup\mathbb{N}$ which is total and (bar-recursively) computable on $\mathcal{M}_{\widehat{\rho}}$ and the following holds in all non-trivial models $\mathscr{M}$ of $\mathcal{T}$: for all $z\in \mathcal{S}_\rho$ and $z^*\in \mathcal{S}_{\widehat{\rho}}$, if $z^*\gtrsim z$, then
\[
 \exists n\leq_0\Phi(z^*)\,\neg\Delta_{m,\mathcal{A}}(T,\underline{r},l,z,n,\Phi(z^*)).
\]
In particular,
\[
 \exists n\leq_0\Phi(z^*)\,\neg\Theta_{m,\mathcal{A}}(T,\underline{r},l,z,n).
\]
\end{theorem}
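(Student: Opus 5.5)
The plan is to reduce the statement to Theorem \ref{thm:metatheorem} by turning $\neg\Theta_m$ into a premise-free $\exists$-statement over a universal formula. First, Theorem \ref{thrm:equiv:approxsat:sat} gives the equivalence of $\Theta_m$ and $\Theta_{m,\mathcal{A}}$ over $\mathcal{A}^\omega[L,\mathrm{retr},\phi]+\mathrm{Sat}$. Proposition \ref{prop:F:to:sat} shows that $\mathrm{Sat}$ is derivable from $F^X$. Together these give
\[
\mathcal{A}^\omega[L,\mathcal{T}]+F^X\vdash \forall z^\rho\,\exists n^0\,\neg\Theta_{m,\mathcal{A}}(T,\underline{r},l,z,n),
\]
which unfolds to $\forall z\,\exists n,k\,\neg\Delta_{m,\mathcal{A}}(T,\underline{r},l,z,n,k)$. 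Propositions \ref{prop:ext:approx} and \ref{prop:approx:univ}, specifically $(\star)$ and $(\dagger)$ in their proofs, allow $\Delta_{m,\mathcal{A}}(\cdot,k)$ to be sandwiched between quantifier-free formulas $\tilde\Delta^{\mathrm{retr}}_{m,\mathcal{A}}(\cdot,k)$ and $\tilde\Delta^{\mathrm{retr}}_{m,\mathcal{A}}(\cdot,k+2)$. These quantifier-free formulas are built with $\mathrm{retr}_s$ and $\phi_s$. Hence the system proves $\forall z\,\exists n,k\,\neg\theta_{qf}(z,n,k)$ with $\theta_{qf}$ quantifier-free, so $\neg\theta_{qf}$ is trivially an $\exists$-formula.

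Next, I would apply Theorem \ref{thm:metatheorem} with $x:=z$, an empty premise (taking $B_\forall$ to be $0=0$), and the pair $(n,k)$ coded as a single $v^0$ via $j$. This yields $\Phi$, computable by bar recursion on $\mathcal{M}_{\widehat\rho}$, such that in every non-trivial model of $\mathcal{T}$ and for $z^*\gtrsim z$ there exist $n,k\le\Phi(z^*)$ with $\neg\theta_{qf}(z,n,k)$. I would transfer this to $\mathcal{S}^{\omega,\mathscr{M}}$ as in the last step of the proof of Theorem \ref{thm:metatheorem}. This is legitimate because the interpretations of $\mathrm{retr}_s,\phi_s$ in Propositions \ref{prop:retr:int} and \ref{prop:int:phi} make the defining axioms true there. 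It follows that $\neg\tilde\Delta^{\mathrm{retr}}(\cdot,k)$ implies $\neg\Delta^{\mathrm{retr}}_{m,\mathcal{A}}(\cdot,k)$, and hence $\neg\Delta_{m,\mathcal{A}}(\cdot,k)$ in the model.

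Finally, I need the single bound $\Phi(z^*)$ as the accuracy index. For this I would use that $\Delta_{m,\mathcal{A}}(\cdot,k')$ implies $\Delta_{m,\mathcal{A}}(\cdot,k)$ for $k\le k'$: increasing $k$ shrinks the universal radii $P(r_{2i-1})-2^{-k}$ upward, enlarges the existential radii and loosens the tolerance in the opposite direction. Care is needed because the universal balls grow with $k$, so monotonicity is not literal. I therefore expect this to be the main obstacle. To handle it, I would first try to fold monotonicity into the witness: replace the realizer by the one for $\neg\theta_{qf}$ at level $k+2$ and check, via the semantic reading in $\mathcal{S}^{\omega,\mathscr{M}}$, that failure of the approximation at level $k$ persists at every $k'\ge k$. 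If direct monotonicity fails, I would instead rely on the fact (from Proposition \ref{prop:equiv:dual}) that $\forall k\,\Delta=\forall k\,\Delta^D$. I would then run the extraction on the $\Delta^D$ form, where $\Bar\forall$ uses closed balls at radius $r-2^{-k}$, and note that $\Delta^D(\cdot,k')$ implies $\Delta(\cdot,k)$ for $k'>k$. Adjusting $\Phi$ by $+2$ then gives $\exists n\le\Phi(z^*)\,\neg\Delta_{m,\mathcal{A}}(\ldots,n,\Phi(z^*))$. The "in particular" clause is immediate, since $\Theta_{m,\mathcal{A}}=\forall k\,\Delta_{m,\mathcal{A}}$.
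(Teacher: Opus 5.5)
Your reduction is the paper's: Theorem~\ref{thrm:equiv:approxsat:sat} with Proposition~\ref{prop:F:to:sat}, then Proposition~\ref{prop:approx:univ}, then Theorem~\ref{thm:metatheorem} with a trivial premise, then $(\dagger)$ and $(\star)$. The error is in the last step. Write $\Delta(k)$, $\Delta^{\mathrm{retr}}(k)$, $\Delta^D(k)$ for the three level-$k$ approximations.

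From $\neg\Delta^{\mathrm{retr}}(k)$ you infer $\neg\Delta(k)$. But $(\star)$ reads $\Delta^D(k)\to\Delta^{\mathrm{retr}}(k)\to\Delta(k)$, so contraposition only gives $\neg\Delta^{\mathrm{retr}}(k)\to\neg\Delta^D(k)$, which is what the paper uses. Your implication is false in general. In $\mathcal{S}^{\omega,\mathscr{M}}$, $\mathrm{retr}_s$ projects onto the closed ball, so $\Delta^{\mathrm{retr}}(k)$ quantifies universally over closed balls of radius $P(r_{2i-1})-2^{-k}$, whereas $\Delta(k)$ uses open balls. Take a discrete sort with $P(r_1)-2^{-k}=1$: the closed ball is the whole sort, the open ball is just $\{a_s\}$. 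A counterexample on the boundary then falsifies $\Delta^{\mathrm{retr}}(k)$ but not $\Delta(k)$. This is exactly the open/closed distinction the paper insists on.

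Your fallback has the same reversal. The implication $\Delta^D(k')\to\Delta(k)$ transfers a failure of $\Delta$ to $\Delta^D$, not back. What you need is $\Delta(k+1)\to\Delta^D(k)$, which holds for three reasons:
\begin{itemize}
\item the open ball of radius $P(r)-2^{-k-1}$ contains the closed ball of radius $P(r)-2^{-k}$;
\item the closed ball of radius $P(r)+2^{-k-1}$ lies inside the open ball of radius $P(r)+2^{-k}$;
\item $2^{-k-1}\le 2^{-k}$.
\end{itemize}
This turns $\neg\Delta^D(k)$ into $\neg\Delta(k+1)$ and forces replacing $\Phi$ by $\Phi+1$. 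The paper itself stops at $\exists n\le\Phi(z^*)\,\neg\Delta^D(\ldots,n,\Phi(z^*))$ and leaves this shift implicit.

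The monotonicity you single out as the main obstacle is not one, and your description of it has the directions inverted. As $k$ grows:
\begin{itemize}
\item the universal radii $P(r_{2i-1})-2^{-k}$ grow;
\item the existential radii $P(r_{2i})+2^{-k}$ shrink;
\item the tolerance $2^{-k}$ tightens;
\item the guard $k>\max\{\lceil-\log_2(P(r_{2i-1}))\rceil\}$ only becomes easier to satisfy.
\end{itemize}
Each of these strengthens the formula. So $\Delta(k')\to\Delta(k)$ holds literally for $k\le k'$, and likewise for $\Delta^D$. A failure at level $k$ therefore persists at every $k'\ge k$, with no detour needed. Growing universal balls are what make monotonicity hold, not an obstruction to it.
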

\begin{proof}
Theorem \ref{thrm:equiv:approxsat:sat} and Proposition \ref{prop:F:to:sat} imply
\[
\mathcal{A}^\omega[L,\mathcal{T}]+F^X\vdash \forall z^\rho\, \exists n^0\, \neg\Theta_{m,\mathcal{A}}(T,\underline{r},l,z,n).
\]
    Proposition \ref{prop:approx:univ} implies that there exists a quantifier -free formula $\theta_{qf}$ such that $\Theta_{m,\mathcal{A}}(T,\underline{r},l,z,n)$ is equivalent to $\forall k^0\theta_{qf}(k,z,n)$. Thus, Theorem \ref{thm:metatheorem} implies we can extract the appropriate functional $\Phi:\mathcal{S}_{\widehat{\rho}}\rightharpoonup\mathbb{N}$ such that for all non-trivial models $\mathscr{M}$ of $\mathcal{T}$ we have for all $z\in \mathcal{S}_\rho$ and $z^*\in \mathcal{S}_{\widehat{\rho}}$, if $z^*\gtrsim z$, then
\[
 \exists n,k \leq_0\Phi(z^*)\,\neg\theta_{qf}(k,z,n).
\]
But, from (the proof of) Proposition \ref{prop:approx:univ}, we have $\theta_{qf}(k,z,n)$ is equivalent to $\Tilde{\Delta}^\mathrm{retr}_{m,\mathcal{A}}(T,\underline{r},l,z,n,k)$. This implies,
\[
 \exists n,k \leq_0\Phi(z^*)\,\neg\Tilde{\Delta}^\mathrm{retr}_{m,\mathcal{A}}(T,\underline{r},l,z,n,k)
\]
Now, from the proof of Proposition \ref{prop:approx:univ}, we have
\[
\tilde{\Delta}^\mathrm{retr}_{m,\mathcal{A}}(T,\underline{r},l,z,n,k+2) \to \Delta^\mathrm{retr}_{m,\mathcal{A}}(T,\underline{r},l,z,n,k) \to \tilde{\Delta}^\mathrm{retr}_{m,\mathcal{A}}(T,\underline{r},l,z,n,k).
\]
and so
\[
 \exists n,k \leq_0\Phi(z^*)\,\neg\Delta^\mathrm{retr}_{m,\mathcal{A}}(T,\underline{r},l,z,n,k).
\]
Finally, from $(\star)$ in  Proposition \ref{prop:ext:approx} (and the monotonicity of $\Delta^D_{m,\mathcal{A}}(T,\underline{r},l,z,n,k)$ in the argument $k$) we obtain the result, as we have 
\[
 \exists n \leq_0\Phi(z^*)\,\neg\Delta^D_{m,\mathcal{A}}(T,\underline{r},l,z,n,\Phi(z^*)).
\]

\end{proof}
Our main result, a proof-theoretic analogue of Corollary \ref{cor:main:model:unif:sat}, immediately follows from Theorem \ref{thm:metatheorem2} and Proposition \ref{prop:F:to:sat}.
\begin{theorem}
\label{thm:meta:main}
     Let
 \begin{align*}
 \Theta_m(T,\underline{r},l,z^\rho,n^0):\equiv&\forall_{r_1(z,n)} x_1^{X_{s_1}(0)} \exists_{r_2(z,n)} y_1^{X_{s_2}(0)} \ldots \forall_{r_{2m-1}(z,n)} x_m^{X_{s_{2m-1}}(0)}\exists_{r_{2m}(z,n)} y_m^{X_{s_{2m}}(0)}\,\\&(T((\overline{\underline{x},l(z,n)}),(\overline{\underline{y},l(z,n)}),z,n)=_\RR 0)
 \end{align*}
be a formula in $\mathcal{PBL}[L]$ with  $\rho$ admissible. If
\[
\mathcal{A}^\omega[L,\mathcal{T}]+\mathrm{Sat}\vdash \forall z^\rho\, \exists n^0\,\neg\Theta_{m}(T,\underline{r},l,z,n)
\]
then one can extract a partial functional $\Phi:\mathcal{S}_{\widehat{\rho}}\rightharpoonup\mathbb{N}$ which is total and (bar-recursively) computable on $\mathcal{M}_{\widehat{\rho}}$ and the following holds in all non-trivial models $\mathscr{M}$ of $\mathcal{T}$: for all $z\in \mathcal{S}_\rho$ and $z^*\in \mathcal{S}_{\widehat{\rho}}$, if $z^*\gtrsim z$, then
\[
 \exists n\leq_0\Phi(z^*)\,\neg\Theta_{m,\mathcal{A}}(T,\underline{r},l,z,n).
\]
\end{theorem}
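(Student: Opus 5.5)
The plan is to reduce Theorem \ref{thm:meta:main} to Theorem \ref{thm:metatheorem2} by eliminating $\mathrm{Sat}$ in favour of $F^X$. Suppose
\[
\mathcal{A}^\omega[L,\mathcal{T}]+\mathrm{Sat}\vdash \forall z^\rho\, \exists n^0\,\neg\Theta_{m}(T,\underline{r},l,z,n).
\]
A proof in this system uses only finitely many instances $\mathrm{Sat}(\Theta_{m_1}),\ldots,\mathrm{Sat}(\Theta_{m_j})$. By Proposition \ref{prop:F:to:sat}, each instance is provable in $\mathcal{A}^\omega[L,\mathrm{retr},\phi]+F^X$, and $\mathcal{A}^\omega[L,\mathcal{T}]$ extends $\mathcal{A}^\omega[L,\mathrm{retr},\phi]$. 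Replacing each use of an instance by its derivation (a cut/deduction-theorem argument) therefore gives
\[
\mathcal{A}^\omega[L,\mathcal{T}]+F^X\vdash \forall z^\rho\, \exists n^0\,\neg\Theta_{m}(T,\underline{r},l,z,n).
\]

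Next I would apply Theorem \ref{thm:metatheorem2} verbatim. The hypotheses match exactly: $\Theta_m$ is in $\mathcal{PBL}[L]$, $\rho$ is admissible, and the derivation is in $\mathcal{A}^\omega[L,\mathcal{T}]+F^X$. This yields a partial functional $\Phi:\mathcal{S}_{\widehat{\rho}}\rightharpoonup\NN$ that is total and bar-recursively computable on $\mathcal{M}_{\widehat{\rho}}$. It also gives, in every non-trivial model $\mathscr{M}$ of $\mathcal{T}$ and for all $z^*\gtrsim z$, the existence of $n\le_0\Phi(z^*)$ with $\neg\Theta_{m,\mathcal{A}}(T,\underline{r},l,z,n)$. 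That is precisely the "in particular" clause of Theorem \ref{thm:metatheorem2}, which follows from $\neg\Delta_{m,\mathcal{A}}(\ldots,n,\Phi(z^*))$ because $\Theta_{m,\mathcal{A}}=\forall k\,\Delta_{m,\mathcal{A}}(\ldots,k)$.

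The only step needing care is the first one. I must check that Proposition \ref{prop:F:to:sat} really covers every instance admitted by $\mathrm{Sat}$, including the $\QFER^0_\RR$ side condition when $s=s_\RR$. I also need the extensionality rules used inside the derivation of $\mathrm{Sat}$ from $F^X$ to remain admissible in the larger system $\mathcal{A}^\omega[L,\mathcal{T}]+F^X$. Since $\QFER$ is a rule that is closed under adding axioms, and the $\QFER^0_\RR$ hypotheses are assumed for the particular $T$, this should go through. The remaining work, the Dialectica and majorization part, is already packaged in Theorem \ref{thm:metatheorem2}, so I expect no further obstacle.
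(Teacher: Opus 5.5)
Your proposal is correct and matches the paper's argument: the paper obtains Theorem \ref{thm:meta:main} directly from Proposition \ref{prop:F:to:sat}, which makes every instance of $\mathrm{Sat}$ provable from $F^X$ over $\mathcal{A}^\omega[L,\mathrm{retr},\phi]\subseteq\mathcal{A}^\omega[L,\mathcal{T}]$, together with the ``in particular'' clause of Theorem \ref{thm:metatheorem2}. Your check that the $\QFER^0_\RR$ side conditions and the extensionality rules still work in the extended system is a point the paper leaves implicit.
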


\begin{remark}
    We may have tuples instead of a single variable $z$, in Theorems \ref{thm:metatheorem2} and \ref{thm:meta:main}. As in Remark \ref{rem:F:t:sat}, similar results can be obtained in the context of normed structures (cf.\ Remarks \ref{rem:met:vs:normed} and \ref{rem:PBL:diff}) with an appropriate formalization of parametrised positive bounded formulas and additional extensionality requirements.
\end{remark}

\section{Application of metatheorem: A quantitative version of Theorem \ref{thrm:meta:main:strong}}
\label{sec:quant:main}
We shall now  provide an application of the metatheorem we gave in the previous section. We do this by extracting explicit bounds for Theorem \ref{thrm:meta:main:strong}, through analysing the model-theoretic proof we gave in Section \ref{subsec:model:APP2}, guided by the formalism of the previous section and Theorem \ref{thm:meta:main}, recalling that, as mentioned in Remark \ref{rem:how:to:use}, the processes of extracting the bounds from the proof are contained in the proofs of Propositions \ref{lem:ndinterpretation} and \ref{lem:majmainresult}. Concretely, we extract majorants for the Dialectica interpretation of the negative translation of the lemmas used in the model-theoretic proof of  Theorem \ref{thrm:meta:main:strong}. The key observation that allows us to perform this process smoothly is that, by following the proof of Proposition \ref{prop:approx:univ}, one can show that formulas containing only bounded quantifiers are equivalent to a quantifier-free statement. For example, in the context of embedding $\mathcal{L}_G$ in Section \ref{subsec:model:APP1} into the system, if we have a term $B^{X_{\mathcal{F}}}$ and $n^0$, the formula expressing that $B$ is $n$-generic is
\[
\exists g^{X_{\Omega}(0)}\left(d_{\mathcal{F}}\left(\bigcup_{i\le n}g(i)\cdot B,\hat{\Omega}\right) =0\right).
\]
This formula is equivalent to a quantifier-free formula (using the terms $\phi_s$, as in the proof of Proposition \ref{prop:approx:univ}) after noting that the axioms expressing that $d_{\mathcal{F}}$ is a discrete metric allow one to show that the matrix of the formula is equivalent to a quantifier-free formula. 

Throughout this section, fix $G,A,k, \mathcal{F}, \mu$ as in the statement of Theorem \ref{thrm:meta:main:strong}.
\begin{lemma}[Quantitative version of (the forward direction of) Lemma \ref{lem:pos:gen}]
\label{lem:pos:gen:quant}
    For all $B\in \mathcal{F}$ and $m,p \in \NN$, if $B$ is $p$-stable then, $\mu(B) > 2^{-m}$ implies $B$ is $((p-1)2^{2m+5})$-(left and right ) generic.
\end{lemma}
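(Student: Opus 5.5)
The plan is to reduce genericity of $B$ to a hitting-set problem for a family of translates of $B$. That family has VC-dimension $<p$ by Theorem~\ref{thrm:stab:vc}, and every member has measure $>2^{-m}$ by invariance. An $\varepsilon$-approximation theorem for set systems of bounded VC-dimension, with $\varepsilon=2^{-m}$, then supplies the hitting set. The number $(p-1)2^{2m+5}=32(p-1)\varepsilon^{-2}$ is exactly the size of an $\varepsilon$-approximation of the form $32d/\varepsilon^2$ for a system of VC-dimension $d\le p-1$. This suggests that this is the intended route, rather than the $O(d\varepsilon^{-1}\log\varepsilon^{-1})$ $\varepsilon$-net bound.

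Left genericity is handled first. We need $g_1,\ldots,g_N\in G$ with $G=\bigcup_{i\le N} g_iB$, i.e.\ for every $x\in G$ some $i$ with $g_i^{-1}\in Bx^{-1}$. So it suffices to find a finite set $S=\{h_1,\ldots,h_N\}\subseteq G$ meeting every set of the system $\mathcal{S}_r:=\{Bx^{-1}\mid x\in G\}$, and then to put $g_i:=h_i^{-1}$. Since $\mathcal{F}$ is bi-invariant, $Bx^{-1}\in\mathcal{F}$. By right invariance of $\mu$, $\mu(Bx^{-1})=\mu(B)>2^{-m}$. Since $B$ is $p$-stable, Theorem~\ref{thrm:stab:vc} gives $\mathrm{VC}(\mathcal{S}_r)=\mathrm{VC}_r(B)\le p-1$. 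Now invoke the VC $\varepsilon$-approximation theorem, in its version for finitely additive probability measures on a Boolean algebra containing the system, as in \cite{lovasz2010regularity} or \cite{conant2021quantitative}. With $\varepsilon=2^{-m}$ it gives a multiset $h_1,\ldots,h_N$ with $N\le 32(p-1)\varepsilon^{-2}=(p-1)2^{2m+5}$ such that
\[
\Bigl|\mu(C)-\tfrac{1}{N}\,|\{i\le N\mid h_i\in C\}|\Bigr|<2^{-m}\quad\text{for all }C\in\mathcal{S}_r .
\]
Since $\mu(C)>2^{-m}$ for every $C\in\mathcal{S}_r$, each $C$ contains some $h_i$, which gives the covering. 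If fewer than $N$ points occur, we pad with repetitions to get exactly $N$ translates.

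Right genericity is symmetric. We need $G=\bigcup_i Bg_i$, i.e.\ for every $x$ some $i$ with $g_i^{-1}\in x^{-1}B$. We run the same argument with the system $\{x^{-1}B\mid x\in G\}$, using left invariance of $\mu$ and $\mathrm{VC}_l(B)\le p-1$ from Theorem~\ref{thrm:stab:vc}.

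I expect the main obstacle to be justifying the $\varepsilon$-approximation theorem with the constant $32$ for a merely finitely additive $\mu$, since the classical statement is for finite or $\sigma$-additive settings. My first attempt would be to reduce to the finite counting-measure case. For a fixed finite subfamily $C_1,\ldots,C_q\subseteq\mathcal{S}_r$ the atoms of the generated Boolean algebra are finitely many, and $\mu$ is approximated on them by a rational uniform measure on finitely many representative points. The classical theorem then applies to the induced finite set system, whose VC-dimension is still $\le p-1$. I would then pass from finite subfamilies to the whole of $\mathcal{S}_r$ by a compactness argument on the finitely many positions $h_1,\ldots,h_N$. The cleanest way to do this is to use the finite-support averaging lemma already underlying Theorem~\ref{thrm:packing} in \cite{lovasz2010regularity}, which provides precisely such uniform approximations by averages of point masses. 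Failing a clean citation, I would adopt the version stated in \cite{conant2021quantitative} (Remark 5.2 there) for Keisler-type measures.
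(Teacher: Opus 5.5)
Your reduction is fine. Left genericity of $B$ amounts to a finite set meeting every right translate $Bx^{-1}$. These translates lie in $\mathcal{F}$, have measure $\mu(B)>2^{-m}$, and form a system of VC-dimension $\le p-1$ by Theorem~\ref{thrm:stab:vc}; right genericity is symmetric.

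\textbf{The gap.} The gap is the step you flag as the main obstacle, and it is not just a matter of constants. In the setting of this section ($G$ an arbitrary group, $\mu$ only finitely additive) there is no $\varepsilon$-approximation or even $\varepsilon$-net theorem with sample points in $G$. Since your argument uses $p$-stability only through $\mathrm{VC}_r(B)\le p-1$, it cannot succeed.

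\textbf{Counterexample.} Take $G=\mathbb{Z}$ and $\mathcal{F}=\mathcal{P}(\mathbb{Z})$. Let $\mu(S)$ be a Banach limit of $|S\cap[0,n)|/n$; this is a translation-invariant finitely additive probability measure. Let $B=\NN$.
\begin{itemize}
\item Every translate $B-x=[-x,\infty)$ has measure $1$.
\item The translates form a chain, so their VC-dimension is $1$.
\item No finite subset of $\mathbb{Z}$ meets all of them, so $\NN$ is not generic.
\end{itemize}
Your argument with $p=2$, $m=1$ would still produce a cover of $\mathbb{Z}$ by $2^7$ translates of $\NN$. Of course $\NN$ has the order property in $\mathbb{Z}$, since $(-i)+j\in\NN$ iff $i\le j$. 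That is exactly the point: the conclusion needs stability, not merely finite VC-dimension.

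\textbf{Why the proposed repair fails.} The same example shows this. For every finite subfamily of $\{[-x,\infty)\}$ a single point is a hitting set, so your finite-subfamily step goes through, even with $N=1$. But there is no compactness in $G^N$ that glues these finite hitting sets together.

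Compactness is available only in the Stone space of $\mathcal{F}$. There it yields finitely many ultrafilters (types over $G$) meeting every translate, and this is what a VC theorem for arbitrary finitely additive measures actually gives. Passing from such types to actual elements of $G$ is where stability must be used essentially, for instance through definability of stable types by positive Boolean combinations of instances with parameters from $G$. The paper does not carry this out itself; it imports the bound from the proof of Theorem~5.1 in \cite{conant2021quantitative}.

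\textbf{Where your route does work.} As it stands, your argument is valid only for finite $G$. Even there you would need a genuine source for the log-free bound $32d\varepsilon^{-2}$. The standard $\varepsilon$-approximation bounds are $O(d\varepsilon^{-2}\log(d/\varepsilon))$, or $O(d\varepsilon^{-2})$ with an unspecified absolute constant. For a covering, an $\varepsilon$-net would suffice in any case.
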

\begin{proof}
    The result is immediate from the proof of Theorem 5.1 in \cite{conant2021quantitative}.
\end{proof}
Lemma \ref{lem:pos:gen:quant} provides  majorants for the Dialectica interpretation of the negative translation of Lemma \ref{lem:pos:gen} (after recognising formulas with bounded quantification as quantifier-free, as discussed above, and expanding hidden quantifiers in the relationships between the real numbers). Theorem \ref{thrm:packing} is already quantitative. The next results we need are quantitative versions of Lemmas \ref{lem:bound:symm:nonquant} and \ref{lem:fin:orbit:quant}, whose forms are obtain in a similar manner to Lemma \ref{lem:pos:gen:quant}. 
\begin{lemma}[Quantitative version of Lemma \ref{lem:bound:symm:nonquant}]
\label{lem:bound:symm:quant}
    for every $f :\NN \to \NN$ there exists $m \le \Phi(f)$ such that for all $g, h \in G$
        \[
    \mu(Ag \triangle Ah) > 2^{-f(m)} \to \mu(Ag \triangle Ah)>2^{-m}
    \]
    where 
    \[
    \Phi(f):= \Tilde{f}^{(s)}(0)
    \]
    with $\Tilde{f}(n):= n+5 +2f_M(n+1)+\lceil \log_2(s-1)\rceil$, $\Tilde{f}^{(k)}$ denotes the $kth$ iterate of $\Tilde{f}$, $f_M(n):= \max_{i\le n}f(n)$, and $s:= R(R(k, k + 1), R(k, k + 1))+ 1$.
\end{lemma}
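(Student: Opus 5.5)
We argue by contradiction and quantify the inductive construction in the proof of Lemma \ref{lem:bound:symm:nonquant}. Suppose that for every $m \le \Phi(f)$ there are $g,h \in G$ with $2^{-m} \ge \mu(Ag\triangle Ah) > 2^{-f(m)}$. Define $m_0 := 0$ and $m_{n+1} := \Tilde{f}(m_n)$, so that $m_s = \Phi(f)$. Since $\Tilde{f}$ is increasing in its argument (because $f_M$ is monotone), each $m_n \le \Phi(f)$, and so the counterexample assumption is available at every level $m_n$ and also at $m_n+1$. By Lemma \ref{lem:stable:symm} the relation $\phi(x,(y,z))$ given by $x \in Ay\triangle Az$ is $s$-stable. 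Every set $Ay\triangle Az$ is then $s$-stable as a subset of $G$, because a $k$-order pattern for $(Ay\triangle Az)$ translates into one for $\phi$. Hence Lemma \ref{lem:pos:gen:quant} applies to such sets, and to their translates, with $p=s$.

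By induction on $1\le n\le s$ we construct $g_i,h_i$ for $i\le n$ such that every cell
\[
\bigcap_{i=1}^t (Ag_i\triangle Ah_i)\cap \bigcap_{i=t+1}^n (Ag_i\triangle Ah_i)^c,\qquad 1\le t\le n,
\]
has measure strictly greater than $2^{-m_n}$. For $n=1$ we take the counterexample at level $m_0$. For the inductive step, apply the counterexample at level $m_n+1$: this gives $g,h$ with $2^{-(m_n+1)}\ge \mu(Ag\triangle Ah)>2^{-f(m_n+1)}\ge 2^{-f_M(m_n+1)}$.

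Lemma \ref{lem:pos:gen:quant}, applied with $m=f_M(m_n+1)$, shows that $Ag\triangle Ah$ is $q$-generic with $q=(s-1)2^{2f_M(m_n+1)+5}$. Set $B := \bigcap_{i\le n}(Ag_i\triangle Ah_i)$, which is the $t=n$ cell and so has $\mu(B)>2^{-m_n}$. As in the nonquantitative proof, $B$ is covered by $q$ translates $B\cap v_j(Ag\triangle Ah)$. By pigeonhole, some translate $v$ satisfies
\[
\mu\bigl(B\cap (Ag\triangle Ah)v\bigr)>2^{-m_n}/q = 2^{-(m_n+5+2f_M(m_n+1)+\log_2(s-1))}\ge 2^{-m_{n+1}}.
\]
Here we need to check whether the cover uses left or right translates. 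Lemma \ref{lem:pos:gen:quant} gives both, so we use right genericity to produce the form $(Ag\triangle Ah)v = Agv\triangle Ahv$.

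Set $g_{n+1}:=gv$ and $h_{n+1}:=hv$. The new full intersection cell then has measure greater than $2^{-m_{n+1}}$. Each older cell loses at most $\mu(Ag_{n+1}\triangle Ah_{n+1})=\mu(Ag\triangle Ah)\le 2^{-(m_n+1)}$, by invariance of $\mu$. So these cells keep measure greater than $2^{-m_n}-2^{-(m_n+1)}=2^{-(m_n+1)}\ge 2^{-m_{n+1}}$. After $s$ steps, picking points $a_t$ in the nonempty cells produces the order pattern for $\phi$ exactly as in Lemma \ref{lem:bound:symm:nonquant}, contradicting $s$-stability.

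The main obstacle is the bookkeeping in this step. We must confirm that the additive terms $5$ and $\lceil\log_2(s-1)\rceil$ in $\Tilde{f}$ exactly absorb $\log_2 q$ together with the loss of one bit in $\mu(B)$. We must also check that we can use a strict inequality $\mu(B)>2^{-m_n}$ at each stage, so that the final cells are genuinely nonempty. If the constants come out off by one, we would adjust the inductive invariant to $\ge$ at level $m_n$ plus one extra bit.
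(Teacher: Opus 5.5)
Your proposal is correct and follows essentially the same route as the paper's proof: the same contradiction hypothesis, the same invariant $2^{-\Tilde{f}^{(n)}(0)}$ on the cells, the same application of Lemma \ref{lem:pos:gen:quant} at level $\Tilde{f}^{(n)}(0)+1$ followed by pigeonhole over right translates, and the same one-bit loss on the old cells. Your explicit checks (that $Ag\triangle Ah$ is $s$-stable as a subset of $G$, and that right genericity is the form needed) fill in details the paper leaves implicit, and the constants work out exactly as you computed, so the invariant needs no adjustment.
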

\begin{proof}
    The result and proof are essentially given in Lemma 3.4 of \cite{conant2021quantitative}. We present a full proof here to show how it one obtains this result from an analysis of Lemma \ref{lem:bound:symm:nonquant}.

As in Lemma \ref{lem:bound:symm:nonquant}, write $\phi(x,(y,z))$ for the relation $x \in Ay \triangle Az$, which is $s$-stable by Lemma \ref{lem:stable:symm}. We again argue by contradiction. Suppose for each $m \le \Phi(f)$, there exists $g,h \in G$ such that $2^{-m}\ge \mu(Ag \triangle Ah) > 2^{-f(m)}$. 
By induction on $1 \le n \le s$,  for $1 \le i \le n$ we will construct $g_i,h_i \in G$, such that for all $1 \le t \le n$
\[
\mu\left(\bigcap_{i=1}^t Ag_i \triangle Ah_i \cap \bigcap_{i=t+1}^n(Ag_i \triangle Ah_i)^c\right)>2^{-\Tilde{f}^{(n)}(0)}.
\]
and as in the proof of  Lemma \ref{lem:bound:symm:nonquant}, this will  contradict the stability of $\phi$. For the case $n =1$, by our assumption, with $m=0$, we can take $g,h \in G$ such that $\mu(Ag \triangle Ah) >2^{-f(0)}$ and the result follows since $\Tilde{f}(0) \ge f(0)$. Now fix $1\le n < s$ and suppose we have constructed $g_i, h_i$ with $1 \le i \le n$ satisfying the desired condition, so for all $1 \le t \le n$
\[
\mu\left(\bigcap_{i=1}^t Ag_i \triangle Ah_i \cap \bigcap_{i=t+1}^n(Ag_i \triangle Ah_i)^c\right)> 2^{-\Tilde{f}^{(n)}(0)} .
\]
Since $n < s$, we have $\Tilde{f}^{(n)}(0) < \Phi(f)$ and so, by assumption, we can take $g,h \in G$ satisfying $2^{-(\Tilde{f}^{(n)}(0)+1)}\ge \mu(Ag \triangle Ah) >2^{-f(\Tilde{f}^{(n)}(0)+1)}$. Thus, Lemma \ref{lem:pos:gen:quant} implies that $Ag \triangle Ah$ is $q$-generic, where $q:=2^{2f(\Tilde{f}^{(n)}(0)+1)+5 +\lceil\log_2(s-1)\rceil}$. Setting 
\[
B:= \bigcap_{i=1}^n Ag_i \triangle Ah_i
\]
which will have measure greater than $2^{-\Tilde{f}^{(n)}(0)}$, we will have, for some $v_1, \ldots, v_q \in G$ that 
\[
B = \bigcup_{i=1}^q (B \cap (Ag \triangle Ah)v_i)
\]
 Thus we can take some $v \in G$ such that 
 \[
 \mu(B \cap Agv \triangle Ahv)=\mu(B \cap (Ag \triangle Ah)v)\ge 2^{-(\Tilde{f}^{(n)}(0)+ 2f(\Tilde{f}^{(n)}(0)+1)+5 +\lceil\log_2(s-1)\rceil)}\ge  2^{-\Tilde{f}^{(n+1)}(0)}.
 \]
 Setting $g_{n+1} = gv$ and $h_{n+1} = hv$ yields that 
\[
\mu\left(\bigcap_{i=1}^{n+1} Ag_i \triangle Ah_i \right)> 2^{-\Tilde{f}^{(n+1)}(0)}
\]
and for $1 \le t \le n$ we have 
\begin{equation*}
    \begin{aligned}
        \mu\left(\bigcap_{i=1}^t Ag_i \triangle Ah_i \cap \bigcap_{i=t+1}^{n+1}(Ag_i \triangle Ah_i)^c\right)&\ge \mu\left(\bigcap_{i=1}^t Ag_i \triangle Ah_i \cap \bigcap_{i=t+1}^n(Ag_i \triangle Ah_i)^c\right) - \mu\left(Ag_{n+1}\triangle Ag_{n+1}\right)\\
        & \ge 2^{-\Tilde{f}^{(n)}(0)} - \mu\left((Ag\triangle Ah)v\right) \ge 2^{-(\Tilde{f}^{(n)}(0)+1)}> 2^{-\Tilde{f}^{(n+1)}(0)} .
    \end{aligned}
\end{equation*}    
\end{proof}
\begin{lemma}[Quantitative version of Lemma \ref{lem:fin:orbit:nonquant}]
\label{lem:fin:orbit:quant}
    For all $e: \NN \to \NN$ there exists $n \le \Psi(e)$ such that for all $(g_i) \subseteq G$ there exists $i < j \le n$ satisfying 
    \[
    \mu(Ag_i \triangle Ag_j) \le 2^{-e(n)}
    \]
    where 
    \[
    \Psi(e):=\omega_{k-1}(\Phi(f_e))
    \]
    with $f_e(n):= e(\omega_{k-1}(n))$ and $\omega$ defined as in Theorem \ref{thrm:packing} and $\Phi$ as in Lemma \ref{lem:bound:symm:quant}.
\end{lemma}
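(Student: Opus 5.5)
Fix $e$ and apply Lemma \ref{lem:bound:symm:quant} to $f_e(n):=e(\omega_{k-1}(n))$. This yields some $m\le\Phi(f_e)$ such that for all $g,h\in G$,
\[
\mu(Ag\triangle Ah)>2^{-f_e(m)}\to\mu(Ag\triangle Ah)>2^{-m}.
\]
I would then take $n:=\omega_{k-1}(m)$. Since $\omega_{k-1}$ is monotone in its argument, $n\le\omega_{k-1}(\Phi(f_e))=\Psi(e)$. Moreover $e(n)=e(\omega_{k-1}(m))=f_e(m)$, and this identity is exactly why $f_e$ is defined as it is: the threshold $2^{-e(n)}$ in the conclusion becomes the threshold $2^{-f_e(m)}$ supplied by Lemma \ref{lem:bound:symm:quant}.

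Now fix a sequence $(g_i)\subseteq G$ and suppose, for contradiction, that $\mu(Ag_i\triangle Ag_j)>2^{-e(n)}=2^{-f_e(m)}$ for all $i<j\le n$. By the choice of $m$, every such pair then satisfies $\mu(Ag_i\triangle Ag_j)>2^{-m}\ge 2^{-m}$. Set $\mathcal{S}:=\{Ag\mid g\in G\}$ and $\mathcal{H}:=\{Ag_0,\ldots,Ag_n\}$. By Theorem \ref{thrm:stab:vc}, $\mathrm{VC}(\mathcal{S})\le k-1$. The sets $Ag_i$ for $i\le n$ are pairwise distinct, since their symmetric differences have positive measure. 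So $\mathcal{H}$ is $2^{-m}$-separated and has $n+1=\omega_{k-1}(m)+1$ elements.

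This contradicts Theorem \ref{thrm:packing}, which bounds $|\mathcal{H}|$ by $\omega_{k-1}(m)$. If the VC-dimension is strictly less than $k-1$, I would note that $\omega_d(m)$ is monotone in $d$, or simply apply the packing lemma with dimension at most $k-1$. I would also treat degenerate small values of $k$ as trivial.

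I expect no step to be a serious obstacle, since this is a direct quantitative rerun of the proof of Lemma \ref{lem:fin:orbit:nonquant}. The only delicate point is the indexing, namely whether the sequence has $n$ or $n+1$ terms in the range $i<j\le n$. I would handle this by counting indices $0,\ldots,n$, which gives $n+1>\omega_{k-1}(m)$ and hence the contradiction.
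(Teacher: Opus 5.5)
Your proposal is correct and follows essentially the same route as the paper: apply Lemma \ref{lem:bound:symm:quant} to $f_e$, set $n:=\omega_{k-1}(m)$, and use the identity $e(n)=f_e(m)$ to reach a contradiction with Theorem \ref{thrm:packing} together with Theorem \ref{thrm:stab:vc}. Your explicit count of the $n+1$ indices $0,\ldots,n$, and your observation that the sets $Ag_i$ are pairwise distinct, supply exactly the justification the paper leaves implicit in its terse conclusion $n<\omega_{k-1}(m)$.
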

\begin{proof}
    As in the proof of Lemma \ref{lem:fin:orbit:nonquant}, we argue by contradiction. Suppose we have $e: \NN \to \NN$ such that for all $n \le \Psi(e)$ there exists $(g_i) \subseteq G$ such that for all $i<j\le n$ we have $\mu(Ag_i \triangle Ag_j) > 2^{-e(n)}$. By Lemma \ref{lem:bound:symm:quant}, we can take $m \le \Phi(f_e)$ such that for all $g, h \in G$
        \[
    \mu(Ag \triangle Ah) > 2^{-f_e(m)} \to \mu(Ag \triangle Ah)>2^{-m}.
    \]
    So we can set $n:= \omega_{k-1}(m) \le \omega_{k-1}(\Phi(f_e)) = \Psi(e)$ and so for all $i<j\le n$ we have 
    \[
    \mu(Ag_i \triangle Ag_j) > 2^{-e(n)} = 2^{-e(\omega_{k-1}(m))} = 2^{-f_e(m)}
    \]
    so, $\mu(Ag_i \triangle Ag_j) >  2^{-m}$. Theorem \ref{thrm:packing} and Theorem \ref{thrm:stab:vc} implies $n < \omega_{k-1}(m)$, a contradiction.
\end{proof}
We can now  provide explicit bounds for Theorem  \ref{thrm:meta:main:strong}.
\begin{theorem}
    For $k \ge 1$ and $f:\NN \times \NN \to \NN$, there exists  $n \le \Psi_1(f) $ and $m \le \Psi_2(f)$ such that  $\mathrm{Stab}_m(A) \in \mathcal{F}$, $\mathrm{Stab}_m(A) \le G$, and $\mathrm{Stab}_m(A)$ has index at most $n$. In addition,  there exists a subset $Y \subseteq G$, which is a union of cosets of $\mathrm{Stab}_m(A)$, such that $\mu(A \triangle Y ) \le 2^{-f(n,m)}$. Where,
    \[
    \Psi_2(f) = \Phi(e_f) \mbox{ and } \Psi_1(f):=\Psi\left(C_{\Psi_2(f)}\right) 
    \]
    with  
    \[
    e_f(m) =
    \max\left\{ 3\max_{n \le \Psi(C_m)}(f(n,m))+3\lceil\log_2 (\Psi(C_m))\rceil+\lceil\log_2(k-1)\rceil+5,\;m+1\right\}.
   \]

    and  for all $m \in \NN$, $C_m : \NN \to \NN$ is the constant function taking value $m$. Here, $\Phi$ and $\Psi$ are as in Lemmas \ref{lem:bound:symm:quant} and \ref{lem:fin:orbit:quant} respectively. 
\end{theorem}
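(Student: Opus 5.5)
We will follow the proof of Theorem \ref{thrm:main:nonquant:strong}, replacing each noneffective step by its quantitative counterpart. First, apply Lemma \ref{lem:bound:symm:quant} with the function $e_f$. This gives $m \le \Phi(e_f)=\Psi_2(f)$ such that for all $g,h\in G$ we have $\mu(Ag\triangle Ah)>2^{-e_f(m)}\to \mu(Ag\triangle Ah)>2^{-m}$. Since $e_f(m)\ge m+1$, the sets $\mathrm{Stab}_m(A)$, $\mathrm{Stab}^<_m(A)$ and $\mathrm{Stab}_{e_f(m)}(A)$ all coincide. Here we use right invariance, $\mu(Ag\triangle A)=\mu(Ag\triangle Ae)$. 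In particular Lemma \ref{lem:Stab:in:F} gives $H:=\mathrm{Stab}_m(A)\in\mathcal{F}$.

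Next we show that $H$ is a subgroup. From $\mu(Agh\triangle A)\le \mu(Agh\triangle Ah)+\mu(Ah\triangle A)=\mu(Ag\triangle A)+\mu(Ah\triangle A)\le 2^{1-e_f(m)}\le 2^{-m}$ we get closure under products. Inverses follow from invariance, since $\mu(Ag^{-1}\triangle A)=\mu(A\triangle Ag)$.

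For the index, apply Lemma \ref{lem:fin:orbit:quant} with the constant function $C_m$. This gives $n'\le\Psi(C_m)$ such that among any $g_1,\ldots,g_{n'}$ there are $i<j$ with $\mu(Ag_i\triangle Ag_j)\le 2^{-m}$, that is, $g_jg_i^{-1}\in H$. Hence $H$ has at most $n'$ cosets, so $[G:H]\le n'$. We want $n\le\Psi_1(f)=\Psi(C_{\Psi_2(f)})$, which requires $\Psi$ to be monotone in the constant. We will check this from the definitions of $\Phi$ and $\omega$, or else simply take $n:=\Psi(C_{\Psi_2(f)})$ as the index bound.

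Finally we construct $Y$, quantifying the argument in the proof of Theorem \ref{thrm:main:nonquant:strong}. Set $\delta:=2^{-f(n,m)}/n$ and let $I$ be the set of left cosets $C=gH$ with $\mu(C\cap A)>\delta$. For such $C$ we show $\mu(C\cap A^c)\le\delta$. Suppose instead that $B:=H\cap g^{-1}A$ and $C':=H\cap g^{-1}A^c$ both have measure $>\delta$. We would like to apply Lemma \ref{lem:pos:gen:quant} to $B$, and this is the main obstacle: we need a stability bound for $B$. $H$ is a Boolean combination of boundedly many translates of $A$, but only up to measure, so we may instead have to use the stability of $g^{-1}A$ together with $H$ being a subgroup, which is the route taken in \cite{conant2021quantitative}. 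Taking $\lceil\log_2(k-1)\rceil$ as the stability contribution and $B$ being $(k-1)2^{2\log_2(1/\delta)+5}$-right generic, we find $h$ with $\mu(Bh\cap C')\ge\delta/((k-1)2^{2\log_2(1/\delta)+5})$. Since $Bh\cap C'\subseteq H\cap Hh\cap g^{-1}(Ah\cap A^c)$, we get $h\in H$, so $\mu(Ah\triangle A)\le 2^{-e_f(m)}$. The $3f$ and $3\log_2\Psi(C_m)$ terms in $e_f$ are arranged exactly so that $2^{-e_f(m)}$ is smaller than this lower bound, using $n\le\Psi(C_m)$, which gives a contradiction. Setting $Y:=\bigcup_{C\in I}C$, we sum over at most $n$ cosets and obtain $\mu(A\triangle Y)\le n\delta=2^{-f(n,m)}$. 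The remaining exponent bookkeeping is routine.
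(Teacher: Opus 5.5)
Your proposal is essentially the paper's proof. The paper likewise applies Lemma \ref{lem:bound:symm:quant} to $e_f$ to get $m\le\Psi_2(f)$ with $\mathrm{Stab}_m(A)=\mathrm{Stab}_{e_f(m)}(A)$, a subgroup lying in $\mathcal{F}$ by Lemma \ref{lem:Stab:in:F}. It then uses Lemma \ref{lem:fin:orbit:quant} with $C_m$ for the index, and finishes with the quantitative coset dichotomy via Lemma \ref{lem:pos:gen:quant} with $p=k$.

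The step you flag as the main obstacle, $k$-stability of $B=H\cap g^{-1}A$, is also used without comment in the paper, and it closes quickly along the route you suspected. First, $g^{-1}A$ is $k$-stable, since an order pattern $(a_i,b_j)$ for $g^{-1}A$ yields the pattern $(ga_i,b_j)$ for $A$. Second, intersecting with a subgroup preserves $k$-stability. If $a_ib_j\in g^{-1}A\cap H\iff i\le j$, then every product $a_ib_j=(a_ib_k)(a_1b_k)^{-1}(a_1b_j)$ lies in $H$. Hence the same pattern is already witnessed by $g^{-1}A$ alone.

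On the bound $n\le\Psi_1(f)$, your fallback of simply taking $n:=\Psi(C_{\Psi_2(f)})$ would break the last step. The reason is that $e_f(m)$ only controls $f(n,m)$ and $\lceil\log_2 n\rceil$ for $n\le\Psi(C_m)$. The monotonicity you need does hold, however: $\Psi(C_m)=\omega_{k-1}\bigl(s(2m+5+\lceil\log_2(s-1)\rceil)\bigr)$ is nondecreasing in $m$. So the lemma's own $n\le\Psi(C_m)\le\Psi(C_{\Psi_2(f)})$ works, exactly as you use it in the final step.
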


\begin{proof}
    Lemma \ref{lem:bound:symm:quant} implies that for all $f: \NN \to \NN$, there exists $m \le \Phi(f)$ such that $\mathrm{Stab}_m(A) \subseteq \mathrm{Stab}_{f(m)}(A)$. Furthermore, if $f(m) \ge m+1$ we will have 
    \[
    \mathrm{Stab}_m(A)\mathrm{Stab}_m(A)\subseteq \mathrm{Stab}_{f(m)}(A)\mathrm{Stab}_{f(m)}(A) \subseteq \mathrm{Stab}_{f(m)-1}(A)\subseteq \mathrm{Stab}_{m}(A),
    \]
    here we use the fact that for all $n \in \NN$, $\mathrm{Stab}_{n}(A)\mathrm{Stab}_{n}(A) \subseteq \mathrm{Stab}_{n-1}(A)$. Thus, for such an $f$ and $m$, we have $\mathrm{Stab}_m(A)$ is closed under the group operation. Clearly $\mathrm{Stab}_m(A)$ contains the identity and is closed under inverses, thus $\mathrm{Stab}_m(A) \le G$. Furthermore, as in the proof of Theorem \ref{thrm:main:nonquant:strong}, Lemma \ref{lem:Stab:in:F} implies $\mathrm{Stab}_m(A) \in \mathcal{F}$.

    Now, let $f:\NN \times \NN \to \NN$ be given. By the above, we can take $m \le \Phi(e_f)= \Psi_2(f)$ such that $H:= \mathrm{Stab}_m(A)$ is a subgroup of $G$. Now, by Lemma \ref{lem:fin:orbit:quant}, we have $n \le \Psi(C_m) \le \Psi_1(f)$ such that, for all $(g_i) \subseteq G$ there exists $i<j\le n$ with $\mu(Ag_i \triangle Ag_j) \le 2^{-m}$. Suppose we have distinct cosets $g_0H,\ldots, g_nH$, then for each $i<j\le n$, we have $g_ig_j^{-1} \notin H$, which implies (by the right invariance of $\mu$) that $\mu(Ag_i \triangle Ag_j) > 2^{-m}$, a contradiction. Thus, $H$ has index at most $n \le \Psi_1(f)$.

Now, we have that for all $g \in G$, either $\mu(gH \cap A) \le 2^{-f(n,m)}/n$ or  $\mu(gH \cap A^c) \le 2^{-f(n,m)}/n$. If not, setting $B:=H \cap g^{-1}A$ and $C:=H \cap g^{-1}A^c$, would imply $\mu(B), \mu(C) >  2^{-(f(n,m)+\lceil\log_2(n)\rceil)}$. Thus, Lemma \ref{lem:pos:gen:quant} implies $B$ is $((k-1)2^{2(f(n,m)+\lceil\log_2(n)\rceil)+5})$-generic and so since $\mu(C)>2^{-(f(n,m)+\lceil\log_2(n)\rceil)}$ there exists $h \in G$ such that
\[
\mu(Bh \cap C)> \frac{2^{-(3(f(n,m)+\lceil\log_2(n)\rceil) + 5)}}{k-1}.
\]

As in the proof of Theorem \ref{thrm:main:nonquant:strong}, for such an $h$, we have $Bh \cap C = H \cap Hh \cap g^{-1}(Ah \cap A^c)$ which implies  $h \in H$ and 
\[
\mu(Ah \cap A^c)=\mu(g^{-1}(Ah \cap A^c))> \frac{2^{-(3(f(n,m)+\lceil\log_2(n)\rceil) + 5)}}{k-1}.
\]
But, $h \in H  = \mathrm{Stab}_{e_f(m)}(A)$ implies
\[
\mu(Ah \cap A^c) \le \mu(Ah \triangle A)\le 2^{-e_f(m)} \le\frac{2^{-(3(f(n,m)+\lceil\log_2(n)\rceil) + 5)}}{k-1}
\]
a contradiction.

    Now let $C_1,\ldots C_r$ be the cosets of $H$, so $r \le n$. Let $I$ be the set of $1 \le i \le r$ such that $\mu(C_i \cap A) > 2^{-f(n,m)}/n$, so if $i \in I$ we have $\mu(C_i \cap A^c) \le 2^{-f(n,m)}/n$. Set 
    \[
    Y:= \bigcup_{i\in I}C_i.
    \]
    So, the result follows since
    \begin{equation*}
        \begin{aligned}
            \mu(A \triangle Y) = \mu(A \cap Y^c) + \mu(A^c \cap Y)
            = \mu\left(\bigcup_{i \notin I} C_i \cap A\right)+ \mu\left(\bigcup_{i \in I} C_i \cap A^c\right)\le  2^{-f(n,m)}
        \end{aligned}
    \end{equation*}

\end{proof}
\noindent
{\bf Acknowledgments:} The authors are grateful to Nicholas Pischke for his helpful comments.

\end{document}